\documentclass[11pt]{amsart}

\usepackage{amsmath,amssymb,amsthm}
\usepackage{mathrsfs}
\usepackage{graphicx}
\usepackage{pifont}
\numberwithin{equation}{section}

\theoremstyle{plain}
\newtheorem{theorem}{Theorem}[section]
\newtheorem{proposition}[theorem]{Proposition}
\newtheorem{lemma}[theorem]{Lemma}
\newtheorem{corollary}[theorem]{Corollary}
\newtheorem{definition}[theorem]{Definition}
\newtheorem{example}[theorem]{Example}
\newtheorem{remark}[theorem]{Remark}

\newtheorem*{theoremA}{Theorem A}
\newtheorem*{theoremB}{Theorem B}
\newtheorem*{theoremC}{Theorem C}

\DeclareMathOperator{\Ext}{Ext}
\DeclareMathOperator{\Hom}{Hom}
\DeclareMathOperator{\Def}{Def}
\DeclareMathOperator{\End}{End}
\DeclareMathOperator{\Ker}{Ker}
\DeclareMathOperator{\im}{im}
\DeclareMathOperator{\Pf}{Pf}
\DeclareMathOperator{\Tr}{Tr}
\DeclareMathOperator{\Res}{Res}
\DeclareMathOperator{\Spec}{Spec}
\DeclareMathOperator{\GL}{GL}
\DeclareMathOperator{\Aut}{Aut}

\DeclareMathOperator{\rk}{rk}
\newcommand{\cO}{\mathcal{O}}
\newcommand{\cE}{\mathcal{E}}

\newcommand{\cI}{\mathcal{I}}

\newcommand{\cV}{\mathcal{V}}
\newcommand{\cL}{\mathcal{L}}
\newcommand{\cP}{\mathcal{P}}
\newcommand{\ka}{\kappa}

\newcommand{\ot}{\otimes}
\newcommand{\LF}{\textup{\textsc{lf}}}

\usepackage[colorlinks=true,linkcolor=blue,citecolor=blue,urlcolor=blue,pdftitle={Leafrians, Frobenius splittings, and moduli spaces over Poisson surfaces},pdfauthor={Jun Peng}]{hyperref}

\begin{document}

\title[Leafrians, Frobenius splittings, and moduli spaces over Poisson surfaces]{Leafrians, Frobenius splittings, and moduli spaces over Poisson surfaces}
\author{Jun Peng}

\begin{abstract}
Let $X$ be a smooth projective Poisson surface over an algebraically closed field of characteristic $p>2$, and let $s\in H^0(X,\omega_X^{-1})$ be nonzero. We construct an anticanonical section associated to $s$, called the \emph{leafrian}, on moduli spaces $M$ of stable objects on $X$, including the moduli of stable sheaves, of principal $G$-bundles, and of instantons. The leafrian splits $M$ if and only if $s^{p-1}$ Frobenius splits the base. It is invariant under the Hamiltonian flows of Bottacin's Poisson structure on $M$, so every compatibly split subvariety of $M$ is a Poisson subvariety. Furthermore, the leafrian splits the Gieseker compactification $\bar M$ under the same criterion. As a consequence, in characteristic zero, for $X$ a smooth projective Poisson surface with a smooth or empty anticanonical divisor and $\rho>0$ (positive leaf rank), the Gieseker compactification $\bar M$ satisfies Kodaira-type cohomology vanishing, including singular $\bar M$.
\end{abstract}
\keywords{Frobenius splitting, leafrian, Poisson structures on moduli spaces, moduli of sheaves, principal bundles, instantons, Hasse invariant, Gieseker compactification}
\subjclass[2020]{14J60, 14D20, 53D17, 13A35}
\maketitle

\makeatletter\renewcommand{\tocsection}[3]{\indentlabel{\@ifnotempty{#2}{\hb@xt@2em{#2.\hfil}\quad}}#3}\makeatother\setcounter{tocdepth}{1}
\tableofcontents

\section{Introduction}\label{sec:intro}

Throughout, $k$ is an algebraically closed field of characteristic $p>2$.  A smooth variety $Y$ over $k$ is Frobenius split when $\cO_Y\to F_*\cO_Y$ admits an $\cO_Y$-linear retraction. Frobenius split varieties satisfy vanishing theorems and enjoy good intersection theory \cite{BKu}.  A single anticanonical section $\sigma\in H^0(Y,\omega_Y^{-1})$ can produce a splitting by the criterion of Brion and Kumar \cite[Thm.~1.3.8]{BKu}: in slice coordinates at a closed point, writing $\sigma=f\,\partial_{[d]}$, the \emph{top coefficient}
\[
 b_0=[\,t_1^{p-1}\cdots t_d^{p-1}\,]\,f^{p-1}
\]
governs the trace of $\sigma^{p-1}$.  When that trace is a nonzero constant, a rescaling of $\sigma$ splits $Y$.  The nonvanishing of $b_0$ is what we produce.

The Lie-theoretic examples like Bott--Samelson varieties, $G/U$, and $G/B$ carry Poisson $T$-Pfaffians which split \cite{Lu,PY}; the brackets come from $r$-matrices, and the local reason is coordinate-bound.  This paper isolates the local mechanism and applies it to moduli spaces over surfaces.

The mechanism is one local object.  The deformation module of a fat point $Z=\Spec A$ carries a weighted trace pairing
\[
 B(v_1,\dots,v_n)=\kappa_0\,\Res_A\bigl(g\cdot\det[v_1|\cdots|v_n]\bigr),
\]
where $g$ is a weight on the local algebra and $n=\dim X$ is the codimension of the fat point.  The norm $N_{A/k}(g)$ carries the weight into the block.  The top coefficient of $N_{A/k}(g)^{p-1}$ in the block coordinates is the \emph{block coefficient} $T(g)$; it is the only part of the mechanism that sees the weight.

Concretely, let $X$ be a smooth projective variety of dimension $n$ and let $M$ be a moduli space of sheaves on $X$.  The construction rests on \emph{members}: fat points at which the weighted trace pairing above is realized; when the general object is not locally free and its non-locally free points carry the pairing, those points are the members.  In all cases the units are supplied by a master chain (Section~\ref{sec:transfer}).  A member of length $\ell$ splits into $\ell$ \emph{units}.  The number of units at a member-saturated datum is
\[
 \rho\ =\ \sum_\lambda \ell_\lambda .
\]
This integer $\rho$ is intrinsic to $M$: it depends on neither $s$ nor the presentation.  On a surface it is realized by Bottacin's tensor at generic data for generic $s$, $\rho=\frac12\operatorname{rk}\theta_s$ (Remark~\ref{rem:rho-open}), and this is the $\rho$ of the statements below.  The leafrian exists exactly when $\rho>0$, equivalently when the leaf is nonempty. The block is the deformation module $A^{\oplus n}$ of a codimension-$n$ complete intersection.

At each unit, a polyvector field on $X$ pushes forward to $M$ along the support map; summing over units produces the \emph{transferred polyvector}
\[
 \hat s=\sum_{a}\beta_a,\qquad
 \hat s^{[\rho]}=\bigwedge_{a}\beta_a,
\]
the sum and the normalized wedge of the unit polyvectors.  Wedging with an Euler frame $\Omega$, the wedge of the transverse scaling fields $c_j\partial_{c_j}$, gives $\hat s^{[\rho]}\wedge\Omega$, defined on the member locus; when it extends to a nonzero anticanonical section of $M$, we call it the leafrian. Its top coefficient factors as a product of block contributions.  Once the trace of the leafrian is constant, the leafrian splits $M$ if and only if every block coefficient is nonzero.

The members and the Euler frame above are supplied by a presentation.  A presentation (Definition~\ref{def:present}) is a chain of elementary and point modifications realizing the object.

From here on we specialize to surfaces.  The directions of $M$ split into leaf directions and transverse directions, and their numbers satisfy $2\rho+c=d$, where $c$ is the number of transverse directions.

We apply this to moduli spaces over a smooth projective Poisson surface $X$: stable sheaves, principal bundles, and instantons over a pair $(X,L)$.  Here Poisson means that $\omega_X^{-1}$ carries a nonzero section.  The classification of such surfaces is classical: they are the rational surfaces, the ruled surfaces, the $K3$ surfaces, and the abelian surfaces.

Over a Poisson surface, the transferred polyvector of a bivector $s$ on $X$ is identified with Bottacin's Poisson tensor $\theta_s$ on moduli spaces, so the leafrian is $\LF_s=\theta_s^{[\rho]}\wedge\Omega$.

For the trace computation, we use a filtered degeneracy complex. Write
\[
 \mathsf t(s)=\operatorname{Tr}_X(s),
\]
the Frobenius trace of the weight $s$ on $X$. When $D=Z(s)$ is a smooth elliptic curve, $\mathsf t(s)$ is the Hasse invariant of $D$.

At a stable smooth point, the Bottacin deformation complex is reduced by the radical coming from restriction to $D$. Its determinant is $\LF_s^2$, and its Cartier determinant is computed by the modification blocks. A weighted norm member of length $\ell$ contributes $\mathsf t(s)^\ell$, an unweighted Koszul block contributes $1$, and a split End block of multiplicity $m$ on $D$ contributes $\mathsf t(s)^{m^2-1}$. Split End blocks on ordinary divisors contribute only fixed units. These computations use the block coefficient of Section~\ref{sec:trans} and the split End formula of Section~\ref{sub:degcomplex}. A single-point filtered representation of the object suffices; it is not necessary to construct moving members for every leaf direction.

On a surface the total Cartier determinant is $\mathsf t(s)^N$ up to a fixed unit. Since $\LF_s^{p-1}$ has degree $\rho(p-1)$ in $s$, the degree of the determinant forces $N=\rho$. This gives the trace formula
\[
 \Tr(\LF_s^{p-1})=\Tr_X(s)^{\rho}\cdot u,\qquad u\in k^\times .
\]

The leafrian $\LF_s$ is independent of the Euler frame up to a regular unit, by Remark~\ref{rem:frame-indep}.  Over surfaces the presentation theory of Section~\ref{sec:transfer} supplies the Euler frame and the member locus.  When $\rho>0$ the trace formula gives the criterion $\Tr_X(s)\neq0$, the Hasse invariant of $X$ or of $X\setminus L$.

\medskip\noindent\textbf{Standing hypothesis.}  For principal bundles, $G$ is smooth reductive, $\mathfrak g$ carries an invariant nondegenerate bilinear form, and $z(\mathfrak g)\otimes H^1(\cO_X)=0$.

\begin{theoremA}
Let $X$ be a smooth projective Poisson surface, let $s$ be nonzero in $H^0(X,\omega_X^{-1})$, or in $H^0(X,\omega_X^{-1}(-2L))$ in the instanton case. Let $M$ be $M(r,c_1,c_2)$, $M_G$, or $M_{r,k}(X,L)$. Then:
\begin{enumerate}
\item[\rm(i)] \emph{(existence)} the leafrian $\LF_s$ exists if and only if $\rho>0$;
\item[\rm(ii)] \emph{(splitting)} $\LF_s$ splits $M$ if and only if $s^{p-1}$ Frobenius splits $X$, or $X\setminus L$ in the instanton case;
\item[\rm(iii)] \emph{(Torelli)} if $s$ splits the base as in (ii), the assignment $[\,s\,]\longmapsto[\,\LF_s\,]$ is injective;
\item[\rm(iv)] \emph{(Poisson)} $\LF_s$ is invariant under the Hamiltonian flows of Bottacin's Poisson structure on $M$ and hence every compatibly split subvariety is a Poisson subvariety whenever $\LF_s$ splits $M$.
\end{enumerate}
\end{theoremA}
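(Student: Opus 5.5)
The plan is to prove the four parts in order, using the machinery previewed in the introduction: the transferred polyvector of Section~\ref{sec:transfer}, the block coefficient $T(g)$ of Section~\ref{sec:trans}, and the split End formula of Section~\ref{sub:degcomplex}.

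\emph{Part (i).} First identify the transferred polyvector $\hat s$ with Bottacin's tensor $\theta_s$. This is a local comparison at a member-saturated datum. The pairing $B$ on the deformation module $A^{\oplus 2}$ of each fat point is the residue pairing twisted by $s$. Serre duality, written as the Bottacin pairing $\Ext^1(E,E)\times\Ext^1(E,E)\to\Ext^2(E,E\otimes\omega_X^{-1})\to H^2(\cO_X)$, restricts on each unit to the same residue. It follows that $\hat s^{[\rho]}=\theta_s^{[\rho]}$ on the member locus. Next, $\rho=\frac12\rk\theta_s$ by Remark~\ref{rem:rho-open}, so $\rho>0$ exactly when $\theta_s^{[\rho]}\neq0$. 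Then $\theta_s^{[\rho]}\wedge\Omega$ is a nonzero section of $\omega_M^{-1}$ on the member locus, because $2\rho+c=d$. Extension across the complement uses Remark~\ref{rem:frame-indep}: different presentations give frames differing by regular units, so the local sections glue. Where $M$ is normal, Hartogs handles the codimension-$\ge2$ part. The converse is immediate: if $\rho=0$ there is no leaf factor.

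\emph{Part (ii).} This is the heart of the proof and the main obstacle. The plan is to prove the trace formula $\Tr(\LF_s^{p-1})=\Tr_X(s)^\rho\cdot u$ with $u\in k^\times$.
\begin{enumerate}
\item[(a)] At a stable smooth point, pass to the Bottacin deformation complex modulo the radical coming from restriction to $D=Z(s)$. Its determinant is $\LF_s^2$.
\item[(b)] Filter this complex by a single-point presentation. The Cartier determinant then factors over blocks: weighted norm members contribute $\mathsf t(s)^\ell$ via $T(g)$ from Section~\ref{sec:trans}, Koszul blocks contribute $1$, and split End blocks contribute $\mathsf t(s)^{m^2-1}$ or fixed units.
\item[(c)] Conclude $N=\rho$ by homogeneity. $\Tr(\LF_s^{p-1})$ has degree $\rho(p-1)$ in $s$, while $\mathsf t(s)$ has degree $p-1$.
\end{enumerate}
The delicate points are the following. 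One must check that the trace is a global constant; properness of $\bar M$, or the normal-crossing support, should give this. One must also check that no hidden unit in (b) depends on $s$ or on the point. For that I would argue that $u$ is a regular nowhere-vanishing function on an open subset whose complement has codimension $\ge2$.

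Granting the formula, the Brion--Kumar criterion \cite[Thm.~1.3.8]{BKu} gives the equivalence: $\LF_s$ splits $M$ if and only if $\Tr_X(s)\ne0$, if and only if $s^{p-1}$ splits $X$. In the instanton case one twists by $L$ and works on $X\setminus L$, with the same argument.

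\emph{Parts (iii) and (iv).} For (iii), suppose $\LF_s=\lambda\LF_{s'}$. Compare the leaf factors $\theta_s^{[\rho]}$ and $\theta_{s'}^{[\rho]}$ restricted to a single unit. There they reduce to $s$ and $s'$ at a point of $X$, with the rescalings $\lambda^{1/\rho}$ consistent across units. Hence $[s]=[s']$, and the splitting hypothesis ensures $\rho>0$.

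For (iv), $\theta_s^{[\rho]}$ is invariant under Hamiltonian flows $\theta_s(df)$ because a Poisson bivector is preserved by its own Hamiltonian vector fields, $L_{X_f}\theta=0$. For the frame, $L_{X_f}\Omega$ lies in the ideal generated by the leaf directions, since Hamiltonian fields are tangent to leaves. Wedging with $\theta_s^{[\rho]}$ therefore kills the error, so $L_{X_f}\LF_s=0$. Finally, the compatibly split subvarieties are components of the zero loci of the $\LF_s$-ideal. The Kumar--Mehta argument shows they are stable under derivations preserving $\LF_s$, hence under all $X_f$, and so they are Poisson subvarieties.
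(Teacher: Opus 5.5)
Your argument for (ii) is the paper's: the reduced degeneracy complex with $\det\mathcal K_{\mathrm{red}}=u\,\LF_s^2$, the block Cartier determinants, and the degree count forcing $N=\rho$. Your (iv) also follows the paper in outline. Two steps, however, do not go through as proposed.

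\textbf{Constancy of the trace.} The block computation evaluates $\Tr(\LF_s^{p-1})$ at a single stable smooth point, so (ii) needs this function to be constant on $M$. You leave that open.
\begin{itemize}
\item $M$ is only quasi-projective. ``Properness of $\bar M$'' is of no use until you know that $\LF_s$ extends without poles across the strictly semistable locus, and that locus can be a divisor: for $(2,0,2)$ on $\mathbb P^2$ it is $\operatorname{Sym}^2\mathbb P^2$ (Remark~\ref{rem:g-critical}).
\item That extension is the missing idea. The paper proves it in Theorems~\ref{thm:g-extend} and~\ref{thm:g-extend-G}. The chain space maps onto every codimension-one point of the normal variety $\bar M$ (Lemmas~\ref{lem:g-tops} and~\ref{lem:g-tops-G}). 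The pulled-back local expression $\prod_\nu N(g_\nu)\prod_j c_j\,u$ is regular on the chain space. Lemma~\ref{lem:pullback-pole} together with Hartogs then excludes poles.
\item Only after this does Lemma~\ref{lem:consttrace-extend} give constancy; this is the first step of the proof of Theorem~\ref{thm:TF} when $D\neq\emptyset$.
\item Your fallback for the unit (nowhere vanishing off codimension two) gives at best a unit on $M$, not a constant.
\item For instantons, ``the same argument'' is not enough. Here $\lambda=\Tr_U(s^{p-1})$ is a function on the noncomplete $U$, and a one-point evaluation determines the trace only when $\lambda$ is a nonzero constant. Theorem~\ref{thm:inst} must treat $\lambda\equiv0$ and nonconstant $\lambda$ separately. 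In the nonconstant case $\lambda^\rho$ is nonconstant, so no rescaling has trace $1$. These are exactly the cases where $s^{p-1}$ fails to split $U$.
\item Since $M$ is only normal, \cite[Thm.~1.3.8]{BKu} has to be supplemented by Remark~\ref{rem:nearsplit}.
\end{itemize}

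\textbf{Globality in (i).} Local sections that differ by regular units on overlaps do not glue. So Remark~\ref{rem:frame-indep} cannot assemble $\LF_s$ from local pieces. The paper needs no gluing:
\begin{itemize}
\item $\theta_s^{[\rho]}$ is globally the Pfaffian power of Bottacin's bivector.
\item The Euler fields of a \emph{single} presentation are projectable and descend to global vector fields on $M$ by {\rm(P1)} (Lemma~\ref{lem:descent}).
\item The unit ambiguity in Remark~\ref{rem:frame-indep} is between two such global sections.
\end{itemize}
Nonvanishing comes from the generic independence of the $Q$-projections of the Euler fields (Lemma~\ref{lem:qrank}), not from the count $2\rho+c=d$. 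Also, ``$\rho>0$ exactly when $\theta_s^{[\rho]}\neq0$'' is vacuous, since $\theta_s^{[0]}=1$.

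\textbf{Part (iii).} Here you take a genuinely different route. The paper fixes one Euler frame and gets $Z(\LF_s)=Z(\LF_{s'})$. It then recovers $D=Z(s)$ from $Z(\LF_s)\setminus Z(\eta_{\mathrm{can}})$, using a single kernel modification $E_z$ and the torsion criterion (Lemma~\ref{lem:hecke}, Proposition~\ref{prop:intr}). It concludes because two sections with the same divisor differ by a constant.

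Your comparison of $\prod_\nu s(x_\nu)$ with $\prod_\nu s'(x_\nu)$ over moving members is essentially the argument of Lemma~\ref{lem:diff}. It bypasses the description of the degeneracy divisor, but it needs three things:
\begin{itemize}
\item one presentation used for both weights;
\item the $s$-independence of the constant in Lemma~\ref{lem:constunit};
\item a datum where the member supports vary freely over $X$ while the remaining factors of the local expression do not depend on them.
\end{itemize}
The splitting hypothesis does not force $\rho>0$; that has to come from (i).

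\textbf{Part (iv).} Tangency of $X_f$ to the leaves alone only gives $\cL_{X_f}\Omega\equiv\phi\,\Omega$ modulo the leaf ideal. Getting $\phi=0$ uses $X_f(c_j)=0$ for the class coordinates (Corollary~\ref{cor:norminv}). The passage to Poisson subvarieties is \cite[Thm.~D]{PY}. Compatibly split subvarieties lie in $Z(\LF_s)$, but they are not merely its components.
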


The leafrian lives on $M$, which is quasi-projective.  The next theorem extends it to the Gieseker compactification $\bar M$ and records the splitting there.

\begin{theoremB}
Let $X$, $s$, and $M$ be as in Theorem~A, and let $\bar M$ denote the Gieseker compactification of $M$.  If $\rho>0$ then $\LF_s$ extends to a section $\bar\LF_s$ of $\omega_{\bar M}^{[-1]}$; in the instanton case the extension holds when $\Tr_U(s^{p-1})$ is constant.  Moreover,
\[
 \bar\LF_s\ \text{splits}\ \bar M
 \qquad\Longleftrightarrow\qquad
 s^{p-1}\ \text{Frobenius splits}\ X,\ \text{or}\ X\setminus L\ \text{in the instanton case},
\]
where both sides fail when the extension fails.  If $\bar M$ is Gorenstein then $\bar\LF_s$ is a section of $\omega_{\bar M}^{-1}$.
\end{theoremB}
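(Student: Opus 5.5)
The argument has two parts. First, use reflexive extension across the small boundary to produce $\bar\LF_s$. Then transport splitting from $M$ to $\bar M$ using the standard behaviour of Frobenius splittings on normal varieties.

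\emph{Step 1: extension.} Assume $\rho>0$, so that Theorem~A(i) gives $\LF_s\in H^0(M,\omega_M^{-1})$. I will use that the Gieseker compactification $\bar M$ is normal, and that the stable locus $M$ is open in $\bar M$ (after removing the strictly semistable points of high codimension, or working on the stable part of $\bar M$). The sheaf $\omega_{\bar M}^{[-1]}$ is reflexive of rank one. By Hartogs on the normal variety $\bar M$, a section extends across any closed subset of codimension $\ge2$. So the task is to control the divisorial part of $\bar M\setminus M$.

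For that boundary I would use the Bottacin tensor, which is defined on all of $\bar M$ through the deformation complex of semistable sheaves. The presentation of Section~\ref{sec:transfer} with the Euler frame $\Omega$ then shows that $\theta_s^{[\rho]}\wedge\Omega$ has no pole along a boundary divisor $E$. Concretely, the frame-independence of Remark~\ref{rem:frame-indep} shows that at a generic point of $E$ one can choose members and an Euler frame that degenerate regularly, so the order of $\LF_s$ along $E$ is $\ge0$.

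In the instanton case the transverse directions involve the framing along $L$. There I would show regularity along $E$ by the trace formula itself: if $\Tr_U(s^{p-1})$ is constant, then $\Tr(\LF_s^{p-1})$ is a constant, and a pole of $\LF_s$ along $E$ would force a pole of the trace. I expect this argument, including the converse failure of the extension when the trace is nonconstant, to be the main obstacle. I would first attempt it by computing the order along $E$ through a one-parameter degeneration of the filtered degeneracy complex.

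\emph{Step 2: splitting equivalence.} The trace map $\Tr:F_*\omega_{\bar M}^{[1-p]}\to\cO_{\bar M}$ is defined on the normal variety and agrees with the smooth one on $M$. Since $\bar M$ is projective and normal, $\Tr(\bar\LF_s^{p-1})$ is a global regular function, hence constant. It restricts on $M$ to $\Tr(\LF_s^{p-1})=\Tr_X(s)^\rho u$.

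Therefore $\bar\LF_s$ splits $\bar M$ if and only if this constant is nonzero. Here splitting of a normal variety is detected on the smooth locus, because $\cO_{\bar M}=j_*\cO_M$ in codimension $2$ and the boundary divisors are controlled by Step 1. The constant is nonzero if and only if $\Tr_X(s)\ne0$, which holds if and only if $s^{p-1}$ splits $X$ (or $X\setminus L$), by Theorem~A(ii). When the extension fails, $\bar\LF_s$ is not a regular section, so neither side holds, since the trace is nonconstant in that case.

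\emph{Step 3: Gorenstein case.} If $\bar M$ is Gorenstein, then $\omega_{\bar M}$ is invertible and $\omega_{\bar M}^{[-1]}=\omega_{\bar M}^{-1}$, which gives the last sentence.
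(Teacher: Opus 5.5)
Your Steps~2 and~3 match the paper's proof of Theorem~\ref{thm:g-split}. On the normal variety $\bar M$ the power $\bar\LF_s^{\,p-1}$ defines a map $\varphi\colon F_*\cO_{\bar M}\to\cO_{\bar M}$. The regular function $\varphi(1)$ on the projective $\bar M$ equals $\Tr_X(s)^{\rho}u^{p-1}$ on $M$, and reducedness propagates $\varphi\circ F=\mathrm{id}$ from a dense open.

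The gap is Step~1 in the sheaf and principal bundle cases, which is where Theorem~B has its content. The strictly semistable locus can be a divisor (the locus of $I_x\oplus I_y$ in $\bar M(2,0,2)$ on $\mathbb P^2$), and you give no mechanism excluding a pole along it.
\begin{itemize}
\item Bottacin's tensor is not available ``on all of $\bar M$ through the deformation complex''. At a polystable $E$ the local model of $\bar M$ is a quotient of the Kuranishi space in $\Ext^1(E,E)$ by $\Aut E$, which is larger than the scalars, so $\Ext^1(E,E)$ is not the tangent space there.
\item You say nothing about the Euler fields at the boundary.
\item Remark~\ref{rem:frame-indep} only compares two frames on $M$ up to a regular unit. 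It neither produces members and frames at a boundary point nor controls the order of $\LF_s$ along a boundary divisor.
\end{itemize}

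The missing argument is Lemma~\ref{lem:g-regular}, which works upstairs on the chain space $\cI$ with semistable tops allowed.
\begin{itemize}
\item Every polystable sheaf is the top of a chain $F(D')\to F=E^{\vee\vee}\to E$ (Lemma~\ref{lem:g-tops}; Hecke chains for $M_G$, Lemma~\ref{lem:g-tops-G}). Hence $\mu\colon\cI\to\bar M$ reaches every codimension-one point.
\item On $\cI$ one has $\mu^*\LF_s=f\cdot d\mu(\partial_{[\mathrm{leaf}]}\wedge\partial_{[\mathrm{trans}]})$, with $f=\prod_\nu N(g_\nu)\prod_j c_j\cdot u$ polynomial in the chain coordinates.
\item Lemma~\ref{lem:pullback-pole}, via $\operatorname{ord}_{H'}\mu^*\sigma=e\cdot\operatorname{ord}_H\sigma$, converts regularity of the pullback into regularity at the generic point of each boundary divisor.
\end{itemize}
Only then does Hartogs finish.

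Your instanton argument rests on a false principle: a pole of a section does not force a pole of the trace of its $(p-1)$-st power. For $p=3$ in one variable, $\sigma=u^{-1}(1+u^2)\,\partial_u$ has a pole at $u=0$. Yet $\sigma^2=(u^{-2}+2+u^2)\,\partial_u^{\otimes 2}$ has trace identically $1$, since only the $u^2$ term survives the Cartier extraction. So constancy of $\Tr_U(s^{p-1})$ does not by itself yield the extension. Your inference ``extension fails $\Rightarrow$ trace nonconstant'', used for the clause that both sides fail, is therefore unsupported.

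The paper argues the other way. For instantons it takes $\bar M$ to be the smooth framed Quot scheme, so there is no singular boundary to analyse (Remark~\ref{rem:g-instantons}). If $\Tr_U(s^{p-1})$ is nonconstant, then $\Tr(\LF_s^{p-1})$ is nonconstant on $M$, whereas any regular extension to a projective compactification would have constant trace.
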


For instantons $\bar M$ is the framed Quot scheme, which is smooth, and the extension is immediate when $\Tr_U(s^{p-1})$ is constant (Remark~\ref{rem:g-instantons}); the content of Theorem~B lies in the sheaf and principal bundle cases, where $\bar M$ has a singular strictly semistable locus.  Since $\bar M$ is projective and Frobenius split, Kodaira-type cohomology vanishing holds in characteristic $p$ \cite{BKu}, and in characteristic zero one obtains the following statement.

\begin{theoremC}\label{thm:vanishing}
In characteristic zero, let $X$ be \ding{192}\,a $K3$ surface or an abelian surface, or \ding{193}\,a rational surface or a ruled surface admitting $s\in H^0(X,\omega_X^{-1})$ with $Z(s)$ smooth.  If $\rho>0$ the Gieseker compactifications of $M(r,c_1,c_2)$, $M_G$, and $M_{r,k}(X,L)$ satisfy
\[
 H^i(\bar M,\,\omega_{\bar M}\otimes L)=0,\qquad i>0,\ L\ \text{ample}.
\]
\end{theoremC}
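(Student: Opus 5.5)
The plan is the standard spreading-out and reduction-mod-$p$ argument, with Theorem~B supplying the Frobenius splitting of $\bar M$ modulo $p$. We then use the Mehta--Ramanathan/Brion--Kumar vanishing for Frobenius split projective varieties and lift it back to characteristic zero by semicontinuity.

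\emph{Step 1: spreading out.} Choose a finitely generated $\mathbb Z$-algebra $R\subset\mathbb C$ over which all the data are defined: $X$, the section $s$, the ample $L$ on $\bar M$, and the moduli problem. Also take $G$ to be a split reductive group scheme and include the framing line for instantons. After inverting finitely many primes, we may assume the following. The family $\mathcal X\to\Spec R$ is smooth projective and $s$ extends to a section of $\omega^{-1}_{\mathcal X/R}$, with $Z(s)$ smooth over $R$ in case \ding{193}. The relative Gieseker compactification $\bar{\mathcal M}\to\Spec R$ exists, is flat, and commutes with base change to closed fibres; this is needed since GIT quotients commute with base change for reductive groups in the sense of Seshadri. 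The standing hypothesis for $G$ also holds fibrewise. Finally, $\omega_{\bar{\mathcal M}/R}$ (or its reflexive version) and $L$ commute with base change, and $\rho$ is constant in the fibres. This last point holds because $\rho=\frac12\operatorname{rk}\theta_s$ at generic data by Remark~\ref{rem:rho-open}, and ranks are generically constant.

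\emph{Step 2: the splitting criterion on closed fibres.} By Theorem~B it suffices to show that $s_{\mathfrak m}^{p-1}$ Frobenius splits $X_{\mathfrak m}$ (or $X_{\mathfrak m}\setminus L$) for a Zariski dense set of closed points $\mathfrak m$. Equivalently, $\Tr_{X_{\mathfrak m}}(s)\neq0$. In case \ding{192}, for $K3$ and abelian surfaces $s$ is a unit and $\Tr(s)$ is the Hasse invariant of $X_{\mathfrak m}$. Ordinary reduction on a dense set of primes is known for abelian surfaces (Ogus, after Katz), and also for $K3$ surfaces by Bogomolov--Zarhin/Joshi--Rajan; after shrinking $\Spec R$ we may further assume a dense set of ordinary closed points. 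In case \ding{193}, $D=Z(s)$ is a smooth curve in $|{-K_X}|$ on a rational or ruled surface. Its components are elliptic curves or are rational. For rational components the trace is automatically nonzero. For elliptic components $\mathsf t(s)$ is the Hasse invariant of $D_{\mathfrak m}$, and ordinary reduction of elliptic curves holds at a density-one set of primes (Serre, or Elkies in the CM-free case). One should check that the splitting criterion only sees $\Tr_X(s)$ and not finer data; the introduction asserts exactly this via the identity $\Tr(\LF_s^{p-1})=\Tr_X(s)^\rho u$. Since $\rho>0$, Theorem~B then gives that $\bar M_{\mathfrak m}$ is Frobenius split, and for instantons that $\Tr_U(s^{p-1})$ is constant.

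\emph{Step 3: vanishing and lift.} On a projective Frobenius split variety $Y$ we have $H^i(Y,\omega_Y\otimes L)=0$ for $i>0$ and $L$ ample \cite[Thm.~1.2.9]{BKu}, using the dualizing sheaf. For singular $\bar M$ this is the form proven via the Frobenius dual of the splitting, together with Serre duality on Cohen--Macaulay-free statements, so no Gorenstein hypothesis is needed. Applying it on each $\bar M_{\mathfrak m}$ and using upper semicontinuity of $h^i$ in the flat family, with the base change property of $\omega_{\bar{\mathcal M}/R}$ fixed in Step 1, we obtain vanishing on the generic fibre and hence on $\bar M_{\mathbb C}$. \textbf{The main obstacle} is Step 1's base change claim for the relative dualizing sheaf of the possibly singular $\bar{\mathcal M}$. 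I would handle it by shrinking $\Spec R$ so that $\omega_{\bar{\mathcal M}/R}=\mathcal Ext^{-\dim}(\mathcal O,\omega^\bullet)$ is flat and its formation commutes with base change. This follows by generic flatness of the relevant coherent cohomology sheaves of the relative dualizing complex.
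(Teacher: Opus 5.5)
\textbf{The instanton case does not go through.} For $M_{r,k}(X,L)$ the weight is a section of $\omega_X^{-1}(-2L)$, not of $\omega_X^{-1}$. The splitting criterion in Theorem~B is that $s^{p-1}$ split $U=X\setminus L$, i.e.\ that $\Tr_U(s^{p-1})$ be a nonzero constant. Moreover, constancy of $\Tr_U(s^{p-1})$ is a \emph{hypothesis} of the extension in Theorem~B, not a conclusion, so your sentence ``Theorem~B then gives \dots\ that $\Tr_U(s^{p-1})$ is constant'' reverses the logic. Your Step~2 only checks $\Tr_X(s)\neq0$ for $s\in H^0(\omega_X^{-1})$, which says nothing about the instanton weight.

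In fact the criterion fails in the basic cases:
\begin{itemize}
\item On a $K3$ or abelian surface with $L\neq0$ one has $H^0(\omega_X^{-1}(-2L))=H^0(\cO_X(-2L))=0$, so there is no weight at all.
\item On $(\mathbb P^2,\text{line})$, Corollary~\ref{cor:inst-adhm} gives $\Tr_U(s^{p-1})=0$ for every weight.
\item Proposition~\ref{prop:inst-toric} excludes every toric pair with $L\neq0$.
\end{itemize}
So reduction mod $p$ never produces a split $\bar M_{\mathfrak m}$ here, and your argument proves Theorem~C only for $M(r,c_1,c_2)$ and $M_G$. The paper does not route instantons through Frobenius splitting. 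It takes $\bar M$ to be the framed Quot scheme, asserts that this is smooth and projective, and applies Kodaira vanishing directly in characteristic zero. You would need that input, suitably justified, or some substitute to close this case.

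\textbf{Sheaves and principal bundles.} Here your route is the paper's:
\begin{itemize}
\item spread out;
\item choose a closed point with $\Tr_X(s)\neq0$, via Bogomolov--Zarhin, Ogus, or Serre's ordinary-reduction theorem for $D$;
\item split $\bar M_{\mathfrak m}$ by Theorem~B;
\item apply vanishing for Frobenius split projective varieties;
\item lift by upper semicontinuity.
\end{itemize}
Your care about base change of the relative dualizing sheaf, and about working over a finitely generated $\mathbb Z$-algebra rather than a number-field model, goes beyond what the paper writes and is to your credit.

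Two small corrections:
\begin{itemize}
\item A smooth $D\in|-K_X|$ has $\omega_D\cong\cO_D$ by adjunction. Every component is therefore elliptic, and the ``rational components'' case never occurs.
\item For singular split $Y$, the vanishing $H^i(Y,\omega_Y\otimes L)=0$ is not a Serre duality statement. Dualize the split inclusion $\cO_Y\to F_*\cO_Y$ to a split surjection $F_*\omega_Y\to\omega_Y$. By the projection formula this embeds $H^i(\omega_Y\otimes L)$ into $H^i(\omega_Y\otimes L^{p^e})$, which is zero for $e\gg0$.
\end{itemize}
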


For $r=1$ and $c_1=0$ the moduli space $M(r,c_1,c_2)$ is $\operatorname{Hilb}^{c_2}(X)$, and Theorem~A refines \cite{KT} on projective Poisson surfaces: it produces the splitting explicitly and gives the converse.

The restriction $\rho>0$ marks the range of the leafrian method.  It does not claim that the vanishing fails when $\rho=0$.  When the leaf is empty no leafrian is formed; the assembly is the deformation volume $\eta_{\mathrm{can}}$ of Proposition~\ref{prop:empty}, whose existence is a geometric condition on $M$ studied in Section~\ref{sec:principle}.  Empty leaf is an isolated numerical condition.  Once $r$ and $c_1$ are fixed, the equality $d=c_0$ holds only for finitely many $c_2$ (and only for special $(r,k)$ in the instanton case), since $d$ grows with $c_2$ while $c_0$ is bounded at generic data.  Thus the generic numerical type has $\rho>0$, and Theorem~C covers almost all cases.  The presentation theory is verified over surfaces in Sections~\ref{sec:moduli}--\ref{sec:inst}.

\begin{remark}[Reading off $\rho$ from $M$]\label{rem:read-rho}
When the general object is not locally free and its non-locally free points realize the leaf, those points are the members and
\[
 \rho=\operatorname{length}(E^{\vee\vee}/E)=\operatorname{ch}_2(E^{\vee\vee})-\operatorname{ch}_2(E),
\]
as in Lemma~\ref{lem:hd-members}.  The equality holds at member-saturated objects; at a stable object one has $\operatorname{length}(E^{\vee\vee}/E)=\sum_\nu\ell_\nu\le\rho$, with strict inequality in general (Section~\ref{sub:master}).  This includes Hilbert schemes and framed sheaves with instanton points.  For Hilbert schemes $\rho$ counts the points of $Z$, so $\rho=0$ exactly when the general object is locally free.  In general $\rho=0$ is the empty leaf of the preceding paragraph, and a locally free general object does not force $\rho=0$, since the units may come from special objects or from a master chain.  One has $2\rho=\operatorname{rk}\theta_s$ at generic data.
\end{remark}

\medskip\noindent\emph{Roadmap.}  Section~\ref{sec:prelim} records the splitting criterion and the standing conventions.  Section~\ref{sec:trans} computes the block coefficient.  Section~\ref{sec:hat} constructs the transferred polyvector and identifies it with Bottacin's tensor.  Section~\ref{sec:principle} defines the leafrian.  Section~\ref{sec:transfer} builds presentations, and Section~\ref{sub:degcomplex} proves the trace formula by filtered degeneracy complexes.  Section~\ref{sec:moduli} is the model case, stable sheaves; Sections~\ref{sec:others} and~\ref{sec:inst} treat principal bundles and instantons.  Section~\ref{sec:cfp} is the Poisson enrichment and the Torelli theorem.  Section~\ref{sec:gieseker} extends the leafrian to the Gieseker compactification and yields Theorem~C.  Appendix~\ref{app:calc} collects the elementary transformation calculus.

\section{Preliminaries}\label{sec:prelim}

\subsection{Frobenius splitting}

\begin{definition}\label{def:split}
A smooth variety $Y$ over $k$ is \emph{Frobenius split} if $\cO_Y\hookrightarrow F_*\cO_Y$ admits an $\cO_Y$-linear retraction.  A splitting is given by $\varphi\in H^0(Y,\omega_Y^{1-p})$ with $\Tr(\varphi)=1$, where $\Tr$ is the trace of Frobenius.  A subscheme $Z\subset Y$ is \emph{compatibly split} if the splitting restricts to $Z$.

For $\sigma\in H^0(Y,\omega_Y^{-1})$ we write $\varphi_\sigma:=\sigma^{p-1}$.  We say that $\sigma$ \emph{splits} $Y$ when $\Tr(\varphi_\sigma)$ is a nonzero constant; a rescaling of $\sigma$ then normalises the trace to $1$.  If $s\in H^0(X,\omega_X^{-1})$, we write
\[
 \operatorname{Tr}_X(s)\,:=\,\operatorname{Tr}(s^{p-1})\in H^0(X,\cO_X)
\]
for the Frobenius trace of $s^{p-1}$.  When $X$ is complete this is a constant.
\end{definition}

\begin{theorem}[\mbox{\cite[Thm.~1.3.8]{BKu}}]\label{thm:bk}
Let $Y$ be a smooth variety and $\varphi\in H^0(Y,\omega_Y^{1-p})$.  Via the canonical identification, $\varphi$ splits $Y$ if and only if at every closed point $x\in Y$, in local coordinates $t_1,\dots,t_d$, the monomial $(t_1\cdots t_d)^{p-1}$ occurs with coefficient $1$ in the local expansion of $\varphi$ (equivalently, $\Tr(\varphi)=1$ as a function).  When $\Tr(\varphi)$ is a constant function (for instance when $Y$ is complete) it suffices that the coefficient is $1$ at one closed point.  If $\sigma\in H^0(Y,\omega_Y^{-1})$ and $\Tr(\sigma^{p-1})$ is a nonzero constant $c$, a rescaling $\sigma\mapsto\lambda\sigma$ with $\lambda^{p-1}c=1$ yields a splitting; $k$ being algebraically closed, such a $\lambda$ exists.
\end{theorem}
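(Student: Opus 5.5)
The statement is \cite[Thm.~1.3.8]{BKu}, and I would prove it by reducing everything to the local structure of the Frobenius trace on $\omega^{1-p}$. \textbf{Step 1: identify $\varphi$ with an $\cO_Y$-linear map.} Use the canonical isomorphism $F_*(\omega_Y^{1-p})\cong\mathcal{H}om_{\cO_Y}(F_*\cO_Y,\cO_Y)$, given by duality for the finite flat morphism $F$, with $F^!\omega_Y=\omega_Y^{p}$. Under it, $\varphi$ corresponds to the map $f\mapsto \Tr(f\varphi)$, where $\Tr\colon F_*\omega_Y^{1-p}\to\cO_Y$ is the trace. Then $\varphi$ splits $Y$ exactly when this map sends $1$ to $1$, that is, when $\Tr(\varphi)=1$ as a function.

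\textbf{Step 2: compute the trace in local coordinates.} At a closed point $x$, pick coordinates $t_1,\dots,t_d$, so that $\widehat{\cO}_{Y,x}=k[[t_1,\dots,t_d]]$. Trivialize $\omega_Y^{1-p}$ by $(dt_1\wedge\cdots\wedge dt_d)^{1-p}$. The Cartier operator sends $t^{a}\,dt$ to $t^{(a+1-p\mathbf 1)/p}\,dt$ when every $a_i\equiv p-1 \pmod p$, and to $0$ otherwise. Hence, writing $\varphi=\sum_a c_a t^a$, we get
\[
\Tr(\varphi)=\sum_{b} c_{pb+(p-1)\mathbf 1}^{1/p}\, t^{b}.
\]
Evaluating at $x$ gives $\Tr(\varphi)(x)=c_{(p-1)\mathbf 1}^{1/p}$. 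Since $k$ is perfect, $\Tr(\varphi)(x)=1$ if and only if the coefficient of $(t_1\cdots t_d)^{p-1}$ is $1$.

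Requiring $\Tr(\varphi)=1$ as a function amounts to requiring $\Tr(\varphi)(x)=1$ at every closed point $x$; this holds because $Y$ is reduced and $k$ is algebraically closed. That gives the pointwise criterion.

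\textbf{Step 3: the constant case.} If $\Tr(\varphi)$ is constant, for instance because $H^0(Y,\cO_Y)=k$ when $Y$ is complete and connected, then its value at one closed point determines it. So checking the coefficient at a single point suffices.

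\textbf{Step 4: rescaling.} For $\sigma\in H^0(Y,\omega_Y^{-1})$, the map $\Tr$ is $p^{-1}$-linear, that is $\Tr(g^p\psi)=g\Tr(\psi)$. Therefore
\[
\Tr\bigl((\lambda\sigma)^{p-1}\bigr)=\lambda^{(p-1)/p}\,c .
\]
Choosing $\lambda$ with $\lambda^{p-1}c^{p}=1$ normalizes this to $1$, as the convention matching the statement requires; equivalently, absorb the Frobenius twist into the coefficient. Such a $\lambda$ exists because $k$ is algebraically closed.

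The only real subtlety is the exact normalization in Step 1, namely matching the duality isomorphism with the coefficient convention so that the constant is exactly $1$ rather than a $p$-th power of it. I would fix this by the explicit local computation in Step 2, which settles the sign and the constant once and for all.
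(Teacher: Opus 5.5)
Your argument is the standard Brion--Kumar proof: duality for the finite flat Frobenius, plus the local Cartier-operator formula of their Lemma~1.3.6. The paper gives no proof of its own and simply cites \cite[Thm.~1.3.8]{BKu}, so there is no alternative route to compare. Steps~1--3 are fine. One small slip in Step~1: duality gives $F^!\omega_Y\cong\omega_Y$, while $F^*\omega_Y\cong\omega_Y^{p}$. This yields $F^!\cO_Y\cong\omega_Y^{1-p}$, which is the isomorphism you want. Step~2 also shows that the coefficient of $(t_1\cdots t_d)^{p-1}$ is $\Tr(\varphi)(x)^p$, so it does not depend on the chart. That is worth stating, since the statement does not say whether the coordinates are some chart or any chart.

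The point to clean up is Step~4. With the absolute Frobenius you use throughout, $\Tr$ is $p^{-1}$-linear over $k$. Your correct computation $\Tr\bigl((\lambda\sigma)^{p-1}\bigr)=\lambda^{(p-1)/p}c$ therefore shows that the statement's normalization $\lambda^{p-1}c=1$ gives trace $c^{(p-1)/p}$, which is not $1$ unless $c\in\mathbb F_p^\times$. The statement's normalization is correct for the $k$-linear trace of the relative Frobenius $Y\to Y^{(p)}$, where the coefficient equals $\Tr(\varphi)(x)$ itself rather than its $p$-th power. So the sentence ``as the convention matching the statement requires'' does not reconcile anything. Fix one convention and say explicitly that the exponent relation for $\lambda$ depends on it. What the paper actually uses later is convention-independent: the existence of some $\lambda$, by algebraic closedness, and the nonvanishing of the top coefficient.
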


\begin{remark}[Extension of a splitting from a dense open]\label{rem:nearsplit}
The criterion of Theorem~\ref{thm:bk} holds for smooth $Y$ only in its
stated form. For a reduced $Y$ the following consequence of
\cite[Ch.~1]{BKu} is enough. If $\varphi\in\Hom_{\cO_Y}(F_*\cO_Y,\cO_Y)$
satisfies $\varphi\circ F=\mathrm{id}$ on a dense open $U\subset Y$, then
$\varphi$ is a splitting of $Y$. Indeed $\varphi\circ F$ is multiplication
by $a=\varphi(F(1))$, the hypothesis gives $a|_U=1$, and $Y$ reduced with
$U$ dense forces $a=1$. Smoothness of $Y$ is not used in this step.
\end{remark}

\begin{remark}[One-chart principle]\label{rem:onechart}
Rescaling $\sigma$ by $\lambda$ rescales $\varphi_\sigma$ by $\lambda^{p-1}$.  The nonvanishing of the coefficient of $(t_1\cdots t_d)^{p-1}$ is independent of the chart.  Signs and normalizations in $\Pf$ are therefore immaterial.  The map $f\mapsto\lambda f$ scales $b_0$ by $\lambda^{p-1}\neq0$; only nonvanishing is at issue.
\end{remark}

\begin{theorem}[\mbox{\cite[Prop.~1.3.11]{BKu}}]\label{thm:compat}
If $Y$ is a smooth complete variety and $\sigma\in H^0(Y,\omega_Y^{-1})$ is such that $\sigma^{p-1}$ splits $Y$, then the zero scheme $Z(\sigma)$ is compatibly split; in particular $Z(\sigma)$ is reduced.
\end{theorem}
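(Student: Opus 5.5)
The plan is to argue locally along $Z:=Z(\sigma)$, using the defining property $\Tr(\sigma^{p-1})=1$ (after the rescaling of Theorem~\ref{thm:bk}) and the $\cO_Y$-linearity of the splitting map $\varphi:=\Tr(\sigma^{p-1}\cdot -)\colon F_*\cO_Y\to\cO_Y$. Compatible splitting of $Z$ means $\varphi(F_*\cI_Z)\subset\cI_Z$, where $\cI_Z=\sigma\cdot\omega_Y\subset\cO_Y$ is the ideal sheaf of the divisor $Z$. Locally $\cI_Z$ is generated by a single function $f$ with $\sigma=f\,\partial_{[d]}$. Since $Y$ is covered by such charts and the condition is local, it suffices to check the inclusion on an affine chart with coordinates $t_1,\dots,t_d$ around a closed point.

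On such a chart, $\varphi(g)=\Tr(f^{p-1}g\,\partial_{[d]}^{p-1})$, and the Cartier-type trace extracts the coefficients of the monomials $t^{\,p-1+p\alpha}$ and takes $p$-th roots. Take $g=fh$ with $h$ a local function. Then
\[
\varphi(fh)=\Tr(f^{p}h\,\partial_{[d]}^{p-1})=f\cdot\Tr(h\,\partial_{[d]}^{p-1}),
\]
by $p$-linearity of the trace, which is the identity $\Tr(a^p b)=a\Tr(b)$ and is exactly $\cO_Y$-linearity of $\varphi$ on $F_*\cO_Y$. Hence $\varphi(F_*\cI_Z)\subset \cI_Z$, and $\varphi$ descends to a map $F_*\cO_Z\to\cO_Z$. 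Because $\varphi(1)=1$ on $Y$, the induced map also sends $1\mapsto1$. It is therefore a splitting of $Z$, which gives compatible splitting.

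Reducedness then follows from the standard fact that a Frobenius split scheme is reduced. If $a\in\cO_Z$ is nilpotent, take $N$ with $a^{p^N}=0$. Applying the descended splitting $N$ times gives
\[
a=\bar\varphi^N(a^{p^N})=0.
\]
Completeness of $Y$ is used only to know that $\Tr(\sigma^{p-1})$ is a constant, so that one rescaling makes it identically $1$ and not merely $1$ at a point (Theorem~\ref{thm:bk}).

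The only delicate step is the identity $\Tr(f^p h\,\partial_{[d]}^{p-1})=f\Tr(h\,\partial_{[d]}^{p-1})$. I would verify it directly by expanding in the basis $t^\beta$, $0\le\beta_i\le p-1$, of $\cO_Y$ over $\cO_Y^p$: multiplying by $f^p$ only multiplies the $\cO_Y^p$-coefficients, and the trace takes their $p$-th roots. The rest is formal.
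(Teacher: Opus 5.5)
Your argument is correct. It is the standard proof behind \cite[Prop.~1.3.11]{BKu}, but it finishes differently from the paper. The paper does not prove Theorem~\ref{thm:compat} itself; it cites it, and its own argument for the same conclusion is Lemma~\ref{lem:loccompat}.

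Both arguments start from $h=fg$ and $hf^{p-1}=gf^p$. The paper then works at a closed point $x\in Z(\sigma)$ in the Frobenius box $\widehat\cO/(t_1^p,\dots,t_d^p)$, where $f^p\equiv f(0)^p=0$. That computation only shows that $\varphi_\sigma(h)$ vanishes at the closed points of $Z(\sigma)$, i.e.\ that it lies in the radical of $\cI_{Z(\sigma)}$. Passing from the radical to $\cI_{Z(\sigma)}=(f)$ would require knowing that $Z(\sigma)$ is reduced, which is the conclusion being proved.

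Your identity $\varphi(fh)=\Tr(f^ph\,\partial_{[d]}^{p-1})=f\cdot\Tr(h\,\partial_{[d]}^{p-1})$, which is the $\cO_Y$-linearity of $\varphi$ on $F_*\cO_Y$, holds in the whole local ring rather than only modulo $(t_1^p,\dots,t_d^p)$. It therefore gives the ideal-theoretic inclusion $\varphi(F_*\cI_{Z(\sigma)})\subset\cI_{Z(\sigma)}$ directly, so your route is the one that actually closes the argument. The reducedness step is the same in both versions: your $\bar\varphi^N(a^{p^N})=a$ is the paper's $a^p=0\Rightarrow a=0$, iterated.

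One small point concerns completeness. Under Definition~\ref{def:split}, ``$\sigma^{p-1}$ splits $Y$'' already means $\Tr(\sigma^{p-1})=1$, so no rescaling is needed and completeness is not used at all, as Lemma~\ref{lem:loccompat} also notes. In any case, a constant rescaling of $\sigma$ does not change $Z(\sigma)$.
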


\begin{lemma}[Local compatibility]\label{lem:loccompat}
Let $Y$ be smooth and $\sigma\in H^0(Y,\omega_Y^{-1})$ with $\sigma^{p-1}$ splitting $Y$ after a constant rescaling.  Then $Z(\sigma)$ is compatibly split, and in particular reduced.  Completeness of $Y$ is not used.
\end{lemma}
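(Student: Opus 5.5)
The plan is to reduce to a local statement and run the standard argument behind \cite[Prop.~1.3.11]{BKu}, which is local in nature. Write $\varphi=\lambda^{p-1}\sigma^{p-1}$ for the rescaled splitting. Since $\Tr(\varphi)=1$ as a function on $Y$, Remark~\ref{rem:nearsplit} identifies $\varphi$ with an honest retraction $F_*\cO_Y\to\cO_Y$. Compatible splitting of a closed subscheme $Z$ means $\varphi(F_*\cI_Z)\subset\cI_Z$. This condition is local on $Y$, so it suffices to check it on an affine open cover. Completeness never enters, because we work directly with the hypothesis that the trace is the constant $1$ rather than deducing it from properness.

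Locally on an affine chart $U$ with coordinates $t_1,\dots,t_d$, trivialize $\omega_Y^{-1}$ and write $\sigma=f\,\partial_{t_1}\wedge\cdots\wedge\partial_{t_d}$, so that $\cI_{Z(\sigma)}|_U=(f)$. Under the canonical identification $\Hom(F_*\cO_Y,\cO_Y)\cong F_*\omega_Y^{1-p}$, the map $\varphi$ is $g\mapsto \Tr(f^{p-1}g)$, up to the constant $\lambda^{p-1}$. The key point is the $p^{-1}$-linearity of the trace: for any $h$,
\[
\Tr(f^{p-1}\cdot fh)=\Tr(f^{p}h)=f\,\Tr(h).
\]
Hence $\varphi(F_*(f))\subset(f)$, and $Z(\sigma)$ is compatibly split on each chart. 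These local statements glue, since the ideal sheaf and $\varphi$ are global.

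For reducedness, recall that a compatibly split subscheme inherits a splitting $\cO_Z\to F_*\cO_Z$, and split schemes are reduced. Indeed, if $g^p=0$ in $\cO_Z$, then $g=\varphi(g^p)=0$, so $\cO_Z$ has no nilpotents.

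The only step needing care is the identification of $\varphi$ with the map $g\mapsto\Tr(f^{p-1}g)$ on a non-complete $Y$. This is the Cartier-operator description of the trace, and it is purely local in \cite[Ch.~1]{BKu}, so I expect no real obstacle there. The remaining task is simply to confirm that the rescaling constant $\lambda^{p-1}$ does not affect the ideal-preservation argument, which is clear since it is a unit.
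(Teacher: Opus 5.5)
Your proof is correct. It shares the paper's skeleton: trivialize locally as $\sigma=f\,\partial_{[d]}$, write a section of the ideal as $h=fg$, and use $hf^{p-1}=gf^{p}$. Your reducedness step is also the paper's.

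The two routes differ in how the $p$-th power $f^p$ is exploited. The paper passes to the Frobenius box $\widehat{\cO}/(t_1^p,\dots,t_d^p)$ at a point $x\in Z(\sigma)$. There $f^p\equiv f(x)^p=0$, so the top coefficient of $hf^{p-1}$ vanishes. This is a pointwise statement: it shows that $\varphi(h)$ vanishes at the closed points of $Z(\sigma)$.

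You instead use the $\cO_Y$-linearity of the trace, $\Tr(f^{p}h\,\partial_{[d]}^{p-1})=f\,\Tr(h\,\partial_{[d]}^{p-1})$. This gives the ideal-theoretic inclusion $\varphi(F_*(f))\subseteq(f)$ directly, which is literally what compatibility of the Cartier divisor $(f)$ requires.

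What each route buys:
\begin{itemize}
\item The pointwise computation, taken alone, only places $\varphi(h)$ in the ideal of $Z(\sigma)_{\mathrm{red}}$. To reach $(f)$ from it, you would have to run it at every point $y\in Y$, where $f^p\equiv f(y)^p$, and then use that $Y$ is reduced.
\item This distinction matters here, because reducedness of $Z(\sigma)$ is exactly what the lemma deduces from compatibility. Your version is the one that carries that logic without extra work.
\item The paper's version has the advantage of staying inside its top-coefficient calculus of Definition~\ref{def:top}.
\end{itemize}

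One small point: your appeal to Remark~\ref{rem:nearsplit} is unnecessary. By Definition~\ref{def:split}, the condition $\Tr(\varphi)=1$ already means that $\varphi$ is a retraction of $\cO_Y\to F_*\cO_Y$.
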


\begin{proof}
Locally $\sigma=f\,\partial_{[d]}$, so $Z(\sigma)$ is the Cartier divisor $(f)$.  A local function $h$ vanishing on $Z(\sigma)$ is $h=f\cdot g$.  At a point $x\in Z(\sigma)$ one has $f(0)=0$, and in the Frobenius box $\widehat{\cO}/(t_1^p,\dots,t_d^p)$ one has $h\,f^{p-1}=g\,f^p=0$ (there $f^p\equiv f(0)^p=0$).  Thus $\varphi_\sigma=\sigma^{p-1}$ carries $F_*\cI_{Z(\sigma)}$ into $\cI_{Z(\sigma)}$.  A compatibly split scheme is reduced: the composition $\cO\to F_*\cO\to\cO$ is the identity, the first arrow sending $a$ to $a^p$, so $a^p=0$ implies $a=0$.
\end{proof}

\begin{lemma}[Unit rigidity]\label{lem:unitrigid}
Let $Y$ be reduced and connected, and let $u$ be a regular function on $Y$ such that $u^{p-1}$ is a nonzero constant.  Then $u\in k^\times$.
\end{lemma}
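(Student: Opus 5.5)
We argue pointwise and then use connectedness. Put $c:=u^{p-1}\in k^\times$. At every closed point $y\in Y$ the value $u(y)\in k$ satisfies $u(y)^{p-1}=c$. The polynomial $T^{p-1}-c$ is separable over $k$, because its derivative $(p-1)T^{p-2}$ vanishes only at $T=0$, and $0$ is not a root since $c\neq0$. It therefore has exactly $p-1$ distinct roots $\zeta_1,\dots,\zeta_{p-1}\in k$, and these are $p-1$ distinct scalar multiples of one root.

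Next we factor over $\cO(Y)$:
\[
 0=u^{p-1}-c=\prod_{i=1}^{p-1}(u-\zeta_i).
\]
The closed subsets $Y_i:=Z(u-\zeta_i)$ cover $Y$, since every closed point has $u(y)$ equal to some $\zeta_i$. They are pairwise disjoint, since $\zeta_i\neq\zeta_j$ for $i\neq j$. So $Y$ is a finite disjoint union of the closed sets $Y_i$, which makes each $Y_i$ also open. Because $Y$ is connected, exactly one of them, say $Y_{i_0}$, equals $Y$ set-theoretically.

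Finally we pass from sets to functions. The function $u-\zeta_{i_0}$ vanishes at every point of $Y$, so it is nilpotent. Since $Y$ is reduced, $u=\zeta_{i_0}\in k^\times$.

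Alternatively, one can avoid points and use the idempotent decomposition coming from the coprime factors $u-\zeta_i$. Connectedness forces all but one idempotent to be $0$, which gives $u-\zeta_{i_0}=0$ directly when $Y$ is reduced.

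The only delicate step is the separability of $T^{p-1}-c$, which is what guarantees the pieces are disjoint. It depends on $p\nmid p-1$ and on $c\neq0$. The remaining steps are routine, and neither completeness nor smoothness of $Y$ is used.
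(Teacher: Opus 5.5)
Your proof is correct and follows the paper's argument: separability of $T^{p-1}-c$ makes the fibres of $u$ over the $p-1$ roots a finite partition of $Y$ into clopen sets, connectedness selects one root $\zeta_{i_0}$, and reducedness (which the paper uses only implicitly when it says $u$ \emph{is} the constant) upgrades the pointwise statement to $u=\zeta_{i_0}$. Your idempotent variant does not need reducedness at all: the factors $u-\zeta_i$ with $i\neq i_0$ are then units in $\cO(Y)$, so $\prod_i(u-\zeta_i)=0$ forces $u-\zeta_{i_0}=0$ directly.
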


\begin{proof}
Write $u^{p-1}=\lambda^{p-1}$ with $\lambda\in k^\times$.  The polynomial $T^{p-1}-1$ splits into distinct linear factors over $k$, so $u/\lambda$ takes values in the finite set $\mu_{p-1}$.  Each fibre of $u/\lambda$ is open and closed, and $Y$ is connected, so $u$ is the constant $\lambda\mu$ for some $\mu\in\mu_{p-1}$.
\end{proof}

\subsection{The top coefficient}

Everything takes place in the formal local ring $\widehat\cO=\widehat{\cO}_{Y,x}$ at one point $x$.  A \emph{slice coordinate system} is a regular system of parameters $t=(t_1,\dots,t_d)$ of $\widehat\cO$.

\begin{definition}[Top coefficient]\label{def:top}
For $f\in\widehat\cO$, the \emph{top coefficient} is
\[
 b_0(f)=[\,t^{(p-1)\mathbf 1}\,]\,f^{p-1},\qquad t^{(p-1)\mathbf 1}:=\prod_{i=1}^{d}t_i^{p-1},
\]
the socle coordinate of $f^{p-1}$ in the Frobenius box $\widehat\cO/(t_1^p,\dots,t_d^p)$.  For a section $\sigma=f\,\partial_{[d]}$ we write $b_0(\sigma):=b_0(f)$.
\end{definition}

\begin{lemma}[Socle identity; coordinate invariance of $b_0$]\label{lem:socle}
Let $A=k[t_1,\dots,t_d]/(t_1^p,\dots,t_d^p)$ and $P\in\GL_d(k)$.  The linear substitution $\Phi_P$ is a $k$-algebra automorphism of $A$ and
\[
 \Phi_P\big(t^{(p-1)\mathbf 1}\big)=\prod_{j=1}^d\ell_j^{p-1}=\det(P)^{p-1}\,t^{(p-1)\mathbf 1},\qquad \ell_j:=\Phi_P(t_j).
\]
Consequently, for a section the top coefficient is independent of the slice coordinate system up to a nonzero factor.  For a bare function the value can change; the invariance is a statement about sections.
\end{lemma}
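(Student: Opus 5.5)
The plan has three parts: show that $\Phi_P$ is a well-defined automorphism, compute the image of the socle monomial, and then derive the statement about sections.

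First, $\Phi_P$ is well defined and bijective. Let $\Phi_P$ be the $k$-algebra endomorphism of $k[t_1,\dots,t_d]$ sending $t_j\mapsto\ell_j=\sum_i P_{ij}t_i$. In characteristic $p$ the Frobenius is additive, so $\ell_j^p=\sum_i P_{ij}^p t_i^p$ lies in the ideal $(t_1^p,\dots,t_d^p)$. Hence $\Phi_P$ preserves this ideal and descends to $A$. The endomorphism $\Phi_{P^{-1}}$ is a two-sided inverse, so $\Phi_P$ is an automorphism.

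Second, we compute $\Phi_P(t^{(p-1)\mathbf 1})=\prod_j\ell_j^{p-1}$. I would argue by degree. The product is homogeneous of degree $d(p-1)$, and the only monomial of that degree in $A$ is $t^{(p-1)\mathbf 1}$, because each exponent is at most $p-1$. So the product equals $c(P)\,t^{(p-1)\mathbf 1}$ for some scalar $c(P)$.

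To identify $c(P)$ there are two routes.
\begin{enumerate}
\item Multiplicativity. The map $P\mapsto c(P)$ is multiplicative, since $\Phi_{PQ}=\Phi_Q\circ\Phi_P$ or its reverse, and either order gives the same scalar. It is also a polynomial in the entries of $P$. One can then check it on generators of $\GL_d$.
   \begin{itemize}
   \item Diagonal matrices give $\prod\lambda_j^{p-1}=\det^{p-1}$.
   \item Permutation matrices give $1=(\pm1)^{p-1}$, since $p-1$ is even.
   \item An elementary transvection $t_1\mapsto t_1+a t_2$ sends $t_1^{p-1}t_2^{p-1}$ to $(t_1+at_2)^{p-1}t_2^{p-1}\equiv t_1^{p-1}t_2^{p-1}$, because every other term contains $t_2^{\ge p}$.
   \end{itemize}
   Hence $c=\det^{p-1}$.
\item Top exterior powers. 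Alternatively, identify the socle coordinate with the top exterior power, via the identity $\prod_j\ell_j^{p-1}=\det(P)^{p-1}\prod_j t_j^{p-1}$ in $A$. I prefer the generator check; it is the main step.
\end{enumerate}

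Third, the consequence for sections. A section $\sigma=f\,\partial_{[d]}$ in coordinates $t$ becomes $f'\,\partial'_{[d]}$ under a change of regular parameters $t'$. Here $f'=f\cdot J$, where $J$ is the Jacobian determinant, a unit. Computing $b_0$ only involves $f^{p-1}$ modulo $(t^p)$. The nonlinear part of the coordinate change acts on the box $\widehat\cO/(t^p)$ compatibly with the linear part $P=d\Phi(0)$. I would reduce to the linear case by noting two facts:
\begin{itemize}
\item the socle of the box is one-dimensional and is preserved by any automorphism fixing the maximal ideal;
\item the Cartier trace of $f^{p-1}\partial_{[d]}^{p-1}$ is intrinsic, as in Theorem~\ref{thm:bk}.
\end{itemize}
Hence $b_0$ changes by $J(0)^{p-1}\det(P)^{\pm(p-1)}$, a nonzero factor.

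For a bare function $f$, with no compensating $\partial_{[d]}$, this factor is not accounted for. Indeed, under nonlinear changes the top coefficient of $f^{p-1}$ alone can change. I would exhibit a one-variable example to justify the final sentence of the lemma.
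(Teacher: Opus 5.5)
Your first two steps are correct and follow the paper's proof: additivity of Frobenius gives well-definedness, the degree count forces $\prod_j\ell_j^{p-1}=c(P)\,t^{(p-1)\mathbf 1}$, and $c$ is multiplicative. The paper finishes by calling $c$ an algebraic character of $\GL_d(k)$, hence a power of $\det$, and fixing the exponent on diagonal matrices; it uses permutations and transvections only as a check. Your generator check turns that check into the proof and avoids the classification of characters, so polynomiality of $c$ is no longer needed. Your route (2) just restates the identity, so it is not a second argument.

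The third step does not work as written. Your first fact cannot reduce a nonlinear change to its linear part. The box is intrinsic: for any regular system of parameters $(t_1^p,\dots,t_d^p)=\mathfrak m^{[p]}$, since $(\sum_i r_it_i)^p=\sum_i r_i^pt_i^p$. Its one-dimensional socle is intrinsic too. But $b_0$ is the projection onto that socle along the other monomials of the chosen chart, and a nonlinear substitution feeds non-socle monomials into the socle. For example, take $d=1$, $p=3$ and $t=u+u^2$, whose linear part is the identity; then $[t^2](1+t)^2=1$ while $[u^2](1+u+u^2)^2=0$.

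The frame also does not compensate through the constant $J(0)^{p-1}$. It compensates through the nonconstant unit $J^{-(p-1)}$, with $J=\det(\partial t/\partial u)$, and multiplying by a nonconstant unit does not act on the socle coordinate by a scalar. In the example, $\sigma=(1+t)\partial_t=(1+u+u^2)(1+2u)^{-1}\partial_u=(1+2u)\partial_u$, which has top coefficient $1$ in both charts. So your factor $J(0)^{p-1}\det(P)^{\pm(p-1)}$ is not derived by your argument. Likewise, the drop from $1$ to $0$ for the bare function is not explained by a missing constant factor.

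What does work is your second fact used alone, which is the paper's route. Write $f^{p-1}=\sum_\beta a_\beta^p t^\beta$ over $\widehat\cO^{\,p}$, with $0\le\beta_i\le p-1$. Then $\Tr(\sigma^{p-1})=a_{(p-1)\mathbf 1}$. Since $a_\beta^p\equiv a_\beta(0)^p$ modulo $\mathfrak m^{[p]}$, you get $b_0(\sigma)=\Tr(\sigma^{p-1})(x)^p$, which is coordinate-free. So $b_0$ of a section is in fact exactly invariant. Your linear identity is the explicit instance for linear changes: with $t=Pu$, the frame factor $\det(P)^{-(p-1)}$ cancels $\det(P)^{p-1}$. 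The example above also supplies the one-variable example you still owe for the last sentence; it is the one the paper uses.
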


\begin{proof}
Since $(\Phi_P(t_i))^p=\sum_j a_{ij}^p t_j^p=0$ in $A$, the substitution is well defined.  The map $P\mapsto\Phi_P(t^{(p-1)\mathbf 1})/t^{(p-1)\mathbf 1}$ and the map $P\mapsto\det(P)^{p-1}$ are both algebraic characters of $\GL_d(k)$, hence both are powers of $\det$.  Comparing on diagonal matrices gives the power $p-1$, so the identity holds.  As a check, a permutation fixes $t^{(p-1)\mathbf 1}$ and $(\pm1)^{p-1}=1$; for a transvection $I+E_{ij}$ only the term of $(t_i+t_j)^{p-1}t_j^{p-1}$ with $t_i$-exponent $p-1$ survives, since exponents at least $p$ die in $A$, giving $t^{(p-1)\mathbf 1}$.  A linear coordinate change acts on the socle generator by the unit $\det(J)^{p-1}$: the section $\sigma=f\,\partial_{[d]}$ has local expression $(f\circ\Psi)\det(J)^{-1}\partial^{u}_{[d]}$, and the frame factor $\det(J)^{-(p-1)}$ compensates the change of the socle extraction.  For a bare function the value can change: for $d=1$, $p=3$, $f=1+t$ and $t=u+u^2$ the values are $1$ and $0$.  A general coordinate change need not preserve the Frobenius box exactly, and only the nonvanishing of $b_0$ is used below; that nonvanishing is invariant because it is the nonvanishing of the intrinsic trace $\Tr(\sigma^{p-1})$.
\end{proof}

\begin{lemma}[Invariance]\label{lem:inv}
The nonvanishing of $b_0(\sigma)$ does not depend on the choice of slice coordinates.
\end{lemma}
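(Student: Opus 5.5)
The plan is to identify the nonvanishing of $b_0(\sigma)$ with an intrinsic, coordinate-free condition, namely the nonvanishing at $x$ of the Frobenius trace $\Tr(\sigma^{p-1})$. Once that identification is in place, the lemma follows at once, because the trace map $\Tr\colon F_*\omega_Y^{1-p}\to\cO_Y$ is defined without coordinates (it is the Cartier operator, i.e.\ Grothendieck duality for $F$).

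First I fix a slice coordinate system $t=(t_1,\dots,t_d)$ of $\widehat\cO$ and write $\sigma=f\,\partial_{[d]}$, so that $\sigma^{p-1}=f^{p-1}\,\partial_{[d]}^{\,p-1}$. I then use the standard local description of the Cartier operator from \cite[Ch.~1]{BKu}. Under $\omega^{1-p}\cong\mathcal{H}om(F_*\cO,\cO)$, the element $g\,\partial_{[d]}^{\,p-1}$ acts on $t^{\mathbf a}$, for $0\le a_i\le p-1$, by
\[
t^{\mathbf a}\longmapsto\bigl(\text{coefficient of }t^{(p-1)\mathbf 1-\mathbf a}\text{ in the }p\text{-basis expansion of }g\bigr)^{1/p}.
\]
In particular, $\Tr(\sigma^{p-1})$ evaluated at $x$ equals $b_0(f)^{1/p}$ up to the $p$-basis reduction. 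Here one writes $f^{p-1}=\sum_{\mathbf a}h_{\mathbf a}^p\,t^{\mathbf a}$ with $0\le a_i\le p-1$, and the value at $x$ picks out $h_{(p-1)\mathbf 1}(0)^p$. This value equals the coefficient of $t^{(p-1)\mathbf 1}$ in the image of $f^{p-1}$ in the Frobenius box $\widehat\cO/(t_1^p,\dots,t_d^p)$, because the terms $h_{\mathbf a}^p t^{\mathbf a}$ with $h_{\mathbf a}(0)=0$ land in $(t^p)$. Hence $b_0(\sigma)\neq0$ if and only if $\Tr(\sigma^{p-1})(x)\neq0$.

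The main point requiring care is that the Cartier-operator formula is genuinely intrinsic. Any two regular systems of parameters give the same map $F_*\omega^{1-p}\to\cO$ under the canonical identification, and this is exactly the content of \cite[Thm.~1.3.8]{BKu} together with Theorem~\ref{thm:bk} as stated. The change of frame $\partial_{[d]}\mapsto\det(J)^{-1}\partial^{u}_{[d]}$ multiplies $f$ by the unit $\det(J)$, and a unit does not affect vanishing at $x$.

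For the linear part of a coordinate change I can also cross-check directly with Lemma~\ref{lem:socle}. There the socle generator scales by $\det(P)^{p-1}$, which is compensated by the frame factor, so $b_0$ changes by a nonzero scalar. Only the nonlinear part of the coordinate change needs the intrinsic trace argument.
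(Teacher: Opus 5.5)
Your proposal is correct and takes the paper's route. The paper proves this lemma by pointing to the last assertion of Lemma~\ref{lem:socle}: the nonvanishing of $b_0(\sigma)$ is the nonvanishing of the intrinsic trace $\Tr(\sigma^{p-1})$ at $x$, which is your identification $b_0(\sigma)=\bigl(\Tr(\sigma^{p-1})(x)\bigr)^p$ via the Cartier-operator description.

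One small caution concerns your aside that multiplying $f$ by the unit $\det(J)$ ``does not affect vanishing''. Read as a statement about $b_0$ of a function, it is false: for $d=1$, $p=3$ one has $b_0(1)=0$ but $b_0(1+t)\neq0$. You do not need it, since the coordinate-free trace already absorbs the frame change.
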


\begin{proof}
This is the last assertion of Lemma~\ref{lem:socle}.
\end{proof}

\subsection{Parameters}

A member is a fat point of length $\ell$; its deformation module is $A^{\oplus n}$ of dimension $n\ell$ and it carries $n\ell$ leaf coordinates.  A member of length $\ell$ splits into $\ell$ units, and $\rho=\sum_\lambda\ell_\lambda$ counts units at a member-saturated datum.  On a surface $\rho$ is also half the rank of Bottacin's tensor $\theta_s$ at generic data for generic $s\in H^0(X,\omega_X^{-1})$ (Remark~\ref{rem:rho-open}).  The number $\rho$ depends on $M$ alone.  The budget is
\[
 n\rho+c=d.
\]
For $n=2$ the deformation module is $A\oplus A$ of dimension $2\ell$, the budget is $2\rho+c=d$, and this is the case used in the moduli applications.

$\rho$ counts leaf \emph{units}, not blocks: a unit has $n$ leaf coordinates, and for $n=2$ a unit is one leaf pair.  A block of length $\ell$ splits into $\ell$ units by Definition~\ref{def:tpoly}, so the leaf polyvector has degree $n\rho$.  When the members realize the leaf, $n\rho$ is the generic rank of the leaf factor.  For $n=2$ we write $[\rho]$ for the Pfaffian normalization of a wedge power: $\pi^{[\rho]}$ is the section whose local expression in a leaf frame is $\Pf(A)\,e_1\wedge\cdots\wedge e_{2\rho}$, where $A$ is the matrix of $\pi$ in the leaf frame.  The wedge of $\rho$ copies of $\pi$ equals $\rho!$ times $\pi^{[\rho]}$ and may vanish when $\rho\ge p$.

The extraction of $t^{(p-1)\mathbf 1}$ from $f^{p-1}$ reads a coefficient in the Frobenius box.  It is unaffected by the vanishing of the constant term of $f$.  Splitting statements therefore need no hypothesis on the rank of the leaf factor at the evaluation point.

\subsection{Pairings and powers}

Two pairings occur.  The trace pairing of a block is a form on the deformation module $\Def_Z$, which is tangent data.  Bottacin's pairing $\beta_s$ is a pairing on the cotangent space $T^*M$.  Serre duality identifies $\Ext^1(E,E\ot\omega)$ with $\Ext^1(E,E)^\vee$, so the two descriptions agree.  The transpose of Bottacin's bivector across this duality is the block pairing, and the leaf factor of the leafrian is the Pfaffian of that matrix in the deformation basis.  The inverse of the bivector does not occur.

The leaf factor is the Pfaffian of the block-diagonal pairing in the deformation bases.  The local expression of the leafrian is
\[
 f=\prod_\nu N(g_\nu)\cdot\prod_j c_j ,
\]
with each norm factor $N(g_\nu)$ occurring to the first power.  The inverse power has poles along $N(g_\nu)=0$ and does not occur.

\section{The block and the unit top coefficient}\label{sec:trans}

This section computes the block coefficient and the unit top coefficient for codimension two, the case of the moduli applications.  The transferred polyvector of Definition~\ref{def:tpoly} and the leaf factor of Definition~\ref{def:leaffactor} are defined for general $n$; only the block calculus below is specialized to $n=2$, and the global statements below are made for surfaces.

\subsection{Fat points on a branch}

\begin{definition}[Fat point on a branch]\label{def:fat}
A \emph{fat point} is a finite scheme $Z=\Spec A$ with $A$ a local Artin $k$-algebra of length $\ell\geq1$.  It lies on a \emph{branch} if $A$ is monogenic, $A\cong k[u]/(m)=k[z]$ with $z=u$ modulo $m$, and $z$ is called the \emph{branch coordinate}.  Write
\[
 m=u^\ell-q(u),\qquad q(u)=\sum_{i<\ell}q_iu^i,
\]
for the characteristic polynomial of $z$; then $1,z,\dots,z^{\ell-1}$ is a $k$-basis of $A$.  For $\ell=1$ the fat point is a single reduced point.
\end{definition}

\begin{lemma}[Leaf identity]\label{lem:leaf}
$N_{A/k}(z)=(-1)^{\ell+1}q_0$, where $N_{A/k}(a)=\det(\operatorname{mult}_a)$ is the norm.
\end{lemma}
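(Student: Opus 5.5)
The plan is to compute the determinant of multiplication by $z$ directly in the monomial basis $1,z,\dots,z^{\ell-1}$ of Definition~\ref{def:fat}. In that basis $\operatorname{mult}_z$ is the companion matrix of $m(u)=u^\ell-q(u)$. It sends $z^i\mapsto z^{i+1}$ for $i<\ell-1$, and
\[
z^{\ell-1}\mapsto z^\ell=q(z)=\sum_{i<\ell}q_iz^i .
\]
The matrix therefore has $1$'s on the subdiagonal and last column $(q_0,q_1,\dots,q_{\ell-1})^{T}$.

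To evaluate the determinant, expand along the first row. Its only nonzero entry is $q_0$, in position $(1,\ell)$. Deleting row $1$ and column $\ell$ leaves the identity block of size $\ell-1$ formed by the subdiagonal $1$'s, whose determinant is $1$. The cofactor sign is $(-1)^{1+\ell}$, so
\[
N_{A/k}(z)=\det(\operatorname{mult}_z)=(-1)^{\ell+1}q_0 .
\]

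As a cross-check, the characteristic polynomial of $\operatorname{mult}_z$ is $m(u)$, because the companion matrix realizes $A\cong k[u]/(m)$. The determinant is then $(-1)^\ell m(0)=(-1)^\ell(-q_0)$, which agrees. In the case $\ell=1$ we have $A=k$ and $z=q_0$, so the norm is $q_0$, consistent with the formula. There is no real obstacle; the only care needed is tracking the sign convention $m=u^\ell-q$, as opposed to $u^\ell+q$.
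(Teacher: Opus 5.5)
Your proof is correct: the cofactor expansion of the companion matrix is right, and your cross-check $\det(\operatorname{mult}_z)=(-1)^\ell m(0)=(-1)^{\ell+1}q_0$ is exactly the paper's one-line argument. The paper also remarks that the norm equals the product of the values of $z$ over the points of $Z$ over $\bar k$, the two descriptions agreeing by polynomiality with no separability needed; that remark is not required for the statement as posed.
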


\begin{proof}
$N_{A/k}(z)=(-1)^{\ell}m(0)=(-1)^{\ell+1}q_0$.  The norm is also the product of the values of $a$ over the points of $Z$ with multiplicities over $\bar k$; the two descriptions agree by polynomiality in the coefficients of $m$, so no separability is assumed.
\end{proof}

\begin{definition}[Family of fat points]\label{def:family}
A \emph{family of fat points} is a finite set $\{Z_\lambda\}_\lambda$ of fat points with pairwise disjoint supports, each on a branch, with $A_\lambda=\cO_{Z_\lambda}=k[z_\lambda]$ of length $\ell_\lambda$, together with deformation parameters for each member as follows.  Let $n=\dim X$.  A codimension-$n$ complete intersection $Z_\lambda$ of a smooth variety has
\[
 \Def_{Z_\lambda}=\Hom(I_{Z_\lambda}/I_{Z_\lambda}^2,\cO_{Z_\lambda})\cong A_\lambda^{\oplus n},
\]
of dimension $n\ell_\lambda$.  A deformation class is an $n$-tuple deforming the $n$ relations.  Choosing $k$-bases of the $n$ copies of $A_\lambda$ gives $n\ell_\lambda$ \emph{deformation parameters}.  We require that these parameters are $n\ell_\lambda$ of the slice coordinates $t$, and that the parameter sets of distinct members are disjoint.  The corresponding $n\ell_\lambda$ directions form the \emph{deformation block} $B_\lambda$ of $Z_\lambda$.  For $n=2$ one has $\Def_{Z_\lambda}\cong A_\lambda\oplus A_\lambda$ of dimension $2\ell_\lambda$, a deformation class is a pair $(a,b)$ deforming the two relations, and the block carries $2\ell_\lambda$ coordinates.
\end{definition}

\subsection{The trace pairing}

\begin{definition}[Trace pairing]\label{def:trans}
Let $Z$ be a fat point on a branch with local algebra $A$, let $\Def_Z\cong A^{\oplus n}$, and let $g\in A$.  The \emph{trace pairing} of $(Z,g)$ is the $n$-linear form on $\Def_Z$
\[
 B(v_1,\dots,v_n)=\kappa_0\,\Res_A\bigl(g\cdot\det[v_1|\cdots|v_n]\bigr),\qquad \kappa_0\in k^\times,
\]
where $\Res_A$ is the residue, so $\Res_A(a)=[z^{\ell-1}]a$ in the monogenic presentation.  The constant $\kappa_0$ absorbs the normalisation of the residue; we fix $\kappa_0=1$.
\end{definition}

\noindent For $n=2$ and $v_1=(a,b)$, $v_2=(a',b')$ this is the bilinear form
\[
 B\bigl((a,b),(a',b')\bigr)=\kappa_0\,\Res_A\bigl(g\cdot(ab'-a'b)\bigr)
\]
on $\Def_Z\cong A\oplus A$.  At $n=2$ it agrees with the restriction of Bottacin's pairing \cite{Bot}, displayed in \eqref{eq:bot} with $g=s|_A$, by Proposition~\ref{prop:localmodel}.

\begin{lemma}[Norm form of the block]\label{lem:normform}
For $n=2$, in the deformation parameters of $Z$, the Pfaffian of the trace pairing is $\pm \kappa_0^{\ell}\,N_{A/k}(g)$.
\end{lemma}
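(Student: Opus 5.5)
The plan is to write the trace pairing as an explicit $2\ell\times2\ell$ skew matrix in the monomial basis of $\Def_Z\cong A\oplus A$ and reduce its Pfaffian to the determinant of an $\ell\times\ell$ Gram matrix. That Gram matrix will then factor as a Hankel residue matrix, of determinant $\pm1$, times the matrix of multiplication by $g$, whose determinant is $N_{A/k}(g)$ by definition.

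\emph{Block form.} Use the monogenic presentation $A=k[z]$ of Definition~\ref{def:fat}. Take the basis $(z^i,0)$, $(0,z^j)$ for $0\le i,j\le\ell-1$ as the deformation parameters. The formula $B((a,b),(a',b'))=\kappa_0\Res_A(g(ab'-a'b))$ gives three kinds of entries:
\[
B\bigl((z^i,0),(z^j,0)\bigr)=0,\qquad B\bigl((0,z^i),(0,z^j)\bigr)=0,\qquad B\bigl((z^i,0),(0,z^j)\bigr)=\kappa_0\,C_{ij},
\]
where $C_{ij}=\Res_A(g\,z^{i+j})$. So the matrix of $B$ is $\kappa_0\begin{pmatrix}0&C\\-C^{T}&0\end{pmatrix}$. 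The standard identity $\Pf\begin{pmatrix}0&C\\-C^T&0\end{pmatrix}=(-1)^{\ell(\ell-1)/2}\det C$ applies. The Pfaffian is homogeneous of degree $\ell$ in the entries, so
\[
\Pf(B)=\pm\kappa_0^{\ell}\det C .
\]

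\emph{Factoring $C$.} Let $\langle x,y\rangle=\Res_A(xy)$, and let $H$ be its Gram matrix, $H_{ij}=\Res_A(z^{i+j})=[z^{\ell-1}]z^{i+j}$. For $i+j<\ell-1$ this entry is $0$, and for $i+j=\ell-1$ it is $1$. Entries with $i+j\ge\ell$ are whatever the reduction modulo $m$ produces, but they lie strictly below the antidiagonal. Hence $H$ is anti-triangular with ones on the antidiagonal, and $\det H=\pm1$.

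Now let $M_g$ be the matrix of $\operatorname{mult}_g$ in the basis $z^j$, so that $gz^j=\sum_k (M_g)_{kj}z^k$. Then
\[
C_{ij}=\Res_A\bigl(z^i\cdot gz^j\bigr)=\sum_k H_{ik}(M_g)_{kj},
\]
that is, $C=HM_g$. Therefore $\det C=\pm\det M_g=\pm N_{A/k}(g)$, using the definition of the norm recalled in Lemma~\ref{lem:leaf}. Combining with the block form gives $\Pf(B)=\pm\kappa_0^{\ell}N_{A/k}(g)$.

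\emph{Main obstacle.} The only delicate point is what ``the deformation parameters'' means. If arbitrary $k$-bases of the two copies of $A$ are allowed, as in Definition~\ref{def:family}, then a change of basis $P$ multiplies the Pfaffian by $\det P$. I would therefore state the identity for the monomial bases, which are the ones induced by the branch coordinate. For any other choice, the Pfaffian differs by the nonzero constant $\det P$. Since only nonvanishing and the rescaling behaviour matter, this is harmless by Remark~\ref{rem:onechart}; the sign $\pm$ absorbs the remaining normalisation. Beyond this, the argument needs only the anti-triangularity of the residue Hankel matrix, which holds with no separability assumption on $m$.
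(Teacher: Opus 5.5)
Your proof is correct, and it reaches the key identity $\det C=\pm N_{A/k}(g)$ by a different route. Note that your $C$ is the matrix the paper calls $H$.

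\textbf{The paper's route.} It diagonalizes over the roots. Partial fractions give $\Res_A(b)=\sum_r b(\zeta_r)/m'(\zeta_r)$, hence $C=V\,\mathrm{diag}\bigl(g(\zeta_r)/m'(\zeta_r)\bigr)V^{\mathsf T}$ with $V$ a Vandermonde matrix. The discriminant identity $\prod_r m'(\zeta_r)=\pm\det(V)^2$ then leaves $\pm\prod_r g(\zeta_r)$. Because this needs distinct roots, the paper must afterwards extend the identity by polynomiality in the coefficients of $m$ and $g$. That extension is what covers the local, non-reduced algebras that are the actual fat points.

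\textbf{Your route.} You factor $C=H\,M_g$ through the residue Gram matrix and the multiplication matrix. You then use that $H$ is anti-triangular with unit antidiagonal, so it is unimodular in the monomial basis. This argument holds verbatim over any base ring and needs no roots, no splitting field and no density step.

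\textbf{The sign.} Your route also fixes the sign. The factor $(-1)^{\ell(\ell-1)/2}$ from the Pfaffian of the off-diagonal block form cancels against $\det H=(-1)^{\ell(\ell-1)/2}$. So in the ordered basis $(z^i,0),(0,z^j)$ one gets exactly $\Pf=\kappa_0^{\ell}N_{A/k}(g)$.

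\textbf{What the paper's route buys.} It gives the split picture. On the separable locus it exhibits the block as a sum of $\ell$ rank-one pieces with coefficients $g(\zeta_r)$ at the support points. This viewpoint is reused for the unit polyvectors of Definition~\ref{def:tpoly} and in Lemma~\ref{lem:membertop}.

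Your caveat about the choice of bases is also right and matches the paper. It too works in the monomial basis and lets other bases contribute only a nonzero constant.
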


\begin{proof}
In the basis $(z^i,0)$, $(0,z^a)$ of the two halves of $A\oplus A$ the matrix is $\left(\begin{smallmatrix}0&\kappa_0H\\-\kappa_0H^{\mathsf T}&0\end{smallmatrix}\right)$ with $H_{ia}=\Res_A(g\,z^{i+a})$.  The residue Hankel identity gives $\det H=\pm N_{A/k}(g)$: partial fractions give $\Res_A(b)=\sum_r b(\zeta_r)/m'(\zeta_r)$ (expand $b$ in the Lagrange basis $\prod_{s\neq r}\frac{u-\zeta_s}{\zeta_r-\zeta_s}$ and extract $[u^{\ell-1}]$), so $H=V\,\mathrm{diag}\bigl(g(\zeta_r)/m'(\zeta_r)\bigr)\,V^{\mathsf T}$ for the Vandermonde matrix $V=(\zeta_r^{\,i})$, and $\det(V)^2\prod_r m'(\zeta_r)^{-1}=\pm1$ since $\prod_r m'(\zeta_r)=\pm\operatorname{disc}(m)=\pm\det(V)^2$.  The Pfaffian of $\left(\begin{smallmatrix}0&\kappa_0H\\-\kappa_0H^{\mathsf T}&0\end{smallmatrix}\right)$ is $\pm \kappa_0^{\ell}\det H$.  For the top coefficient only the nonzero unit factors matter.  All identities are polynomial in the coefficients of $m$ and $g$: on the open set of distinct roots $V$ is invertible, and since both sides are polynomials they agree on the whole family.  At $\ell=1$ the identity reads $\Pf=\pm\kappa_0\,g$ (the general sign being $(-1)^{\ell(\ell-1)/2}$).
\end{proof}

\subsection{Weights}

\begin{definition}[Weight; norm factor]\label{def:transv}
Let $Z$ be a fat point on a branch of length $\ell$, with local algebra $A=k[z]$; let $w'=\sum_{a<\ell}p_a z^a\in A$.  For a \emph{weight} $g\in A$ of the shape $g=z\cdot h(z,w')$ with $h\in k[z,\tau]$ arbitrary ($z$ the branch variable, $\tau$ the tangential variable, specialized to the gluing coordinate $w'$), the \emph{norm factor} is $\beta=N_{A/k}(g)$ (the Pfaffian of the block is $\pm \kappa_0^\ell\beta$, Lemma~\ref{lem:normform}).  The \emph{attached coordinates} $(q,p)$ are the deformation parameters: the coefficients of the characteristic polynomial $m=u^\ell-q(u)$ of $z$ and the coordinates $p_a$ of $w'$.  There are $2\ell$ of them, and they are the slice coordinates of the block.

A second block type is the \emph{length-one coordinate block}: a single coordinate $t_i$, the norm factor of a fat point of length $\ell=1$ with $g=\gamma z$ (the tangential direction empty; $\gamma$ is the \emph{transfer coefficient}), which is $\pm\gamma q_0$, with attached coordinate $q_0=t_i$.  In general one attaches only the coordinates that actually occur in $\beta$.
\end{definition}

\noindent By Lemma~\ref{lem:leaf} the factor $z$ contributes exactly the leaf $q_0$ to the norm.  The shape $g=z\cdot h(z,w')$ is the restriction of a surface weight $s=u\cdot h(u,v)$ to the graph $v=w'(u)$ of the fat point.

\subsection{The unit top coefficient}\label{sec:constant}

Write the weight of a block of length $\ell$ as $g=z\cdot h$ with $N(z)=\pm q_0$, and call $h_{\mathrm{tan}}:=h|_{z=0}$ the \emph{tangential restriction} of the coordinate coefficient $h$ (a function of the tangential coordinate $\tau$, the second variable of $h$, specialized to the gluing coordinate $w'$; it is a coordinate along $D$, not a section of the normal bundle of $D$, and not the branch coordinate $z$).  Then
\[
 T(g)=\bigl(T_1\bigr)^{\ell},\qquad
 T_1=[\,\tau^{p-1}\,]\,h_{\mathrm{tan}}(\tau)^{p-1},
\]
exactly.  The weight is \emph{transverse to the branch} when $T_1\neq0$.

\begin{theorem}[Unit top coefficient, codimension two]\label{thm:utc}
Let $g=z\cdot h(z,w')$ with $w'=\sum_{a<\ell}p_a z^a$ and $h\in k[z,\tau]$ arbitrary; put $h_{\mathrm{tan}}(\tau)=h(0,\tau)$ and $T_1=[\,\tau^{p-1}\,]\,h_{\mathrm{tan}}(\tau)^{p-1}$.  Then
\[
 T(g)=\Bigl[\prod_{0\leq i<\ell}q_i^{p-1}\prod_{0\leq a<\ell}p_a^{p-1}\Bigr]\,N_{A/k}(g)^{p-1}=\bigl(T_1\bigr)^{\ell}
\]
exactly.
\end{theorem}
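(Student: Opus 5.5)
The plan is to remove the Vandermonde structure by changing to root coordinates, so that the norm factorizes into $\ell$ identical one-point contributions.

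Over the generic point of the block, write $m(u)=\prod_{r=1}^{\ell}(u-\zeta_r)$ and $w_r:=w'(\zeta_r)$. The pair $(q,p)$ is then a function of $(\zeta,w)$:
\begin{itemize}
\item the $q_i$ are, up to sign, the elementary symmetric functions of the $\zeta_r$;
\item the $p_a$ are recovered from the $w_r$ by Lagrange interpolation.
\end{itemize}
As in the proof of Lemma~\ref{lem:normform}, the norm is the product of values over the points of $Z$. So
\[
 N_{A/k}(g)=\prod_{r}\zeta_r\,h(\zeta_r,w_r),
\qquad
 N_{A/k}(g)^{p-1}=\prod_r \zeta_r^{\,p-1}h(\zeta_r,w_r)^{p-1}.
\]
This expression is a product of identical factors in disjoint pairs of variables $(\zeta_r,w_r)$. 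For one such pair the top coefficient is
\[
 [\zeta^{p-1}w^{p-1}]\,\zeta^{p-1}h(\zeta,w)^{p-1}=[w^{p-1}]\,h(0,w)^{p-1}=T_1 .
\]
This holds because in the Frobenius box any positive power of $\zeta$ beyond $\zeta^{p-1}$ dies. Multiplying over $r$ gives $T_1^{\ell}$, provided the top coefficient can be computed in the $(\zeta,w)$ coordinates instead of the $(q,p)$ coordinates.

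The key step is the transfer of the top coefficient along the map $\pi:(\zeta,w)\mapsto(q,p)$. I would phrase $b_0$ intrinsically through the Cartier operator $C$ on top forms. For a function $f$ and a top form $dt$, the $(p-1)$-coefficient extraction is the constant term of $C(f^{p-1}\,dt_{[2\ell]})$ divided by $dt_{[2\ell]}$. This is the same intrinsic trace used in Lemma~\ref{lem:socle} and Lemma~\ref{lem:inv}.

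The Jacobian of $\pi$ is computed from the Vandermonde matrix:
\[
 dq\wedge dp=\pm V(\zeta)^2\,d\zeta\wedge dw=\pm\operatorname{disc}(m)\,d\zeta\wedge dw .
\]
The $q$-part contributes $\det V$, and the interpolation part contributes another $\det V$ after clearing the $w'$-dependence, which is triangular in $p$. The map $\pi$ is finite of degree $\ell!$ (quotient by $S_\ell$), and the Cartier operator commutes with the trace $\pi_*$ on top forms. So I would compute $C\bigl(\pi^*(N^{p-1})\,\pi^*(dq\wedge dp)\bigr)$ upstairs and push it down. The discriminant factor then has to be absorbed: I would use the residue identity of Lemma~\ref{lem:normform}, $\prod_r m'(\zeta_r)=\pm\det V^2$, to pair it with the symmetrization. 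As in that lemma, I would use polynomiality in the coefficients of $m$, $w'$ and $h$ to pass from the open set of distinct roots to the whole family.

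The main obstacle is exactly this step. The base point of the block is the totally degenerate configuration $\zeta_1=\cdots=\zeta_\ell=0$, where $V$ vanishes, and the comparison between the top coefficient in $(q,p)$ and the symmetrized top coefficient in $(\zeta,w)$ is not a linear substitution. Lemma~\ref{lem:socle} therefore does not apply directly, and one must check that the $\ell!$ from the degree and the discriminant combine to a unit rather than to $0$ in characteristic $p$.

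If the Cartier-operator route does not close cleanly, I would fall back on a direct expansion in the original coordinates. Write $N(g)^{p-1}=q_0^{p-1}\cdot N(h(z,w'))^{p-1}$. In the box the factor $q_0^{p-1}$ forces $q_0=0$ in the second factor. I would then induct on $\ell$: setting $q_0=0$ splits off the reduced point $z=0$, which contributes the coefficient $h(0,p_0)$ and hence one factor $T_1$ from the $p_0^{p-1}$-extraction. The residual length-$(\ell-1)$ point carries the coordinates $q_1,\dots,q_{\ell-1}$, $p_1,\dots,p_{\ell-1}$ after a unipotent change of variables, which has determinant $1$ and so is harmless by Lemma~\ref{lem:socle}.
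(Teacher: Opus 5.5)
Both of your routes break at the one place where the theorem has content, the Vandermonde factor.

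\textbf{The root-coordinate route.} You compute $[\zeta^{p-1}w^{p-1}]\,\zeta^{p-1}h(\zeta,w)^{p-1}=T_1$ at $\zeta=w=0$. That is the total collision point, where $(\zeta,w)$ are not coordinates on the $(q,p)$-space, and the transfer you sketch does not repair this.
\begin{itemize}
\item The Jacobian is not $\pm\operatorname{disc}(m)$. The $q$-part gives $\pm\Delta_\ell$, but $p=W^{-1}w$ with $W=(\zeta_r^{\,a})$, so the interpolation part gives $\Delta_\ell^{-1}$. Hence $dq\wedge dp=\pm\,d\zeta\wedge dw$.
\item The map $(\zeta,w)\mapsto(q,p)$ is not a finite morphism, since Lagrange interpolation has $\Delta_\ell$ in the denominator.
\item A push-forward along a cover of degree $\ell!$ would introduce the factor $\ell!$, which is $0$ in $k$ once $\ell\geq p$.
\end{itemize}
So the step you flag as the main obstacle cannot be closed as planned.

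What does work is to avoid push-forwards altogether. The function $a\mapsto[t^{(p-1)\mathbf 1}]\,N(g)^{p-1}(a+t)$ is a polynomial on the $(q,p)$-space, with value $T(g)$ at $a=0$. At a point where $m$ is separable, passing to root coordinates is a formal isomorphism with Jacobian $\pm1$, and $p-1$ is even. So there this polynomial equals $\prod_r\tau_1(\zeta_r,w'(\zeta_r))$, where $\tau_1(u_0,v_0)=[s_1^{p-1}s_2^{p-1}]\bigl((u_0+s_1)h(u_0+s_1,v_0+s_2)\bigr)^{p-1}$. The product is symmetric in the roots, hence polynomial in $(q,p)$, so the identity extends to all $(q,p)$. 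At the origin it gives $\tau_1(0,0)^{\ell}=T_1^{\ell}$. That is the mechanism of Lemma~\ref{lem:membertop}, and it is not what your proposal carries out.

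\textbf{The fallback.} The claim that the residual point is split off by a unipotent change of determinant $1$ is false. At $q_0=0$ one has $m=u\,m_1$, and reducing $w'$ modulo $m_1$ gives the residual gluing coordinates $p'_a=p_a+q_{a+1}p_{\ell-1}$ for $0\leq a\leq\ell-2$. These still involve $p_0$. The linear change $(p_0,\dots,p_{\ell-1})\mapsto(p_0,p'_0,\dots,p'_{\ell-2})$ has determinant $\pm q_1$, and no change of determinant $1$ separates the two factors.

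If you treat the change as harmless, you get $T_1\cdot[\prod_{i\geq1}q_i^{p-1}\prod_a(p'_a)^{p-1}]\,N_{A_1}(h)^{p-1}$. Here $N_{A_1}(h)$ carries no factor $z$, so the inductive hypothesis does not apply. For $h=1+\tau$, $p=3$, $\ell=2$ this equals $[q_1^2(p'_0)^2](1+p'_0)^2=0$, whereas the true value is $T_1^2=1$. This is exactly the naive prediction of Remark~\ref{rem:routeg}.

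The missing idea is that Lemma~\ref{lem:lin-socle}, applied over $k[q]$, turns this change into the factor $q_1^{p-1}$. That factor absorbs the whole $q_1$-extraction and forces $q_1=0$ in the residual. Then $m_1$ again has a root at the origin, the residual extraction has the same ``$q_0$-degree zero'' shape, and the induction closes with one factor $T_1$ per step. In the paper this factor is the Vandermonde $\Delta_\ell^{p-1}$ of Step~1, and its descent $\Delta_\ell^{p-1}|_{\zeta_\ell=0}=\Delta_{\ell-1}^{p-1}e_{\ell-1}^{p-1}$ in Step~2.
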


\begin{lemma}[Linear socle calculus]\label{lem:lin-socle}
Let $R$ be a commutative $k$-algebra, $W$ any $\ell\times\ell$ matrix over $R$, and $G\in R[y_1,\dots,y_\ell]$ arbitrary.  Then
\[
 \Bigl[\prod_{a<\ell}p_a^{p-1}\Bigr]\,G(Wp)=\det(W)^{p-1}\,\Bigl[\prod_{a<\ell}y_a^{p-1}\Bigr]\,G(y),
\]
where the substitution $y=Wp$ is made on the left and the extraction on the right is in the variables of $G$.
\end{lemma}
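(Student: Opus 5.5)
The plan is to reduce to monomials by $R$-linearity, and then to show that only one monomial of $G$ can contribute. Both sides of the identity are $R$-linear in $G$, since the coefficients of $G$ lie in $R$ and commute past the substitution and the extraction. So it suffices to treat $G=y^\alpha=\prod_a y_a^{\alpha_a}$. The right-hand side is then $\det(W)^{p-1}$ when $\alpha=(p-1)\mathbf 1$ and $0$ otherwise. On the left, $G(Wp)=\prod_a \ell_a^{\alpha_a}$ with $\ell_a=\sum_b W_{ab}p_b$, which is homogeneous of degree $|\alpha|$ in the $p_b$. So the coefficient of $\prod_b p_b^{p-1}$ vanishes unless $|\alpha|=\ell(p-1)$.

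\textbf{Vanishing of off-diagonal monomials.} Assume $|\alpha|=\ell(p-1)$ but $\alpha\neq(p-1)\mathbf 1$. Then some $\alpha_a\geq p$. Write $\alpha_a=pc_a+r_a$ with $0\le r_a<p$. Since $R$ is a $k$-algebra, it has characteristic $p$, so $\ell_a^{p}=\sum_b W_{ab}^p p_b^p$. Hence $\prod_a\ell_a^{pc_a}$ is a combination of monomials $p^{p\gamma}$ with $|\gamma|=\sum_a c_a>0$. Each such monomial has some exponent at least $p$. Multiplying by $\prod_a\ell_a^{r_a}$ only raises exponents, so no term can equal $\prod_b p_b^{p-1}$, and the left-hand side is $0$, in agreement with the right.

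\textbf{The diagonal monomial.} It remains to show that the coefficient of $\prod_b p_b^{p-1}$ in $\prod_a \ell_a^{p-1}$ is $\det(W)^{p-1}$ for an arbitrary matrix $W$ over $R$. This is the one substantive step. Both sides are polynomials with integer coefficients in the entries $W_{ab}$, so it suffices to prove the identity in $\mathbb F_p[W_{ab}]$ and then specialize to $R$. To prove it there, I would check it for $W\in\GL_\ell(k)$ using Lemma~\ref{lem:socle}. In the box $k[p]/(p_b^p)$ that lemma gives $\prod_a\ell_a^{p-1}=\det(W)^{p-1}p^{(p-1)\mathbf 1}$. Moreover, the coefficient of $p^{(p-1)\mathbf 1}$ is the same in the polynomial ring as in the box, because the ideal $(p_b^p)$ contains no multiple of that monomial. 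Since $k$ is infinite and $\GL_\ell(k)$ is Zariski dense in the space of matrices, the polynomial identity holds identically. Specializing then gives it over any $R$.

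I expect the only point needing care to be this passage from invertible matrices over $k$ to arbitrary matrices over $R$, which the universal-polynomial argument handles. The rest is the characteristic-$p$ observation that $p$-th powers of linear forms only produce monomials with some exponent at least $p$.
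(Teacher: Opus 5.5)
Your proof is correct and follows essentially the paper's route. The Frobenius identity $\ell_a^p=\sum_b W_{ab}^p p_b^p$, which the paper phrases as the substitution preserving the Frobenius box, eliminates every monomial with an exponent $\ge p$, and degree counting eliminates the remaining off-diagonal monomials. The diagonal monomial then comes from Lemma~\ref{lem:socle} for invertible $W$, extended to all $W$. Your passage through the universal identity in $\mathbb F_p[W_{ab}]$ and the Zariski density of $\GL_\ell(k)$ simply spells out the paper's ``by polynomiality'' step.
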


\begin{proof}
The linear substitution $\Phi_W$ preserves the Frobenius box: $\Phi_W(y_i)^p=\sum_j W_{ij}^p\,y_j^p=0$ modulo $(y_1^p,\dots,y_\ell^p)$, so both extractions depend only on $G$ modulo the box.  Write $G\equiv\sum_\beta c_\beta y^\beta$ with $0\leq\beta_r\leq p-1$.  In $G(Wp)$ a term $y^\beta$ has total degree $|\beta|$, so only $|\beta|=\ell(p-1)$ can contribute to the socle monomial $\prod_a p_a^{p-1}$, and within the box the only such $\beta$ is $(p-1)\mathbf 1$.  In the box one has, by Lemma~\ref{lem:socle},
$\prod_i\bigl(\sum_j W_{ij}y_j\bigr)^{p-1}=\det(W)^{p-1}\prod_a y_a^{p-1}$
(an identity in the entries of $W$: valid for invertible $W$ by that lemma, hence for all $W$ by polynomiality); extracting coefficients gives the claim.
\end{proof}

\begin{lemma}[Restriction and extraction]\label{lem:restrict}
Let $\Psi\in k[e_1,\dots,e_k]$ be symmetric in $\zeta_1,\dots,\zeta_k$ (written in the elementary symmetric functions).  Then:
\begin{enumerate}
\item[\rm(i)] $\displaystyle\Bigl[\prod_{j<k}e_j^{p-1}e_k^{0}\Bigr]\Psi=\Bigl[\prod_{j<k}e_j^{p-1}\Bigr]\,\Psi\big|_{\zeta_k=0}$, where $\Psi|_{\zeta_k=0}$ is read as the polynomial in $e_1,\dots,e_{k-1}$ obtained by $e_k\mapsto0$ and $e_j\mapsto e_j(\zeta_1,\dots,\zeta_{k-1})$; the superscript $0$ on $e_k$ marks the $e_k$-free part of the extraction;
\item[\rm(ii)] $\displaystyle\Bigl[\prod_{j\leq k}e_j^{p-1}\Bigr]\,\bigl(e_k^{p-1}\Psi\bigr)=\Bigl[\prod_{j<k}e_j^{p-1}e_k^{0}\Bigr]\Psi$.
\end{enumerate}
\end{lemma}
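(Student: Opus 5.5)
Both parts are bookkeeping statements about monomials in the polynomial ring $k[e_1,\dots,e_k]$, where the $e_j$ are treated as independent variables. The plan is therefore to write $\Psi=\sum_\alpha c_\alpha\,e_1^{\alpha_1}\cdots e_k^{\alpha_k}$ as a polynomial in the $e_j$ and read off coefficients directly. The one substantive point is that this expansion is unique, so that the extraction $[\,\cdot\,]$ is well defined. This holds because the elementary symmetric functions $e_1,\dots,e_k$ of $\zeta_1,\dots,\zeta_k$ are algebraically independent over $k$, by the fundamental theorem of symmetric polynomials, which is valid over any base ring. The same theorem applied to $\zeta_1,\dots,\zeta_{k-1}$ shows that $e_1(\zeta_1,\dots,\zeta_{k-1}),\dots,e_{k-1}(\zeta_1,\dots,\zeta_{k-1})$ are algebraically independent. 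Hence the extraction on the right-hand side of (i) is also well defined.

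For (i), the first step is the identity $e_j(\zeta_1,\dots,\zeta_{k-1},0)=e_j(\zeta_1,\dots,\zeta_{k-1})$ for $j<k$, together with $e_k(\zeta_1,\dots,\zeta_{k-1},0)=\zeta_1\cdots\zeta_{k-1}\cdot 0=0$. The specialization $\zeta_k=0$ is a ring homomorphism. Under the identification of symmetric polynomials with $k[e_1,\dots,e_k]$, it is therefore the homomorphism $e_k\mapsto 0$, $e_j\mapsto e_j$ for $j<k$, with values in $k[e_1,\dots,e_{k-1}]$. Applied to the expansion of $\Psi$, it kills every monomial with $\alpha_k>0$ and leaves the $e_k$-free part $\sum_{\alpha_k=0}c_\alpha e_1^{\alpha_1}\cdots e_{k-1}^{\alpha_{k-1}}$ unchanged. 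The coefficient of $\prod_{j<k}e_j^{p-1}$ in this restriction is $c_{(p-1,\dots,p-1,0)}$, which is exactly the left-hand side of (i).

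For (ii), multiplication by $e_k^{p-1}$ sends the monomial with exponent $\alpha$ to the one with exponent $\alpha+(p-1)\varepsilon_k$, where $\varepsilon_k$ is the $k$-th unit vector, and this shift is injective on exponents. So the coefficient of $\prod_{j\le k}e_j^{p-1}$ in $e_k^{p-1}\Psi$ is the coefficient of $\prod_{j<k}e_j^{p-1}e_k^{0}$ in $\Psi$. If the extraction is instead read in the Frobenius box $(e_1^p,\dots,e_k^p)$, the same argument applies: the only preimage of the exponent $(p-1,\dots,p-1)$ under the shift is an exponent with $e_k$-degree $0$, which lies inside the box. So truncation changes nothing. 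I do not expect a real obstacle here. The only point needing care is the algebraic independence that makes $[\,\cdot\,]$ meaningful both before and after restriction, and this is supplied by the fundamental theorem of symmetric polynomials.
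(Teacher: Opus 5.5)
Your proposal is correct and follows essentially the paper's argument. The paper also sorts $\Psi$ by $e_k$-degree, $\Psi=\sum_m e_k^m\Psi_m(e_1,\dots,e_{k-1})$; for (i) it observes that setting $\zeta_k=0$ returns exactly $\Psi_0$, and for (ii) that after multiplying by $e_k^{p-1}$ only the $m=0$ part reaches $e_k$-degree $p-1$. Your extra remark on the algebraic independence of the elementary symmetric functions, which makes both extractions well defined, is a harmless justification that the paper leaves implicit.
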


\begin{proof}
Writing $\Psi=\sum_m e_k^m\Psi_m(e_1,\dots,e_{k-1})$, the coefficient of $e_k^{0}$ is $\Psi_0$, and the restriction $\zeta_k=0$ maps $e_k$ to $0$ and $e_j$ to the elementary symmetric functions of the remaining roots, so it returns exactly $\Psi_0$; this is {\rm(i)}.  For {\rm(ii)}, in $e_k^{p-1}\Psi$ every term of $e_k$-degree $m\geq1$ has $e_k$-degree $p-1+m$ and does not contribute to the coefficient of $e_k^{p-1}$, while the $m=0$ term contributes $\Psi_0$.
\end{proof}

\begin{proof}[Proof of Theorem~\ref{thm:utc}]
Write $p(\zeta)=\sum_{a<\ell}p_a\zeta^a$ and let $\zeta_1,\dots,\zeta_\ell$ be the
roots of $m$.  By multiplicativity of the norm and Lemma~\ref{lem:leaf},
\[
 N_{A/k}(g)=(-1)^{\ell+1}q_0\prod_{r=1}^{\ell}h(\zeta_r,p(\zeta_r)),
\]
the product being symmetric in the $\zeta_r$, hence a polynomial in $(e,p)$.
As $p-1$ is even, $N_{A/k}(g)^{p-1}=q_0^{p-1}\prod_r h(\zeta_r,p(\zeta_r))^{p-1}$
with no sign.  The explicit factor $q_0^{p-1}$ supplies the whole $q_0$-coefficient, so the remaining factor is taken at $q_0$-degree $0$, that is at $e_\ell$-degree $0$; with $q_0=\pm e_\ell$ and $q_i=\pm e_{\ell-i}$ ($i\geq1$), all signs dying at the exponent $p-1$, one gets
\[
 T(g)=\Bigl[\prod_{j<\ell}e_j^{p-1}e_\ell^{0}\Bigr]\,S,\qquad
 S:=\Bigl[\prod_{a<\ell}p_a^{p-1}\Bigr]\prod_{r=1}^{\ell}h(\zeta_r,p(\zeta_r))^{p-1}.
\]

\medskip\noindent\emph{Step 1 (Vandermonde change).}  The variables $p_a$ occur only through the values $y_r=p(\zeta_r)$, that is $y=Wp$ with $W=(\zeta_r^{\,a})_{r,a}$ and $\det W=\Delta_\ell=\prod_{r<s}(\zeta_s-\zeta_r)$.  Applying Lemma~\ref{lem:lin-socle} to $G(y)=\prod_r h(\zeta_r,y_r)^{p-1}$, for which $G(Wp)=\prod_r h(\zeta_r,p(\zeta_r))^{p-1}$, and using the disjoint-variable factorization $[\prod_r y_r^{p-1}]G=\prod_r\psi(\zeta_r)$ with $\psi(\zeta)=[\,y^{p-1}\,]h(\zeta,y)^{p-1}$, one obtains
\[
 S=\Delta_\ell^{\,p-1}\prod_{r=1}^{\ell}\psi(\zeta_r).
\]
(Both sides are polynomials in the $\zeta_r$ and symmetric, since $p-1$ is even and $\Delta_\ell^{p-1}$ is invariant under permutations; so the identity descends to the elementary symmetric functions.)

\medskip\noindent\emph{Step 2 (descent).}  Restrict to $e_\ell$-degree $0$, that is $\zeta_\ell=0$.  From $\Delta_\ell=\Delta_{\ell-1}\prod_{r<\ell}(\zeta_\ell-\zeta_r)$ one gets $\Delta_\ell^{p-1}|_{\zeta_\ell=0}=\Delta_{\ell-1}^{\,p-1}e_{\ell-1}^{\,p-1}$ (the sign $(-1)^{\ell-1}$ evaluates to $1$ at the exponent $p-1$), and $\prod_{r\leq\ell}\psi(\zeta_r)|_{\zeta_\ell=0}=\psi(0)\prod_{r<\ell}\psi(\zeta_r)$ with $\psi(0)=[\,y^{p-1}\,]h(0,y)^{p-1}=T_1$.  Writing
$\mathcal T_k:=[\prod_{j<k}e_j^{p-1}e_k^{0}]\,\Delta_k^{\,p-1}\prod_{r\leq k}\psi(\zeta_r)$, Lemma~\ref{lem:restrict}(i) turns the $e_\ell$-degree-$0$ extraction into the restriction $\zeta_\ell=0$, and Lemma~\ref{lem:restrict}(ii) shows that the explicit factor $e_{\ell-1}^{\,p-1}$ supplies the whole $e_{\ell-1}$-extraction:
\[
 \mathcal T_\ell=T_1\cdot\mathcal T_{\ell-1}.
\]

\medskip\noindent\emph{Step 3 (induction).}  The base case is $\mathcal T_1=[e_1^{0}]\psi(\zeta_1)=\psi(0)=T_1$ ($\Delta_1=1$).  Hence $\mathcal T_\ell=T_1^{\ell}$, and $T(g)=\mathcal T_\ell$ by the display preceding Step 1.  All identities are exact: the only sign factors are $(-1)$-powers at the exponent $p-1$, which evaluate to $1$, and Lemma~\ref{lem:lin-socle} contributes the exact factor $\det(W)^{p-1}$.  Thus $T(g)=T_1^{\ell}$.
\end{proof}

\begin{remark}[Checks and scope]\label{rem:routeg}
For weights affine in the gluing coordinate, $g=z\cdot(a(z)+b(z)w')$, the theorem gives $T=b(0)^{\ell(p-1)}$.  A naive factorization over the root coordinates $(\zeta_r,y_r=p(\zeta_r))$, without the Jacobian, is false (for $h=1+\tau$, $p=3$, $\ell=2$ it predicts $0$ against the true value $T_2=1=T_1^2$); the Vandermonde factor $\Delta^{p-1}$ of Step 1 is the missing correction, and its descent supplies the factor $e_{\ell-1}^{\,p-1}$ that closes the induction.
\end{remark}

\begin{lemma}[Top coefficient of a member]\label{lem:membertop}
Let $Z$ be a fat point of length $\ell$ on a branch with attached coordinates $(q,p)$, let $g=H(z,w')$ with $H\in k[u,v]$ arbitrary, and let $x_0$ be the support of $Z$ at the origin of the attached coordinates. Let $s$ be the surface weight with local function $H$ in the coordinates $(u,v)$ at $x_0$. Then
\[
 T(g)=\Tr_X(s)(x_0)^{\ell}.
\]
For $g=z\cdot h(z,w')$ this returns $T_1^{\ell}$.  In particular $T(s|_{\cO_Z})=\Tr_X(s)(x_0)^{\ell}$ for every member, on $D$ or away from $D$.
\end{lemma}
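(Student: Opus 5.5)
The plan is to reduce Lemma~\ref{lem:membertop} to the computation already carried out in the proof of Theorem~\ref{thm:utc}, dropping the factor $z$.

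\emph{Setup.} Let $\zeta_1,\dots,\zeta_\ell$ be the roots of $m$ and $p(\zeta)=\sum_{a<\ell}p_a\zeta^a$. Multiplicativity of the norm gives
\[
 N_{A/k}(g)=\prod_{r=1}^{\ell}H\bigl(\zeta_r,p(\zeta_r)\bigr),
\]
which is symmetric and hence a polynomial in $(e,p)$. In the notation of Definition~\ref{def:transv} the coordinates are $q_i=\pm e_{\ell-i}$, and the signs die at the even exponent $p-1$. So $T(g)$ is the extraction of $\prod_{j\le\ell}e_j^{p-1}\prod_a p_a^{p-1}$ from $\prod_r H(\zeta_r,p(\zeta_r))^{p-1}$.

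\emph{Step 1 (Vandermonde).} Apply Step~1 of the proof of Theorem~\ref{thm:utc} verbatim. Lemma~\ref{lem:lin-socle} with $y=Wp$ gives the $p$-extraction as $\Delta_\ell^{p-1}\prod_r\psi(\zeta_r)$, where
\[
 \psi(\zeta)=[\,y^{p-1}\,]\,H(\zeta,y)^{p-1}.
\]
What remains is the full $e$-extraction
\[
 \Bigl[\prod_{j\le\ell}e_j^{p-1}\Bigr]\,\Delta_\ell^{p-1}\prod_r\psi(\zeta_r).
\]
This is now a statement purely about symmetric functions of $\ell$ roots, with one-variable weight $\psi$.

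\emph{Step 2 (the $e$-extraction).} I expect this to be the main obstacle. In Theorem~\ref{thm:utc} the explicit factor $q_0^{p-1}$ forced $e_\ell$-degree $0$, which allowed the descent $\zeta_\ell=0$. Here that factor is absent, and the coefficient of $e_\ell^{p-1}$ must be extracted genuinely. My first attempt is to change variables from $e$ to roots. The map $\zeta\mapsto e$ has Jacobian $\pm\Delta_\ell$. Up to the sign, the socle extraction in the $e$-box of a symmetric $F$ should equal the socle extraction in the $\zeta$-box of $\Delta_\ell^{-(p-1)}\cdot\Delta_\ell^{p-1}F$ appropriately interpreted. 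The cleanest route is the trace formalism: the Cartier trace of $F\,de_1\wedge\cdots\wedge de_\ell$ pulls back to the trace of $F\,\Delta_\ell\,d\zeta_1\wedge\cdots\wedge d\zeta_\ell$, divided by the degree-$\ell!$ factor handled via the $S_\ell$-invariants. Applied to $F=\Delta_\ell^{p-1}\prod_r\psi(\zeta_r)$ this turns $\Delta_\ell^{p-1}$ together with the Jacobian into $\Delta_\ell^{p}$, a $p$-th power, which passes through the Cartier operator as $\Delta_\ell$ evaluated at the base point. Concretely I would expect
\[
 T(g)=\prod_r[\,\zeta_r^{p-1}\,]\psi(\zeta_r)=\bigl([\,u^{p-1}v^{p-1}\,]H^{p-1}\bigr)^{\ell},
\]
possibly after normalising by a unit. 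If the trace argument gets tangled with the non-étaleness of $\zeta\mapsto e$ at the origin, I would instead fall back on an induction on $\ell$ in the style of Step~2 of Theorem~\ref{thm:utc}. That would split $\psi=\psi_0+\zeta\psi_1$ and use Lemma~\ref{lem:restrict} to peel off one root at a time, checking the base case $\ell=1$, where $T(g)=[u^{p-1}v^{p-1}]H^{p-1}$ directly.

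\emph{Step 3 (identification and special cases).} Since $(u,v)$ are coordinates at $x_0$ and $s=H\,\partial_u\wedge\partial_v$ locally, Theorem~\ref{thm:bk} identifies $[u^{p-1}v^{p-1}]H^{p-1}$ with $\Tr_X(s)(x_0)$. Lemma~\ref{lem:socle} makes this independent of the chart up to the unit already absorbed. Hence $T(g)=\Tr_X(s)(x_0)^\ell$.

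For $H=u\,h(u,v)$ the coefficient $[u^{p-1}v^{p-1}]u^{p-1}h^{p-1}$ equals $[v^{p-1}]h(0,v)^{p-1}=T_1$, which recovers Theorem~\ref{thm:utc}. The final sentence of the lemma is the case $g=s|_{\cO_Z}$, which needs no hypothesis on whether $x_0\in D$.
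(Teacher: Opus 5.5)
Your route is sound in outline and genuinely different from the paper's, but Step~2 as written does not go through, and it needs the repair below.

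\textbf{Comparison with the paper.} The paper works on the separable stratum. It passes directly from $(q,p)$ to the split coordinates $(\zeta_r,y_r)$ and notes that the Jacobian is $\pm\Delta_\ell\cdot\Delta_\ell^{-1}=\pm1$. It then uses that the top coefficient is the intrinsic Frobenius trace to compute it as a product over disjoint variables. Finally it reaches the collapsed fat point at the origin only through a polynomiality remark; at a separable point the split coordinates compute a product of traces at $\ell$ distinct points, not at $x_0$.

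You split that Jacobian into its two factors. Lemma~\ref{lem:lin-socle} absorbs the Vandermonde substitution exactly and leaves $\Delta_\ell^{p-1}$. The quotient map $\pi\colon\zeta\mapsto e$ supplies the remaining $\Delta_\ell$, so the product is a $p$-th power. This never inverts $\Delta_\ell$ and can be carried out at the origin itself, which is an advantage. Steps~1 and~3 are fine.

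\textbf{What fails in Step~2.} Two of the mechanisms you name break down.
\begin{itemize}
\item The trace map of $\pi$ satisfies $\operatorname{Tr}_\pi\circ\pi^*=\ell!$. This is $0$ in $k$ once $\ell\ge p$, and such lengths are allowed, so there is no division by $\ell!$ to perform.
\item $\pi$ is ramified at the origin: $\pi^*(de_1\wedge\cdots\wedge de_\ell)=\pm\Delta_\ell\,d\zeta_1\wedge\cdots\wedge d\zeta_\ell$ vanishes at $\zeta=0$ for $\ell\ge2$. So neither a pointwise comparison of socle extractions in the $e$-box and the $\zeta$-box, nor evaluating at the base point the $\Delta_\ell$ that comes out of $C(\Delta_\ell^{p}\,\cdot\,)$, can work; both give $0$.
\end{itemize}

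\textbf{The fix: compare forms, not values.} The Cartier operator on top forms commutes with pullback along any morphism, because $C(y_1^{p-1}\cdots y_\ell^{p-1}\,dy_1\wedge\cdots\wedge dy_\ell)=dy_1\wedge\cdots\wedge dy_\ell$ for arbitrary functions $y_i$. Take $F=\Delta_\ell^{p-1}\prod_r\psi(\zeta_r)$ and write $C(\psi(\zeta)\,d\zeta)=\chi(\zeta)\,d\zeta$. Then
\[
\pi^*C(F\,de)=C\Bigl(\pm\Delta_\ell^{p}\prod_r\psi(\zeta_r)\,d\zeta\Bigr)=\pm\Delta_\ell\prod_r\chi(\zeta_r)\,d\zeta=\pi^*\Bigl(\prod_r\chi(\zeta_r)\,de\Bigr),
\]
where $\prod_r\chi(\zeta_r)$ is symmetric, hence a polynomial in $e$. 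Since $\pi$ is dominant, $\pi^*$ is injective on top forms, so $C(F\,de)=\prod_r\chi(\zeta_r)\,de$.

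Only now evaluate at $e=0$, that is at $\zeta=0$. This gives
\[
[e^{(p-1)\mathbf 1}]F=\chi(0)^{p\ell}=\bigl([\zeta^{p-1}]\psi\bigr)^{\ell},
\]
with no normalising unit, and Step~3 then finishes as you describe.

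Your fallback induction would not replace this argument. Lemma~\ref{lem:restrict}(ii) needs an explicit factor $e_\ell^{p-1}$, which in Theorem~\ref{thm:utc} came from $q_0^{p-1}$. That factor is exactly what is absent for a general $H$.
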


\begin{proof}
First assume $\Delta_\ell\neq 0$, so that $m$ is separable and $Z$ splits into $\ell$ distinct points. Write $(\zeta_1,y_1),\dots,(\zeta_\ell,y_\ell)$ for the coordinates of the split points in the deformation space. The attached coordinates are given by
\[
 q_i=\pm e_{\ell-i}(\zeta),\qquad y=Wp,\qquad \det W=\Delta_\ell.
\]
Both $(q,p)$ and $(\zeta,y)$ are slice coordinates at $Z$ on this stratum. The Jacobian of the change is
\[
 \det\frac{\partial(q,p)}{\partial(\zeta,y)}
 =\pm\Delta_\ell\cdot\Delta_\ell^{-1}
 =\pm1.
\]
Hence the frame factor of $\partial_{[2\ell]}$ under this change is a nonzero constant.

The number $T(g)$ is the socle extraction of $N^{p-1}$ in the coordinates $(q,p)$. It is the top coefficient of the section
\[
 \sigma=N(g)\,\partial_{[2\ell]}.
\]
The top coefficient of a section is the Frobenius trace of $\sigma^{p-1}$ and does not change under a change of slice coordinates. In the coordinates of the split points the frame factor is constant, and $N$ is the product of the values of $H$ at the points:
\[
 N(g)=\prod_{r=1}^{\ell}H(\zeta_r,y_r).
\]
The variables $\zeta_r$ and $y_r$ are independent, the socle extraction of a product over disjoint variables is the product of the extractions, and therefore
\[
 T(g)
 =\prod_{r=1}^{\ell}\bigl[\zeta^{p-1}y^{p-1}\bigr]H(\zeta,y)^{p-1}
 =\Tr_X(s)(x_0)^{\ell}.
\]
Each factor is the value of $\Tr_X(s)$ at $x_0$ by the trace formula. Both sides are polynomial in the coefficients of $m$ and of $g$, so the identity holds for every fat point on a branch. If one keeps $y_r=p(\zeta_r)$ as dependent variables instead, then Remark~\ref{rem:routeg} applies and a further Vandermonde factor is needed.
\end{proof}

\section{Transfer on the member locus and identification with Bottacin's tensor}\label{sec:hat}

This section constructs the transferred polyvector $\hat s$ on the member locus and identifies it with Bottacin's tensor on a surface.

The construction of Definition~\ref{def:tpoly} is uniform in $n=\dim X$.  Only the identification with Bottacin's tensor is carried out on a surface, where $n=2$.  Throughout, $X$ is a smooth projective surface and $s\in H^0(X,\omega_X^{-1})$ is nonzero.  Write $U\subset M$ for the member locus, the locus of objects that admit member data.

The members $Z_\lambda$ are fat points of lengths $\ell_\lambda\ge1$, intrinsic to the object or supplied by a presentation, and at a member the deformation module is $\Def_{Z_\lambda}=A_\lambda^{\oplus 2}$ of dimension $2\ell_\lambda$.  Reduced points are the case $\ell_\lambda=1$.  As in Section~\ref{sec:prelim}, $\rho=\sum_\lambda\ell_\lambda$ counts leaf \emph{units} at a member-saturated datum: a member of length $\ell$ splits into $\ell$ units.

Let $Z$ be a member of length $\ell$ with local algebra $A=k[z]/(m)$ and weight $g=s|_A$.  On the open locus where $m$ is separable the support splits into $\ell$ points $x_1,\dots,x_\ell$ and
\[
 \Def_Z=A^{\oplus 2}\cong\bigoplus_{a=1}^{\ell} k^{\oplus 2}
\]
splits into $\ell$ blocks of $2$ coordinates each.  The values of $g$ at the support points satisfy
\[
 \prod_{a=1}^{\ell} g(x_a)=N_{A/k}(g).
\]

\begin{definition}[Transferred polyvector]\label{def:tpoly}
The \emph{unit polyvector} of the $a$-th support point is the image of $s|_{x_a}\in\wedge^nT_{x_a}X$ under the inverse of the differential of the support map restricted to its block.  In the block coordinates
\[
 \beta_a=g(x_a)\,\partial_{q_{a,1}}\wedge\cdots\wedge\partial_{q_{a,n}} .
\]
The \emph{transferred polyvector} on $U$ is
\[
 \hat s=\sum_{\text{all units}}\beta_a ,
\]
and at a member-saturated datum its \emph{normalized power} is
\[
 \hat s^{[\rho]}=\bigwedge_{\text{all units}}\beta_a .
\]
\end{definition}

\noindent The sum of the unit polyvectors is symmetric in the support points, hence a polynomial in the coefficients of $m$ and $g$; as in Lemma~\ref{lem:leaf} it extends by polynomiality to all members, including those whose algebra is not split.  For a reduced member there is a single unit and $N_{A/k}(g)=s(x)$.

\noindent One has $\hat s^{\wedge\rho}=\rho!\,\hat s^{[\rho]}$, and the naive wedge is not used since $\rho!=0$ when $\rho\geq p$.

\begin{proposition}[Well-definedness]\label{prop:hat-well}
The polyvector $\hat s$ is independent of the choices in Definition~\ref{def:tpoly}, and it extends across the collision strata of $U$ as a section of $\wedge^n\operatorname{Der}(\cO_M)$.
\end{proposition}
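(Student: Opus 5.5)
The plan is to prove both assertions at once by exhibiting $\hat s$ intrinsically, so that no choice survives, and then to reduce extension across collisions to the polynomiality already used in Lemma~\ref{lem:leaf} and Lemma~\ref{lem:normform}.

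\emph{Independence on the split locus.} At a member $Z$ with $m$ separable, $Z$ is a disjoint union of $\ell$ reduced points, and $\Def_Z=\bigoplus_a T_{x_a}X$ canonically. The support map is étale onto $\operatorname{Sym}^\ell X$ near $Z$, so its differential restricted to the block is an isomorphism $\bigoplus_a T_{x_a}X\to T_Z U$ (block part). The unit polyvector $\beta_a$ is then the image of $s(x_a)\in\wedge^nT_{x_a}X$ under a canonical map, independent of the coordinates $q_{a,1},\dots,q_{a,n}$. The coordinate expression $g(x_a)\partial_{q_{a,1}}\wedge\cdots\wedge\partial_{q_{a,n}}$ changes under a change of block coordinates by the Jacobian in both the function and the frame, and the two cancel. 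The sum over $a$ is invariant under relabelling the support points. Distinct members have disjoint parameter sets by Definition~\ref{def:family}, so summing over members introduces no interaction. The transverse coordinates $c_j$ do not enter $\hat s$, so the only remaining choice is the presentation of the member. Two presentations give the same fat point $Z$, hence the same $\Def_Z$ and the same $s|_Z$, and therefore the same $\hat s$.

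\emph{Extension across collisions.} Write $\hat s$ in the attached coordinates $(q,p)$ of a block, in the frame $\partial_{q_i},\partial_{p_a}$. Its coefficients are obtained from $\sum_a g(x_a)\partial_{\zeta_a}\wedge\partial_{y_a}$ by the change of variables $q_i=\pm e_{\ell-i}(\zeta)$, $y=Wp$. These coefficients are symmetric functions of the roots $(\zeta_r)$, divided a priori by powers of $\Delta_\ell$ coming from $W^{-1}$ and from $\partial\zeta/\partial e$. The main obstacle is to show that these denominators cancel, so that the coefficients are genuinely polynomial in $(q,p)$ and the coefficients of $g$.

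My first approach to this obstacle is to use the intrinsic description. $\hat s$ is the image under Serre duality of the map $\Def_Z^\vee\to\Def_Z$ induced by contraction with $s|_Z$ on $A^{\oplus2}$. This map is the trace pairing $B$ of Definition~\ref{def:trans}, read as a bivector via the residue duality $A\cong A^\vee$. Concretely, I would write the bivector as $\sum_{i,a}H'_{ia}\,\partial_{(z^i,0)}\wedge\partial_{(0,z^a)}$ with $H'_{ia}=\Res_A(g\,\check z^{\,i}\check z^{\,a})$ in the dual residue basis. This is visibly regular in the coefficients of $m$ and $g$. On the separable locus, the same Lagrange and partial-fraction computation as in the proof of Lemma~\ref{lem:normform} shows it equals $\sum_a g(x_a)\partial_{\zeta_a}\wedge\partial_{y_a}$.

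Since the two expressions agree on the dense open set of separable $m$ and one of them is regular, $\hat s$ extends. The extension is unique because $U$ is reduced and the separable locus is dense, the same argument as in Remark~\ref{rem:nearsplit}. It is a section of $\wedge^n\operatorname{Der}(\cO_M)$ on the member locus. If the residue-dual description turns out to carry a residual factor, I would fall back on a codimension-two argument. Collisions of two points form a divisor, and along it one checks directly, in the case $\ell=2$, that the coefficient has no pole. Hartogs extension on the normal variety $U$ then handles the higher collision strata.
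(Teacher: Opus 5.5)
Your proposal is correct and follows the paper's proof: independence comes from the intrinsic description of each unit, and extension from the polynomiality of residue structure constants in the attached coordinates $(q,p)$. At the key step it is sharper than the paper. The paper only asserts that the coefficients are ``the structure constants of the block pairing''. You identify the coefficient of $\partial_{q_i}\wedge\partial_{p_a}$ exactly as $\Res_A(g\,\check z^{\,i}\check z^{\,a})$, which is the Hankel matrix $H$ of Lemma~\ref{lem:normform} transformed by congruence with $G^{-1}$, where $G=(\Res_A(z^{i+j}))$ is unimodular. Your separable-locus check, which rests on $\partial q_i/\partial\zeta_r=\check z^{\,i}(\zeta_r)$ and $(W^{-1})_{ar}=\check z^{\,a}(\zeta_r)/m'(\zeta_r)$, is precisely the justification the paper leaves out.

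Do state the restriction $n=2$ explicitly (it is the standing hypothesis of that section), because it is essential. For $n\ge3$ the same computation leaves a factor $m'(\zeta_r)^{-(n-1)}$ that the residue no longer absorbs. Already for $n=3$, $\ell=2$, $g=1$, with $p'$ the coefficients of the third relation, one gets $\hat s(dq_1,dp_1,dp'_1)=2/(q_1^2+4q_0)$. That is a genuine pole along the collision divisor, so neither your argument nor the paper's remark on uniformity in $n$ supports a general-$n$ reading of the statement.
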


\begin{proof}
Each unit polyvector is the image of the intrinsic $n$-vector $s|_{x_a}$ under the inverse of the differential of the support map restricted to its block, hence does not depend on the block coordinates, and a permutation of the units permutes the summands of $\hat s$ without changing the sum.  For the extension across collisions, work in the attached coordinates $(q,p)$ of the member (Definition~\ref{def:transv}), which are the slice coordinates of the block and form a smooth chart through the collision strata.  In these coordinates $\hat s$ is given by the structure constants of the block pairing
\[
 \kappa_0\,\Res_A\bigl(g\cdot\det[\,\cdot\,|\cdots|\,\cdot\,]\bigr),
\]
and these are polynomials in $(q,p)$: the coefficients of $g=s|_{\cO_Z}$ in the basis of $A$ are polynomial in the deformation parameters, and likewise the leaf coefficient $\pm\kappa_0^{\ell}N_{A/k}(g)$ of Lemma~\ref{lem:normform}.  Hence $\hat s$ is regular where units collide; the argument applies to any number of colliding units and extends by polynomiality to members with non-reduced schemes, as in Definition~\ref{def:tpoly}.
\end{proof}

\begin{theorem}[Vanishing of the Schouten bracket]\label{lem:schouten}
One has $[\hat s,\hat s]=0$.
\end{theorem}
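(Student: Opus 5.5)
The plan is to prove $[\hat s,\hat s]=0$ first on the dense open part of the member locus where every member has separable characteristic polynomial, and then extend by continuity. On that open set each member $Z_\lambda$ splits into $\ell_\lambda$ reduced points $x_a$. By Definition~\ref{def:tpoly} the unit polyvectors $\beta_a$ are pulled back from $X$ along the block-restricted support maps. Concretely, in the root coordinates $(\zeta_a,y_a)$ of Lemma~\ref{lem:membertop}, the map to the $a$-th factor of $X$ is a local isomorphism on the $a$-th block. Hence $\beta_a=g(x_a)\,\partial_{\zeta_a}\wedge\partial_{y_a}$, where $g(x_a)=H(\zeta_a,y_a)$ depends only on the $a$-th block coordinates.

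Expanding $[\hat s,\hat s]=\sum_{a,b}[\beta_a,\beta_b]$, we treat the two kinds of terms separately.
\begin{enumerate}
\item[\rm(i)] For $a\neq b$, the coefficient of $\beta_a$ is independent of the block-$b$ coordinates and vice versa. The vector fields involved also live in disjoint sets of coordinates. The Schouten bracket of polyvectors in disjoint variables with coefficients in disjoint variables vanishes, by the Leibniz rule for $[\,\cdot\,,\cdot\,]$ and $[\partial_{\zeta_a},\partial_{\zeta_b}]=0$.
\item[\rm(ii)] For $a=b$, the term $[\beta_a,\beta_a]$ is the pullback of $[s,s]$ on $X$. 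Here $[s,s]\in H^0(X,\wedge^3T_X)=0$ since $\dim X=2$; in general $n$, a top-degree polyvector on $X$ has $[s,s]$ of degree $2n-1>n$ when $n\geq 2$, which again vanishes.
\end{enumerate}
The transverse directions $c_j$ do not enter, because $\hat s$ has no components along them and its coefficients are independent of them in the slice coordinates. So on the separable locus $\hat s$ is a sum of commuting pulled-back Poisson structures, and the bracket vanishes.

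For the extension, Proposition~\ref{prop:hat-well} shows that $\hat s$ is a regular section of $\wedge^n\operatorname{Der}(\cO_M)$ across the collision strata, with coefficients polynomial in the attached coordinates $(q,p)$. The bracket $[\hat s,\hat s]$ is then a regular section of $\wedge^{2n-1}\operatorname{Der}(\cO_M)$ on the smooth chart given by the attached coordinates. It vanishes on the dense open separable locus (the discriminant complement), and $M$ is reduced there, so it vanishes identically on $U$.

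The main obstacle is the change of coordinates in the first step. One must check that the root coordinates $(\zeta,y)$ are genuinely independent slice coordinates in which each $\beta_a$ has the product form above; as Remark~\ref{rem:routeg} warns, using $y_a=p(\zeta_a)$ as dependent variables would spoil the separation. The plan is to use the Jacobian computation of Lemma~\ref{lem:membertop} ($\det=\pm1$ on the separable stratum) to justify working in $(\zeta,y)$. If needed, one can also invoke the identification with Bottacin's tensor (Proposition~\ref{prop:localmodel}) as an alternative route, since Bottacin's bivector is known to be Poisson.
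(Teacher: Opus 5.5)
Your proposal is correct and follows the paper's own argument. In split (root) coordinates, the diagonal terms $[\beta_a,\beta_a]$ vanish because they are $(2n-1)$-vectors built from only the $n$ coordinate fields of one unit block, and the cross terms vanish because each coefficient $g(x_a)$ involves only its own block's coordinates; the identity then extends across the collision strata by the regularity and polynomiality of Proposition~\ref{prop:hat-well}, and your use of the Jacobian of Lemma~\ref{lem:membertop} to confirm that $(\zeta,y)$ is a genuine chart on the separable locus makes explicit a point the paper leaves implicit.
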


\begin{proof}
In the split coordinates of the members
\[
 \hat s=\sum_a f_a\,\partial_{q_{a,1}}\wedge\cdots\wedge\partial_{q_{a,n}},
\]
where $f_a=g(x_a)$ is a polynomial in the coordinates of the $a$-th unit block only.  Each term of $[\hat s,\hat s]$ is the product of an entry of $\hat s$ and a derivative of an entry.  If the two entries lie in the same unit block, the alternating sum over $2n-1$ indices drawn from $n$ coordinates vanishes.  If they lie in distinct unit blocks, the term is $f_b\,\partial f_a$ with $a\neq b$, taken in a coordinate of the $b$-th block, and it vanishes because $f_a$ does not involve the coordinates of the $b$-th block.  For $n=2$ the same computation is the coordinate formula
\[
 [\pi,\pi]^{abc}=\sum_l\bigl(\pi^{al}\partial_l\pi^{bc}+\pi^{bl}\partial_l\pi^{ca}+\pi^{cl}\partial_l\pi^{ab}\bigr).
\]
The identity is a polynomial identity in the coefficients of $m$ and $g$, hence holds at every member by polynomiality; it holds at every point of the locus of separated members, and across the collision strata of $U$ by Proposition~\ref{prop:hat-well}.
\end{proof}

\noindent The identity $[\hat s,\hat s]=0$ is the Poisson condition on a surface.

\noindent The transfer $\hat s$ is relative to the presentation unless the members are intrinsic to the object, as for Hilbert schemes and for moduli whose general object has singular points realizing the leaf.  For principal bundles the members are Hecke points.  On a surface the tensor $\theta_s$ of Proposition~\ref{prop:localmodel} is intrinsic in all cases, and it is the regular extension of $\hat s|_U$.

\begin{remark}[Unit wedge and the norm]\label{rem:unitnorm}
In the split coordinates of a member of length $\ell$ one has
\[
 \prod_{a=1}^{\ell} g(x_a)=N_{A/k}(g),
\]
so the coefficient of $\beta_1\wedge\cdots\wedge\beta_\ell$ is the norm exactly.  In the attached coordinates the change of variables has Jacobian $\pm1$ in the cases used below, namely $n=2$ and $\ell=1$, by Lemma~\ref{lem:membertop}.
\end{remark}

\begin{proposition}[Identification of the transfer]
\label{prop:localmodel}
Let $X$ be a Poisson surface, let $\rho>0$, and let $U\subset M$ be the
member locus of member-saturated data with length-one members, possibly empty.
Then on $U$ one has
\[
\hat s\big|_U=\sum_a\beta_a=\theta_s\big|_U,
\qquad
\hat s^{[\rho]}\big|_U=\bigwedge_a\beta_a=\theta_s^{[\rho]}\big|_U,
\]
where $\beta_a$ are the unit polyvectors of the members as in
Definition~\ref{def:tpoly} and $\theta_s$ is Bottacin's Poisson tensor
\cite{Bot}.
\end{proposition}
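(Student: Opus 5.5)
The plan is to compare both sides at a point $[E]\in U$ through Bottacin's description of $\theta_s$ on the cotangent space. Recall that $T_{[E]}M=\Ext^1(E,E)$ and $T^*_{[E]}M=\Ext^1(E,E\otimes\omega_X)$ by Serre duality. Bottacin's bivector is the composite
\[
 \theta_s\colon \Ext^1(E,E\otimes\omega_X)\xrightarrow{\ \cdot s\ }\Ext^1(E,E)
\]
given by multiplication by $s\in H^0(\omega_X^{-1})$. The corresponding pairing is $\beta_s(\alpha,\alpha')=\Tr(\alpha\circ s\circ\alpha')$, taken in $H^2(\omega_X)\cong k$; this is display \eqref{eq:bot}. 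Since both sides of the first claimed identity are bivectors, it suffices to check agreement on $U$. The second identity then follows by taking $\rho$-fold normalized wedge powers, because $[\rho]$ is defined through the Pfaffian in a leaf frame and $\bigwedge_a\beta_a$ is exactly that Pfaffian when the $\beta_a$ sit in disjoint blocks.

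\emph{Step 1 (localization).} At a member-saturated datum with length-one members $x_1,\dots,x_\rho$, I will split
\[
 \Ext^1(E,E)\;\supset\;\bigoplus_a\Def_{x_a}=\bigoplus_a k^{\oplus2}
\]
via the local-to-global spectral sequence. Each $\Def_{x_a}\cong T_{x_a}X$ is identified with the leaf block through the differential of the support map; by Definition~\ref{def:family} these are slice coordinates. The transverse directions $c_j$ make up the complement.

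\emph{Step 2 (block computation).} For classes supported at $x_a$, the trace pairing reduces to a local residue $\Res_{x_a}\bigl(s\cdot(ab'-a'b)\bigr)$. For length one this is $s(x_a)$ times the standard symplectic form on $T_{x_a}X$; this is Definition~\ref{def:trans} with $g=s|_{x_a}$. By Lemma~\ref{lem:normform} with $\ell=1$, the block coefficient is $\pm g(x_a)$. Dualizing across Serre duality (the transpose discussion in the subsection on pairings and powers) gives exactly $\beta_a=g(x_a)\,\partial_{q_{a,1}}\wedge\partial_{q_{a,2}}$.

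\emph{Step 3 (vanishing of cross terms).} I must show that $\theta_s$ has no components pairing distinct blocks, and none pairing a block with a transverse direction. For distinct points the classes have disjoint supports, so their Yoneda product vanishes. For the transverse directions, I would argue that $\theta_s$ annihilates the conormal to the leaf. This is the step I expect to be the main obstacle. Here I would use that the transverse directions are the Euler scaling fields $c_j\partial_{c_j}$. These come from deformations pulled back from the locally free part $E^{\vee\vee}$, on which the rank count $2\rho=\rk\theta_s$ forces $\theta_s$ to be degenerate. Combining this rank count (Remark~\ref{rem:rho-open}) with the block computation of Step 2, which already accounts for rank $2\rho$, leaves no room for further nonzero components.

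\emph{Step 4 (conclusion).} Both $\theta_s$ and $\hat s$ are regular; for $\hat s$ this is Proposition~\ref{prop:hat-well}. They agree on the dense locus of separated members, so they agree on all of $U$. The identity is polynomial in the member data, so it passes through any remaining collision strata within $U$. If $U$ is empty, the statement is vacuous.
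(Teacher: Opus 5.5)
Your Step~2 is essentially the paper's whole proof. The paper restricts Bottacin's pairing to a length-one block $A\oplus A$ with $A=k$, reads off the matrix $\kappa_0 g(x)J$, takes the bivector $g(x)\,\partial_{q_1}\wedge\partial_{q_2}=\beta_a$, and gets the Pfaffian statement from the disjointness of the units. For $M_G$ and $M_{r,k}(X,L)$ it cites Lemmas~\ref{lem:Gblock} and~\ref{lem:inst-identify}, which you do not address; for principal bundles one also needs that the reduction directions lie in the radical.

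The problem is your extra Step~3, where you try to exclude mixed components by a rank count. Write $\theta_s$ in the coframe $(dq,dc)$ as $\bigl(\begin{smallmatrix}A&W\\-W^{\mathsf T}&K\end{smallmatrix}\bigr)$, with $A$ the $2\rho\times2\rho$ block of Step~2. If $A$ is invertible, then $\rk\theta_s=2\rho+\rk\bigl(K+W^{\mathsf T}A^{-1}W\bigr)$. So $\rk\theta_s=2\rho$ only forces the Schur complement $K+W^{\mathsf T}A^{-1}W$ to vanish, and places no condition on $W$. For $c=1$ it gives nothing at all, since a skew matrix of odd size $2\rho+1$ automatically has rank at most $2\rho$. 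In the testbed of Example~\ref{ex:testbed} ($d=5$, $\rho=2$, $c=1$), the bivector $\hat s+\sum_{a,i}w_{a,i}\,\partial_{q_{a,i}}\wedge\partial_c$ has rank $4$ for every choice of the $w_{a,i}$. Yet it differs from $\hat s$ as soon as one $w_{a,i}\neq0$. So ``no room for further nonzero components'' does not follow.

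What you need is $W=0$, i.e.\ that the class coordinates $c_j$ are Casimirs. The assertion that $\theta_s$ annihilates the conormal of the leaf is tautological; the content is that the $dc_j$ lie in that conormal. The paper's proof takes this block-diagonality for granted; it is supplied by the local zero-block Lemmas~\ref{lem:zeroblock} and~\ref{lem:hd-zeroblock}. There, a class with zero component in a member block pairs trivially with every class of that block, because the pairing is $\kappa_0\Res_A\bigl(g\cdot\det[\,\cdot\,|\,\cdot\,]\bigr)$ and a zero column kills the determinant. Only once $W=0$ is known does the radical count of Lemma~\ref{lem:radical} (with $c=c_0$) give $K=0$; that is the part your rank argument does handle.

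Two smaller points:
\begin{itemize}
\item In the local-to-global spectral sequence, the local term $H^0(\mathcal{E}xt^1(E,E))$ is a quotient of $\Ext^1(E,E)$, not a subspace. The ``classes supported at $x_a$'' of Step~1 therefore require a choice of lift before the disjoint-support argument makes sense.
\item Since the members in $U$ are reduced with disjoint supports, $U$ has no collision strata. The rank argument instead breaks down where a member meets $D$: there $g(x_a)=0$ and $A$ is singular. That locus has to be reached by the density argument of your Step~4.
\end{itemize}
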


\begin{proof}
At a length-one member $Z$ at $x$ the local algebra is $A=k$ and the deformation module is $A\oplus A$.  Under Serre duality the restriction of Bottacin's pairing \eqref{eq:res} to these directions is the trace pairing of Definition~\ref{def:trans} with weight $g=s|_A$.  The pairing has matrix $\kappa_0 g(x)J$, and the dual bivector is
\[
g(x)\,\partial_{q_1}\wedge\partial_{q_2},
\]
which is the unit $\beta_a$ of Definition~\ref{def:tpoly}.  Hence $\hat s|_U=\theta_s|_U$.  For $n=2$ the wedge of the units equals the Pfaffian power $\theta_s^{[\rho]}$, since the normalized wedge of a bivector does not depend on the choice of units.  The local computations in the principal and framed cases are Lemma~\ref{lem:Gblock} and Lemma~\ref{lem:inst-identify}.
\end{proof}

\noindent The presentations of Section~\ref{sec:transfer} produce length-one members, and the identification is used only in that case.  The equality for a member of length $\ell>1$ is not needed below.

\noindent The identification is special to surfaces: only there is a
bivector an anticanonical section, hence the weight of the trace
pairing.  Since $\theta_s$ is a global section of
$\wedge^2\operatorname{Der}(\cO_M)$ and agrees with $\hat s$ on $U$, the
leafrian is $\LF_s=\theta_s^{[\rho]}\wedge\Omega$, whose restriction
to $U$ is the transfer leaf factor.  The regularity of $\LF_s$ is likewise intrinsic, by Lemma~\ref{lem:global}.

\begin{remark}[Input]\label{rem:hat-input}
The transfer uses the local algebra of the member and the weight,
through the norm factor of Definition~\ref{def:transv}.  The
construction of \cite{Bot} uses the cup product and the trace of
$\Ext^1(E,E)$; the cup of two classes lands in $\Ext^2$ and the trace
pairing with $k$ requires the top $\Ext^2$, and the two coincide on a
surface.  On the overlap the two constructions agree by
Proposition~\ref{prop:localmodel}.
\end{remark}

\section{The leafrian and the splitting criterion}\label{sec:principle}

The ingredients of Sections~\ref{sec:trans} and~\ref{sec:hat} assemble into an anticanonical section whose top coefficient is the product of the block coefficients.

\subsection{Frame data and deficiency}

\begin{definition}[Euler fields and Euler frame]\label{def:euler}
Given transverse coordinates $c_1,\dots,c_c$ at $x$, their \emph{Euler fields} are the infinitesimal scalings
\[
 w_i=c_i\,\partial_{c_i},\qquad 1\le i\le c ,
\]
and their wedge
\[
 \Omega=w_1\wedge\cdots\wedge w_c
\]
is the \emph{Euler frame}.  Its radial determinant is
\[
 \eta=\det[w_1,\dots,w_c]=\Bigl(\prod_jc_j\Bigr)\Delta
\]
with $\Delta$ a unit.
\end{definition}

\begin{definition}[Frame datum]\label{def:frame}
A \emph{frame datum} at $x$ consists of four parts:
\begin{itemize}
\item[\rm(i)] \emph{blocks}: a family of blocks with trace pairings on the leaf directions, the pairings of distinct blocks vanishing against each other;
\item[\rm(ii)] \emph{additivity}: the deformation space is the direct sum of the blocks;
\item[\rm(iii)] \emph{transverse frame}: an Euler frame in the sense of Definition~\ref{def:euler};
\item[\rm(iv)] \emph{coverage}: the attached coordinates of the blocks are pairwise disjoint and, together with the transverse coordinates $c_j$, exhaust the slice coordinates.
\end{itemize}
\end{definition}

\noindent The frame datum is automatically admissible over surfaces: the presentation theory of Section~\ref{sec:transfer} supplies global Euler fields.  We write $\partial_{[\mathrm{leaf}]}$ and $\partial_{[\mathrm{trans}]}$ for the ordered wedges of the leaf and transverse coordinate frames, and $\partial_{[d]}$ for the full frame.

\begin{definition}[Realization point]\label{def:realization}
A point at which the local data of a frame datum exist is a \emph{realization point}.  The realization points may form a proper subset.
\end{definition}

\begin{definition}[Leaf $n$-vector]
\label{def:leafnv}
Let $M$ be a variety of dimension $d$ with regular locus
$M^{\mathrm{reg}}$, let $n,\rho\ge 1$, and let $c\ge 0$ satisfy
$n\rho+c=d$.
A \emph{leaf $n$-vector} of rank $\rho$ on $M$ is a triple
$\bigl(\vartheta,(\beta_a)_{a=1}^{\rho}\bigr)$ where
$\vartheta\in H^0(M,\wedge^n \operatorname{Der}(\cO_M))$ and the
$\beta_a$ are decomposable sections of
$\wedge^n\operatorname{Der}(\cO_M)$ on $M^{\mathrm{reg}}$ such that
\[
\vartheta\big|_{M^{\mathrm{reg}}}=\sum_{a=1}^{\rho}\beta_a,
\]
and the $\beta_a$ span a distribution of rank $n\rho$ on
$M^{\mathrm{reg}}$.
The distribution is the \emph{leaf} of $\vartheta$.
\end{definition}

\begin{definition}[Leaf factor and leafrian]
\label{def:leaffactor}
Let $\bigl(\vartheta,(\beta_a)\bigr)$ be a leaf $n$-vector of rank
$\rho$ on $M$.
The \emph{leaf factor} of this data is
\[
\vartheta^{[\rho]}\;=\;\beta_1\wedge\cdots\wedge\beta_\rho
\]
on $M^{\mathrm{reg}}$, extended by reflexivity when $M$ is normal.
Let $\Omega=w_1\wedge\cdots\wedge w_c$ be an Euler frame with
$n\rho+c=d$.
The \emph{leafrian} of the data is
\[
\LF\;=\;\vartheta^{[\rho]}\wedge\Omega
\;\in\;H^0(M,\omega_M^{[-1]}),
\]
and $\LF\in H^0(M,\omega_M^{-1})$ when $M$ is Gorenstein.
\end{definition}

\begin{remark}
The leaf factor is defined from the chosen units, and no independence of
decomposition is required in general. For $n=2$ the wedge
$\beta_1\wedge\cdots\wedge\beta_\rho$ is independent of the choice of
units and equals the Pfaffian power of the bivector $\vartheta$. In that
case the units may be forgotten and $\vartheta^{[\rho]}$ is a global
section determined by $\vartheta$ alone. For $n>2$ the units are kept in
the data. The naive wedge satisfies
\[
\vartheta^{\wedge\rho}=\rho!\,\vartheta^{[\rho]}
\]
when $\rho<p$. The normalized wedge remains defined when $\rho\ge p$,
and it is the object used throughout.
\end{remark}

\begin{remark}[Independence of the frame]
\label{rem:frame-indep}
Two Euler frames whose difference lies in the leaf distribution give the
same leafrian. Two presentations give frames whose radial determinants
differ by a regular unit, and the corresponding leafrians differ by that
unit. If the traces of the $(p-1)$ powers are nonzero constants, the unit
is a constant. This is the content of Lemmas \ref{lem:leafclass} and
\ref{lem:unique} and of Theorem \ref{thm:hd-canonicity}, and it is what
makes $[s]\mapsto[\LF_s]$ well defined on projective spaces.
\end{remark}

\begin{definition}[Leaf $n$-vector of a weight]
\label{def:leafweight}
Let $X$ be a smooth projective Poisson surface, let
$s\in H^0(X,\omega_X^{-1})$ be nonzero, and let $\theta_s$ be Bottacin's
Poisson tensor on $M$.
Set $n=2$, and let $\rho$ be the number of units of a member-saturated datum as in Section~\ref{sec:prelim}.
The \emph{leaf $n$-vector of $s$} is $\vartheta_s=\theta_s$, with leaf
factor $\vartheta_s^{[\rho]}$ in the sense of Definition~\ref{def:leaffactor}.
The \emph{leafrian of $s$} is
\[
\LF_s\;=\;\vartheta_s^{[\rho]}\wedge\Omega.
\]
\end{definition}

\begin{remark}
For $n=2$ this is Definition \ref{def:leaffactor} applied to any leaf
$n$-vector with $\vartheta=\theta_s$, and the result does not depend on a
unit decomposition. By Proposition \ref{prop:localmodel} the leaf factor
$\theta_s^{[\rho]}$ agrees on the member locus $U$ with the transfer leaf
factor $\hat s^{[\rho]}$, and $\LF_s$ agrees on $U$ with the
transfer leafrian. No global unit decomposition of $\theta_s$ is
required. In the framed case the weight lies in
$H^0(X,\omega_X^{-1}(-2L))$, and the same definition applies on the open
set $X\setminus L$.
\end{remark}

Let $f\in\widehat\cO$ and consider
\begin{equation}\label{eq:fact}
 f=a\cdot\prod_\lambda\beta_\lambda+\delta,\qquad a\in k^\times,
\end{equation}
where each $\beta_\lambda$ is a norm factor or a single coordinate (the transverse coordinates $c_j$ are single-coordinate blocks, so they occur among the $\beta_\lambda$), and where the attached coordinates of distinct blocks are pairwise disjoint and, together with the $c_j$, exhaust the slice coordinates $t$.

\begin{definition}[Deficiency]\label{def:def}
The correction $\delta$ in \eqref{eq:fact} is \emph{deficient} if no monomial of the expansion of $f^{p-1}$ that draws at least one factor from $\delta$ equals the top monomial $t^{(p-1)\mathbf 1}$.  One sufficient form is \emph{exact factorization}: $f=a\prod_\lambda\beta_\lambda$ with $\delta=0$.
\end{definition}

\begin{remark}[Why the forms suffice]\label{rem:defreason}
In the Frobenius box every exponent is at most $p-1$, so a factor that cannot supply the exponent $p-1$ of some variable keeps the top monomial out of reach.  The gap must be measured against the whole term $\beta^{p-1-j}\delta^{j}$ of $f^{p-1}$, not against $\delta$ alone.  Exact factorization covers the member-saturated cases; the remaining cases are handled by the reduced degeneracy complex.  A non-constant unit can refill a deficient variable: with $p=3$, $\ell=1$, $\beta=\gamma q_0(h_0+h_1p_0)$ and a perturbation $\delta'=p_0$ one has $b_0(\beta(1+\delta'))=\gamma^2(h_1^2+4h_0h_1+h_0^2)$, not $\gamma^2h_1^2$.
\end{remark}

\subsection{The shape theorem}

\begin{theorem}[Shape theorem]\label{thm:head}
Let $Y$ be smooth, let $\sigma\in H^0(Y,\omega_Y^{-1})$ have local expression $f$ at a closed point $x$ in a fixed frame, and assume:
\begin{enumerate}
\item[\rm(1)] \emph{blocks}: $f$ admits the factorization \eqref{eq:fact}, each $\beta_\lambda$ being a norm factor or a single coordinate;
\item[\rm(2)] \emph{coverage}: the attached coordinates of distinct blocks are pairwise disjoint and, together with the transverse coordinates $c_j$, exhaust the slice coordinates $t$;
\item[\rm(3)] \emph{extraction}: the correction $\delta$ of \eqref{eq:fact} is deficient.
\end{enumerate}
Then $b_0(\sigma)=a^{p-1}\prod_\lambda b_\lambda$ exactly, where $b_\lambda$ is the top coefficient of $\beta_\lambda^{p-1}$ in its own variables.  If moreover each
$b_\lambda$ is nonzero and $\Tr(\sigma^{p-1})$ is a constant function, then a constant rescaling of $\sigma$ splits $Y$, compatibly with $Z(\sigma)$.
\end{theorem}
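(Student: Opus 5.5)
We expand $f^{p-1}$ binomially in the decomposition \eqref{eq:fact}:
\[
 f^{p-1}=a^{p-1}\prod_\lambda\beta_\lambda^{p-1}+\sum_{j\ge1}\binom{p-1}{j}\Bigl(a\prod_\lambda\beta_\lambda\Bigr)^{p-1-j}\delta^{j}.
\]
Every monomial in the second sum draws at least one factor from $\delta$. By hypothesis~(3) and Definition~\ref{def:def}, none of these monomials equals $t^{(p-1)\mathbf 1}$. Hence $b_0(\sigma)$ equals the coefficient of $t^{(p-1)\mathbf 1}$ in $a^{p-1}\prod_\lambda\beta_\lambda^{p-1}$, read in the Frobenius box $\widehat\cO/(t_1^p,\dots,t_d^p)$.

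Next we use coverage~(2). Each $\beta_\lambda$ is a polynomial, or power series, in its own attached coordinates $t^{(\lambda)}$ only. These coordinate sets are pairwise disjoint, and together with the $c_j$, which are themselves single-coordinate blocks, they exhaust $t$. So the box factors as a tensor product of the boxes in each variable set, and $t^{(p-1)\mathbf 1}=\prod_\lambda (t^{(\lambda)})^{(p-1)\mathbf 1}$. The coefficient of a product of series in disjoint variables at a product monomial is the product of the coefficients, because a monomial in $t$ determines its restriction to each variable set uniquely. This gives $b_0(\sigma)=a^{p-1}\prod_\lambda b_\lambda$ exactly. The one point to watch is that a block's norm factor, for instance $N_{A/k}(g)$ in the attached coordinates $(q,p)$ of Definition~\ref{def:transv}, must genuinely involve only that block's coordinates. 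This is exactly what hypothesis~(1) with~(2) asserts, so no cross terms arise.

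For the splitting statement, assume each $b_\lambda\neq0$. Then $b_0(\sigma)\neq0$ at $x$, since $a\in k^\times$. By Lemma~\ref{lem:inv} this nonvanishing is intrinsic: it is the value at $x$ of $\Tr(\sigma^{p-1})$, up to the nonzero frame factor of Lemma~\ref{lem:socle}. Since $\Tr(\sigma^{p-1})$ is assumed constant, the constant is nonzero. Theorem~\ref{thm:bk} then gives $\lambda\in k^\times$ with $\lambda^{p-1}\Tr(\sigma^{p-1})=1$, and $(\lambda\sigma)^{p-1}$ splits $Y$. Compatibility with $Z(\sigma)=Z(\lambda\sigma)$ follows from Lemma~\ref{lem:loccompat}, which needs no completeness.

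The main obstacle is the identification of the socle coefficient with the trace value in the middle step. Strictly, the Frobenius box is only stable under linear coordinate changes, so one must make sure the slice coordinates in (2) are the ones in which $\Tr(\sigma^{p-1})(x)$ is read off. I would handle this by citing Theorem~\ref{thm:bk} in the given chart directly: the trace at $x$ is the coefficient of $(t_1\cdots t_d)^{p-1}$ in any regular system of parameters, with the frame $\partial_{[d]}$ fixed. In that case no change of coordinates is needed, and Lemma~\ref{lem:inv} is used only to pass to other points.
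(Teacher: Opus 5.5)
Your proposal is correct and follows the paper's proof. Deficiency removes every term drawing on $\delta$, and coverage makes the socle extraction factor over the disjoint block variables, since each variable lies in exactly one block and the exponent $p-1$ is allocated uniquely; the splitting then comes from Theorem~\ref{thm:bk} by a constant rescaling, with compatibility from Lemma~\ref{lem:loccompat}. You also make explicit a step the paper leaves implicit, namely that $b_0(\sigma)\neq0$ at $x$ forces the constant $\Tr(\sigma^{p-1})$ to be nonzero.
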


\begin{proof}
By (2) the blocks occupy disjoint variable sets, and the extraction of $t^{(p-1)\mathbf 1}$ from $\prod_\lambda\beta_\lambda^{p-1}$ is the product of the block extractions: each variable belongs to exactly one block, so the allocation of the exponent $p-1$ is unique.  The deficiency hypothesis kills every monomial drawing on $\delta$, so $b_0=a^{p-1}\prod_\lambda b_\lambda$.  The product of nonzero factors is nonzero.  If $\Tr(\sigma^{p-1})$ is the constant $c$, rescaling $\sigma$ by a constant $\lambda$ with $\lambda^{p-1}c=1$ yields $\Tr=1$, by Theorem~\ref{thm:bk} and the algebraic closedness of $k$; compatibility is by Lemma~\ref{lem:loccompat}.  A nonzero constant multiple of $\sigma$ has the same zero scheme.  If $\rho=0$ there are no leaf blocks, the product $\prod b_\lambda$ is $1$, and $b_0=a^{p-1}$ for the product volume.
\end{proof}

\begin{lemma}[Constant unit]\label{lem:constunit}
In the frame of the datum of Definition~\ref{def:frame} the local expression of the leafrian is
$f=a\prod_\lambda\beta_\lambda$ with
$a=\pm\prod_\lambda\kappa_0^{\ell_\lambda}$ in the block coordinates of the members, and $a\in k^\times$ is constant.
\end{lemma}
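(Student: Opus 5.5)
The plan is to compute the local expression of $\LF=\vartheta^{[\rho]}\wedge\Omega$ directly in the frame supplied by the frame datum of Definition~\ref{def:frame}, and to read off the factorization block by block. By additivity and coverage (parts (ii) and (iv)), the slice coordinates split into the attached coordinates of the blocks $B_\lambda$ together with the transverse coordinates $c_j$. The pairings of distinct blocks vanish against each other by part (i). Consequently the matrix of $\vartheta$ restricted to the leaf directions is block diagonal, with one block for each member.

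\emph{Leaf part.} First I would treat the leaf factor. Its Pfaffian normalization is multiplicative over a block-diagonal decomposition, so $\vartheta^{[\rho]}=\pm\prod_\lambda \Pf(\text{block}_\lambda)\,\partial_{[\mathrm{leaf}]}$. The sign comes only from reordering the leaf frame into the fixed order $\partial_{[\mathrm{leaf}]}$. Next, on each block I would apply Lemma~\ref{lem:normform} in the deformation parameters (the attached coordinates $(q,p)$ of Definition~\ref{def:transv}). This gives $\Pf(\text{block}_\lambda)=\pm\kappa_0^{\ell_\lambda}N_{A_\lambda/k}(g_\lambda)=\pm\kappa_0^{\ell_\lambda}\beta_\lambda$. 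For length-one members this agrees with the unit wedge by Proposition~\ref{prop:localmodel} and Remark~\ref{rem:unitnorm}. At that point I would also record that the Hankel identity of Lemma~\ref{lem:normform} is an exact polynomial identity in the block coordinates, and not merely an identity up to a nonconstant unit.

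\emph{Transverse part.} Then I would wedge with $\Omega=\prod_j w_j$, where $w_j=c_j\partial_{c_j}$. Since the $c_j$ are among the slice coordinates and are disjoint from the leaf coordinates, we have $\Omega=\prod_j c_j\,\partial_{[\mathrm{trans}]}$ on the nose. The unit $\Delta$ of Definition~\ref{def:euler} equals $1$ here, because the Euler fields are taken in the very coordinates of the frame. Each $c_j$ is a single-coordinate block, which is exactly how \eqref{eq:fact} lists it. Collecting the factors gives $f=a\prod_\lambda\beta_\lambda$ with $a=\pm\prod_\lambda\kappa_0^{\ell_\lambda}$, after reordering $\partial_{[\mathrm{leaf}]}\wedge\partial_{[\mathrm{trans}]}$ into $\partial_{[d]}$ at the cost of one further sign. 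Since $\kappa_0=1$ is fixed, $a$ is a sign, and in particular it is a constant in $k^\times$.

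\emph{Main obstacle.} The step I expect to be delicate is making sure that no hidden nonconstant unit enters the expression. Two sources need checking. The first is the frame change between the deformation parameters of the blocks and the ambient slice coordinates. The second is the possible non-unit $\Delta$ in the radial determinant. For the first, the frame datum requires the attached coordinates to be literally among the slice coordinates (Definition~\ref{def:family}), so the Jacobian is a permutation matrix. Where split-point coordinates are used instead, the Jacobian is $\pm1$ by Lemma~\ref{lem:membertop}. For the second, I would choose the Euler fields in the frame coordinates themselves. If a presentation supplies a different frame, I would invoke Remark~\ref{rem:frame-indep} to absorb the discrepancy, which is a regular unit. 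In that case I would note that such a unit is constant once the traces are nonzero constants, by Lemma~\ref{lem:unitrigid}. This does not affect the statement in the chosen frame.
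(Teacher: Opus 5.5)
Your proposal is correct and follows the paper's proof. Both arguments use the same steps: the leaf matrix is block diagonal because cross pairings vanish, Lemma~\ref{lem:normform} gives each block Pfaffian as $\pm\kappa_0^{\ell_\lambda}N(g_\lambda)$ (matched against the unit wedge), coverage leaves no further leaf directions, and $\Omega=(\prod_jc_j)\,\partial_{[\mathrm{trans}]}$ holds exactly, so only shuffle signs and powers of $\kappa_0$ remain. The only difference is minor: you note that no Jacobian arises at all in the attached coordinates, since they are literally slice coordinates by Definition~\ref{def:frame}(iv), whereas the paper obtains a Jacobian of $\pm1$ from Lemma~\ref{lem:membertop} and restricts to $\ell_\lambda=1$.
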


\begin{proof}
The pairings of distinct members vanish against
each other, so the leaf matrix of $\theta_s$ in the deformation bases is block
diagonal, and by Lemma~\ref{lem:normform} the Pfaffian of each block is
exactly $\pm\kappa_0^{\ell_\lambda}N_{A_\lambda/k}(g_\lambda)$.  The leaf factor is the wedge of the unit polyvectors of
Definition~\ref{def:tpoly}, which for $n=2$ equals the Pfaffian power.  In the split coordinates of a member the
product of the unit coefficients is
\[
 \prod_a g(x_a)=N_{A_\lambda/k}(g_\lambda),
\]
so the same product of norms
arises.  The change from the split coordinates to the attached
coordinates of the member has Jacobian $\pm1$ by Lemma~\ref{lem:membertop} in the cases used below, namely $n=2$ and $\ell_\lambda=1$; these are the
cases in which the transferred leaf factor occurs in the statements
below, and no further constant is introduced.  Coverage gives
$\sum_\lambda 2\ell_\lambda=2\rho$, so there are no remaining leaf directions.
The Euler frame is $\Omega=(\prod_jc_j)\,\partial_{[\mathrm{trans}]}$ exactly,
and wedging with $\varpi=\pm\bigl(\prod_\lambda
\kappa_0^{\ell_\lambda}N(g_\lambda)\bigr)\,\partial_{[\mathrm{leaf}]}$ gives
$f=\pm\prod_\lambda\kappa_0^{\ell_\lambda}N(g_\lambda)\cdot\prod_jc_j$ up to
the sign of the shuffle.  Every factor other than the norms and the
coordinates is a constant.
\end{proof}

\begin{lemma}[Zero block coordinates]\label{lem:zeroblock}
Let $Z$ be a member of a frame datum with block $\Hom(I_Z/I_Z^2,\cO_Z)=A\oplus A$ and weight $g$, and let $\xi$ be a deformation class of zero component in this block.  Then $\xi$ pairs trivially with every block class under the trace pairing; if two classes have zero block component, their pairing has no local contribution at the support of $Z$.  Consequently the matrix of the pairing is block diagonal between the member blocks and the classes of zero block coordinates.
\end{lemma}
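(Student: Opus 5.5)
The plan is to compute directly from Definition~\ref{def:trans}, using the locality of the trace pairing. For $n=2$ the pairing of two deformation classes $v_1=(a,b)$ and $v_2=(a',b')$ is $\kappa_0\,\Res_A\bigl(g\cdot(ab'-a'b)\bigr)$. The residue is taken at the support of $Z$, so it depends only on the components of $v_1,v_2$ in $\Def_Z=\Hom(I_Z/I_Z^2,\cO_Z)=A\oplus A$. That is, it depends only on the images of the classes under restriction to the block.

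The first step is to make this restriction explicit. Under the additivity part of the frame datum (Definition~\ref{def:frame}(ii)), the deformation space decomposes as a direct sum of the blocks and the complementary directions. The local contribution at $\operatorname{supp}Z$ of the global pairing (Bottacin's pairing under Serre duality, as in Proposition~\ref{prop:localmodel}) is therefore obtained by composing the projection $\xi\mapsto\xi|_Z\in A\oplus A$ with the trace pairing of $(Z,g)$. A class $\xi$ with zero block component has $\xi|_Z=(0,0)$.

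The second step is a bilinearity observation. Since $\det[v_1|v_2]=ab'-a'b$ is bilinear, it vanishes as soon as one argument is $(0,0)$. Hence $B(\xi,\eta)=\kappa_0\Res_A(g\cdot 0)=0$ for every block class $\eta$, which gives the first claim. If both $\xi$ and $\eta$ have zero block component, the integrand at $\operatorname{supp}Z$ is identically zero. So there is no local contribution there, and whatever pairing they have is supported elsewhere, at other members or at transverse directions. This gives the second claim.

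The block-diagonal conclusion then follows by ordering the basis as (member block classes, classes of zero block coordinates). The off-diagonal entries are pairings of the first kind, which vanish. Combined with clause (i) of Definition~\ref{def:frame}, which says the pairings of distinct blocks vanish against each other, the whole matrix is block diagonal.

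The main obstacle is the first step: justifying that the global pairing localizes, so that its contribution near $Z$ really is the residue pairing of the restricted classes. I would get this from Proposition~\ref{prop:localmodel} and the locality of the Serre-duality trace on a surface. The trace of a cup product in $\Ext^2(E,E\otimes\omega)$ is computed by a sum of local residues at the supports of the modifications. The classes in the complement of $\Def_Z$ are exactly those whose local Ext component at $\operatorname{supp}Z$ vanishes.
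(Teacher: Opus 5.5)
Your argument is the paper's own: in block coordinates the pairing is $\kappa_0\Res_A\bigl(g\cdot(ab'-a'b)\bigr)$. It vanishes as soon as one argument has block coordinates $(0,0)$, and two zero-block classes leave no residue at the support of $Z$, which gives the block-diagonal form. The paper does not carry out your localization step; it reads the pairing directly in the block coordinates of the frame datum, so that step is not needed. As you phrased it, the step is also too strong: the global trace is not in general a sum of residues at the modification points (it fails for the Serre-duality trace when $D=\emptyset$), and only the case where one class is supported at $Z$ would matter.
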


\begin{proof}
In the block coordinates the pairing is $B_g\bigl((a,b),(a',b')\bigr)=\kappa_0\Res_A\bigl(g\cdot(ab'-a'b)\bigr)$; for $\xi$ of block coordinates $(a_\xi,b_\xi)=0$ this gives $0$ against every block class $(a,b)$, and two zero-block classes contribute no residue at the support of $Z$.
\end{proof}

\subsection{The leafrian as a global section}

On a moduli space $M$ the leaf factor $\varpi$ and the Euler fields are global; the section $\LF=\varpi\wedge\Omega$ is a global anticanonical section.  On a surface the leaf factor is the Pfaffian $\theta_s^{[\rho]}$, which agrees with the transfer leaf factor $\hat s^{[\rho]}$ on the member locus $U$ by Proposition~\ref{prop:localmodel}.  The radial determinant $\eta=\prod_jc_j$ of the Euler frame is a section of $\det Q$ on the open where the leaf rank is constant.  Here $Q=T_M/\Lambda$, where $\Lambda$ is the leaf distribution spanned by the block directions.  Two frames of one datum differ by a constant matrix, so $\eta$ is well defined up to constants.  We call this class the \emph{deformation volume} $\eta_{\mathrm{can}}$.  The identification $\varpi\wedge\Omega=\varpi\otimes\eta$ holds because leaf components of the $w_i$ die against the leaf wedge; thus $\LF$ depends on the frame through $\eta$ alone.

\begin{theorem}[The leafrian]\label{thm:normsec}
Let $\rho>0$.  On a surface the section $\LF_s=\theta_s^{[\rho]}\wedge\Omega$ is a nonzero global section of $\omega_M^{[-1]}$, well defined up to a regular unit in the Euler frame.  At a member-saturated point of the member locus $U$ its local expression in the coordinates of the datum is
\[
 f=\prod_\nu N_{A_\nu/k}(g_\nu)\cdot\prod_jc_j\cdot u,
\]
where $u\in k^\times$.
\end{theorem}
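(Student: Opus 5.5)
The plan has three parts: build $\LF_s$ as a section over the smooth locus, extend it reflexively, and compute its local expression at a member-saturated point. Non-vanishing will come out of that local formula.

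\emph{Global section.} On the smooth locus $M^{\mathrm{reg}}$, Bottacin's tensor $\theta_s$ is a global section of $\wedge^2\operatorname{Der}(\cO_M)$. Since $n=2$, its normalized Pfaffian power $\theta_s^{[\rho]}$ is a global section of $\wedge^{2\rho}T_M$, determined by $\theta_s$ alone and with no choice of units (remark after Definition~\ref{def:leaffactor}). The Euler frame $\Omega$ is supplied globally by the presentation theory of Section~\ref{sec:transfer}. The wedge $\theta_s^{[\rho]}\wedge\Omega$ therefore lies in $\wedge^{2\rho+c}T_M=\omega_M^{-1}$ on $M^{\mathrm{reg}}$, because the budget gives $2\rho+c=d$.

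\emph{Reflexive extension and frame dependence.} The leaf components of the $w_j$ die against the leaf wedge, so $\theta_s^{[\rho]}\wedge\Omega=\theta_s^{[\rho]}\otimes\eta$. The section thus depends on the frame only through the radial determinant $\eta$. Two presentations change $\eta$ by a regular unit (Remark~\ref{rem:frame-indep}), so $\LF_s$ is well defined up to a regular unit. Where $M$ is normal, the section extends across the singular locus, which has codimension $\ge2$, as a section of the reflexive sheaf $\omega_M^{[-1]}$.

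\emph{Local formula.} At a member-saturated point of $U$ with length-one members, Proposition~\ref{prop:localmodel} identifies $\theta_s|_U$ with $\hat s|_U=\sum_a\beta_a$. By Lemma~\ref{lem:zeroblock} and the disjointness of member supports, the pairing matrix is block diagonal, both between distinct members and between members and zero-block classes. The Pfaffian therefore factors as the product over blocks, and each block contributes $\pm\kappa_0^{\ell_\nu}N_{A_\nu/k}(g_\nu)$ by Lemma~\ref{lem:normform}. Coverage (Definition~\ref{def:frame}(iv)) shows that the leaf directions are exhausted, and $\Omega=(\prod_j c_j)\,\partial_{[\mathrm{trans}]}$. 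This is precisely Lemma~\ref{lem:constunit}, which yields $f=\prod_\nu N(g_\nu)\prod_j c_j\cdot u$ with $u=\pm\prod\kappa_0^{\ell_\nu}\in k^\times$.

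\emph{Non-vanishing.} Since $\rho>0$, at least one block exists. Each factor in the local formula is a nonzero function: each norm $N(g_\nu)$ is nonzero for $s\ne0$ at a general choice of support point, and each $c_j$ is a coordinate. Hence $f\ne0$, and $\LF_s\neq0$.

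The main obstacle is the first step. It requires a global, regular Euler frame with $2\rho+c=d$ and the claim that $\theta_s^{[\rho]}$ is not identically zero. The latter needs generic rank $2\rho$, which I take from Remark~\ref{rem:rho-open} together with the nonemptiness of a member-saturated point in $U$. If the global frame is only available on opens, I would instead glue the local leafrians using the unit comparison of Remark~\ref{rem:frame-indep}. That gluing gives $\LF_s$ only as a section twisted by a unit cocycle, and I would expect to resolve this via the lemmas cited there.
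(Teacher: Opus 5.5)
Most of your argument is the paper's own proof. You get regularity from the Pfaffian power $\theta_s^{[\rho]}$ of Bottacin's tensor wedged with the Euler fields, the frame enters only through $\eta$, and the local expression is read off from Lemma~\ref{lem:constunit}. Your fallback of gluing local leafrians through a unit cocycle is unnecessary. Lemma~\ref{lem:descent} already makes each $w_j=c_j\partial_{c_j}$ a global vector field on $M$, and that is exactly what the paper invokes.

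The one step that is not justified in general is non-vanishing. You deduce $\LF_s\neq0$ from the local formula at a member-saturated point of $U$, and you explicitly assume such a point exists. The paper does not supply one. Proposition~\ref{prop:localmodel} allows $U$ to be empty, and Remark~\ref{rem:regimes} records that member-saturated tops may be strictly semistable, hence outside $M$; for type $(2,0,2)$ on $\mathbb P^2$ they are the sheaves $I_x\oplus I_y$. In that situation your argument gives neither $\theta_s^{[\rho]}\not\equiv0$ nor $\eta\not\equiv0$.

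The paper rests this step on Proposition~\ref{prop:frame}, whose ingredients need no point of $U$:
\begin{itemize}
\item[(1)] By Remark~\ref{rem:rho-open} the generic rank of $\theta_s$ is $2\rho$, so $\theta_s^{[\rho]}$ is generically nonzero.
\item[(2)] Lemma~\ref{lem:qrank}, which uses only {\rm(P2)}, shows that the $Q$-projections of the class Euler fields are generically independent, so $\eta\not\equiv0$.
\item[(3)] Lemma~\ref{lem:reduction} then gives $\theta_s^{[\rho]}\wedge\Omega\neq0$ at a general point of the irreducible $M$.
\end{itemize}
With this substitution your proof covers the statement. The local formula is then needed only for the displayed expression, not for non-vanishing.
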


\begin{proof}
The norm factors are the resultants $\operatorname{Res}(m_\nu,\hat g_\nu)$, regular in the deformation parameters, and the transverse factor is the deformation volume.  In the coordinates of the datum the leaf wedge contributes $\prod_\nu N(g_\nu)$ and the radial determinant contributes $\prod_jc_j$; by Lemma~\ref{lem:constunit} the remaining unit is constant.  At a general point of the evaluation locus the leaf rank is $2\rho$ and the class directions are transverse, so the local expression is nonzero; this is the content of Proposition~\ref{prop:frame}.  Two presentations of one object differ by constant rescalings of the class coordinates, hence two data give $\LF$ and $u\LF$ with $u\in k^\times$.  Regularity of $\LF_s$ on $M$ is intrinsic: $\theta_s^{[\rho]}$ is the Pfaffian power of Bottacin's tensor, hence a global section of $\wedge^{2\rho}T_M$, and the Euler fields are global by Lemma~\ref{lem:descent}, so $\LF_s=\theta_s^{[\rho]}\wedge\Omega$ is a global section of $\omega_M^{[-1]}$.
\end{proof}

\begin{lemma}[Leaf-class invariance]\label{lem:leafclass}
Let $\pi$ be a bivector of generic rank $2\rho$ with leaf $\Lambda=\operatorname{im}\pi^\sharp$. Let $\Omega$ and $\Omega'$ be $c$-vector fields whose difference lies in the ideal of $\Lambda$, so that every term of $\Omega-\Omega'$ has at least one $\Lambda$-component. Then
\[
 \pi^{[\rho]}\wedge\Omega=\pi^{[\rho]}\wedge\Omega'.
\]
\end{lemma}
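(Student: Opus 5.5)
The plan is to reduce the claim to a pointwise statement in exterior algebra at a general point and then extend by density. The identity is an equality of sections of $\wedge^{2\rho+c}\operatorname{Der}(\cO_M)$, hence of a torsion-free (reflexive) sheaf. It therefore suffices to prove it on the dense open subset of $M^{\mathrm{reg}}$ where $\pi$ has rank exactly $2\rho$. There $\Lambda=\operatorname{im}\pi^\sharp$ is a subbundle of rank $2\rho$. By linearity, since $\Omega-\Omega'=\sum_k \lambda_k\wedge\mu_k$ with $\lambda_k$ a local section of $\Lambda$ and $\mu_k$ a $(c-1)$-vector, it is enough to show $\pi^{[\rho]}\wedge\lambda=0$ for every local section $\lambda$ of $\Lambda$.

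The key step is that $\pi^{[\rho]}$ is a section of the line bundle $\wedge^{2\rho}\Lambda\subset\wedge^{2\rho}T_M$. To see this, I will use the Darboux normal form of the skew form pointwise (linear algebra, valid in any characteristic $\neq 2$ for skew-symmetric forms). At each point one can choose a frame $e_1,\dots,e_{2\rho},e_{2\rho+1},\dots$ of $T_M$ with $\pi=\sum_{i\le\rho}e_{2i-1}\wedge e_{2i}$. Then $\Lambda$ is spanned by $e_1,\dots,e_{2\rho}$. By the Pfaffian normalization of Section~\ref{sec:prelim}, $\pi^{[\rho]}=\Pf(A)\,e_1\wedge\cdots\wedge e_{2\rho}$ with $A$ the matrix of $\pi$ on the leaf frame. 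This is a multiple of the top wedge of $\Lambda$, and the argument avoids $\rho!$, so it holds even when $\rho\ge p$. Since $\wedge^{2\rho+1}\Lambda=0$, for any $\lambda\in\Lambda$ we get $e_1\wedge\cdots\wedge e_{2\rho}\wedge\lambda=0$, and hence $\pi^{[\rho]}\wedge\lambda\wedge\mu=0$.

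Summing over the terms of $\Omega-\Omega'$ gives $\pi^{[\rho]}\wedge(\Omega-\Omega')=0$ on the constant-rank open set. The equality then extends to all of $M$ because both sides are sections of a torsion-free sheaf that agree on a dense open subset, using reflexive extension across the singular locus when $M$ is normal, as in Definition~\ref{def:leaffactor}.

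The main obstacle I expect is making precise the phrase ``the ideal of $\Lambda$'' away from the constant-rank locus, where $\Lambda$ is only a subsheaf and the decomposition $\Omega-\Omega'=\sum\lambda_k\wedge\mu_k$ may exist only generically. I would handle this by noting that the hypothesis is needed only on a dense open set. I would also check that the Pfaffian normalization $\pi^{[\rho]}$ is intrinsic, meaning independent of the chosen leaf frame up to the determinant factor, so that the statement $\pi^{[\rho]}\in\wedge^{2\rho}\Lambda$ is frame-free.
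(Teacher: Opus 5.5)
Your argument is correct and is essentially the paper's: in a leaf frame $\pi^{[\rho]}=\Pf(A)\,e_1\wedge\cdots\wedge e_{2\rho}$, so any term of $\Omega-\Omega'$ carrying a $\Lambda$-factor dies, and only the $Q$-component of $\Omega$ survives the wedge. The density step off the constant-rank locus, which the paper leaves implicit, is fine if you run it in $\omega_M^{[-1]}$ or on $M^{\mathrm{reg}}$ rather than in $\wedge^{2\rho+c}\operatorname{Der}(\cO_M)$, since torsion-freeness of the latter at singular points is not automatic.
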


\begin{proof}
Choose a leaf frame $e_1,\dots,e_{2\rho}$ spanning $\Lambda$ with
\[
 \pi^{[\rho]}=\operatorname{Pf}(A)\,e_1\wedge\cdots\wedge e_{2\rho},
\]
and complete to a frame of $T$ by $f_1,\dots,f_c$ projecting to a basis of $Q=T/\Lambda$. A $c$-vector with at least one $\Lambda$-component dies upon wedging with $e_1\wedge\cdots\wedge e_{2\rho}$. Hence $\pi^{[\rho]}\wedge\Omega$ sees only the $Q$-component of $\Omega$, and $\Omega$ and $\Omega'$ have the same $Q$-component.
\end{proof}

\begin{lemma}[Globality of the leafrian]\label{lem:global}
Let $\LF_s=\theta_s^{[\rho]}\wedge\Omega$ with $\Omega$ an Euler frame. Then $\LF_s$ is independent of the choice of $\Omega$ within its leaf class, and is a regular section of $\omega_M^{[-1]}$ on $M$.
\end{lemma}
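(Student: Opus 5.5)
The plan is to prove the two assertions separately: first independence of the frame, then regularity on $M$.

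\emph{Independence.} Let $\Omega$ and $\Omega'$ be two Euler frames in the same leaf class. Every term of $\Omega-\Omega'$ then has at least one component in $\Lambda=\operatorname{im}\theta_s^\sharp$. On the open set $M^\circ\subset M^{\mathrm{reg}}$ where $\theta_s$ has its generic rank $2\rho$, Lemma~\ref{lem:leafclass} applies with $\pi=\theta_s$ and gives $\theta_s^{[\rho]}\wedge\Omega=\theta_s^{[\rho]}\wedge\Omega'$ on $M^\circ$. Both sides are sections of the torsion-free sheaf $\omega_M^{[-1]}$, and $M^\circ$ is dense, so they agree on all of $M$.

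For points where the rank of $\theta_s$ drops, I will not rerun the argument. The density argument already covers them, because $\omega_M^{[-1]}$ is reflexive and hence torsion-free.

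\emph{Regularity.} I will work on $M^{\mathrm{reg}}$ and then extend. There $\theta_s$ is a regular section of $\wedge^2 T_M$, being Bottacin's tensor. Its Pfaffian power $\theta_s^{[\rho]}$ is a regular section of $\wedge^{2\rho}T_M$. To see this, note that locally it is given by the universal Pfaffian polynomials $\Pf$ of the $2\rho\times 2\rho$ principal minors of the coefficient matrix of $\theta_s$. These are integral polynomials, so the normalized power is defined even when $\rho\ge p$.

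The Euler fields $w_i$ are global vector fields by Lemma~\ref{lem:descent}, so $\Omega$ is a regular section of $\wedge^c T_M$. Using $2\rho+c=d$, the wedge $\theta_s^{[\rho]}\wedge\Omega$ is therefore a regular section of $\wedge^dT_{M^{\mathrm{reg}}}=\omega^{-1}_{M^{\mathrm{reg}}}$.

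To reach all of $M$, I use normality of $M$ (granted for the moduli spaces under consideration). The singular locus has codimension at least $2$, and $\omega_M^{[-1]}=j_*\omega^{-1}_{M^{\mathrm{reg}}}$. Hence the section extends uniquely to $H^0(M,\omega_M^{[-1]})$, exactly as in Definition~\ref{def:leaffactor}.

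\emph{Main obstacle.} The delicate point is that Lemma~\ref{lem:descent} only supplies the Euler fields up to the leaf class. Their local lifts may therefore differ by $\Lambda$-valued terms, and these lifts must glue to a well-defined section.

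I would handle this by working locally: on overlaps the lifts differ by elements of the ideal of $\Lambda$, so the independence statement just proved shows that the local wedges agree, and they glue. This is the step to check carefully, because it is needed at points where the rank of $\theta_s$ drops. There I would again rely on the density argument, since the local wedges are regular and agree on the dense set $M^\circ$.
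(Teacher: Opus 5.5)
Your proposal is correct and follows the paper's proof: independence comes from Lemma~\ref{lem:leafclass}, and regularity comes from $\theta_s^{[\rho]}$ being a global section of $\wedge^{2\rho}T_M$ together with the global Euler fields of Lemma~\ref{lem:descent}, extended to a section of $\omega_M^{[-1]}$ by normality (your Pfaffian-of-principal-minors formula makes the first of these precise). Your density argument across the rank-drop locus is a welcome refinement, but the gluing step you call the main obstacle is unnecessary: Lemma~\ref{lem:descent} already produces global vector fields $w_j$ on $M$, so $\Omega$ is a global section of $\wedge^cT_M$ from the start.
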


\begin{proof}
Two Euler frames both project to $Q=T_M/\Lambda$, so their difference has only $\Lambda$-components; by Lemma~\ref{lem:leafclass} the wedge with $\theta_s^{[\rho]}$ is unchanged. Thus $\LF_s$ is independent of the presentation as a rational section.

On the evaluation locus $\LF_s$ is regular by the local expression of Theorem~\ref{thm:normsec}. Regularity on $M$ is intrinsic: $\theta_s^{[\rho]}$ is a global section of $\wedge^{2\rho}T_M$ as the Pfaffian power of Bottacin's tensor, and the Euler fields are global by Lemma~\ref{lem:descent}; hence $\LF_s$ is a global section of $\omega_M^{[-1]}$, with the local expression of Theorem~\ref{thm:normsec} on the evaluation locus.
\end{proof}

Existence of $\LF$ as a nonzero section is weight-blind; the weights enter only through $b_0$.  When $c=0$ the leaf fills the tangent space and $\LF_s=\theta_s^{[\rho]}$; no transverse factor is present.

\begin{lemma}[Uniqueness up to scalar]\label{lem:unique}
Let $\sigma=\varpi\wedge\Omega$ and $\sigma'=\varpi\wedge\Omega'$ be
leafrians with radial determinants $\eta'=u\cdot\eta$ for a regular unit $u$.  If $\Tr(\sigma^{p-1})$ and $\Tr(\sigma'^{p-1})$ are nonzero constants, then $u\in k^\times$ and $\sigma'=u\,\sigma$.
\end{lemma}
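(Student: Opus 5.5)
The plan is to reduce the statement to the unit rigidity of Lemma~\ref{lem:unitrigid}. First, since both sections share the leaf factor $\varpi$, I compare them directly. By the identification $\varpi\wedge\Omega=\varpi\otimes\eta$ recorded before Theorem~\ref{thm:normsec} (equivalently Lemma~\ref{lem:leafclass}), $\sigma$ depends on $\Omega$ only through its radial determinant $\eta$. The hypothesis $\eta'=u\eta$ therefore gives $\sigma'=u\,\sigma$ as sections of $\omega_M^{[-1]}$: first on the open set where the leaf rank is constant and $\eta$ is defined, then everywhere by reflexivity on the normal variety $M$.

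Next I compute traces. The Frobenius trace is $p^{-1}$-linear: for a regular function $a$ and $\varphi\in\omega^{1-p}$ one has $\Tr(a^p\varphi)=a\Tr(\varphi)$. Writing $u^{p-1}=u^p\cdot u^{-1}$, this gives
\[
\Tr(\sigma'^{p-1})=\Tr\bigl(u^{p}\,u^{-1}\sigma^{p-1}\bigr)=u\,\Tr\bigl(u^{-1}\sigma^{p-1}\bigr).
\]
This is not yet a multiple of $\Tr(\sigma^{p-1})$, so I need a way to move $u^{-1}$ outside the trace.

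I intend to avoid that problem by working at a point where $b_0$ is computed. Choose a closed point $x$ of the member locus where the local expression of Theorem~\ref{thm:normsec} holds, and use slice coordinates. The top coefficient of $\sigma'$ is the socle coefficient of $u^{p-1}f^{p-1}$. Expanding $u=u(x)+(\text{maximal ideal})$, I aim to show that, for the trace as a function, both sides are constants $c'$ and $c$, and that $c'=\Tr(u^{p-1}\sigma^{p-1})$.

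My first attempt at the key step runs as follows. Since $c$ and $c'$ are nonzero constants, $\sigma$ and $\sigma'$ both split $M$ after rescaling, by Theorem~\ref{thm:bk}. A splitting section is determined by its divisor up to scalar on each connected component, once one uses $H^0(\cO_M)=k$, or more locally the fact that $u$ is a unit whose divisor is empty. The trace condition should then force $u^{p-1}$ to be constant. Concretely, I would use the standard fact that if $\varphi$ splits and $a^{p-1}\varphi$ also has constant nonzero trace, then the map $b\mapsto\Tr(b\varphi)$ restricted to $p$-th powers gives $\Tr(a^{p-1}\varphi)=a^{-1}\Tr(a^p\varphi)$ \emph{when $a$ is a unit} only after comparing with $\Tr(\varphi)=1$. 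This step is where I expect the main difficulty.

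The fallback is to evaluate on a dense open set where $u$ has a $p$-th root locally in the Frobenius box sense. That is unavailable in general, so instead I will argue by degree, the way the paper does. The socle coefficient of $u^{p-1}f^{p-1}$, taken at every closed point, equals $\Tr(\sigma'^{p-1})$. Hence $u^{p-1}$ times the local trace data is constant, and I will conclude that $u^{p-1}$ is a nonzero constant.

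Finally, Lemma~\ref{lem:unitrigid}, applied on the reduced connected $M$, upgrades this to $u\in k^\times$, and then $\sigma'=u\,\sigma$ by the first step.
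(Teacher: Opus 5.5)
You get $\sigma'=u\sigma$ from $\varpi\wedge\Omega=\varpi\otimes\eta$, and you finish with Lemma~\ref{lem:unitrigid}; both steps are fine. The middle step, that $u^{p-1}$ is a nonzero constant, is never established, and your own second paragraph shows why it is problematic. The Frobenius trace is only $p^{-1}$-linear, $\Tr(a^p\varphi)=a\,\Tr(\varphi)$. So $\Tr(u^{p-1}\sigma^{p-1})=u\,\Tr(u^{-1}\sigma^{p-1})$ bears no fixed relation to $u^{p-1}\Tr(\sigma^{p-1})$.

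Your fallback then assumes exactly that relation. The socle coefficient of $u^{p-1}f^{p-1}$ at $x$ is not $u(x)^{p-1}$ times the socle coefficient of $f^{p-1}$, because the non-constant part of $u$ lifts lower monomials of $f^{p-1}$ to the top one. The paper itself notes this effect in Remark~\ref{rem:defreason}.

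Your first attempt does not help either. The fact $H^0(\cO_M)=k$ requires completeness, and a unit with empty divisor need not be constant on a non-complete $M$. The ``standard fact'' is also misstated: in fact $a^{-1}\Tr(a^p\varphi)=\Tr(\varphi)$. The paper's proof takes the same route as your fallback (``the two traces differ by $u^{p-1}$''), so following it does not close the gap.

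In fact the step is false as stated. On $\mathbb G_m=\Spec k[t,t^{-1}]$ let $\sigma=(1+t)\partial_t$ and $\sigma'=t(1+t)\partial_t$, so $u=t$. At $t=a$, write $t=a+\tau$. The coefficient of $\tau^{p-1}$ in $(1+a+\tau)^{p-1}$ is $1$. In $\bigl((a+\tau)(1+a+\tau)\bigr)^{p-1}$ it is $\sum_{i=0}^{p-1}a^{p-1-i}(1+a)^{i}=\bigl(a-(1+a)\bigr)^{p-1}=1$. So both traces are identically $1$, yet $u$ is a nonconstant regular unit. These are restrictions of two splitting sections of $T_{\mathbb P^1}$ with different zeros. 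Multiplying both by the leaf factor $xy\,\partial_x\wedge\partial_y$ on $\mathbb G_m^2$ gives the same phenomenon with $\rho=1$, since the extraction separates over disjoint variables.

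What is missing is an input that forces $u$ to be constant independently of the traces. One option is $\cO(M)^\times=k^\times$, for example $M$ complete and connected, in which case the trace hypothesis becomes superfluous. The other, in the paper's applications, is the constant rescalings of (P3), which give constant ratios of radial determinants by Lemma~\ref{lem:hd-transverse}.
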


\begin{proof}
In the frame of the datum the leaf factors agree, so
$\sigma'=\varpi\otimes\eta'=u\cdot(\varpi\otimes\eta)=u\,\sigma$.  The two traces differ by $u^{p-1}$, so $u^{p-1}=c'/c$ with $c$ and $c'$ the two nonzero constants.  By Lemma~\ref{lem:unitrigid} one gets $u\in k^\times$.
\end{proof}

\begin{corollary}[Frame reading]\label{cor:frame}
Let $w_1,\dots,w_c$ be regular transverse fields, on $M$ or on a dense open,
with radial determinant $\eta=\det[w_1,\dots,w_c]\not\equiv0$.  Then
$\varpi\wedge(w_1\wedge\cdots\wedge w_c)=\varpi\otimes\eta$; for
$\eta=\eta_{\mathrm{can}}$ this is the leafrian.
\end{corollary}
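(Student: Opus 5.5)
The statement is Corollary~\ref{cor:frame}. I would prove it by computing $\varpi\wedge(w_1\wedge\cdots\wedge w_c)$ pointwise in a frame adapted to the leaf distribution. Then I would pass from a dense open to all of $M$ by reflexivity and regularity.

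\emph{Adapted frame.} Work first on the dense open $V\subset M^{\mathrm{reg}}$ where three conditions hold: the leaf rank of $\theta_s$ is $2\rho$, the fields $w_i$ are regular, and $\eta$ is defined. On $V$ choose, as in the proof of Lemma~\ref{lem:leafclass}, a local leaf frame $e_1,\dots,e_{2\rho}$ of $\Lambda$. Complete it by fields $f_1,\dots,f_c$ projecting to a basis of $Q=T_M/\Lambda$. Then write $\varpi=\Pf(A)\,e_1\wedge\cdots\wedge e_{2\rho}$.

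\emph{Transverse part and leaf terms.} Expand each transverse field as $w_i=\sum_j W_{ij}f_j+\lambda_i$ with $\lambda_i\in\Lambda$. Multilinear expansion of $w_1\wedge\cdots\wedge w_c$ gives a single term $\det(W)\,f_1\wedge\cdots\wedge f_c$ with no $\Lambda$-component. Every other term contains some $\lambda_i$, so it lies in the ideal of $\Lambda$. By Lemma~\ref{lem:leafclass} (more directly, because $e_1\wedge\cdots\wedge e_{2\rho}\wedge\lambda=0$ for $\lambda\in\Lambda$), those terms die against $\varpi$. Hence
\[
\varpi\wedge(w_1\wedge\cdots\wedge w_c)=\det(W)\,\varpi\wedge f_1\wedge\cdots\wedge f_c .
\]
The radial determinant $\eta=\det[w_1,\dots,w_c]$ is by definition the determinant of the images of the $w_i$ in $\det Q$, which is $\det(W)$ in the frame $\bar f_j$. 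Under the canonical isomorphism $\wedge^{2\rho}\Lambda\otimes\det Q\cong\det T_M$ the right side is exactly $\varpi\otimes\eta$. This identity is frame-independent, since changing $f_j$ modulo $\Lambda$ or by $\GL_c$ changes both sides compatibly.

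\emph{Extension and identification.} Both sides are sections of the reflexive sheaf $\omega_M^{[-1]}$, or rational sections of it when the $w_i$ are only given on a dense open. They agree on $V$, so they agree as rational sections. Whenever one side is regular on $M$, so is the other, by normality and reflexivity. Finally, take $\eta=\eta_{\mathrm{can}}$, the radial determinant of the global Euler frame $\Omega$ from the presentation. Then the left side is $\theta_s^{[\rho]}\wedge\Omega=\LF_s$ by Definition~\ref{def:leafweight}, and Lemma~\ref{lem:global} gives its regularity.

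\emph{Main obstacle.} The delicate step is the meaning of $\eta$ off the locus where the leaf rank is constant. There $Q$ is not a bundle, so $\varpi\otimes\eta$ makes sense only as a rational section. I would handle this by proving the identity on $V$ only. I would then read the right side as defined through the left side, which is a genuine wedge of regular polyvectors, and so avoid defining $\det Q$ globally.
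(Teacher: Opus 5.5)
Your proposal is correct and is essentially the paper's argument. The paper justifies this corollary by the remark that the leaf components of the $w_i$ die against the leaf wedge, which is the adapted-frame computation of Lemma~\ref{lem:leafclass}, so only the $Q$-projection of $w_1\wedge\cdots\wedge w_c$ survives, and its determinant is $\eta$; your use of $\wedge^{2\rho}\Lambda\otimes\det Q\cong\det T_M$ on the constant-rank locus, and of the identity for both the given $w_i$ and the Euler frame when $\eta=\eta_{\mathrm{can}}$, just makes explicit what the paper leaves implicit.
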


\begin{remark}[On the Euler frame]\label{rem:omega}
The Euler frame $\Omega$ is built from the class coordinates of a
presentation: the fields $w_j=c_j\partial_{c_j}$ descend to global vector
fields on $M$ by Lemma~\ref{lem:descent}, and
$\Omega=w_1\wedge\cdots\wedge w_c$.  Two presentations give frames whose
radial determinants differ by a regular unit, and the unit is a constant
when $\Tr_X(s)\neq0$ by Lemma~\ref{lem:unique}.  The frame vanishes along
the divisors $c_j=0$, and this does not affect the top coefficient: the
class coordinates enter the local expression linearly, so the socle of
$(\prod_j c_j)^{p-1}$ equals one.
\end{remark}

\subsection{The obstruction}

\begin{definition}[Ordinary weights]\label{def:ordinary}
A weight $g$ of a member is \emph{ordinary} when its block coefficient $T(g)$ is nonzero.  By Theorem~\ref{thm:utc} this is $T_1\neq0$ for every weight of the shape $g=z\cdot h(z,w')$.  A transverse volume
$\eta$ in the coordinates $c_j$ is \emph{ordinary} when
$T(\eta):=[\prod c_j^{p-1}]\eta^{p-1}\neq0$; the product volume $\prod_jc_j$ is
ordinary with $T(\eta)=1$.
\end{definition}

\begin{remark}[The obstruction]\label{thm:obstr}
At an evaluation datum the top coefficient of $\LF$ factors as $\prod_\nu T(g_\nu)\cdot u^{p-1}$ with $u\in k^\times$, and $\LF$ splits $M$ exactly when every weight is ordinary.  The general form of this factorisation is the trace formula of Theorem~\ref{thm:TF}.
\end{remark}

\begin{proof}
The norm factors carry the block variables $q_{\nu,i}$ and the transverse volume
the variables $c_j$, pairwise
disjoint and together exhausting the slice.  The
extraction of $t^{(p-1)\mathbf1}$ from $f^{p-1}$ separates over the disjoint
variables, by Theorem~\ref{thm:head} and Lemma~\ref{lem:socle}.
Since the unit is constant by Lemma~\ref{lem:constunit}, $b_0=\prod_\nu T(g_\nu)\cdot u^{p-1}$, the transverse product contributing $T(\eta)=1$.  The same separation gives the variant of Remark~\ref{rem:nonradial}.
The criterion follows: $b_0(\LF)=0$ exactly when some $T(g_\nu)$ vanishes.  If every weight is ordinary then $b_0(\LF)\neq0$ and Theorem~\ref{thm:bk}
gives the splitting after rescaling $\LF$; one
point suffices.
\end{proof}

\begin{remark}[Variants]\label{rem:variants}
The converse of the criterion is asserted for the
leafrian of a datum, unique up to a regular unit.  A general wedge $\varpi\wedge\Omega$ with
arbitrary transverse fields need not belong to this class.
\end{remark}
\begin{remark}[Non-radial transverse volume]\label{rem:nonradial}
If the transverse volume $\eta$ is a function of the transverse coordinates alone but differs from the product volume, then
\[
 b_0=\prod_\nu T(g_\nu)\cdot T(\eta)\cdot u^{p-1},
\]
and the criterion of Remark~\ref{thm:obstr} holds with $\eta$ ordinary in addition.
\end{remark}

\subsection{The empty leaf}

When the datum has no members one has $\rho=0$, the leaf factor is $\varpi=1$, and the assembly is the deformation volume $\eta_{\mathrm{can}}$.

\begin{proposition}[Empty leaf]\label{prop:empty}
Let $\rho=0$.  Then no leafrian is formed, and the deformation volume $\eta_{\mathrm{can}}$ exists if and only if $M$ carries $d$ global vector fields vanishing at a common point with independent differentials there.  When it exists, its top coefficient is $b_0=u^{p-1}$ with $u\in k^\times$, and $\eta_{\mathrm{can}}$ splits $M$ whenever $\Tr(\eta_{\mathrm{can}}^{p-1})$ is a nonzero constant.
\end{proposition}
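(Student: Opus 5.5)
The plan is to treat the three assertions of Proposition~\ref{prop:empty} in turn, specializing the machinery of the preceding subsections to the degenerate case $\rho=0$, where the budget reads $c=d$ and every slice coordinate is transverse.

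\emph{No leafrian.} This is definitional. Definitions~\ref{def:leafnv} and~\ref{def:leaffactor} require $\rho\ge1$, and there are no members, so there are no unit polyvectors $\beta_a$ and no leaf factor $\vartheta^{[\rho]}$. By convention the leaf factor is $\varpi=1$, and the only candidate section is $\varpi\wedge\Omega=\Omega$. By Corollary~\ref{cor:frame} this equals the radial determinant $\eta$ viewed as a section of $\det T_M=\omega_M^{-1}$.

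\emph{Existence criterion.} Here the frame datum of Definition~\ref{def:frame} consists of the transverse frame alone, and coverage (iv) says that the $c_j$, $1\le j\le d$, form a full system of slice coordinates at some point $x$.

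For the forward direction, suppose $\eta_{\mathrm{can}}$ exists. Then the Euler fields $w_j=c_j\partial_{c_j}$ are global by Lemma~\ref{lem:descent} (invoked as in Remark~\ref{rem:omega}). All $d$ of them vanish at $x$, the point where every $c_j=0$. Their differentials there are the linear maps $\partial_{c_i}\mapsto\delta_{ij}\partial_{c_j}$, which are linearly independent.

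For the converse, suppose $v_1,\dots,v_d$ are global fields vanishing at $x$ with independent linear parts. I would diagonalize the linear parts by a constant change of basis, or argue that independence lets us choose formal coordinates $c_j$ in which $v_j\equiv c_j\partial_{c_j}$ to first order. Then $\eta=\det[v_1,\dots,v_d]$ has the form $(\prod_j c_j)\Delta$ with $\Delta(x)\ne0$, which is exactly the shape required in Definition~\ref{def:euler}. Well-definedness up to constants follows as in the paragraph before Theorem~\ref{thm:normsec}.

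I expect this converse to be the main obstacle. Independent differentials do not obviously force the linear parts to be simultaneously diagonalizable. If the direct argument fails, I would fall back on the weaker reading: since $\det[v_i]$ vanishes to order exactly $d$ with nonzero degree-$d$ leading form, it can serve as the radial determinant. The top-coefficient step below only needs the product form up to a unit.

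\emph{Top coefficient and splitting.} I would apply Theorem~\ref{thm:head} with no leaf blocks: $f=a\prod_j c_j$ with $a\in k^\times$, each $c_j$ a single-coordinate block, and $\delta=0$. The theorem gives $b_0=a^{p-1}\cdot\prod_j b_j$. Each $b_j=[c_j^{p-1}]c_j^{p-1}=1$, which is Definition~\ref{def:ordinary}, so $b_0=u^{p-1}$ with $u=a$. To get $a$ constant rather than a unit function, I would use the unit rigidity of Lemma~\ref{lem:unitrigid} together with Lemma~\ref{lem:unique}, once the trace is known to be constant. Finally, if $\Tr(\eta_{\mathrm{can}}^{p-1})$ is a nonzero constant, Theorem~\ref{thm:bk} gives a rescaling $\lambda$ with $\lambda^{p-1}c=1$, and this rescaled section splits $M$. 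Compatibility with $Z(\eta_{\mathrm{can}})$ follows from Lemma~\ref{lem:loccompat}.
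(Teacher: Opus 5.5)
Your outline follows the paper's proof. The budget forces $c=d$, so the Euler frame fills $T_M$. At its centre $\eta=(\prod_jc_j)\Delta$ with $\Delta$ a unit. The shape theorem, which is what Remark~\ref{thm:obstr} uses, gives $b_0=u^{p-1}$, and Theorem~\ref{thm:bk} gives the splitting.

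\textbf{The gap is in your converse: the fallback does not work.} Linear independence of the linear parts $A_j\in\End(T_xM)$ of the $v_j$ has two shortcomings. It does not make the degree-$d$ leading form $L(c)=\det[A_1c|\cdots|A_dc]$ of $\det[v_1,\dots,v_d]$ nonzero. And even a nonzero $L$ of degree $d$ is not enough for the top coefficient.

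For $d=2$, the fields $x\partial_x$ and $y\partial_x$ have independent linear parts and determinant identically zero. The fields $v_1=x\partial_x+y\partial_y$ and $v_2=y\partial_x$ are both global on $\mathbb P^2$, vanish at $[1:0:0]$, and have independent linear parts, yet
\[
 \det\begin{pmatrix}x&y\\ y&0\end{pmatrix}=-y^2,\qquad [\,x^{p-1}y^{p-1}\,]\,(-y^2)^{p-1}=0 .
\]
Since $\det[v_1,\dots,v_d]\in\mathfrak m_x^d$, homogeneity gives $b_0=[\prod_jc_j^{p-1}]\,L^{p-1}$. By Lemma~\ref{lem:socle} this is nonzero when $L$ is a product of $d$ independent linear forms. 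That is exactly what the Euler shape $c_j\partial_{c_j}$ supplies. It also holds for independent, simultaneously diagonalizable linear parts $A_j=\mathrm{diag}(a_{j1},\dots,a_{jd})$, as for a $d$-torus with a fixed point, where $L=\det(a_{ji})\prod_ic_i$. Bare independence does not supply it.

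The paper does not derive the Euler shape from the weak hypothesis. Its proof works with fields that already read $w_j=c_j\partial_{c_j}$ at the centre, with ``independent differentials'' meant as in Remark~\ref{rem:radial}. Its converse is just the observation that a full Euler frame in the sense of Definition~\ref{def:euler} has this shape. You should adopt that reading, or add the diagonalizability hypothesis, rather than rely on the fallback.

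A smaller point: the detour through Lemmas~\ref{lem:unitrigid} and~\ref{lem:unique} to make $a$ constant is misplaced. Those lemmas concern global regular functions. Using them would make your claim about $b_0$ conditional on the trace being constant, which the proposition does not assume. The top coefficient is a number at the centre $x$. Put $\delta=(\Delta-\Delta(x))\prod_jc_j$. Every term of $(\Delta(x)\prod_jc_j+\delta)^{p-1}$ containing a factor $\delta$ has degree $>d(p-1)$. So $\delta$ is deficient, and Theorem~\ref{thm:head} gives $b_0=\Delta(x)^{p-1}$ directly.
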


\begin{proof}
With $\rho=0$ the budget gives $c=d$, so the Euler frame has to fill all of $T_M$.  At its center $p$ the fields read $w_j=c_j\partial_{c_j}$ in coordinates $c_j$, hence
\[
 \eta=\det[w_1,\dots,w_d]=\Bigl(\prod_jc_j\Bigr)\Delta
\]
with $\Delta$ a unit, and Remark~\ref{thm:obstr} gives $b_0=u^{p-1}\neq0$ with no weight factor; the splitting follows from Theorem~\ref{thm:bk} once $\Tr(\eta_{\mathrm{can}}^{p-1})$ is a nonzero constant.  Conversely a full Euler frame in the sense of Definition~\ref{def:euler} has this shape at its center.
\end{proof}

\noindent The condition is geometric and not automatic.  For $d=0$ it is empty.  For $d\geq2$ it holds whenever $M$ carries a $d$-dimensional torus action with a fixed point.  It fails for $M=\operatorname{Pic}^{c_1}(X)$ with $X$ an elliptic curve: here $d=1$, $H^0(T_M)$ consists of translation fields, and $\LF$ does not exist; the splitting of $M$ is governed by its own Hasse invariant, not by the construction.

\noindent The moduli cases add the datum at realization points.  The criterion is verified over surfaces in Section~\ref{sec:transfer}.  Poisson input is not used in this section; it returns in Section~\ref{sec:cfp}.

\section{Presentations and the trace formula}\label{sec:transfer}

Throughout this section $X$ is a smooth projective surface and $s\in H^0(X,\omega_X^{-1})$ is nonzero.  The moduli space $M$ is assumed reduced, normal and irreducible.  Presentations are taken in the sense of Definition~\ref{def:present}, with block coordinates $(q_{\nu,i})_{i\le 2}$ of the members and class coordinates of the gluing data.  The leafrian is written $\LF_s$; its independence of the presentation up to a regular unit is Theorem~\ref{thm:hd-canonicity}.

\subsection{Presentations}

The Euler fields must be global.  A case contributes them by a presentation.

\begin{definition}[Presentation]\label{def:present}
A \emph{single-point filtered presentation} of a stable object $E$ is a finite chain of point and elementary modifications from a locally free ground to $E$, with \emph{block coordinates} of the fat-point members and of the healed points, and \emph{class coordinates} $c_1,\dots,c_c$ of the gluing data, such that:
\begin{itemize}
\item[{\rm(P1)}] \emph{Quotient and invariance.}  The fibres of $\mu$ are the
orbits of the equivalences of chains, $\mu$ realizes $M$ as the quotient of $\cI$ by this
action, $\mu$-invariant regular functions on $\cI$ are regular on $M$, and
projectable vector fields on $\cI$ descend to regular vector fields on $M$;
the class coordinates are $\mu$-invariant, and so are the block coordinates of
the members, the members being constant along the fibres.  The block
coordinates of the healed points are chain coordinates: their contribution is computed by the split End blocks of Section~\ref{sub:degcomplex}.
\item[{\rm(P2)}] \emph{Count.}  $m-\phi=d=2\rho+c$, where $m=\dim\cI$ and
$\phi$ is the fibre dimension.
\item[{\rm(P3)}] \emph{Constant rescalings.}  Two presentations of one object
differ by the choice of the trivialisation data, which rescales the class
coordinates by constants $\lambda_j\in k^\times$.
\end{itemize}
\end{definition}

\begin{remark}[Dominance is not required]
A filtered presentation is used only at one stable smooth point. Its role is to provide the filtration of the reduced degeneracy complex.
\end{remark}

\begin{remark}[Class coordinates]\label{rem:classcoords}
At each step the gluing data form an affine space $A_j$ with an action of $G_j=k^\times\ltimes T_j$, where $T_j$ is a vector group of translations of $\varepsilon_j$ and $k^\times$ rescales the trivialisation of $L_j$. The invariant ring $k[A_j]^{G_j}$ is finitely generated, being the weight-zero part of the finitely generated ring $k[A_j]^{T_j}$, and a rescaling of the trivialisation multiplies its generators by constants. The class coordinates $c_1,\dots,c_c$ are these generators.
\end{remark}

\begin{remark}[Member-saturated tops]\label{rem:regimes}
A top of a master chain with $H=\emptyset$ is called member-saturated; its
members fill the leaf.  A member-saturated top is stable or strictly
semistable; in the second case it lies in the Gieseker boundary of $\bar M$.
For the test type $(2,0,2)$ on $\mathbb P^2$ the member-saturated tops are
the sheaves $I_x\oplus I_y$, produced by Lemma~\ref{lem:saturated}(ii).
\end{remark}

\begin{lemma}[Euler-field descent]\label{lem:descent}
Under {\rm(P1)}--{\rm(P2)} each $w_j=c_j\partial_{c_j}$ descends to a global
algebraic vector field on $M$; composing with $T_M\twoheadrightarrow Q$
gives a global section of $Q$ (the radical component is
immaterial to $\varpi\wedge\Omega$).
\end{lemma}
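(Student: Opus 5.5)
The plan is to build $w_j$ first on the chain space $\cI$ and then push it down along $\mu$, using (P1) for the descent and (P2) only to see that the fields obtained give a transverse frame of the right size. The argument stays on $\cI$, where the class coordinates are honest regular functions.

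\emph{Step 1: the fields on $\cI$.} By Remark~\ref{rem:classcoords}, at each step of the chain the class coordinates $c_1,\dots,c_c$ generate the invariant ring $k[A_j]^{G_j}$. The natural first choice of $w_j$ is the infinitesimal generator of rescaling $c_j$ alone, but that is not a field on $A_j$, since the $c_j$ are only invariants. Instead I would take the weight grading of $k[A_j]^{T_j}$ by the torus $k^\times$ and its lifts to the factors of $A_j$. If the generators $c_j$ are chosen homogeneous for a torus $(k^\times)^c$ acting on $A_j$ and commuting with $T_j$ (one factor per gluing datum, as suggested by (P3), where independent trivialisations rescale the $c_j$ by independent constants $\lambda_j$), then the Euler field $E_j$ of the $j$-th factor is a regular vector field on $\cI$. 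It satisfies $E_j(c_i)=\delta_{ij}c_i$.

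\emph{Step 2: projectability.} Since the torus commutes with the equivalences of chains (it only changes trivialisation data, which is the content of (P3)), its flow maps $\mu$-fibres to $\mu$-fibres. Hence each $E_j$ is projectable, and by (P1) it descends to a regular vector field $w_j$ on $M$. On $M$ one has $w_j(c_i)=\delta_{ij}c_i$ with the $c_i$ now regular functions on $M$ by (P1), so locally $w_j=c_j\partial_{c_j}$ plus a part annihilating all class coordinates. The block coordinates of the members are constant along the fibres and untouched by the rescaling, so that extra part lies in the leaf distribution $\Lambda$ spanned by the block directions.

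\emph{Step 3: passing to $Q$.} By (P2) the class coordinates together with the $2\rho$ block coordinates are $d$ functions. Transversality at a general point therefore makes the images of the $w_j$ in $Q=T_M/\Lambda$ the Euler fields of a frame of $Q$. Composing with $T_M\twoheadrightarrow Q$ gives global sections of $Q$ on the open set where the leaf rank is constant. The statement that the radical component is immaterial is Lemma~\ref{lem:leafclass}: the $\Lambda$-components die against $\theta_s^{[\rho]}$.

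\emph{Main obstacle.} The hard point is Step 1–2: showing that the torus rescaling the $c_j$ independently actually acts on $\cI$ and commutes with the chain equivalences. Remark~\ref{rem:classcoords} only provides a single $k^\times$ per step, so several invariants at one step might share a weight. I would first try to refine the choice of generators into weight-homogeneous pieces for the full diagonal torus of the gluing extensions $\varepsilon_j$. If that fails, I would fall back on defining $w_j$ only on a dense open of $M$ and extending by normality of $M$ (assumed in this section) as a reflexive section. This fallback suffices for $\Omega$ as a section of $\omega_M^{[-1]}$.
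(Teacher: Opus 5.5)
The gap is in Step~1. It is conditional, and its hypothesis is essentially the whole content of the lemma. Nothing in {\rm(P1)}--{\rm(P3)} or Remark~\ref{rem:classcoords} provides a torus acting on $\cI$, normalising the equivalence of chains, under which the $c_j$ carry independent nonzero weights.

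\begin{itemize}
\item The only torus in sight is the $k^\times$ inside $G_j=k^\times\ltimes T_j$. Since the $c_j$ generate $k[A_j]^{G_j}$, the weight-zero part, that factor fixes every class coordinate. Its Euler field $E$ therefore has $E(c_i)=0$, not $\delta_{ij}c_i$. This is no accident: rescaling an extension class by a nonzero constant does not change the middle term up to isomorphism.
\item Your appeal to {\rm(P3)} in Step~2 works against you. {\rm(P3)} says that two presentations \emph{of one object} differ by trivialisation data. A rescaling that ``only changes trivialisation data'' therefore leaves the top fixed, so its Euler field is vertical. A vertical field kills the $\mu$-invariant functions of {\rm(P1)}, so $E_j(c_j)=0$, contradicting $E_j(c_j)=c_j$. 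Read correctly, {\rm(P3)} compares the coordinate functions of two presentations ($c_j'=\lambda_jc_j$), and a constant change of coordinates is not a flow on $\cI$ or on $M$.
\item The fallback does not close the gap either. Normality of $M$ extends reflexive sections only across closed sets of codimension at least two. Fields of the form $c_j\partial_{c_j}$ written in local charts can acquire poles where the $c_j$ and the block coordinates cease to form a coordinate system. That locus is cut out by a single Jacobian determinant, so it is of codimension one, and you would get at best a rational vector field.
\end{itemize}

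The paper needs no group action. It works on $\cI$ in adapted coordinates, in which the $\mu$-invariant functions (class and block coordinates) are completed by fibre coordinates $u$. A vertical field is then $V=\sum_aV^a\partial_{u^a}$. Because $V(c_j)=0$, one gets $[c_j\partial_{c_j},V]=c_j\sum_a(\partial_{c_j}V^a)\,\partial_{u^a}$, which is vertical. So $c_j\partial_{c_j}$ is projectable, and its descent to a regular field on $M$ is literally the clause of {\rm(P1)} that projectable fields descend. {\rm(P2)} enters only to show that the descended field is not vertical at a general point: block and class directions together span $T_M$, and the block directions lie in the leaf.

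Your worry that $\partial_{c_j}$ is not intrinsically a field on $A_j$ is legitimate, and the paper handles it only through these adapted coordinates. The way forward, though, is to argue projectability directly from the fibre-constancy of the $c_j$ and then invoke {\rm(P1)}, not to postulate a torus the axioms do not supply.
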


\begin{proof}
A vector field $W$ on $\cI$ descends along $\mu$ if and only if it is
projectable: $[W,V]$ is vertical for every vertical vector field $V$.  By
{\rm(P1)} each class coordinate $c_j$ is constant along the fibres, so for
vertical $V$ one has $[c_j\partial_{c_j},V]=-V(c_j)\,\partial_{c_j}=0$ up to
vertical terms; thus $w_j$ is projectable.  The field is not vertical at a
general point: the block directions lie in the leaf distribution, and by
{\rm(P2)} the block and class directions together span $T_M$, so the class
directions map onto the transverse quotient $Q$ and $d\mu(\partial_{c_j})\neq0$
on a dense open.  By the quotient clause of
{\rm(P1)} the descended field is a global algebraic vector field on $M$.
\end{proof}

\begin{lemma}[Transverse rank]\label{lem:qrank}
Under {\rm(P2)}, the $Q$-projections of the class directions have rank $c$, so one
may choose $c$ class Euler fields $w_1,\dots,w_c$ whose $Q$-projections are
generically independent.
\end{lemma}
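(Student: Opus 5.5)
The plan is to read the statement at a general point of $M$ and reduce it to linear algebra in the tangent space. Fix a general point $x\in M$ lying in the image of the chain space $\cI$ under $\mu$, at which $\mu$ is smooth onto its image with fibres of dimension $\phi$. By (P2) one has $\dim\cI-\phi=d=2\rho+c$. At such a point $d\mu_y\colon T_y\cI\to T_xM$ is surjective with kernel the vertical directions. By (P1) the tangent space of $\cI$ at a preimage $y$ is spanned by the block directions $\partial_{q_{\nu,i}}$, the class directions $\partial_{c_j}$, and vertical directions along the orbit of chain equivalences. Hence $T_xM$ is spanned by the images of the block and class directions.

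Next I identify the images of the block directions with the leaf. By Proposition~\ref{prop:localmodel}, on the member locus $U$ the block directions are exactly the directions on which $\theta_s$ restricts to the nondegenerate trace pairings. These blocks are mutually orthogonal by Lemma~\ref{lem:zeroblock}, and each is nondegenerate where $N(g_\nu)\neq0$ by Lemma~\ref{lem:normform}. So at a general point the images of the $2\rho$ block directions span a $2\rho$-dimensional subspace contained in $\Lambda_x=\operatorname{im}\theta_s^\sharp$. Since $\rho$ is defined so that the generic rank of $\theta_s$ is $2\rho$, this subspace equals $\Lambda_x$. Therefore the image in $Q_x=T_xM/\Lambda_x$ of the span of all block and class directions is the span of the $Q$-projections of the $\partial_{c_j}$, and this span is all of $Q_x$, which has dimension $d-2\rho=c$. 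There are exactly $c$ class coordinates, so these $c$ projections form a basis of $Q_x$.

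To finish, pass from $\partial_{c_j}$ to the Euler fields $w_j=c_j\partial_{c_j}$, which descend by Lemma~\ref{lem:descent}. Their $Q$-projections are the $\partial_{c_j}$ projections rescaled by $c_j$, so they remain independent off the divisors $c_j=0$. At a general point all $c_j\neq0$, since by (P1) the $c_j$ are nonconstant invariant functions. Independence at one point gives generic independence, because the determinant $\det[\bar w_1,\dots,\bar w_c]$ is a section of $\det Q$ over the open set where the leaf rank is constant, and a section nonzero at one point is nonzero on a dense open set.

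The main obstacle is the claim that the block directions already fill $\Lambda_x$ and do not leak into $Q$, or equivalently that no class direction lies in the leaf. If that fails, the class directions project onto $Q$ with rank less than $c$. I would handle this with a dimension count. Suppose the class projections had rank $r<c$. Then block and class directions together would span at most $2\rho+r<d$ dimensions of $T_xM$, contradicting the surjectivity of $d\mu$ forced by (P2). So rank $c$ follows from counting, provided the block images are contained in the leaf, which is exactly what Proposition~\ref{prop:localmodel} supplies.
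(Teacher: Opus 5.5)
Your proposal is correct and is essentially the paper's argument: (P2) makes the block and class directions span $T_M$ at a general point, and the block directions lie in the leaf and so project to zero in $Q$. Hence the class directions project onto $Q$, of rank $c$, and a basis can be extracted. The only differences are that you justify the leaf containment through Proposition~\ref{prop:localmodel}, which is available only on the member locus $U$, whereas the paper simply takes it from the block directions being deformation directions of the point steps; and that you spell out why the rescaling $w_j=c_j\partial_{c_j}$ keeps the projections independent off $\{c_j=0\}$.
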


\begin{proof}
By {\rm(P2)} the block coordinates and the class coordinates together span $T_M$ at a general point, with dimensions $2\rho$ and $c$.  The block directions lie in the leaf distribution (they are the deformation directions of the point steps), so their $Q$-projection vanishes.  Hence the $Q$-projection of the class directions has rank equal to $\rk Q=c$ at a general point, and a spanning family contains a basis.
\end{proof}

\begin{remark}[Radial fields]\label{rem:radial}
The class coordinates are multiplicative scales of the gluing data, so the fields $w_j=c_j\partial_{c_j}$ are radial at the evaluation points: they vanish at the centre and have independent differentials there.  This forces the transverse factor $\prod_jc_j$ and hence the ordinary top coefficient of Remark~\ref{thm:obstr}.  When $D\neq\emptyset$ and the members exist, directions of $M$ coming from twisting by line bundles are not transverse.  Indeed a twist class $\alpha\cdot\mathrm{id}$ pairs with a member class $\eta$ by
\[
 \beta_s(\alpha\cdot\mathrm{id},\eta)=\sum_{x\in D}\Res_x\bigl(s\,\alpha\,\Tr(\eta)\bigr),
\]
which is nonzero whenever $\alpha$ has nonzero residue along $D$; such directions lie in the leaf and are covered by its units.
\end{remark}

\begin{lemma}[Global transverse fields]\label{lem:normlift}
Let $s_1,\dots,s_c\in H^0(Q)$ satisfy $\det[s_1,\dots,s_c]=\eta$, a transverse
volume.  If the connecting map $H^0(Q)\to H^1(\operatorname{im}\varpi)$ of
$0\to\operatorname{im}\varpi\to T\to Q\to0$ vanishes at each $s_i$, then
the $s_i$ lift to global regular fields $w_i$ with transverse volume $\eta$, unique up to additions of leaf fields.
\end{lemma}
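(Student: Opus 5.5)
This is the standard lifting argument along the long exact cohomology sequence of
\[
 0\to\operatorname{im}\varpi\to T\to Q\to0 .
\]
I work on the open set where the leaf rank is constant, so that all three terms are locally free. Taking global sections gives the exact sequence
\[
 H^0(\operatorname{im}\varpi)\to H^0(T)\to H^0(Q)\xrightarrow{\ \partial\ }H^1(\operatorname{im}\varpi).
\]

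\textbf{Step 1 (lifting).} Since $\partial(s_i)=0$, exactness at $H^0(Q)$ gives a global field $w_i\in H^0(T)$ with image $s_i$ in $Q$.

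\textbf{Step 2 (transverse volume).} The transverse volume of $(w_1,\dots,w_c)$ is $\det[\bar w_1,\dots,\bar w_c]$, computed in $\det Q$ from the $Q$-components. These components are exactly the $s_i$, so the volume is $\det[s_1,\dots,s_c]=\eta$. Equivalently, by Lemma~\ref{lem:leafclass} and Corollary~\ref{cor:frame}, $\varpi\wedge(w_1\wedge\cdots\wedge w_c)=\varpi\otimes\eta$. This holds because any leaf components of the $w_i$ die against the leaf wedge.

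\textbf{Step 3 (uniqueness).} Suppose $w_i'$ is another lift of $s_i$. Then $w_i-w_i'$ maps to zero in $H^0(Q)$. By exactness at $H^0(T)$, it lies in the image of $H^0(\operatorname{im}\varpi)$, so it is a global leaf field. Conversely, adding any global leaf field to $w_i$ gives another lift, and by Step 2 it does not change the volume.

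\textbf{Main obstacle: regularity over all of $M$.} The sequence is exact as vector bundles only where the leaf rank is constant. I would take $Q$ to be $T_M/\Lambda$ with $\Lambda$ saturated, so that $Q$ is torsion-free on the normal variety $M$. I would then argue that the lift is defined on a big open subset, namely the smooth locus minus the rank-drop locus.

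If the rank-drop locus has codimension at least $2$, reflexivity of $T_M$ on normal $M$ extends $w_i$ to a regular global field. Otherwise I would read the statement as holding on the open set where $H^0(Q)$ is taken, which is the same open set on which the connecting-map hypothesis is imposed. In that case the lemma is just the exact-sequence lift there, and the word ``global'' refers to that open set.
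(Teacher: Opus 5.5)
Your proof is the paper's argument: you lift each $s_i$ by exactness of the long exact sequence at $H^0(Q)$, read the transverse volume off the $Q$-projections, and get uniqueness up to $H^0(\operatorname{im}\varpi)$ from exactness at $H^0(T)$. Your extra discussion of regularity is unnecessary. The cohomology sequence exists for any short exact sequence of coherent sheaves, with $Q$ taken as the cokernel, so the lift $w_i\in H^0(T)$ is already global with no local-freeness or codimension argument. Only $\det Q$ needs the constant-rank locus, and the paper works there as well.
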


\begin{proof}
The long exact sequence reads
\[
 0\to H^0(\operatorname{im}\varpi)\to H^0(T)\to H^0(Q)\xrightarrow{\ \delta\ } H^1(\operatorname{im}\varpi).
\]
The vanishing of $\delta$ at $s_i$ means that $s_i$ is the image of a global
field $w_i\in H^0(T)$, and the choices differ by $H^0(\operatorname{im}\varpi)$.
The projection $T\to Q$ carries $w_i$ to $s_i$, so the determinant of the
$Q$-projections of $w_1,\dots,w_c$ is $\det[s_1,\dots,s_c]=\eta$.  The leaf
components of the $w_i$ die against the leaf wedge and do not change the
leafrian.
\end{proof}

\begin{proposition}[The Euler frame and the leafrian]\label{prop:frame}
Under {\rm(P1)}--{\rm(P3)}, with $w_1,\dots,w_c$ as in
Lemma~\ref{lem:qrank}, the field $\Omega=w_1\wedge\cdots\wedge w_c$ satisfies $\eta\not\equiv0$,
and $\LF=\varpi\wedge\Omega\in H^0(M,\omega_M^{[-1]})$ is a
well-defined global anticanonical section, unique up to scalar among frames whose radial
determinant is a deformation volume up to a unit.  Replacing $\Omega$ within its leaf
class does not change $\LF$.
\end{proposition}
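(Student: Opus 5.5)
We prove Proposition~\ref{prop:frame} by assembling the lemmas of this section, treating the four assertions in turn: the nonvanishing of $\eta$, the globality of $\LF$, uniqueness up to scalar, and invariance within the leaf class.

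\emph{Step 1: $\eta\not\equiv0$.} By Lemma~\ref{lem:descent}, each $w_j=c_j\partial_{c_j}$ descends to a global algebraic vector field on $M$. Lemma~\ref{lem:qrank} lets us choose $w_1,\dots,w_c$ whose $Q$-projections are generically independent. Their radial determinant $\eta=\det[w_1,\dots,w_c]$, read in $\det Q$ on the open set where the leaf rank is $2\rho$, is therefore not identically zero. Near an evaluation point we can say more. By Remark~\ref{rem:radial} the class fields are radial, so in the class coordinates $\eta=(\prod_jc_j)\Delta$ with $\Delta$ a unit, as in Definition~\ref{def:euler}.

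\emph{Step 2: globality.} The leaf factor $\varpi=\theta_s^{[\rho]}$ is a global section of $\wedge^{2\rho}T_M$ on $M^{\mathrm{reg}}$, being the Pfaffian power of Bottacin's tensor. Since $\Omega$ is a global $c$-vector field and $2\rho+c=d$ by (P2), the wedge $\varpi\wedge\Omega$ is a section of $\omega_{M^{\mathrm{reg}}}^{-1}$. Because $M$ is normal, it extends by reflexivity to a section of $\omega_M^{[-1]}$; this is the argument of Lemma~\ref{lem:global}. By Corollary~\ref{cor:frame}, $\varpi\wedge\Omega=\varpi\otimes\eta$. This is nonzero: $\varpi$ is generically of full leaf rank, since $\rho$ is half the generic rank of $\theta_s$, and $\eta\not\equiv0$ by Step 1.

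\emph{Step 3: uniqueness and leaf-class invariance.} Invariance under replacing $\Omega$ within its leaf class is exactly Lemma~\ref{lem:leafclass}. For uniqueness, take two frames whose radial determinants are deformation volumes up to units, so $\eta'=u\eta$ with $u$ regular and invertible. Then $\LF'=u\,\LF$. By (P3), frames from two presentations differ by constant rescalings $\lambda_j$ of the class coordinates, and these contribute $u=\prod_j\lambda_j\in k^\times$. For a general pair, Lemma~\ref{lem:unique} forces $u$ to be constant once both $(p-1)$-traces are nonzero constants.

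\emph{Main obstacle.} The delicate point is Step 1, namely passing from the generic independence of the $Q$-projections to the statement that the descended fields really have $\eta$ equal to a deformation volume, rather than a non-radial transverse volume. Here I would use the $\mu$-invariance of the $c_j$ from (P1) to compute $\eta$ upstairs on $\cI$, where it is visibly $\prod_jc_j$ times a unit. I would then push it down using (P2), which ensures that the block and class directions together span $T_M$, so no correction term from fibre directions survives in $\det Q$.
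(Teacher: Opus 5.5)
Your proposal is correct and follows essentially the same route as the paper: the paper's proof likewise takes the global fields from Lemma~\ref{lem:descent}, their generic $Q$-independence (hence $\eta\not\equiv0$) from Lemma~\ref{lem:qrank}, the constant comparison of two frames from (P3), and the scalar from Lemma~\ref{lem:unique}, while your explicit appeals to Lemma~\ref{lem:global} and Lemma~\ref{lem:leafclass} spell out what the paper leaves implicit. The ``main obstacle'' you flag is not needed, since the radial determinant of a presentation's Euler frame is a deformation volume by definition; also note that $c_j\partial_{c_j}$ is unchanged by $c_j\mapsto\lambda_jc_j$, so the (P3) rescalings leave $\Omega$ itself unchanged (giving $u=1$ rather than $u=\prod_j\lambda_j$), which does not affect your conclusion.
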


\begin{proof}
Lemma~\ref{lem:descent} produces the global fields; Lemma~\ref{lem:qrank} gives that $c$ of them are generically independent in $Q$.  Two such frames differ by a constant matrix by {\rm(P3)}, and Lemma~\ref{lem:unique} gives the scalar.
\end{proof}

\subsection{Descent and local expression}

The polyvectors of Section~\ref{sec:hat} descend along a presentation as follows.

\begin{lemma}[Length-one block]\label{lem:hd-lengthone}
Let $Z$ be a length-one member with local algebra $A=k$ and weight $g$. Then
\[
 N_{A/k}(g)=g,
\]
and the trace pairing of Definition~\ref{def:trans} is nondegenerate when $g\neq0$.
\end{lemma}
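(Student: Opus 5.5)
The statement is a direct specialization of the definitions to $\ell=1$, so I will unwind them rather than invoke any deep result. With $A=k$, the norm $N_{A/k}(g)=\det(\operatorname{mult}_g)$ is the determinant of a $1\times1$ matrix, namely $g$ itself (a scalar once evaluated at the support point). As a check against Lemma~\ref{lem:leaf}: in the monogenic presentation $A=k[u]/(u-q_0)$ one has $m=u-q_0$, so $N_{A/k}(z)=(-1)^{2}q_0=q_0=z$, and multiplicativity of the norm gives $N_{A/k}(g)=g$ for every $g\in A$.

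For nondegeneracy I will write the pairing out in coordinates. With $n=2$, $\Def_Z\cong A\oplus A=k^{2}$ and basis $(1,0),(0,1)$. Since $\ell=1$, the residue is $\Res_A(a)=[z^{0}]a=a$, the identity on $k$. Definition~\ref{def:trans} then gives
\[
B\bigl((a,b),(a',b')\bigr)=\kappa_0\,g\,(ab'-a'b),
\]
whose matrix is $\kappa_0 g\,J$ with $J=\left(\begin{smallmatrix}0&1\\-1&0\end{smallmatrix}\right)$. Its determinant is $\kappa_0^2g^2$, which is nonzero exactly when $g\neq0$, because $\kappa_0\in k^\times$. Equivalently, its Pfaffian is $\kappa_0 g$, which is the case $\ell=1$ of Lemma~\ref{lem:normform}.

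There is no real obstacle. The only point to watch is the residue normalization at $\ell=1$: $\Res_A$ must extract the coefficient of $z^{\ell-1}=z^0$, so it is the identity and contributes no extra factor. I will also record that this matrix is the one used in Proposition~\ref{prop:localmodel}, whose dual bivector is $g\,\partial_{q_1}\wedge\partial_{q_2}$.
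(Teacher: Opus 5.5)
Your proposal is correct and follows the paper's proof: the norm on the one-dimensional algebra $A=k$ is $g$ itself, and in the basis of $A\oplus A$ the pairing matrix is $\kappa_0 g\,J$ with determinant $\kappa_0^2g^2$, which is nonzero exactly when $g\neq0$. Your extra checks, the consistency with Lemma~\ref{lem:leaf} and the observation that $\Res_A$ is the identity at $\ell=1$, are accurate but not needed.
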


\begin{proof}
For $A=k$ the norm is the value $g$. In the basis of $A\oplus A$ the pairing matrix is
\[
 \kappa_0\begin{pmatrix}0&g\\-g&0\end{pmatrix},
\]
of determinant $\kappa_0^2 g^2$, so the pairing is nondegenerate for $g\neq0$.
\end{proof}

\begin{lemma}[Zero block]\label{lem:hd-zeroblock}
In the length-one case, let $B=\sum_\nu B_\nu$ be the sum of the block pairings.
If a class $\xi$ has zero component in the block of $x_\nu$, then
$B_\nu(\xi,v)=0$ for every block class $v$; distinct
members pair trivially against each other. Consequently, when $g\neq0$ on
every block, the radical of $B$ consists exactly of the classes whose component vanishes in every member block.
\end{lemma}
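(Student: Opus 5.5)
The plan is to read Lemma~\ref{lem:hd-zeroblock} off the explicit block form of the pairing, treating each member block separately and then assembling by additivity. The work splits into a local vanishing statement for each block and a linear-algebra assembly using the length-one nondegeneracy of Lemma~\ref{lem:hd-lengthone}.

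\textbf{Step 1 (vanishing against a zero block).} Fix a member $x_\nu$ with $A_\nu=k$. By Definition~\ref{def:trans} in the length-one case, the block pairing is
\[
 B_\nu\bigl((a,b),(a',b')\bigr)=\kappa_0\,g_\nu\,(ab'-a'b),
\]
where $(a,b)$ and $(a',b')$ are the components in $A_\nu\oplus A_\nu=k^2$. If $\xi$ has component $(a,b)=(0,0)$ in this block, the expression is $0$ for every $v$, because it is bilinear in the block components. This is the length-one instance of Lemma~\ref{lem:zeroblock}.

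\textbf{Step 2 (distinct members are orthogonal).} Let $v$ be supported in the block of $x_\mu$ and $w$ in the block of $x_\nu$, with $\mu\neq\nu$. The residue defining $B_\nu$ is local at the support of $Z_\nu$. There $v$ has zero component, so $B_\nu(v,w)=0$ by Step~1, and symmetrically $B_\mu(v,w)=0$. Every other $B_\lambda$ sees both classes with zero component, so it also gives $0$. Hence the total pairing $B$ is block diagonal over the members, in agreement with frame datum (i).

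\textbf{Step 3 (identifying the radical).} Write a class $\xi$ through its block components $\xi_\nu\in k^2$, with $B(\xi,\eta)=\sum_\nu B_\nu(\xi_\nu,\eta_\nu)$. One inclusion is immediate from Step~1: if every $\xi_\nu=0$, then $\xi$ lies in the radical. For the other, suppose $\xi$ is in the radical and test it against a class $\eta$ whose only nonzero component is an arbitrary $\eta_\nu$ in block $\nu$. This gives $B_\nu(\xi_\nu,\eta_\nu)=0$ for all $\eta_\nu$. Since $g_\nu\neq0$, the form $B_\nu$ is nondegenerate on $k^2$ by Lemma~\ref{lem:hd-lengthone} (its determinant is $\kappa_0^2g_\nu^2$), so $\xi_\nu=0$.

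\textbf{Main obstacle.} The real content is whether, for each block, such a test class $\eta$ with a single prescribed nonzero component actually exists in the deformation space. I would justify this with additivity (ii) of the frame datum: the deformation space is the direct sum of the blocks, so the block projections are surjective and block-supported classes exist. Where the relevant space is larger, containing classes of zero block coordinate as well, I would invoke the block-diagonal statement of Lemma~\ref{lem:zeroblock}. The issue is that without surjectivity of the block projection, the radical could be strictly larger than the set of classes with all block components zero.
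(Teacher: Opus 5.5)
Your proposal is correct and follows the paper's proof: the paper argues in the same way that a zero block column kills the determinant $ab'-a'b$, that distinct members have disjoint supports, and that each length-one block is nondegenerate for $g\neq0$ by Lemma~\ref{lem:hd-lengthone}. Your extra assembly step, where additivity (ii) supplies block-supported test classes, spells out a surjectivity that the paper leaves implicit: the block parameters are slice coordinates, so every block component can be prescribed.
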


\begin{proof}
A zero column kills the determinant. Distinct members have disjoint supports.
The pairing restricted to a block is nondegenerate for $g\neq0$ by Lemma~\ref{lem:hd-lengthone}.
\end{proof}

\begin{theorem}[Descent]\label{thm:hd-descent}
Along a presentation, the block formulas of Definition~\ref{def:tpoly} define regular sections
\[
 \beta_a,\ \hat s\ \in\ H^0(M,\wedge^{2}T_M),\qquad \hat s^{[\rho]}\ \in\ H^0(M,\wedge^{2\rho}T_M)
\]
on $M$, and $\hat s$ is Bottacin's tensor $\theta_s$.  Consequently
\[
 \sigma=\hat s^{[\rho]}\wedge(w_1\wedge\cdots\wedge w_c)
\]
is a regular section of $\omega_M^{[-1]}$, where $w_j=c_j\partial_{c_j}$ are the Euler fields.
\end{theorem}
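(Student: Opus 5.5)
The plan is to prove the descent in three stages: regularity on the member locus, extension to $M$, and the wedge with the Euler frame.

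\emph{Stage 1: regularity on the member locus.} Work on the incidence variety $\cI$ of the presentation, where the block coordinates $q_{\nu,1},q_{\nu,2}$ of each member and the class coordinates $c_j$ form a chart. In the length-one case each unit polyvector has the form
\[
 \beta_a=g(x_a)\,\partial_{q_{a,1}}\wedge\partial_{q_{a,2}},
\]
where $g(x_a)$ is the value of $s$ at the support of the member. By (P1) the block coordinates are $\mu$-invariant, so $g(x_a)$ is a $\mu$-invariant regular function. The fields $\partial_{q_{a,i}}$ are projectable by the same argument as Lemma~\ref{lem:descent}: for vertical $V$ one has $V(q_{a,i})=0$, hence $[\partial_{q_{a,i}},V]$ is vertical. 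Therefore each $\beta_a$ descends to a regular bivector on $\mu(\cI)\cap U$. By Proposition~\ref{prop:hat-well}, which expresses everything in attached coordinates through polynomial structure constants, the sum $\hat s=\sum_a\beta_a$ extends across collision strata.

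\emph{Stage 2: extension to all of $M$.} I will not try to extend the $\beta_a$ individually; instead I use Bottacin. By Proposition~\ref{prop:localmodel}, $\hat s|_U=\theta_s|_U$, and $\theta_s$ is a global section of $\wedge^2T_M$. Since $M$ is normal and irreducible and $U$ is dense, the regular section $\theta_s$ is the unique extension of $\hat s$, so $\hat s=\theta_s$ on $M$. For the normalized power, the Pfaffian power $\theta_s^{[\rho]}$ is defined globally from $\theta_s$; in a local frame it is $\Pf$ of the matrix, which is polynomial in the entries, so it needs no division by $\rho!$. On $U$ it equals $\bigwedge_a\beta_a=\hat s^{[\rho]}$, again by Proposition~\ref{prop:localmodel}. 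I take ``$\beta_a$ regular on $M$'' to mean regularity on the locus where the units are defined; globally only the sum and the normalized wedge make sense. If $\wedge^2T_M$ is not reflexive at singular points, I would instead work in the reflexive hull $\wedge^{[2]}T_M$, using normality.

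\emph{Stage 3: wedge with the Euler frame.} The fields $w_j=c_j\partial_{c_j}$ are global by Lemma~\ref{lem:descent}, so $\sigma=\theta_s^{[\rho]}\wedge w_1\wedge\cdots\wedge w_c$ is a section of $\wedge^{2\rho+c}T_M$. By (P2), $2\rho+c=d$, so this is a section of $\wedge^dT_M$ on the smooth locus. Extending it by reflexivity over the singular locus, which has codimension at least $2$ by normality, gives a section of $\omega_M^{[-1]}$.

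The main obstacle is Stage 2: justifying that the descended $\hat s$ literally coincides with $\theta_s$ off the member locus, and that the member locus is dense and of the right kind, namely member-saturated with length-one members. I would first reduce to the identification on a dense open set, via Proposition~\ref{prop:localmodel} together with the local principal and framed lemmas cited there, and then conclude by separatedness and torsion-freeness of the sheaf of polyvectors.
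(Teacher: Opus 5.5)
Your three stages follow the paper's proof. First, $g(x_a)$ is a $\mu$-invariant regular function. Second, $\partial_{q_{a,i}}$ is projectable because vertical fields annihilate the block coordinates, so both descend by (P1). Third, Proposition~\ref{prop:localmodel} identifies $\hat s$ with $\theta_s$ on the length-one member locus. Reducedness of $M$ then carries this equality, and with it $\sigma=\theta_s^{[\rho]}\wedge\Omega=\LF_s$, to all of $M$.

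The one divergence is that you stop the descent at $\mu(\cI)\cap U$ and reinterpret the clause $\beta_a\in H^0(M,\wedge^2T_M)$. You then get global regularity of $\hat s$ and $\hat s^{[\rho]}$ only by borrowing it from $\theta_s$. That proves less than the statement, and the retreat is unnecessary. In the paper, (P1) says that $\mu$ realizes $M$ as the quotient of $\cI$, and that invariant regular functions and projectable fields descend to regular objects on all of $M$. So your own Stage~1 argument already gives each $\beta_a$ as a global regular section. Then $\hat s=\sum_a\beta_a$ and $\hat s^{[\rho]}=\bigwedge_a\beta_a$ are regular without any appeal to Bottacin or to polynomiality of the Pfaffian, and $\theta_s$ is used only for the identification. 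If your hesitation is that individual units need not be globally labelled, note that (P1) rules this out: it requires the block coordinates of the members to be $\mu$-invariant. The paper also does not need Proposition~\ref{prop:hat-well} at this point.

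You single out density of the length-one member locus as the main obstacle. The paper's phrase ``hence on all of $M$'' takes that density for granted as well, so your proposal is no weaker than the paper there. Be aware, though, that neither argument actually supplies it, and Proposition~\ref{prop:localmodel} explicitly allows that locus to be empty.
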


\begin{proof}
The coordinates $(q_{a,i})$ of the units are $\mu$-invariant and the members are
preserved along the fibres, so $g(x_a)$ is a $\mu$-invariant regular
function on $\cI$ and descends by {\rm(P1)}.
A vertical vector field $V$ annihilates the block coordinates, so in adapted coordinates $(q,u)$
one has $V=\sum_a V^a(q,u)\,\partial_{u^a}$ and
$[\partial_{q_{a,i}},V]=\sum_b(\partial_{q_{a,i}}V^b)\,\partial_{u^b}$
is vertical. By {\rm(P1)} the field $\partial_{q_{a,i}}$ descends; the same applies to $c_j\partial_{c_j}$
by Lemma~\ref{lem:descent}.
Consequently each unit polyvector $\beta_a$ descends to a regular section of $\wedge^2 T_M$ on $M$, and so do the sum $\hat s$ and the power $\hat s^{[\rho]}$.

By Proposition~\ref{prop:localmodel} the transfer $\hat s$ agrees with Bottacin's tensor $\theta_s$ on the member locus of length-one members, hence on all of $M$.  Accordingly $\sigma$ agrees with $\LF_s=\theta_s^{[\rho]}\wedge\Omega$ on the member locus, and since $M$ is reduced the two regular sections coincide on $M$.
\end{proof}

\begin{proposition}[Local expression]\label{prop:hd-localexpr}
At an evaluation point, in the coordinates of the datum,
\[
 f=\prod_{\nu=1}^{\rho}N(g_\nu)\cdot\prod_j c_j\cdot u,\qquad u\in k^\times,
\]
where $g_\nu=s(x_\nu)$.
\end{proposition}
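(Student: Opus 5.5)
The plan is to compute the local expression of $\LF_s=\theta_s^{[\rho]}\wedge\Omega$ at an evaluation point by factoring the wedge into its leaf part and its transverse part, and to treat the two parts separately. By Theorem~\ref{thm:hd-descent}, $\LF_s$ coincides on $M$ with $\sigma=\hat s^{[\rho]}\wedge(w_1\wedge\cdots\wedge w_c)$. It therefore suffices to compute $\sigma$ in the coordinates $(q_{\nu,1},q_{\nu,2})_{\nu}$, $(c_j)_j$ of the datum. These form a full slice system by coverage (Definition~\ref{def:frame}(iv)) together with the count (P2), which gives $2\rho+c=d$.

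\emph{Leaf part.} The presentations produce length-one members, so the members are indexed by $\nu=1,\dots,\rho$ and each contributes one unit. At a length-one member, Lemma~\ref{lem:hd-lengthone} gives $N(g_\nu)=g_\nu=s(x_\nu)$. By Definition~\ref{def:tpoly} the unit is
\[
 \beta_\nu=s(x_\nu)\,\partial_{q_{\nu,1}}\wedge\partial_{q_{\nu,2}}.
\]
By Lemma~\ref{lem:hd-zeroblock} distinct members pair trivially, so the leaf matrix is block diagonal. The normalized wedge is then
\[
 \hat s^{[\rho]}=\prod_\nu N(g_\nu)\,\partial_{[\mathrm{leaf}]},
\]
up to the sign and the constants $\kappa_0^{\ell_\nu}$ of Lemma~\ref{lem:normform}. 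The change from split to attached coordinates has Jacobian $\pm1$ here, since $\ell=1$ (Remark~\ref{rem:unitnorm}, Lemma~\ref{lem:membertop}). All of these factors are constant, as recorded in Lemma~\ref{lem:constunit}.

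\emph{Transverse part.} I will write $w_j=c_j\partial_{c_j}+(\text{leaf components})$. By Lemma~\ref{lem:leafclass}, the leaf components die against $\partial_{[\mathrm{leaf}]}$. So only the $Q$-projection matters, and the result is
\[
 \hat s^{[\rho]}\wedge\Omega=\hat s^{[\rho]}\otimes\eta,
\]
by Corollary~\ref{cor:frame}. The class coordinates are invariant scales (Remark~\ref{rem:classcoords}, Remark~\ref{rem:radial}), and the descended fields are exactly radial in them. Hence $\eta=\prod_jc_j$ up to the unit $\Delta$ of Definition~\ref{def:euler}. Multiplying the two parts, with a shuffle sign, gives $f=\prod_\nu N(g_\nu)\prod_jc_j\cdot u$.

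\emph{Main obstacle: constancy of $u$.} The step I expect to be hardest is showing that $u$ is a constant and not merely a unit: $\Delta$ is a priori only a regular unit. My first approach is to argue that the projected fields $d\mu(c_j\partial_{c_j})$ are exactly diagonal in the $Q$-coordinates $c_j$. The point is that the $c_j$ are $\mu$-invariant coordinates on $M$ by (P1), so $w_j(c_i)=\delta_{ij}c_i$ holds identically. This forces $\det[w_i(c_k)]=\prod c_j$ with $\Delta=1$, so that $u$ collects only the $\pm\kappa_0^{\rho}$ factors. If exact diagonality fails, the fallback is the following. Two data differ by the constant rescalings of (P3), and where traces are nonzero constants Lemma~\ref{lem:unique} together with Lemma~\ref{lem:unitrigid} forces $u\in k^\times$. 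However, this only covers the split case, so I would aim to make the diagonality argument work.
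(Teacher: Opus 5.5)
Your proposal is correct and follows essentially the paper's route: you take the unit coefficients $s(x_\nu)=N(g_\nu)$ from Definition~\ref{def:tpoly} and Lemma~\ref{lem:hd-lengthone}, the radial determinant $\prod_j c_j$ of the Euler fields, and a shuffle sign, which gives $u=\pm1$. Your first approach to the constancy of $u$ is exactly what the paper uses implicitly: in the coordinates of the datum $w_j=c_j\partial_{c_j}$, so $w_j(c_i)=\delta_{ij}c_i$ and $\Delta=1$. The fallback through Lemma~\ref{lem:unique} is therefore unnecessary; the paper additionally notes that for members of length $\ell>1$ the change to attached coordinates contributes only a unit, by Lemma~\ref{lem:membertop}.
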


\begin{proof}
By Definition~\ref{def:tpoly} the coefficient of the unit polyvector of
each length-one member is $s(x_\nu)$, which is $N(g_\nu)$ by
Lemma~\ref{lem:hd-lengthone}, and the radial determinant of the Euler
fields is $\prod_jc_j$. Wedging gives the
coordinate frame up to the sign of a shuffle, so $u=\pm1$.  For a member
of length $\ell>1$ the product of the $\ell$ unit coefficients is
$N_{A/k}(g)$ in the split coordinates, and the change to the attached
coordinates contributes a unit by Lemma~\ref{lem:membertop}.
\end{proof}

\begin{lemma}[Unit top coefficient]\label{lem:hd-unit}
Let $Z$ be a length-one member at $x$ with weight $g=s|_{\cO_Z}$. Then
\[
 T(g)=\Tr(s^{p-1})(x).
\]
\end{lemma}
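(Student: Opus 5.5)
This is the case $\ell=1$ of Lemma~\ref{lem:membertop}, and the plan is to specialise that argument directly rather than quote it. For a length-one member the local algebra is $A=k$, the support is the single point $x$, and Lemma~\ref{lem:hd-lengthone} gives $N_{A/k}(g)=g$. So the block coefficient is
\[
 T(g)=[\,q_1^{p-1}q_2^{p-1}\,]\,N(g)^{p-1},
\]
extracted in the two attached block coordinates $(q_1,q_2)=(q_0,p_0)$ of Definition~\ref{def:transv}. These are the deformation parameters of the reduced point, so they are exactly the coordinates of the moving support point.

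\textbf{Step 1.} I first identify the block coordinates with local coordinates on $X$ at $x$. The deformation module is $\Hom(I_x/I_x^2,k)=T_xX$. A deformation $(a,b)$ of the two relations $u,v$ moves the point to $(u,v)=(a,b)$. Hence, as the member varies, $g=s|_{\cO_Z}$ is the function $H(q_1,q_2)$, where $s=H(u,v)\,\partial_u\wedge\partial_v$ is the local expression of $s$ at $x$.

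\textbf{Step 2.} With this identification, $T(g)=[\,u^{p-1}v^{p-1}\,]\,H^{p-1}$ is the top coefficient $b_0(s)$ of Definition~\ref{def:top} at $x$. By Theorem~\ref{thm:bk} together with Lemma~\ref{lem:socle}, this coefficient is the value at $x$ of the Frobenius trace $\Tr(s^{p-1})=\Tr_X(s)$.

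\textbf{The main obstacle} is the normalisation in Step 2. The socle extraction of a bare function is not chart-invariant (Lemma~\ref{lem:socle}), so I must check that the frame in which $g$ is read matches $\partial_u\wedge\partial_v$ up to a constant. I would handle this by working with the section $N(g)\,\partial_{q_1}\wedge\partial_{q_2}$ rather than the function. By Definition~\ref{def:tpoly} this section is the pushforward of $s$ along the support map restricted to the block, and that map is the identity chart in these coordinates. The extraction therefore computes the intrinsic trace of the section, which does not depend on the chart. Any residual constant Jacobian $\pm1$ disappears at the even exponent $p-1$, as in the proof of Lemma~\ref{lem:membertop}, so the equality holds exactly rather than only up to a unit.
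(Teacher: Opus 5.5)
Your proposal is correct and follows the paper's proof. You get $N(g)=g$ from Lemma~\ref{lem:hd-lengthone}, read $g$ as the local function of $s$ in the coordinates of the moving support, and identify its socle extraction with the Frobenius trace at $x$ via the Brion--Kumar local formula; equivalently, this is the case $\ell=1$ of Lemma~\ref{lem:membertop}. Your added care in treating the extraction as that of the section $N(g)\,\partial_{q_1}\wedge\partial_{q_2}$, rather than of a bare function, makes explicit a normalisation the paper leaves implicit.
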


\begin{proof}
By Lemma~\ref{lem:hd-lengthone} the norm is $N(g)=g$, the local function of $s$ at $x$ in the disc frame, taken as a function of the moving support. The extraction of the socle from $g^{p-1}$ is the Frobenius trace of $s$ at $x$ \cite[Lem.~1.3.6]{BKu}. Equivalently this is the case $\ell=1$ of Lemma~\ref{lem:membertop}.
\end{proof}

\subsection{Realization}

\begin{proposition}[Realization principle]\label{prop:realization}
Let $M$ be a moduli space of objects on $X$, and suppose the
case supplies:
\begin{enumerate}
\item[\rm(a)] \emph{local blocks}: at general points $x_\nu\in X$ and fat points
$Z_\nu$ at the $x_\nu$, the objects admit local modification
models whose deformation modules are
\[
 \Hom(I_{Z_\nu}/I_{Z_\nu}^2,\cO_{Z_\nu})=A_\nu^{\oplus 2},
\]
the block pairing restricting to that of Definition~\ref{def:trans};
\item[\rm(b)] \emph{modification theory}: the modifications produce stable
objects and supply class coordinates $c_j$ which complete the block coordinates to a slice of $M$;
\item[\rm(c)] \emph{budget}: $\sum_\nu 2\ell_\nu=2\rho$, realized by $\rho$ length-one members.
\end{enumerate}
Then the datum is realized at realization points $[E]\in M$.  An \emph{evaluation point} is a realization point at which the attached coordinates and the class coordinates form slice coordinates centred at that point.
\end{proposition}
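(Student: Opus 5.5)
To prove the realization principle I would assemble, at a realization point $[E]\in M$, the four parts of a frame datum in the sense of Definition~\ref{def:frame}, and check each part against one of the hypotheses (a)--(c) and the results of Sections~\ref{sec:hat}--\ref{sec:transfer}. The blocks (part (i)) come from hypothesis (a). At the general points $x_\nu$ the local modification models identify the relevant piece of the deformation space of $E$ with $\bigoplus_\nu\Hom(I_{Z_\nu}/I_{Z_\nu}^2,\cO_{Z_\nu})=\bigoplus_\nu A_\nu^{\oplus2}$. On each summand the restriction of Bottacin's pairing is the trace pairing of Definition~\ref{def:trans} with weight $g_\nu=s|_{A_\nu}$, by Proposition~\ref{prop:localmodel}. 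Since the supports of distinct members are disjoint, Lemma~\ref{lem:hd-zeroblock} and Lemma~\ref{lem:zeroblock} show that pairings of distinct blocks vanish against each other. This gives part (i) of the datum.

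For additivity and the transverse frame (parts (ii) and (iii)), I use hypothesis (b). The modifications yield stable objects, and they supply class coordinates $c_j$ which complete the block coordinates to a slice. So the tangent space at $[E]$ decomposes as the sum of the block directions and the class directions. Projecting away the class directions leaves exactly the sum of the blocks, which is part (ii). The Euler fields $w_j=c_j\partial_{c_j}$ descend to global fields by Lemma~\ref{lem:descent}. Their $Q$-projections are generically independent by Lemma~\ref{lem:qrank}, so $\Omega=w_1\wedge\cdots\wedge w_c$ is an Euler frame, which is part (iii).

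Coverage (part (iv)) follows from hypothesis (c). The attached coordinates of distinct members are disjoint because their supports are disjoint. Their count is $\sum_\nu2\ell_\nu=2\rho$, realized by $\rho$ length-one members, so Lemma~\ref{lem:hd-lengthone} applies to each member. Together with the $c$ class coordinates this gives $2\rho+c=d$ coordinates, in agreement with the budget (P2). Since the class coordinates complete the block coordinates to a slice by (b), these coordinates exhaust the slice coordinates, which is part (iv).

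The main obstacle is the passage from completion on an open set to a slice centred at the point. This is needed for the final sentence, where an evaluation point is one at which the attached and class coordinates form slice coordinates centred there. Hypothesis (b) gives a slice only generically, so I would take the realization points to be those where the Jacobian of the combined coordinates is invertible. This is an open condition, nonempty by (b). At such points the datum is realized, and translating the coordinates to vanish at the point yields evaluation points; there Proposition~\ref{prop:hd-localexpr} then applies.
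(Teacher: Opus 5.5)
The assembly of parts (i)--(iv) of the frame datum and the openness argument at the end match the last lines of the paper's proof: block-diagonality from (a) and Lemma~\ref{lem:zeroblock}, the Pfaffian factors from Lemma~\ref{lem:normform}, the transverse volume from the class coordinates, and openness of (a) and (b). What is missing is the step the paper treats as the real content of the proposition. You must show that the block parameters are genuine Kuranishi parameters. That is, the first-order deformations in $\Hom(I_{Z_\nu}/I_{Z_\nu}^2,\cO_{Z_\nu})=A_\nu^{\oplus2}$ must be unobstructed and integrate to actual functions on $M$ near $[E]$.

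Hypothesis (a) only gives tangent-level information. Hypothesis (b) speaks of class coordinates that \emph{complete the block coordinates} to a slice, so it presupposes that the block coordinates already exist; you cannot extract them from (b) without circularity. Without this step your ``attached coordinates'' are only tangent vectors, and coverage (iv) and the notion of an evaluation point are not yet meaningful.

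The paper supplies the step as follows.
\begin{enumerate}
\item Put $F=E^{\vee\vee}$, so that $E$ is a modification of $F$ at $Z$.
\item Apart from $\Hom(I_Z/I_Z^2,F)$, the groups governing deformations of the pair $(Z,F)$ come from $Z$-supported local $\mathcal{E}xt$ sheaves. These have no higher cohomology, so the local-to-global spectral sequence degenerates.
\item Since $Z$ is a codimension-two complete intersection, $I_Z/I_Z^2$ is free over $\cO_Z$. Hence $\Ext^1_{\cO_Z}(I_Z/I_Z^2,\cO_Z)=0$ and $\Ext^1_{\cO_Z}(I_Z/I_Z^2,F|_Z)=0$.
\item So the pair is unobstructed and the $2\ell_\nu$ block parameters are realized. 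Only then does (b) complete them to a slice.
\end{enumerate}
You should insert this argument before your part (i).

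A smaller point concerns part (iii). There you invoke Lemmas~\ref{lem:descent} and~\ref{lem:qrank}, which require the presentation axioms (P1)--(P2); these are not among the hypotheses (a)--(c). The frame datum only needs the local Euler frame of Definition~\ref{def:euler}, which comes directly from the class coordinates supplied by (b). The appeal to global descent should therefore be dropped here.
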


\begin{proof}
Let $F=E^{\vee\vee}$, so that $E$ is a modification of $F$ at $Z$ as in {\rm(a)}; all the groups below are supported on $Z$, so twisting $F$ by a positive divisor changes none of them.  The deformations of the pair $(Z,F)$ are computed from $\Hom(I_Z/I_Z^2,F)$ together with the $Z$-supported sheaves $\mathcal Ext^{\geq1}(I_Z/I_Z^2,F)$ and $\mathcal Ext^q(\cO_Z,F)$: all of these are supported on $Z$, hence have vanishing higher cohomology.  Consequently the local-to-global spectral sequence degenerates at $E_2$.  The embedding of $Z$ in $X$ is regular of codimension $2$, so $I_Z/I_Z^2$ is locally free over $\cO_Z$.  The deformations of $Z$ in $X$ have tangent space and obstruction space
\[
 \Hom_{\cO_Z}(I_Z/I_Z^2,\cO_Z)=A^{\oplus 2},\qquad
 \Ext^1_{\cO_Z}(I_Z/I_Z^2,\cO_Z)=0.
\]
The obstruction vanishes since $I_Z/I_Z^2$ is free over $\cO_Z$, and the same argument with coefficients in $F$ gives
\[
 \Ext^1_{\cO_Z}(I_Z/I_Z^2,F|_Z)=0,
\]
because $I_Z/I_Z^2$ is free over $\cO_Z$ and $F|_Z$ is a finite module over the Artin ring $\cO_Z$.  The remaining obstruction groups are $Z$-supported, hence have vanishing higher cohomology.  Thus the pair $(Z,F)$ is unobstructed \cite[Ch.~I]{Ser}.  The twisting does not change the local block: $g=s|_{\cO_Z}$ is computed on $Z$ itself.

The block coordinates are the $2\ell$ parameters of the deformations of $Z$, that is
\[
 \Hom_{\cO_Z}(I_Z/I_Z^2,\cO_Z)=A^{\oplus 2}.
\]
The computation above shows that these parameters are unobstructed, and by {\rm(b)} they complete to a slice of $M$ at a realization point.  Hence all Kuranishi parameters of the blocks are realized; (a) and Lemma~\ref{lem:zeroblock} give the block-diagonal leaf pairing, Lemma~\ref{lem:normform} its factors, and the class coordinates give the deformation volume.  The realization of the objects is (b).  The conditions of (a) and (b) are open.
\end{proof}

We verify the presentation axioms for moduli problems whose chains are built from elementary
transformations along a smooth divisor $D'\subset X$ and point steps.
Each object $E$ carries a locally free sheaf $A_E$ of endomorphisms, with $h^0(A_E)$ the
infinitesimal automorphisms:
\begin{align*}
 A_E&=\End E &&\text{(sheaves)},\\
 A_E&=\mathfrak g_P &&\text{(principal bundles)},\\
 A_E&=\End E(-L) &&\text{(framed sheaves)}.
\end{align*}
The statements below are displayed for sheaves; the other cases use the deformation spaces of their modification theories.  The existence of presentations follows from the constructions of this section, locally free grounds being supplied by Lemma~\ref{lem:hd-ground} and the classical calculus of \cite{Mar,Hul,Fri,Bal} entering in the weakened form of Appendix~\ref{app:calc}.

\begin{lemma}[Zero-dimensional modifications]\label{lem:zero-dim}
Let $X$ be a regular surface, $E$ locally free of rank $r$, and $Z\subset X$ zero-dimensional. Let $E'$ be torsion-free with an identification $E'|_{X\setminus Z}\cong E|_{X\setminus Z}$. If $E'$ is locally free, then $E'\cong E$. If moreover the modification is given by an inclusion $E\subseteq E'$ with quotient supported on $Z$, then the inclusion is an equality.
\end{lemma}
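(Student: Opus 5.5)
The statement is local around the finite set $Z$, and I plan to reduce it to the reflexive hull together with the fact that reflexive sheaves on a regular surface are determined away from codimension two.

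First, the identification $E'|_{X\setminus Z}\cong E|_{X\setminus Z}$ is an isomorphism of sheaves on the open set $j\colon X\setminus Z\hookrightarrow X$. Applying $j_*$ gives $j_*(E'|_{X\setminus Z})\cong j_*(E|_{X\setminus Z})$.

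Next I will use that on a normal surface, which includes any regular surface, a reflexive sheaf $G$ satisfies $G\cong j_*(G|_{X\setminus Z})$ for $Z$ of codimension two. This is the $S_2$ property. Locally it is Hartogs extension: sections of $\cO_X$ extend over points, and $G$ embeds locally into a free module with torsion-free cokernel, so sections of $G$ extend as well. Locally free sheaves are reflexive. Hence if $E'$ is locally free, then
\[
 E'\cong j_*(E'|_{X\setminus Z})\cong j_*(E|_{X\setminus Z})\cong E.
\]
This proves the first claim.

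For the second claim, suppose $E\subseteq E'$ with $E'/E$ supported on $Z$, and that this inclusion restricts to the given identification off $Z$. Consider the commutative square formed by the inclusion and the restriction maps $E\to j_*j^*E$ and $E'\to j_*j^*E'$. Both restriction maps are isomorphisms, the first because $E$ is locally free and the second because $E'$ is locally free (which is the hypothesis in force). The map $j_*j^*E\to j_*j^*E'$ is an isomorphism because the inclusion is an isomorphism off $Z$. Therefore the inclusion $E\to E'$ is itself an isomorphism, that is, $E=E'$ and $E'/E=0$.

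If $E'$ is only torsion-free, the same square still shows that $E'$ injects into $j_*j^*E'\cong E$. Combined with $E\subseteq E'$, this again forces equality.

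The only delicate point is the extension property $G=j_*j^*G$ for locally free $G$ over codimension two on a regular surface. I would cite the standard depth-two statement: $\operatorname{depth}\cO_{X,x}=2$ at closed points, so $\mathcal H^0_Z(G)=\mathcal H^1_Z(G)=0$, and the local cohomology sequence then gives the isomorphism. No projectivity or Poisson input is needed.
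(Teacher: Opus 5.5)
Your argument is correct. For the first claim it is essentially the paper's argument. The paper extends the identification across $Z$ using reflexivity of $\mathcal{H}om(E,E')$ and $\operatorname{codim}Z=2$, while you apply the same depth-two (Hartogs) property $G\cong j_*j^*G$ to $E$ and $E'$ themselves.

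For the inclusion statement the routes differ. The paper works in $R=\cO_{X,x}$ and writes the inclusion as a matrix $A\in M_r(R)$. It observes that $\det A$ is a unit at every height-one prime, because the cokernel is supported at the closed point, and concludes $\det A\in R^\times$ since $R$ is a UFD. You instead reuse the extension property in the naturality square for $\mathrm{id}\to j_*j^*$. This makes the second claim a formal consequence of the mechanism behind the first. It also gives your slight generalization: equality holds when the larger sheaf $E'$ is merely torsion-free, since $E'$ embeds in $j_*j^*E'\cong E$.

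The paper's determinant argument is more elementary. It uses no local cohomology, only that an element of $R$ which is a unit at every height-one prime is a unit. On the other hand, it needs both sheaves to be locally free of rank $r$ in order to produce a square matrix.

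One small point: you do not need to assume that the inclusion restricts to the given identification off $Z$. Your square only uses that the inclusion is an isomorphism off $Z$, which is automatic because its cokernel is supported on $Z$.
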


\begin{proof}
Both sheaves are locally free, hence reflexive. The identification on $X\setminus Z$ extends to $X$ because $\operatorname{Hom}(E,E')$ is reflexive and $\operatorname{codim} Z=2$; thus $E\cong E'$ on $X$. For the inclusion statement let $x\in Z$ and $R=\cO_{X,x}$, regular of dimension $2$. The map is $A\in M_r(R)$ with cokernel supported at $\{x\}$. Hence $A_{\mathfrak p}$ is an isomorphism at every height-one prime $\mathfrak p$, so $\det A\in R_{\mathfrak p}^\times$ for all height-one $\mathfrak p$. As $R$ is a unique factorization domain, $\det A\in R^\times$ and $A\in\GL_r(R)$.
\end{proof}

\noindent Consequently by Lemma~\ref{lem:zero-dim} a chain of nontrivial kernel point modifications has top non-locally free at the members. When the general object of $M$ is a vector bundle the kernel-only construction below cannot reach the general object.

The chain space is built from a fixed ground sheaf $E_0$ by $t$ divisor steps
and $\rho$ point steps. A divisor step is an elementary transform along $D'\in|\mathcal A|$, with rank-one reduction datum $L_j$ and gluing class $\varepsilon_j$; write $N=N_{D'/X}$. Point steps are kernel point modifications.  The kernel-side sheaf of a step is called the parent: $\ker(E\to i_{D'*}Q)$ is the parent of $E$ for a divisor step, and $\ker(F\to i_{x*}k)$ is the parent of $F$ for a point step. The tops of this chain are member-saturated (Remark~\ref{rem:regimes}); the chain exhibits a filtered presentation.

\begin{definition}[Point step]\label{def:pointstep}
\leavevmode
\begin{itemize}
\item[\rm(S)] \emph{Kernel point modification.} At a point $x_\nu$ the current sheaf $F$ is replaced by its parent
\[
 F'=\ker(F\to i_{x_\nu*}k).
\]
It lowers $\operatorname{ch}_n$ by $1$ and makes $F'$ not locally free at $x_\nu$.
\end{itemize}
Use S when the general object is not locally free. When the general object is a vector bundle, members are supplied by the non-locally free objects of the same numerical type, by Corollary~\ref{lem:Unonempty}.
\end{definition}

\begin{proposition}[Chain counts]\label{prop:hd-count}
Let the chain have $t$ divisor steps with reduction data $L_j\subset F_j=E_j|_{D'}$ and $\rho$ point steps. Put
\[
 g_j=h^0(D',L_j^\vee\otimes F_j)-1,\qquad
 e_j=h^0(D',Q_j^\vee\otimes L_j\otimes N).
\]
Then the chain space has dimension $m=\sum_{j=1}^{t}(g_j+e_j)+2\rho$ and the fibres of $\mu$ over a stable general object have dimension $\phi=\sum_j g_j$. Consequently
\[
 m-\phi=\sum_j e_j+2\rho,
\]
and choosing the data so that $\sum_j e_j=c=d-2\rho$ gives $m-\phi=d$.
\end{proposition}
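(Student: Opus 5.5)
The count is additive along the chain, so I will compute the dimension contributed by each step separately and add. I will also compute the fibre dimension of $\mu$ step by step, as the dimension of the choices that leave the resulting sheaf unchanged.

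\emph{Divisor steps.} Fix the current sheaf $E_{j-1}$ and the divisor $D'$, and put $F_j=E_{j-1}|_{D'}$. An elementary transform along $D'$ is given by two pieces of data.
\begin{itemize}
\item A rank-one reduction datum $L_j\subset F_j$. Choosing it amounts to a nonzero map $L_j\to F_j$ up to scalars, so this parameter space is the projectivization of $H^0(D',L_j^\vee\otimes F_j)$. Its dimension is $g_j=h^0(D',L_j^\vee\otimes F_j)-1$.
\item A gluing class $\varepsilon_j$ in the extension space that governs the transform. I will identify this space with $H^0(D',Q_j^\vee\otimes L_j\otimes N)$ by the usual computation: the normal directions of the transform are $\mathcal{H}om(Q_j,L_j)\otimes N$, via $\mathcal Ext^1(i_{D'*}Q_j,\,\cdot\,)\cong i_*(Q_j^\vee\otimes N)$. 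Its dimension is $e_j$.
\end{itemize}
Summing over $j$ gives the divisor part $\sum_j(g_j+e_j)$ of $m$.

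\emph{Point steps.} A kernel point modification $F'=\ker(F\to i_{x_\nu*}k)$ is determined by two choices. The point $x_\nu$ moves in $X$, which gives $2$ parameters; these are the block coordinates $(q_{\nu,1},q_{\nu,2})$. The surjection $F_{x_\nu}\to k$ is the second choice. For length-one members I take this surjection to be absorbed into the member data, so each point step contributes exactly $2$. This yields $m=\sum_j(g_j+e_j)+2\rho$.

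\emph{Fibre dimension.} By (P1) the fibres of $\mu$ are orbits of chain equivalences. Over a stable general object, I will argue three things.
\begin{itemize}
\item Stability forces $\Aut(E)=k^\times$. This scalar is the rescaling already quotiented out in the projectivization, so it contributes nothing further.
\item Varying $L_j$ inside its projective family, with the gluing class transported compatibly, does not change the top. The reason is that the top depends only on the $G_j$-invariants of $(L_j,\varepsilon_j)$, namely the class coordinates of Remark~\ref{rem:classcoords}. Hence the $g_j$ directions are vertical.
\item The $e_j$ directions and the $2\rho$ block directions inject into $T_{[E]}M$. This is exactly where I will need the generic-injectivity statement: the differential of $\mu$ is injective on the class and block directions. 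I will get this from the Ext computation of Appendix~\ref{app:calc}, together with Lemma~\ref{lem:zero-dim}, which shows that the member points are intrinsic to the top.
\end{itemize}
It follows that $\phi=\sum_jg_j$ and $m-\phi=\sum_je_j+2\rho$. Finally, choosing $t$ and the $L_j$ so that $\sum e_j=c=d-2\rho$ gives $m-\phi=d$. This choice is possible because $e_j$ grows with $\deg N$, which can be adjusted through $D'\in|\mathcal A|$.

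\emph{Main obstacle.} The hard step is the fibre claim, in particular that the $L_j$ directions are exactly vertical and that the gluing directions are not partially absorbed. I will try first a tangent-level computation: identify $\ker d\mu$ with $\bigoplus_j H^0(D',L_j^\vee\otimes F_j)/k$ using the long exact sequence of $0\to E_j\to E_{j-1}\to i_*Q_j\to0$. I will then use the vanishing $\Hom(E_j,E_j)=k$ from stability to show that this kernel has exactly the claimed dimension.
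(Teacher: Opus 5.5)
Your skeleton matches the paper's count. You take $g_j$ from $\mathbb P H^0(D',L_j^\vee\otimes F_j)$ and $e_j$ from the gluing space $\Ext^1(i_{D'*}Q_j,E'_j)$. You count two parameters per point step, as the paper does, and get $\phi=\sum_jg_j$ with the scalars already divided out. However, two of your justifications do not work as written, and the paper argues both differently.

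\emph{Verticality of the $g_j$ directions.} The group $G_j=k^\times\ltimes T_j$ of Remark~\ref{rem:classcoords} acts on the gluing space: it translates $\varepsilon_j$ and rescales the trivialisation of $L_j$. It does not move $L_j$ inside $F_j$. Its invariants, the class coordinates, are exactly the transverse directions that survive on $M$. So ``the top depends only on the $G_j$-invariants'' says nothing about the $L_j$ directions. Taken literally, it would make the $G_j$-orbit directions inside the $e_j$-dimensional gluing space vertical, which contradicts your own third bullet.

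Your setup also takes $F_j=E_{j-1}|_{D'}$, the restriction of the sheaf \emph{before} the step. With that sheaf held fixed, moving $L_j$ moves the output, so there is no reason for these directions to be vertical. The statement has $F_j=E_j|_{D'}$, and the paper's reason is a direct description of the fibre. Over a fixed object $E$, each $L\subset E|_{D'}$ up to scalars determines its parent $E'_L=\ker\bigl(E\to i_{D'*}(E|_{D'}/L)\bigr)$, together with a gluing class recovering $E$. The members are preserved along the fibres. This identifies the fibre with the choices of the $L_j$ and gives $\phi=\sum_jg_j$ at once. The injectivity on the $e_j$ and block directions, which you leave as the main obstacle, is not argued separately in the paper; it is contained in that fibre description.

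\emph{The budget.} That $e_j$ grows with $\deg N$ only shows $e_j$ can be made large. You need $\sum_je_j$ to equal the fixed number $c=d-2\rho$, which can be small ($c=1$ in Example~\ref{ex:testbed}). Moreover, changing $D'$ changes $F_j$, $g_j$ and the window $\deg L_j<d_{\min}(E_j;D')$, so $\deg N$ is not an independent parameter.

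The paper's knob is $\deg L_j$ instead. By Riemann--Roch on $D'$, once $h^1$ vanishes, $e_j$ is an affine function of $\deg\bigl(Q_j^\vee\otimes L_j\otimes N\bigr)$, hence of $\deg L_j$. That degree can be moved within the window of Lemma~\ref{lem:hd-window}. One then arranges $\sum_je_j=c$ by choosing $t$ and the $L_j$, taking no divisor steps when $c=0$.
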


\begin{proof}
See Appendix~\ref{app:count}.
\end{proof}

\subsection{Master chains}\label{sub:master}

The proof of the trace formula proceeds along a chain of point modifications
from a semistable ground.

\begin{definition}[Master chain]\label{def:master}
Let $G$ be a locally free sheaf of rank $r$, let $y_1,\dots,y_\rho$ be
distinct points of $X$, let $\pi_i\colon G|_{y_i}\to k$ be surjections, and
let $H\subset\{1,\dots,\rho\}$. The master chain of type $(G;\,y,\pi;\,H)$
has top
\[
 E=\ker\Bigl(G\to\bigoplus_{i\notin H}k_{y_i,\pi_i}\Bigr).
\]
The points $y_i$ with $i\notin H$ are called members; the points $y_h$ with
$h\in H$ are called healed points.
\end{definition}

\begin{lemma}[Healing]\label{lem:heal}
Let $F=\ker(G\to\bigoplus_{i=1}^{\rho}k_{y_i,\pi_i})$ and let $E$ be the top
of the master chain of type $(G;\,y,\pi;\,H)$. Then $E$ is obtained from $F$
by elementary transformations along smooth divisors through the points
$y_h$, $h\in H$, and for saturating reduction data the abstract class of
$E$ depends only on $G$, the data $(y_i,\pi_i)_{i\notin
H}$ and the set $H$.  The coordinates $y_h$ remain free on the chain space:
moving $y_h$ transversally to its healing divisor re-exposes the defect at
$y_h$, and their contribution is computed by the split End blocks of Section~\ref{sub:degcomplex}.
\end{lemma}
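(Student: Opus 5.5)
The plan is to build $E$ from $F$ one healed point at a time, inducting on $|H|$, so that each step is a local statement at a single point $y_h$ with $h\in H$. Since the points $y_i$ are distinct, the sheaves $F$, $E$ and $G$ all agree away from $\{y_h\}_{h\in H}$. Choose pairwise disjoint smooth divisors $D'_h$ with $y_h\in D'_h$ that avoid the other points $y_i$. On a projective surface, a sufficiently ample linear system separates points, so general members exist. Every construction below then takes place in a neighbourhood of $D'_h$, and the steps for different $h$ commute.

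For a single healed point $y=y_h$ with surjection $\pi$, write $F_1=\ker(G\to k_{y,\pi})$. I would exhibit the healing as two elementary transformations along $D'=D'_h$.
\begin{itemize}
\item First take the kernel $F_2=\ker(F_1\to i_{D'*}Q)$, where $Q$ is a line bundle on $D'$ chosen so that the induced map $F_1|_{D'}\to Q$ kills the torsion of $F_1|_{D'}$ at $y$.
\item Then apply the inverse elementary transformation, that is, the extension by $i_{D'*}L$ with reduction datum $L$, to obtain a sheaf that is reflexive near $D'$. By Lemma~\ref{lem:zero-dim} this sheaf is locally free there and agrees with $G\otimes\cO(-D')$, or with $G$ itself, after twisting.
\end{itemize}
Concretely, near $y$ choose coordinates $(u,v)$ with $D'=\{u=0\}$ and a splitting $G\cong\cO^{r-1}\oplus\cO$ with $\pi$ the last coordinate. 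Then $F_1=\cO^{r-1}\oplus\mathfrak m_y$. The transformation along $D'$ with quotient on the last factor replaces $\mathfrak m_y=(u,v)$ by $(u,v)+(u)\cdot\cO(D')$, which saturates to $\cO$. This is the healing. By \emph{saturating reduction data} I would mean exactly this choice of $Q$ and $L$, namely the one for which the composite becomes locally free at $y$.

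The next step is the independence claim. After healing, $E$ is the unique reflexive extension across $y_h$ of $F|_{X\setminus\{y_h\}}$ restricted to the locally free locus. By Lemma~\ref{lem:zero-dim} (the reflexive-extension argument in codimension two), the healed sheaf near $y_h$ is isomorphic to $G$ near $y_h$, independently of $\pi_h$, of $D'_h$ and of the gluing class. Globally, $E$ is obtained by gluing $G$ near the healed points to $\ker\bigl(G\to\bigoplus_{i\notin H}k_{y_i,\pi_i}\bigr)$ elsewhere, and the gluing is the identity on the overlap. So the isomorphism class of $E$ depends only on $G$, on $(y_i,\pi_i)_{i\notin H}$, and on $H$, and it coincides with the top of Definition~\ref{def:master}.

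For the last assertion, note that the chain space parametrizes $(y_h,\pi_h,D'_h,\text{reduction data})$ freely. The locus where $y_h\in D'_h$ and the data saturate is a closed condition. Moving $y_h$ off $D'_h$ (transversally) breaks it: the elementary transformation then no longer meets the defect, and the top acquires a non-locally free point near $y_h$, which re-exposes the defect. The coordinates $y_h$ are therefore free chain coordinates but not class coordinates. I would only record this here and defer the determinant contribution to the split End blocks, as the statement says.

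The main obstacle is making saturating reduction data precise and global. One has to check that a line bundle quotient $Q$ on $D'_h$ exists with the required local behaviour at $y_h$ and with a gluing class compatible with stability. I would handle this by choosing $Q$ as a twist of $\pi_h$ extended along $D'_h$, using $h^1$-vanishing for sufficiently ample $D'_h$.
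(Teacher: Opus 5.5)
Your overall strategy is the paper's. You transform along a smooth divisor through $y_h$, with data chosen so that the local model $\mathfrak m_{y_h}\oplus\cO^{r-1}$ becomes free. You then identify the result with $E$ by uniqueness of a locally free extension across a point, which is the paper's ``unique free enlargement''. Your kernel-then-enlargement pair is the right shape, since $E/F$ is zero-dimensional while a single transformation along $D'$ shifts $c_1$ by a nonzero multiple of $[D']$.

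The gap is in the independence step. You define saturating data by requiring the composite to be locally free at $y_h$. That condition does constrain the first step: it forces $Q$ to be induced by a line-bundle quotient of $G|_{D'}$ compatible with $\pi_h$. It says nothing about the second step. The enlargements of the locally free $F_2$ along $D'$ are the sheaves $F_2\subset\tilde E\subset F_2(D')$ cut out by a sub-line-bundle $\tilde E/F_2\subset F_2(D')|_{D'}$. All of them are locally free, and two different choices differ generically along $D'$. So for every gluing class other than the one with $\tilde E/F_2=E/F_2$, you get $\tilde E\neq F$ on $D'\setminus\{y_h\}$. Then the argument via Lemma~\ref{lem:zero-dim} does not apply, since it needs $\tilde E=F$ on $X\setminus\{y_h\}$. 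Your assertion that the gluing is the identity on the overlap is exactly what remains unproved. The claimed independence of the gluing class is false: the second step is there to undo the first along $D'\setminus\{y_h\}$, not to restore reflexivity, and that is precisely where the gluing class matters.

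To repair this, build the condition into the definition.
\begin{itemize}
\item Take a single smooth $D'\in|\mathcal A|$ through all $y_h$, $h\in H$, avoiding the $y_i$ with $i\notin H$.
\item Call a line-bundle quotient $\bar Q$ of $G|_{D'}=E|_{D'}$ saturating if its fibre map at each $y_h$ has kernel $\ker\pi_h$. Such $\bar Q$ exist: for $r\geq2$ a general map with the prescribed fibres is onto once $\deg\bar Q\gg0$, and for $r=1$ take $\bar Q=G|_{D'}$. In the paper's convention, with rank-one reduction datum $L$, the condition reads $L_{y_h}\subseteq\ker\pi_h$.
\item Then $F\to i_{D'*}\bar Q(-\sum_{h\in H}y_h)$ is onto with kernel $F_2=\ker(E\to i_{D'*}\bar Q)$.
\item The enlargement is forced to be the class of $0\to F_2\to E\to i_{D'*}\bar Q\to0$.
\end{itemize}
The top is then $E$ itself for every such $D'$ and every saturating $\bar Q$. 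That is the independence claim, with the gluing class determined rather than free.

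This also removes the need for your pairwise disjoint divisors $D'_h$, which an ample system cannot supply: ample curves on a surface meet, and on $\mathbb P^2$ any two curves meet.

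There is also a smaller slip: $G$ does not agree with $F$ and $E$ off $\{y_h\}_{h\in H}$, since it differs from both at the $y_i$ with $i\notin H$. Deferring the last assertion to the split End blocks matches the paper, whose proof does not address it either.
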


\begin{proof}
An elementary transformation along a smooth divisor $D'$ through $y_h$,
with reduction datum chosen to saturate the condition at $y_h$, drops that
condition. At $y_h$ the local model of $F$ is $\mathfrak m\oplus\cO^{r-1}$
and the unique free enlargement of $\cO^r$ appears; away from $D'$ the
sheaf is unchanged. In particular $E$ is the kernel of the remaining
conditions, and its abstract class is independent of the divisors and of
the reduction data.
\end{proof}

\begin{lemma}[Master chain existence]\label{lem:master}
Let $G$ be slope stable and locally free of type $(r,c_1,c_2-m)$, where
$0\le m\le\min(\rho,c_2)$. Then the top of a master chain of type
$(G;\,y,\pi;\,H)$ with $|H|=\rho-m$ is a stable sheaf of type
$(r,c_1,c_2)$.
\end{lemma}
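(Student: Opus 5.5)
The top $E$ is the kernel of $G\to\bigoplus_{i\notin H}k_{y_i,\pi_i}$, a surjection onto a skyscraper of length $m$, where $|H|=\rho-m$ means exactly $m$ members remain. I would verify the numerical type directly and then deduce stability from the stability of $G$.

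\textbf{Step 1: invariants.} From $0\to E\to G\to\bigoplus_{i\notin H}k_{y_i}\to0$ with a quotient of length $m$ supported in codimension two, one gets
\[
\rk E=r,\qquad c_1(E)=c_1(G)=c_1,\qquad \operatorname{ch}_2(E)=\operatorname{ch}_2(G)-m .
\]
Since $c_1$ is unchanged, this gives $c_2(E)=c_2(G)+m=(c_2-m)+m=c_2$. The bound $m\le\min(\rho,c_2)$ guarantees that there are enough distinct points $y_i$ with $i\notin H$, and that $c_2(G)=c_2-m\ge0$ is a legitimate type. The sheaf $E$ is torsion free, as a subsheaf of a locally free sheaf. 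Its double dual is $G$, by Lemma~\ref{lem:zero-dim} and reflexivity, since $E$ and $G$ agree off a finite set.

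\textbf{Step 2: stability.} Any saturated subsheaf $F\subset E$ with $0<\rk F<r$ gives a subsheaf $F\subset G$. Because $G$ is slope stable, $\mu(F)<\mu(G)=\mu(E)$. Slope stability of $E$ then follows, and slope stability implies Gieseker stability. So the top is stable of type $(r,c_1,c_2)$.

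\textbf{Step 3: healed points.} I need to check that the healed points do not alter the top. By Lemma~\ref{lem:heal}, the elementary transformations at $y_h$, $h\in H$, only undo the conditions at those points. By Definition~\ref{def:master}, the top is the kernel over the non-healed points only. Hence Steps 1 and 2 apply to it verbatim, whatever healing divisors and reduction data are used.

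\textbf{Main obstacle.} The only delicate point is confirming that the abstract class produced by the chain really equals the kernel in Definition~\ref{def:master}, which is the content of Lemma~\ref{lem:heal}. Granting that lemma, the argument above is routine.
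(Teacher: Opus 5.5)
Your proof is correct and is the paper's argument: the paper reads off $c_1(E)=c_1(G)$ and $c_2(E)=c_2(G)+m$ from the length-$m$ quotient, and gets stability from Lemma~\ref{lem:saturated}(i), whose proof is exactly your observation that a subsheaf of $E$ of rank $0<r'<r$ is a subsheaf of $G$ and so has slope $<\mu(G)=\mu(E)$. Your Step~3 and the ``main obstacle'' are not needed: Definition~\ref{def:master} defines the top directly as this kernel, so Lemma~\ref{lem:heal} plays no role in this statement.
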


\begin{proof}
The top is $E=\ker(G\to\bigoplus_{i\notin H}k_{y_i,\pi_i})$ with $m$
summands, so $c_1(E)=c_1(G)$ and $c_2(E)=c_2(G)+m$. Stability is
Lemma~\ref{lem:saturated}(i) applied to the $m$ kernel steps.
\end{proof}

\begin{remark}[Existence of the ground]\label{rem:ground}
A suitable $G$ exists in the standard cases.  For $m=0$ one may take a
slope stable locally free sheaf of the target type $(r,c_1,c_2)$ and heal
every step.  For $c_2\ge1$ one may take $m$ so that $(r,c_1,c_2-m)$ lies
in the Bogomolov range of stable bundles; the inequality $m\le\rho$ holds
because $\rho$ grows with $c_2$ while the lower bound of the range does
not.  If $G$ is only polystable, Lemma~\ref{lem:saturated}(ii) applies to
the steps distributed among the summands.
\end{remark}

\begin{lemma}[Kernel modifications]\label{lem:saturated}
Let $F$ be locally free of rank $r$ with Chern classes $(c_1,c_2-k)$,
where $1\le k\le\rho$.
\begin{enumerate}
\item[\rm(i)] If $F$ is slope stable, the $k$-step kernel modification of
$F$ at distinct points is $H$-stable of type $(r,c_1,c_2)$.
\item[\rm(ii)] If $F=\bigoplus_i F_i$ is polystable with the $F_i$ stable of
one reduced Hilbert polynomial, and $k$ admits counts $k_i\ge0$ with
$\sum_i k_i=k$ and $k_i/\operatorname{rk}F_i$ constant, the direct sum of
the $k_i$-step kernel modifications of the $F_i$ is polystable of type
$(r,c_1,c_2)$.
\end{enumerate}
In both cases $E^{\vee\vee}=F$ and $\operatorname{length}(E^{\vee\vee}/E)=k$.
\end{lemma}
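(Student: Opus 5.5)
The plan is to start with the numerical statements, which hold in both cases, and then do stability, (i) first and then (ii). Write $E=\ker(F\to\bigoplus_{i=1}^k k_{x_i})$, where each $F|_{x_i}\to k$ is a surjection. Away from the points $E$ agrees with $F$, so $c_1(E)=c_1(F)$. The quotient $F/E$ is a skyscraper of length $k$, so $\operatorname{ch}_2$ drops by $k$ and $c_2(E)=c_2(F)+k=c_2$. Since $F$ is locally free, hence reflexive, and $E\subset F$ is an isomorphism off a codimension-two set, $E^{\vee\vee}=F^{\vee\vee}=F$. It follows that $\operatorname{length}(E^{\vee\vee}/E)=k$. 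For (ii) the same argument applies summand by summand: $E=\bigoplus_i E_i$ with $E_i^{\vee\vee}=F_i$, and the lengths add to $\sum_i k_i=k$.

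For (i), let $E'\subset E$ be a saturated subsheaf with $0<\operatorname{rk}E'<r$. Its saturation $F'$ in $F$ agrees with $E'$ off the points, so $c_1(F')=c_1(E')$ and $\operatorname{rk}F'=\operatorname{rk}E'$. Slope stability of $F$ gives $\mu(E')=\mu(F')<\mu(F)=\mu(E)$. Strict slope inequality implies $H$-stability in the Gieseker sense, so no comparison of the second-order terms of the Hilbert polynomials is needed, and torsion-freeness of $E$ is inherited from $F$. That proves (i).

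For (ii), each $E_i$ is stable by (i), or simply because a rank-one torsion-free sheaf is stable. The reduced Hilbert polynomial of $E_i$ is that of $F_i$ minus $k_i/\operatorname{rk}F_i$ in the constant term. Since the $F_i$ share one reduced Hilbert polynomial and $k_i/\operatorname{rk}F_i$ is constant by hypothesis, the $E_i$ also share one reduced Hilbert polynomial, so $\bigoplus_i E_i$ is polystable. Its type is $(r,c_1,c_2)$ by additivity of $\operatorname{ch}$ together with the numerical computation of the first paragraph.

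I expect the main obstacle to be the degenerate summands in (ii): summands with $k_i=0$, and summands that are rank-one or not slope-stable but only Gieseker-stable. For summands with $k_i=0$ the constancy hypothesis forces all $k_i=0$, which contradicts $k\ge1$, so this case cannot occur. For Gieseker-stable summands I would repeat the saturation argument of (i) with reduced Hilbert polynomials: a saturated subsheaf of $E_i$ has Hilbert polynomial at most that of its saturation in $F_i$, which is enough because $E_i\subset F_i$ has the same reduced polynomial up to the constant shift.
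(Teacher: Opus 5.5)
Your argument for the numerical assertions, for (i), and for (ii) with slope-stable (or rank-one) summands is the paper's proof. You use the zero-dimensional quotient to get $c_1$, $c_2$ and $E^{\vee\vee}=F$. You get stability from the slope comparison $\mu(E')\le\mu(F')<\mu(F)=\mu(E)$ through $F$. For (ii) you use the constant shift $p_{E_i}=p_{F_i}-k_i/\operatorname{rk}F_i$. Note that the paper also obtains stability of the $E_i$ ``by the same argument'', so it implicitly reads the $F_i$ in (ii) as slope stable.

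The extension in your last paragraph to summands that are only Gieseker stable does not work. Take a saturated $E'\subset E_i$ of rank $r'$ with saturation $F'\subset F_i$. Then $E'=F'\cap E_i$ and $P(E')=P(F')-\ell'$ with $\ell'=\operatorname{length}(F'/E')$, so (normalising as you do) $p(E')=p(F')-\ell'/r'$, while $p(E_i)=p(F_i)-k_i/r_i$. Gieseker stability of $F_i$ together with $P(E')\le P(F')$ only yields $p(E')<p(F_i)$. When $\mu(F')=\mu(F_i)$, you need the shift $\ell'/r'$ to exceed $k_i/r_i$ minus the gap $\chi(F_i)/r_i-\chi(F')/r'$, and nothing forces this.

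Concretely, let $F_i$ be Gieseker stable but strictly slope semistable, with an equal-slope saturated $F'$. Choose the quotients $\pi_x\colon F_i|_x\to k$ with $\operatorname{im}(F'|_x\to F_i|_x)\subset\ker\pi_x$, which is possible since $r'<r_i$. Then $F'\subset E_i$ and it is still saturated, since $E_i/F'\subset F_i/F'$. So $\ell'=0$, and $F'$ violates stability of $E_i$ as soon as $k_i/r_i\ge\chi(F_i)/r_i-\chi(F')/r'$.

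So under the Gieseker reading, (ii) is false for special choices of the quotients. That paragraph should be dropped. Either read ``stable'' in (ii) as slope stable, as the paper does, or add a genericity hypothesis on the $\pi_x$. One sufficient condition is $\ell'/r'\ge k_i/r_i$ for every equal-slope saturated $F'\subset F_i$.
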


\begin{proof}
A kernel step at a point where the sheaf is locally free raises $c_2$ by one
and creates a simple non-locally free point with local model
$\mathfrak m_x\oplus\cO^{r-1}$, as in Lemma~\ref{lem:hd-members}.  Since
$E\subset F$ has zero-dimensional quotient, $E^{\vee\vee}=F$ and
$\mu(E)=\mu(F)$; this gives the numerical assertions.

For {\rm(i)}, a subsheaf $E'\subset E$ of positive rank $r'<r$ is a
subsheaf of $F$ and satisfies $\mu(E')<\mu(F)=\mu(E)$, so $E$ is slope
stable.  For {\rm(ii)}, each $E_i$ is slope stable by the same argument,
and
\[
 p_{E_i}=p_{F_i}-\frac{k_i}{\operatorname{rk}F_i}
\]
is independent of $i$; hence $E=\bigoplus_i E_i$ is polystable.  The
polystable configuration of {\rm(ii)} covers split grounds with the steps
distributed among the summands; for $(2,0,2)$ on $\mathbb P^2$ it produces
$I_x\oplus I_y$.
\end{proof}

\begin{corollary}[Non-locally free objects]\label{lem:Unonempty}
Under hypothesis {\rm(i)} or {\rm(ii)} of Lemma~\ref{lem:saturated}, the
space $\bar M(r,c_1,c_2)$ contains a non-locally free semistable sheaf with
$k$ simple points; under {\rm(i)} it is $H$-stable.
\end{corollary}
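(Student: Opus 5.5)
The plan is to take the sheaf $E$ produced by Lemma~\ref{lem:saturated} as the top of a master chain (Definition~\ref{def:master}) with empty healed set $H$, and to check three things for it: it lies in $\bar M(r,c_1,c_2)$, it is not locally free at exactly $k$ simple points, and it is stable under (i).

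\emph{Construction.} Under hypothesis (i), start from the slope stable locally free $F$ of type $(r,c_1,c_2-k)$. Choose distinct points $y_1,\dots,y_k$ and surjections $\pi_i\colon F|_{y_i}\to k$. Put
\[
 E=\ker\Bigl(F\to\bigoplus_i k_{y_i,\pi_i}\Bigr).
\]
Under hypothesis (ii), do the same summand by summand with counts $k_i$, and take $E=\bigoplus_i E_i$.

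\emph{Numerical type.} Lemma~\ref{lem:saturated} gives $E^{\vee\vee}=F$ and $\operatorname{length}(F/E)=k$. Hence $c_1(E)=c_1$ and $c_2(E)=c_2$, so $E$ has type $(r,c_1,c_2)$.

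\emph{Stability.} Under (i), Lemma~\ref{lem:saturated}(i) gives that $E$ is slope stable. This implies Gieseker stability, since any saturated subsheaf has strictly smaller slope, so $E$ lies in $M\subset\bar M$. Under (ii), Lemma~\ref{lem:saturated}(ii) gives polystability with equal reduced Hilbert polynomials. A polystable sheaf is Gieseker semistable, so $E$ defines a point of $\bar M$, though in general only in the strictly semistable boundary.

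\emph{Non-local-freeness at simple points.} Work locally at each $y_i$. Since $\pi_i$ is surjective, we may choose a local frame of $F$ in which $\pi_i$ is the first coordinate evaluated at $y_i$. The local model of $E$ at $y_i$ is then $\mathfrak m_{y_i}\oplus\cO^{r-1}$. This module is not free because $\mathfrak m_{y_i}$ needs two generators on a surface. Alternatively, if $E$ were locally free near $y_i$, Lemma~\ref{lem:zero-dim} applied to $E\subseteq F$ would force equality there, contradicting $\operatorname{length}(F/E)=k\ge1$. Each $y_i$ contributes length one to $E^{\vee\vee}/E$, so these are $k$ simple points, and $E$ is locally free elsewhere.

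The main obstacle is the existence of the ground sheaf $F$ itself, especially when $k$ is small relative to the Bogomolov range. The corollary sidesteps this by assuming (i) or (ii), so beyond that hypothesis the only delicate point is the Gieseker-versus-slope comparison in case (ii). I would handle it by computing the reduced Hilbert polynomials exactly as in the proof of Lemma~\ref{lem:saturated}(ii).
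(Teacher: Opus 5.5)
Your proposal is correct and follows the paper's route: the paper's proof simply applies Lemma~\ref{lem:saturated} and notes, as in Lemma~\ref{lem:hd-members}, that each kernel step creates a simple non-locally free point with local model $\mathfrak m_x\oplus\cO^{r-1}$, which is exactly your construction and local check. In case (ii), state explicitly that the points of all the summands are chosen pairwise distinct, since a point used in two summands gives the local model $\mathfrak m_x\oplus\mathfrak m_x\oplus\cO^{r-2}$ of colength two rather than a simple point.
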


\begin{proof}
Apply Lemma~\ref{lem:saturated}; each kernel step creates a simple
non-locally free point, as in Lemma~\ref{lem:hd-members}.
\end{proof}

\subsection{Constancy of the trace}\label{sub:trace}

\begin{lemma}[Constancy of the trace on a complete space]
\label{lem:consttrace-complete}
Let $M$ be complete, let $\LF_s=\theta_s^{[\rho]}\wedge\Omega$, and assume
that $\Tr(\LF_s^{p-1})$ is a regular function on $M$. Then
$\Tr(\LF_s^{p-1})$ is constant on each connected component of $M$.
\end{lemma}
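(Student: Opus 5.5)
The statement reduces to the classical fact that a regular function on a complete variety is locally constant; the plan is to apply that fact and check the mild hypotheses it needs.

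\emph{Step 1: reduce to a single component.} Write $\tau:=\Tr(\LF_s^{p-1})$. By hypothesis $\tau\in H^0(M,\cO_M)$. Restricting to a connected component $M_0$ gives an element $\tau|_{M_0}\in H^0(M_0,\cO_{M_0})$. Since $M_0$ is open and closed in the complete space $M$, it is itself complete and connected. So it suffices to treat the case where $M$ is connected.

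\emph{Step 2: the function is constant on the reduction.} Since $M_0$ is complete and connected, $H^0(M_{0,\mathrm{red}},\cO)$ is a finite-dimensional reduced $k$-algebra. It has no nontrivial idempotents because $M_0$ is connected. As $k$ is algebraically closed, this algebra is $k$. A more hands-on route: the image of $M_0$ under $\tau\colon M_0\to\mathbb A^1$ is proper, connected and closed in the affine line, hence a single point $\lambda$. In either form, $\tau-\lambda$ vanishes at every closed point.

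\emph{Step 3: pass from the reduction to $M$ itself.} This is the only point needing care. If $M$ is not reduced, $\tau-\lambda$ is merely nilpotent. In this section, however, $M$ is assumed reduced, normal and irreducible at the start of Section~\ref{sec:transfer}. With reducedness, the nilpotent $\tau-\lambda$ is zero, so $\tau=\lambda\in k$ on $M_0$.

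\emph{Remark on the hard part.} The lemma assumes that $\tau$ is a regular function. The real difficulty in applications is verifying that assumption: one must show that $\LF_s$, which is defined as a section of the reflexive sheaf $\omega_M^{[-1]}$, has a Frobenius trace that is regular on a possibly singular $M$. That verification is not part of this lemma. Without reducedness one would first check regularity on $M_{\mathrm{red}}$, and only the constancy on $M_{\mathrm{red}}$ would be asserted.
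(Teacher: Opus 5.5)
Your proof is correct and follows the paper's argument. The paper simply invokes the fact that a regular function on a complete variety is constant. Your Steps 1--3 spell out the two points that one-liner leaves implicit: the reduction to a connected component, and the use of reducedness, which is part of the standing hypothesis of Section~\ref{sec:transfer} and is built into the paper's word ``variety''.
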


\begin{proof}
On a complete variety a regular function is constant.
\end{proof}

\begin{lemma}[Constancy of the trace after extension]
\label{lem:consttrace-extend}
Let $\LF_s$ extend to a regular section of $\omega_{\bar M}^{[-1]}$ on a
projective compactification $\bar M$, and assume that $\Tr(\LF_s^{p-1})$ is
a regular function on $\bar M$. Then $\Tr(\LF_s^{p-1})$ is constant on each
connected component of $M$.
\end{lemma}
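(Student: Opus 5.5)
The plan is to reduce the statement to Lemma~\ref{lem:consttrace-complete} applied to $\bar M$, and then restrict the resulting constant to $M$. Write $\bar\LF_s$ for the extension, a section of $\omega_{\bar M}^{[-1]}$. Since $\bar M$ is normal and $\omega_{\bar M}^{[-1]}$ is reflexive, the section $\bar\LF_s$ is determined by its restriction to the regular locus. The power $\bar\LF_s^{\,p-1}$ is a section of $\omega_{\bar M}^{[1-p]}$. On $\bar M^{\mathrm{reg}}$, the Frobenius trace of Definition~\ref{def:split} sends this power to a regular function. By hypothesis this trace is a regular function $\tau$ on all of $\bar M$. It restricts on $M$ to $\Tr(\LF_s^{p-1})$, because the trace is local: it is computed by socle extraction at each point, as in Theorem~\ref{thm:bk}.

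\textbf{Step 1: $\tau$ is constant on the components of $\bar M$.} The space $\bar M$ is projective, hence complete. A regular function on a complete variety is constant on each connected component. This is exactly Lemma~\ref{lem:consttrace-complete} with $\bar M$ in place of $M$. Reducedness of $\bar M$ (from normality) is needed so that constancy of values means equality of functions. This is where the Remark~\ref{rem:nearsplit}-style argument enters: $\tau-c$ vanishes on a dense open, hence everywhere.

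\textbf{Step 2: restriction to $M$.} Let $M_0$ be a connected component of $M$. It is connected, so it lies in a single connected component $\bar M_0$ of $\bar M$. On $\bar M_0$ the function $\tau$ equals some constant $c$. Therefore $\Tr(\LF_s^{p-1})=\tau|_{M_0}=c$ is constant on $M_0$.

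\textbf{Main obstacle.} The delicate point is the compatibility of the trace with restriction. We must check that the trace of $\bar\LF_s^{\,p-1}$, computed on $\bar M$, agrees on $M$ with the trace of $\LF_s^{p-1}$. This holds because $M$ is open in $\bar M$ and the Cartier trace commutes with restriction to open subsets. Once that is granted, the statement is formal.
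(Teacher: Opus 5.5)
Your proof is correct and is the paper's argument: the trace is a regular function on the projective, hence complete, $\bar M$, so it is constant on each connected component of $\bar M$, and each connected component of $M$ lies inside one of these. The detours through normality, reflexivity of $\omega_{\bar M}^{[-1]}$ and the dense-open argument of Remark~\ref{rem:nearsplit} are unnecessary: the hypothesis already gives the trace as a regular function on $\bar M$, so only completeness and reducedness of $\bar M$ enter.
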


\begin{proof}
A regular function on a projective variety is constant. Restriction to $M$
is constant on each component.
\end{proof}

\subsection{Reduced degeneracy complex and Cartier determinants}\label{sub:degcomplex}

\begin{definition}[Reduced degeneracy complex]\label{def:redcomplex}
At a stable smooth point $[E]$, form the Bottacin deformation complex and remove the radical subcomplex generated by $H^0(D,\End E|_D)$, retaining the transverse frame $\Omega$. The resulting self-dual complex is called the \emph{reduced degeneracy complex} and is denoted by $\mathcal K_{\mathrm{red}}$. For a chain of modifications, its filtration is defined by the local-to-global Ext triangles.
\end{definition}

\begin{lemma}[Schur complement]\label{lem:schur}
Let $\mathcal K_1\subset\mathcal K$ be a subcomplex of a self-dual complex, and suppose that the pairing on $\mathcal K_1$ is invertible. Write the total pairing as
\[
 B=\begin{pmatrix}A&W\\-W^{\mathsf T}&K\end{pmatrix}.
\]
Then the induced pairing on $\mathcal K/\mathcal K_1$ is
\[
 S=K+W^{\mathsf T}A^{-1}W.
\]
\end{lemma}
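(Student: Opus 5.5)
The Schur complement statement is pure linear algebra. I would prove it in a fixed splitting, by changing basis so that $\mathcal K_1$ becomes an orthogonal summand. Choose a graded complement $\mathcal K_2$ to $\mathcal K_1$ in $\mathcal K$, so that $\mathcal K=\mathcal K_1\oplus\mathcal K_2$ as graded vector spaces. In this splitting the pairing has the block form displayed: $A$ on $\mathcal K_1$, $W$ the cross term, and $K$ on $\mathcal K_2$. The off-diagonal block is $-W^{\mathsf T}$ because the pairing is skew, which is the self-dual situation of the Bottacin complex. The quotient $\mathcal K/\mathcal K_1$ is identified with $\mathcal K_2$. The induced pairing is understood as the restriction of $B$ to the $B$-orthogonal complement $\mathcal K_1^{\perp_B}$, which projects isomorphically onto the quotient exactly because $A$ is invertible.

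The key step is to compute this orthogonal complement. For $y\in\mathcal K_2$, I would look for a correction $x=Cy\in\mathcal K_1$ such that $x+y$ pairs trivially with all of $\mathcal K_1$. That condition reads $Ax+Wy=0$, which gives $C=-A^{-1}W$. Then $\mathcal K_1^{\perp}=\{(Cy,y)\}$, and this space maps isomorphically to $\mathcal K/\mathcal K_1$. Equivalently, I would conjugate $B$ by the unipotent matrix
\[
P=\begin{pmatrix}1&C\\0&1\end{pmatrix}.
\]
This gives $P^{\mathsf T}BP=\operatorname{diag}(A,S)$.

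Next I would evaluate the pairing on the pair $(Cy,y)$, $(Cy',y')$. Expanding gives
\[
y^{\mathsf T}C^{\mathsf T}ACy'+y^{\mathsf T}C^{\mathsf T}Wy'-y^{\mathsf T}W^{\mathsf T}Cy'+y^{\mathsf T}Ky'.
\]
Substituting $C=-A^{-1}W$ and using $A^{\mathsf T}=-A$, the three $W$-terms collapse to $+W^{\mathsf T}A^{-1}W$, so $S=K+W^{\mathsf T}A^{-1}W$. The only real care is keeping the transpose signs consistent with the skew convention; the graded or sign-twisted transposes of a complex behave in the same way.

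For the complex structure, the pairing is compatible with the differential and $\mathcal K_1$ is a subcomplex. Hence $\mathcal K_1^\perp$ is also a subcomplex, and the isomorphism $\mathcal K_1^\perp\to\mathcal K/\mathcal K_1$ is a chain map. Therefore $S$ is the induced self-dual pairing on the quotient complex, and it is independent of the choice of complement, since any two complements differ by a chain-compatible unipotent change of basis.

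I expect the main obstacle to be making the sign conventions match the paper's displayed form, in particular the $-W^{\mathsf T}$ block and the resulting $+$ sign in $S$. This is handled by the single explicit expansion above.
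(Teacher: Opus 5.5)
Your proposal is correct and is essentially the paper's argument: the paper eliminates the first block by $A^{-1}$ and reads off the Schur complement. Your computation of $\mathcal K_1^{\perp}=\{(-A^{-1}Wy,y)\}$ and of $P^{\mathsf T}BP=\operatorname{diag}(A,S)$ is a fully worked version of that step. One small correction: the result is independent of the complement because $\mathcal K_1^{\perp}$ is canonical, not because complements differ by a chain-compatible change of basis, since your $\mathcal K_2$ is only a graded complement.
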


\begin{proof}
Eliminate the first block by $A^{-1}$. The remaining pairing on the quotient is the Schur complement, which agrees with the pairing induced by the local-to-global exact sequence.
\end{proof}

\begin{lemma}[Reduced skew determinant]\label{lem:skewdet}
For the reduced degeneracy complex one has
\[
 \det\mathcal K_{\mathrm{red}}=u\,\LF_s^2
\]
for a fixed unit $u$.
\end{lemma}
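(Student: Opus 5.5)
The approach is to compute the determinant of the self-dual pairing on $\mathcal K_{\mathrm{red}}$ in the frame of the datum, and to compare it with the square of the local expression of $\LF_s$ from Proposition~\ref{prop:hd-localexpr}. The underlying principle is that for a skew pairing the determinant is the square of the Pfaffian. The leafrian is by definition the Pfaffian power $\theta_s^{[\rho]}$ wedged with the transverse frame $\Omega$, so the determinant should be $\Pf(\theta_s|_{\mathrm{leaf}})^2\cdot\eta^2$ up to constants.

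\textbf{Step 1: the radical.} At the stable smooth point $[E]$, Bottacin's pairing $\beta_s$ on the deformation complex has radical equal to the image of $H^0(D,\End E|_D)$, since the pairing factors through restriction to $D=Z(s)$. Removing this radical leaves a complex whose leaf part carries a nondegenerate skew pairing of rank $2\rho$. Retaining $\Omega$ means that the transverse part is recorded by the radial determinant $\eta=\prod_jc_j\cdot\Delta$ of Definition~\ref{def:euler}, rather than by the pairing. I will read $\det\mathcal K_{\mathrm{red}}$ as the determinant of the pairing on the leaf part, times $\eta^2$ from the self-dual placement of the transverse frame against its dual.

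\textbf{Step 2: block diagonalisation.} In the deformation bases of the members, Lemma~\ref{lem:hd-zeroblock} and Lemma~\ref{lem:zeroblock} show that the pairing is block diagonal between distinct members and between member blocks and the zero-block classes. Where a filtration step gives an off-diagonal coupling $W$, Lemma~\ref{lem:schur} replaces the quotient pairing by the Schur complement. The determinant then becomes $\det A\cdot\det S$, multiplicatively along the filtration given by the local-to-global Ext triangles. For each length-one block, Lemma~\ref{lem:hd-lengthone} gives the determinant $\kappa_0^2g_\nu^2$. For longer members, Lemma~\ref{lem:normform} gives $\kappa_0^{2\ell}N(g_\nu)^2$. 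The product is therefore $\prod_\nu N(g_\nu)^2$ times constants.

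\textbf{Step 3: comparison with $\LF_s$.} Multiplying by $\eta^2=(\prod_jc_j)^2\Delta^2$ and comparing with $f^2$ from Proposition~\ref{prop:hd-localexpr} gives $\det\mathcal K_{\mathrm{red}}=u\,\LF_s^2$ in the frame of the datum. Here $u$ collects $\kappa_0$-powers, shuffle signs and $\Delta^2$.

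\textbf{Main obstacle.} The hardest point is showing that $u$ is a fixed unit, and not merely a regular unit at one point. This requires that the Schur complements introduced by the filtration have determinant $1$ modulo constants, that is, that $\det S/\det K$ is constant. My first attempt would be to use that the couplings $W$ are supported at the healed points and on $D$, so they are unipotent with respect to the filtration and contribute no determinant. Failing that, I would argue globally. Both $\det\mathcal K_{\mathrm{red}}$ and $\LF_s^2$ are sections of $\omega_M^{-2}$, so their ratio is a regular unit on the smooth stable locus. Since $M$ is normal and irreducible with the boundary of high codimension, the ratio extends. Lemma~\ref{lem:unitrigid}, applied after taking traces of $(p-1)$-powers, or completeness via Lemma~\ref{lem:consttrace-extend}, would then force the ratio to be constant.
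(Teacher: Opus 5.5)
Your first paragraph is, in substance, the paper's entire proof. The determinant of a skew form is the square of its Pfaffian, and the Pfaffian of the leaf pairing is by definition the coefficient of $\theta_s^{[\rho]}$ in the leaf frame. The retained Euler frame contributes the second square factor $\eta^2$, and $\LF_s=\theta_s^{[\rho]}\wedge\Omega$ gives the claim. Applied to the \emph{whole} leaf pairing, couplings included, this is exact. So $u$ consists only of basis normalisations ($\kappa_0$-powers, shuffle signs, constant frame changes) and is independent of $s$, which is the sense of ``fixed'' needed for the degree count $N=\rho$ in Theorem~\ref{thm:TF}.

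The gap is that Steps~2--3 abandon this route. They compute the determinant as $\prod_\nu N(g_\nu)^2$ and compare with Proposition~\ref{prop:hd-localexpr}. That presupposes an evaluation point at which $\rho$ length-one members fill the leaf. The lemma, however, is applied at an arbitrary stable smooth point of a single-point filtered representation (Lemma~\ref{lem:singlepoint}), whose leaf part also carries healed-point and split End blocks. On your route the Schur-complement issue you flag is therefore real, and you do not resolve it.

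Neither proposed resolution works.
\begin{itemize}
\item Couplings supported at healed points or on $D$ have no reason to be unipotent, and $\det(K+W^{\mathsf T}A^{-1}W)\neq\det K$ in general.
\item In the global fallback, the ratio of two sections of one line bundle is only a rational function until you know the zero divisors coincide, which is what is being proved.
\item $\bar M\setminus M$ need not have codimension $\geq2$: for $(2,0,2)$ on $\mathbb P^2$ it is the divisor $\operatorname{Sym}^2\mathbb P^2$ (Remark~\ref{rem:g-critical}), so Hartogs does not apply.
\item Getting $u^{p-1}\in k^\times$ for Lemma~\ref{lem:unitrigid} from traces needs both traces to be known nonzero constants. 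That value is what Theorem~\ref{thm:TF} computes \emph{using} this lemma, so the argument is circular.
\item Moreover $\Tr$ is only $p^{-1}$-linear, so $\Tr\bigl((u\sigma)^{p-1}\bigr)\neq u^{p-1}\Tr(\sigma^{p-1})$ for nonconstant $u$.
\item Even a constant obtained this way could depend on $s$, which would spoil the degree argument.
\end{itemize}
The repair is to delete Steps~2--3 and the fallback. The identity $\det=\Pf^2$ for the full leaf pairing needs no block structure, so no Schur complement ever enters.
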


\begin{proof}
The determinant of a skew form is the square of its Pfaffian. The transverse frame contributes the second square factor, and the definition $\LF_s=\theta_s^{[\rho]}\wedge\Omega$ gives the assertion.
\end{proof}

\begin{definition}[Higher degeneracy complex]\label{def:higherdeg}
For odd $p$, let $\mathcal K_{\mathrm{red}}^{(p-1)/2}$ be the $(p-1)/2$-th symmetric power of $\mathcal K_{\mathrm{red}}$. Its determinant is called the higher Cartier determinant.
\end{definition}

\begin{lemma}[Higher skew determinant]\label{lem:higherdet}
For every odd $p$,
\[
 \det\mathcal K_{\mathrm{red}}^{(p-1)/2}=u\,\LF_s^{p-1}
\]
for a fixed unit $u$.
\end{lemma}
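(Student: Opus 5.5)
The plan is to reduce the statement to Lemma~\ref{lem:skewdet} by a functoriality property of determinants under symmetric powers. Write $V=\mathcal K_{\mathrm{red}}$ for the self-dual complex at $[E]$ and $B$ for its skew pairing, so that $\det V=\det B$ in the deformation basis. Lemma~\ref{lem:skewdet} gives $\det B=u_1\,\LF_s^2$ with $u_1$ a fixed unit. Since $p$ is odd, $(p-1)/2$ is a positive integer. The goal is therefore an identity of the form
\[
 \det\mathcal K_{\mathrm{red}}^{(p-1)/2}=u_2\,(\det B)^{(p-1)/2},
\]
with $u_2$ a unit. Combining the two gives $u_1^{(p-1)/2}u_2\,\LF_s^{p-1}$, and I take $u=u_1^{(p-1)/2}u_2$.

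For the key identity I would interpret the determinant of the $(p-1)/2$-th symmetric power as the determinant of the induced pairing on that power. In a frame where $B$ is block diagonal with $2\times 2$ blocks, as provided by Lemma~\ref{lem:constunit}, Lemma~\ref{lem:zeroblock} and Lemma~\ref{lem:hd-zeroblock} on the member locus, the pairing on the symmetric power is diagonalizable in the monomial basis. Its determinant is then a monomial in the block entries. By the standard multiplicativity $\det(\mathrm{Sym}^k\phi)=(\det\phi)^{\binom{n+k-1}{n}}$, it is a power of $\det B$. The exponent is not $(p-1)/2$ unless one normalizes. I would therefore define the ``higher Cartier determinant'' via Definition~\ref{def:higherdeg} together with the normalization that reads only the top weight (socle) component, namely the factor $\prod_\nu N(g_\nu)^{p-1}\prod_j c_j^{p-1}$. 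That is exactly $(\text{Pfaffian})^{p-1}$ by Theorem~\ref{thm:normsec}. The remaining factors are products of $\kappa_0$-powers and shuffle signs, which are constants.

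The identity is first proved on the dense open evaluation locus, using the local expression of Proposition~\ref{prop:hd-localexpr}. It then extends to all stable smooth points because both sides are regular and $M$ is reduced, as in the argument of Theorem~\ref{thm:hd-descent}. The unit is a regular function whose $(p-1)$-th power relates two sections. Where the traces are nonzero constants, Lemma~\ref{lem:unitrigid} forces it to be constant. Otherwise I would keep it as a fixed regular unit, matching Lemma~\ref{lem:skewdet}.

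The main obstacle is the exponent bookkeeping. The naive determinant of a symmetric power is the power $\binom{n+k-1}{n}$ of $\det B$, not $(p-1)/2$, and it may also be degenerate modulo $p$ because of factorials in the symmetric-power pairing. I would first try to use the divided-power normalization, as for $\pi^{[\rho]}$, and restrict to the top-degree component, so that the relevant factor is exactly $(\det B)^{(p-1)/2}$. Only if that fails would I redefine the higher determinant as that component.
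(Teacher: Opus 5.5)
Your reduction is the paper's: its proof is the single sentence that determinants are multiplicative under symmetric powers, followed by Lemma~\ref{lem:skewdet}. That sentence asserts exactly the identity
\[
 \det\mathcal K_{\mathrm{red}}^{(p-1)/2}=u_2\,(\det\mathcal K_{\mathrm{red}})^{(p-1)/2}
\]
that you isolate as your key step, and this step fails. Your own exponent count shows why.

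Let $\mathcal K_{\mathrm{red}}$ have rank $d$ (it must, for $\det\mathcal K_{\mathrm{red}}$ to be comparable with $\LF_s^2$) and put $k=(p-1)/2$. The determinant of the induced pairing on $\operatorname{Sym}^k$ is a unit times $(\det\mathcal K_{\mathrm{red}})^{\binom{d+k-1}{d}}$. Now $\binom{d+k-1}{d}=k$ only when $k=1$ or $d=1$; the case $k=1$ is $p=3$, where $\operatorname{Sym}^1$ changes nothing. Already for one skew block $\left(\begin{smallmatrix}0&a\\-a&0\end{smallmatrix}\right)$ and $p=5$, the induced form on $\operatorname{Sym}^2$ in the basis $x^2,xy,y^2$ is antidiagonal with entries $a^2,-a^2/2,a^2$. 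Its determinant is $a^6/2$, not a unit times $a^4$. So for $p\ge5$ and $d\ge2$, with Definition~\ref{def:higherdeg} read literally, the two sides of the lemma are sections of different reflexive powers. They are $\omega_M^{[-2\binom{d+k-1}{d}]}$ and $\omega_M^{[1-p]}$, and no unit $u$ can relate them.

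Your fallback does not close this gap. In the block frame the determinant of the induced pairing is a single monomial up to a constant, so there is no separate ``top weight'' or socle component to extract. Declaring the higher determinant to be $\prod_\nu N(g_\nu)^{p-1}\prod_j c_j^{p-1}$ replaces $\operatorname{Sym}^{(p-1)/2}\mathcal K_{\mathrm{red}}$ by the $(p-1)/2$-th tensor power of the determinant line. With that definition the lemma is the tautology $(u_1\LF_s^2)^{(p-1)/2}=u_1^{(p-1)/2}\LF_s^{p-1}$ from Lemma~\ref{lem:skewdet}, and your extension and unit-rigidity paragraphs become unnecessary. But it is a different statement from the one given.

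Two side remarks:
\begin{itemize}
\item The factorial worry is unfounded. Since $k<p$, every normalising constant in the symmetric-power pairing is a unit, so the exponent is the only obstruction.
\item Writing $\det V=\det B$ is wrong when $c>0$. There $B$ is degenerate on the transverse directions, and the factor $\prod_j c_j^2$ in Lemma~\ref{lem:skewdet} comes from the retained Euler frame, not from $B$.
\end{itemize}

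To turn the proposal into a proof, you must do one of two things. Either adopt the tensor-power definition explicitly, or keep the symmetric power and replace $p-1$ by $2\binom{d+k-1}{d}$. The second option changes the count ``degree $\rho(p-1)$ in $s$'' that Theorem~\ref{thm:TF} draws from this lemma.
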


\begin{proof}
Determinants are multiplicative under symmetric powers. Apply Lemma~\ref{lem:skewdet} to the $(p-1)/2$-th power.
\end{proof}

\begin{definition}[Reduced Cartier determinant]\label{def:reducedcartier}
For a split End block, the \emph{reduced Cartier determinant} is computed after removing the scalar trace line in $H^1(\cO_D)$. This line carries the Hasse scalar $\mathsf t$.
\end{definition}

\begin{lemma}[Split End block]\label{lem:endcartier}
Let $D$ be a smooth elliptic curve with Hasse invariant $\mathsf t$, and let
\[
 V=\bigoplus_i L_i^{\oplus m_i}
\]
with pairwise non-isomorphic $L_i$. Suppose that every cross Hom line bundle either has vanishing $H^1$ or has a nonzero fixed Cartier determinant. Then the reduced split End block has Cartier determinant
\[
 \mathsf t^{\,\sum_i(m_i^2-1)}
\]
up to a fixed unit.
\end{lemma}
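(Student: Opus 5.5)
I would split $\End V=\bigoplus_{i,j}\Hom(L_i,L_j)\otimes\Hom(k^{m_i},k^{m_j})$ and compute the Cartier determinant block by block. The diagonal and off-diagonal parts behave differently, and the determinant is multiplicative over a direct sum of Frobenius-stable pieces.

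\emph{Decomposition and off-diagonal blocks.} The Cartier operator on $H^1(D,\End V)$ is identified, by Serre duality, with the dual of Frobenius on $H^0$ of the Serre dual. It respects the decomposition above, so its determinant is the product of the block determinants. For $i\neq j$ the summand is $(L_j\otimes L_i^{-1})^{\oplus m_im_j}$. This line bundle is nontrivial of degree $0$ or of nonzero degree, and by hypothesis it either has $H^1=0$ or a nonzero fixed Cartier determinant. Each such summand therefore contributes only a fixed unit. The same reasoning applies to the $H^0$ side of the complex.

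\emph{Diagonal blocks.} For $i=j$ the summand is $\cO_D\otimes\mathfrak{gl}_{m_i}$, so $H^1$ is $H^1(\cO_D)\otimes\mathfrak{gl}_{m_i}$, of dimension $m_i^2$. On $H^1(\cO_D)$ the Frobenius (equivalently the Cartier operator) acts by the Hasse scalar $\mathsf t$. Here I use that $D$ is elliptic, so $H^1(\cO_D)$ is one-dimensional and $F^*$ is multiplication by the Hasse invariant. The matrix factor $\mathfrak{gl}_{m_i}$ carries the trivial $p$-linear action. The determinant on this block is thus $\mathsf t^{m_i^2}$, up to a unit coming from the $p$-linearity and a choice of basis; only nonvanishing and the $\mathsf t$-degree matter, by Remark~\ref{rem:onechart}.

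\emph{Reduction.} Following Definition~\ref{def:reducedcartier}, I remove from each diagonal block the scalar trace line $H^1(\cO_D)\cdot\mathrm{id}_{m_i}$, which is exactly the radical component coming from $H^0(D,\End E|_D)$ in Definition~\ref{def:redcomplex}. Each removal divides by one factor of $\mathsf t$, leaving $\mathsf t^{m_i^2-1}$ from the block $\mathfrak{sl}_{m_i}$. Multiplying over $i$ gives $\mathsf t^{\sum_i(m_i^2-1)}$ times fixed units.

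\emph{Main obstacle.} The hard step is justifying that removing the radical subcomplex removes exactly one trace line per diagonal block. One line per block is what gives $\sum_i(m_i^2-1)$ rather than the alternative $\sum_i m_i^2-1$. I would argue this by showing that the radical generated by $H^0(D,\End V)=\bigoplus_i\mathfrak{gl}_{m_i}$ pairs nontrivially, via Lemma~\ref{lem:schur}, exactly with the scalar lines $\mathrm{id}_{m_i}\otimes H^1(\cO_D)$. Lemma~\ref{lem:zeroblock} would show that the cross terms vanish, so that the Schur complement modifies only unit factors.
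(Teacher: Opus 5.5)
Your decomposition and block computation are the paper's argument. The cross summands are discarded as fixed units by the hypothesis, each diagonal summand $H^1(\cO_D)\otimes\mathfrak{gl}_{m_i}$ contributes $\mathsf t^{m_i^2}$, and the reduction lowers each of these exponents by one. You are right that the stated exponent needs one trace line removed per isotypic block $\End(L_i^{\oplus m_i})$. Removing only the global line $H^1(\cO_D)\cdot\mathrm{id}_V$ would give $\sum_i m_i^2-1$. The paper's proof handles this by applying Definition~\ref{def:reducedcartier} to each block $M_{m_i}(k)$ separately. The per-block removal is therefore the convention defining the reduced Cartier determinant; it is not derived from the radical.

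The justification you give for that step is wrong and should be dropped.
\begin{itemize}
\item The radical of Definition~\ref{def:redcomplex} is not the span of the lines $\mathrm{id}_{m_i}\otimes H^1(\cO_D)$. By Lemma~\ref{lem:radical} it has dimension $h^0(D,\End V)-1\ge\sum_i m_i^2-1$, which exceeds the number of summands as soon as some $m_i\ge2$.
\item Nor does $H^0(D,\End V)$ pair only with the scalar lines. Since $\omega_D\cong\cO_D$, Serre duality pairs $H^0(D,\End V)$ with $H^1(D,\End V)$ through $\Tr(AB)$, and the trace form on $\mathfrak{gl}_{m_i}$ is nondegenerate. So for $m_i\ge2$ the section $E_{12}\otimes1$ pairs nontrivially with the traceless line $E_{21}\otimes H^1(\cO_D)$.
\item Consequently, eliminating the radical by the Schur complement of Lemma~\ref{lem:schur} would cut out essentially all of $\bigoplus_i H^1(\cO_D)\otimes\mathfrak{gl}_{m_i}$, not one line per block.
\item Lemma~\ref{lem:zeroblock} concerns fat-point member blocks and does not bear on split End blocks on $D$.
\end{itemize}
So your ``main obstacle'' cannot be resolved the way you propose. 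The count $\sum_i(m_i^2-1)$ rests on the definitional, per-block reading of the reduction, as in the paper.
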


\begin{proof}
Only the diagonal matrix blocks remain after the cross Hom contributions are removed as fixed units. Each block $M_{m_i}(k)$ has determinant $\mathsf t^{m_i^2}$. Definition~\ref{def:reducedcartier} omits one scalar trace line, so the exponent is lowered by one.
\end{proof}

\begin{lemma}[Ordinary divisors]\label{lem:ordinary}
Let $D'$ be a smooth curve with invertible Hasse--Witt matrix. A split End block on $D'$ changes the Cartier determinant only by a fixed nonzero unit.
\end{lemma}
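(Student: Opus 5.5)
The plan is to reduce the statement to the invertibility of the Cartier operator on $H^1(D',\cO_{D'})$, and then to the block structure already used in Lemma~\ref{lem:endcartier}. A split End block on $D'$ has the form $\End V=\bigoplus_{i,j}\mathcal Hom(L_i,L_j)^{\oplus m_im_j}$ for a splitting $V=\bigoplus_i L_i^{\oplus m_i}$ along $D'$. Its contribution to the reduced degeneracy complex is the $H^1(D',\End V)$ term, coming from the local-to-global Ext triangle of the divisor step. I would first use the Schur complement, Lemma~\ref{lem:schur}, to isolate this block. The block's pairing is then the Serre-duality pairing twisted by $s|_{D'}$. On an ordinary divisor $D'\neq D$ the restriction $s|_{D'}$ is a nowhere-zero section of the relevant line bundle on an open part, so it contributes only a fixed unit.

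Next, decompose the Cartier determinant of the block as a product over the summands $(i,j)$.
\begin{itemize}
\item \emph{Diagonal summands.} These are the $\cO_{D'}^{\oplus m_i^2}$. Each contributes $\det(C_{D'})^{m_i^2}$, where $C_{D'}$ is the Cartier (Hasse--Witt) operator on $H^1(D',\cO_{D'})$.
\item \emph{Cross summands.} For $\mathcal Hom(L_i,L_j)$ with $i\neq j$, I would argue as in Lemma~\ref{lem:endcartier}. These terms either have vanishing $H^1$, or else their Frobenius-linear map is fixed by the chain data and does not depend on the moving point. In either case they give a fixed unit.
\end{itemize}

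The total is then $\det(C_{D'})^{\sum_i m_i^2}$ times a fixed unit. The reduced determinant of Definition~\ref{def:reducedcartier} removes one trace line, which changes only the exponent. By hypothesis the Hasse--Witt matrix is invertible, so $\det(C_{D'})\in k^\times$, and every contribution of the block is a fixed nonzero unit.

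The main obstacle is showing that the cross contributions, and the Schur correction $W^{\mathsf T}A^{-1}W$, are constant rather than merely regular and nonvanishing. My first attempt would use the $p$-linearity and rigidity of the Cartier operator on line bundles of fixed degree on $D'$. Failing that, I would invoke Lemma~\ref{lem:unitrigid}: a regular function on the connected chain space whose $(p-1)$-st power is a nonzero constant must itself be constant.
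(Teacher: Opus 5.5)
Your closing step matches the paper's: invertibility of the Hasse--Witt matrix of $D'$ makes the diagonal contributions $\det(C_{D'})^{m_i^2}$ nonzero constants. But the paper's two-line proof also rests on a second point, and your argument gets it wrong. That point is that the Cartier determinant of a split End block on the auxiliary curve $D'$ is computed from the cohomology of $D'$ alone and is \emph{independent of $s$}; the weight enters the reduced degeneracy complex only through $D=Z(s)$. You instead twist the block pairing by $s|_{D'}$, and then discard the twist because $s|_{D'}$ is nowhere zero on an open part. That step fails. For $D'\in|\mathcal A|$ ample and $D\neq\emptyset$ one has $D\cdot D'>0$, so $s|_{D'}$ vanishes on $D\cap D'$ and is not a unit. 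Moreover, any factor it produces scales with $s$. In the proof of Theorem~\ref{thm:TF} the fixed units must have degree zero in $s$, because $N=\rho$ is obtained by comparing $s$-degrees. An $s|_{D'}$-factor would therefore break exactly the use this lemma is put to. Drop the twist and argue, as the paper does, that nothing in the $D'$-block depends on $s$.

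The cross summands are a second gap. Invertibility of the Hasse--Witt matrix is a statement about $H^1(D',\cO_{D'})$ and says nothing about $H^1(D',\mathcal{H}om(L_i,L_j))$ for $i\neq j$. That these pieces have vanishing $H^1$ or a fixed nonzero Cartier determinant does not follow from rigidity of the Cartier operator. In the paper it is the hypothesis of Lemma~\ref{lem:endcartier}, arranged by the general choice of line bundles in Lemma~\ref{lem:generalposition} (as in Lemma~\ref{lem:singlepoint}); you should invoke that instead. Your fallback through Lemma~\ref{lem:unitrigid} is circular. It requires the $(p-1)$-st power of the block contribution to be a nonzero constant already, which is what is to be proved; only the total trace $\Tr(\LF_s^{p-1})$ is known to be constant.
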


\begin{proof}
The determinant is computed from the cohomology of $D'$, which is independent of $s$. Invertibility makes it a nonzero constant.
\end{proof}

\begin{lemma}[General position]\label{lem:generalposition}
For a finite family of split End blocks on smooth curves, the line bundles can be chosen so that every cross Hom line bundle either has vanishing $H^1$ or has a nonzero fixed Cartier determinant.
\end{lemma}

\begin{proof}
For each cross Hom line bundle, the determinant of its Cartier operator is a fixed section on the corresponding Picard space. Its zero locus is a proper special subvariety. General line bundles avoid this locus, while positive-degree line bundles have vanishing $H^1$.
\end{proof}

\begin{lemma}[Single-point filtered representation]\label{lem:singlepoint}
For a stable sheaf $E$ on a smooth projective surface, there is a locally free ground and a finite chain of point and elementary modifications to $E$. The divisors can be chosen ordinary. The reduced degeneracy complex has a filtration with graded pieces of the four types above.
\end{lemma}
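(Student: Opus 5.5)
The plan is to build the chain from the reflexive hull and then read the filtration off the local-to-global Ext triangles of the individual steps. Put $F=E^{\vee\vee}$. On a smooth surface $F$ is locally free, and $E\subset F$ has a zero-dimensional quotient $Q=F/E$ of some length $\ell$.

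\emph{Step 1: the point steps.} Choose a composition series $0=Q_0\subset Q_1\subset\cdots\subset Q_\ell=Q$ with simple quotients $Q_i/Q_{i-1}\cong k_{x_i}$. Set $E_i=\ker(F\to Q/Q_i)$. Then each $E_{i-1}\subset E_i$ is a kernel point modification as in Definition~\ref{def:pointstep}, so $E$ is reached from $F$ by $\ell$ point steps.

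\emph{Step 2: the divisor steps and the ground.} To produce elementary modifications and a ground in a fixed twist, replace $F$ by $G=F(-D')$ with $D'\in|\mathcal A|$ for a sufficiently ample $\mathcal A$. Recover $F$ from $G$ by $r$ successive elementary transformations along $D'$, using rank-one reduction data of a filtration of $F|_{D'}$. This gives a finite chain from the locally free ground $G$ to $E$. Moving $D'$ in its linear system leaves the abstract classes unchanged, by Lemma~\ref{lem:heal} and Lemma~\ref{lem:zero-dim}.

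\emph{Step 3: ordinary divisors.} I expect this step to be the main obstacle. Invertibility of the Hasse--Witt matrix is an open condition on the smooth members of $|\mathcal A|$. The work is to show that this open set is nonempty. My first approach is a degeneration argument: in a sufficiently ample system, specialize to a nodal union of ordinary curves (for instance general members of pencils meeting transversally), so that the ordinary locus meets the smooth locus. Failing that, I would choose $D'$ among members avoiding the finitely many support points $x_i$ and $D\cap\operatorname{Supp}Q$, and invoke the generic ordinarity of curves in very ample systems.

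\emph{Step 4: the filtration and its graded pieces.} Each step $E_{i-1}\to E_i$ gives a triangle
\[
\operatorname{RHom}(E_i,E_i)\to\operatorname{RHom}(E_{i-1},E_{i-1})\to C_i .
\]
Here $C_i$ is computed locally at the modification locus. After removing the radical generated by $H^0(D,\End E|_D)$ (Definition~\ref{def:redcomplex}), these triangles filter $\mathcal K_{\mathrm{red}}$. By Lemma~\ref{lem:schur}, each step contributes its graded piece with the induced self-dual pairing.

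\begin{itemize}
\item A point step at $x_i\notin D$ has cone $\Hom(I_{x_i}/I_{x_i}^2,k)\cong k^{\oplus2}$ with the trace pairing of weight $s(x_i)\neq0$. This is a weighted norm member, by Lemma~\ref{lem:hd-lengthone}.
\item A point step whose block pairs trivially, because the weight vanishes or the directions lie in the radical, gives an unweighted Koszul block.
\item A divisor step along $D'$ contributes an End block of $F|_{D'}$. Choosing the reduction data generically, Lemma~\ref{lem:generalposition} makes it split, with cross Hom terms of vanishing $H^1$ or with fixed Cartier determinant. Since $D'$ is ordinary, this is a split End block on an ordinary divisor (Lemma~\ref{lem:ordinary}).
\item The restriction to $D$ itself, which is the radical part retained through the trace line, gives the split End block on $D$ of Lemma~\ref{lem:endcartier}.
\end{itemize}

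Checking that these four cases exhaust the cones is a matter of support: every $C_i$ is supported either on a point or on $D'$, and its intersection with $D$ is handled by the choice of general position in Step 3.
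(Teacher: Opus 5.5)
Your outline is the paper's own: pass to $F=E^{\vee\vee}$, reach $E$ from $F$ by point steps, connect $F$ to a locally free ground by elementary transformations along general divisors, take those divisors ordinary, and read the filtration off local-to-global Ext triangles and the Schur complement. You are right that Step~3 is where the work lies. The paper's proof only asserts it (``general divisors can be chosen ordinary''), and neither of your routes can supply it. For the sufficiently ample divisors of your Step~2 the claim is in fact false on many surfaces.

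Here is why. For $\mathcal A$ sufficiently ample, Serre duality and Serre vanishing give $H^1(X,\cO_X(-D'))\cong H^1(X,\omega_X(D'))^\vee=0$. Hence restriction gives a Frobenius-equivariant injection $H^1(X,\cO_X)\hookrightarrow H^1(\cO_{D'})$, and this holds for every member of $|\mathcal A|$, nodal or not. Suppose Frobenius were bijective on $H^1(\cO_{D'})$. Then it would be injective on the stable subspace $H^1(\cO_X)$, hence bijective there, since an injective $p$-linear endomorphism of a finite-dimensional space over a perfect field is bijective. So whenever Frobenius is not bijective on $H^1(\cO_X)$, no member of $|\mathcal A|$ is ordinary, and no member has full $p$-rank either.

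Two examples show this happens. One is a supersingular abelian surface. The other is $C\times\mathbb P^1$ with $C$ a supersingular elliptic curve; this is a Poisson surface with $D=C\times\{a,b\}\neq\emptyset$, exactly the range in which the lemma feeds the trace formula.

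Consequently your degeneration to nodal unions of ordinary curves cannot start, and ``generic ordinarity in very ample systems'' is not available to invoke. Step~3 needs one of two things. Either add a hypothesis on $X$ (at least bijectivity of Frobenius on $H^1(\cO_X)$), together with an actual proof that the ordinary locus in $|\mathcal A|$ is nonempty. Or change the construction so it avoids such divisors; nothing in the statement forces divisor steps, since $F$ is already a locally free ground.

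Step~4 has further problems. An inclusion $E_{i-1}\subset E_i$ induces no morphism $\operatorname{RHom}(E_i,E_i)\to\operatorname{RHom}(E_{i-1},E_{i-1})$, only the roof through $\operatorname{RHom}(E_{i-1},E_i)$, so your displayed triangle does not exist. A filtration has to be built from the deformation complex of the flag $E_{i-1}\subset E_i$ and its two forgetful maps. You would then have to check that removing the radical of Definition~\ref{def:redcomplex} is compatible with it.

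The graded piece you assign to a point step is also too small. Riemann--Roch gives $\chi(E,E)=r^2\chi(\cO_X)+(r-1)c_1^2-2rc_2$, so a point step changes $\chi$ by $-2r$. By contrast, $\Hom(I_{x_i}/I_{x_i}^2,k)\cong k^{\oplus 2}$ records only the motion of $x_i$. The choice of quotient $E_i|_{x_i}\to k$ and the remaining directions are not placed in any of your four types; your ``unweighted Koszul'' bullet is assigned to something else.

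Finally, the split End block on $D$ is not the cone of any step. You attribute it to the radical that the reduction has just removed, so your closing ``exhaustion by support'' argument does not account for it.
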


\begin{proof}
The sheaf $F=E^{\vee\vee}$ is locally free on a smooth surface, and $E$ is recovered from $F$ by point modifications. Hecke modifications connect $F$ to a locally free ground. General divisors can be chosen ordinary, and Lemma~\ref{lem:generalposition} gives the cross Hom condition of Lemma~\ref{lem:endcartier}. The local-to-global Ext triangles and Lemma~\ref{lem:schur} give the filtration.
\end{proof}

\begin{lemma}[Hecke presentation]\label{lem:hecke-present}
For a polystable principal $G$-bundle $P$ with a faithful representation $V$, there is a $G$-equivariant Hecke chain to $P$ with ordinary divisors. The adjoint degeneracy complex has a filtration with root blocks. For a constant split reductive Lie algebra $\mathfrak g$ on $D$, the reduced Cartier determinant is
\[
 \mathsf t^{\dim\mathfrak g-\dim\mathfrak z(\mathfrak g)}.
\]
\end{lemma}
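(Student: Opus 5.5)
The plan is to mirror the sheaf case, Lemmas~\ref{lem:singlepoint} and~\ref{lem:endcartier}, with the faithful representation $V$ used to pass between $G$-bundles and vector bundles.

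\emph{Hecke chain.} Let $E=P\times^G V$. I would apply Lemma~\ref{lem:singlepoint} to $E$, or to a twist of it if $E$ is only polystable, and take the elementary modifications to be $G$-equivariant. Concretely, each elementary transformation along a divisor $D'$ is taken to be a Hecke modification of $P$ along $D'$. Its reduction datum is a parabolic reduction of $P|_{D'}$ given by a cocharacter of $G$, rather than an arbitrary line subbundle $L_j$. The divisors are chosen general in a very ample linear system, which makes them ordinary exactly as in the proof of Lemma~\ref{lem:singlepoint}. The general-position statement, Lemma~\ref{lem:generalposition}, is applied to the root line bundles $L_\alpha$, so that every cross term has vanishing $H^1$ or a fixed nonzero Cartier determinant.

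\emph{Adjoint filtration.} The adjoint degeneracy complex is the Bottacin complex built on $\mathfrak g_P$ in place of $\End E$. Along the Hecke chain, the local-to-global Ext triangles for $\mathfrak g_P$ filter it. Once the structure group is reduced to a maximal torus $T$ along $D$, the Cartan--root decomposition $\mathfrak g=\mathfrak t\oplus\bigoplus_\alpha\mathfrak g_\alpha$ splits each graded piece. The pieces indexed by roots are the root blocks, with line bundles $L_\alpha$. Lemma~\ref{lem:schur} transports the pairing to each graded piece, since the invariant form pairs $\mathfrak g_\alpha$ with $\mathfrak g_{-\alpha}$ nondegenerately by the standing hypothesis.

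\emph{Determinant for constant $\mathfrak g$ on $D$.} For a constant split $\mathfrak g$ on $D$, each summand of $\mathfrak g\otimes\cO_D$ is a copy of $\cO_D$, and $H^1(\cO_D)$ is one-dimensional. The Cartier operator acts on that line by the Hasse scalar $\mathsf t$. The raw determinant on $\mathfrak g\otimes H^1(\cO_D)$ is therefore $\mathsf t^{\dim\mathfrak g}$. The reduction of Definition~\ref{def:reducedcartier} removes the trace lines. Here these are the central directions $\mathfrak z(\mathfrak g)\otimes H^1(\cO_D)$, which form the radical coming from $H^0(D,\mathfrak z)$ together with the deformations of the center. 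Removing them lowers the exponent by $\dim\mathfrak z(\mathfrak g)$, giving $\mathsf t^{\dim\mathfrak g-\dim\mathfrak z(\mathfrak g)}$. For $G=\GL_m$ this recovers Lemma~\ref{lem:endcartier}, with $m^2-1$ in the exponent. If some root bundles are nontrivial, their contributions are fixed units by the choice of general position.

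\emph{Main obstacle.} The hard step is the existence of a $G$-equivariant Hecke chain reaching a polystable $P$ whose reduction data stay compatible with $G$. A naive elementary transformation of $E$ along a line subbundle need not come from $G$. I would construct the chain on the $G$-side directly, choosing at each step a cocharacter $\lambda$ of $T$ and a reduction of $P|_{D'}$ to the parabolic $P_\lambda$. The dimension count of Proposition~\ref{prop:hd-count} is then redone with root-space cohomology in place of $h^0(D',L_j^\vee\otimes F_j)$. Polystable $P$ are handled summand by summand through the Levi, as in Lemma~\ref{lem:saturated}(ii).
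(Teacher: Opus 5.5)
Your proposal follows the same route as the paper's short proof: $G$-equivariant Hecke modifications from parabolic reductions (your cocharacter parabolics $P_\lambda$, whose quotient blocks are the unipotent-radical root spaces), ordinary divisors by general choice, and the root decomposition giving $\mathsf t^{\dim\mathfrak g}$ on the constant part with the $\dim\mathfrak z(\mathfrak g)$ central lines removed. As in the paper, Lemma~\ref{lem:generalposition} then makes the nonconstant root blocks fixed units; you should only drop your opening idea of applying Lemma~\ref{lem:singlepoint} to a twist of $E=P\times^GV$, since twisting does not make a polystable sheaf stable and a chain built for $E$ need not be $G$-equivariant, and the $G$-side construction in your last paragraph is the one the paper uses.
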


\begin{proof}
Parabolic reductions give $G$-equivariant Hecke modifications. Their deformation spaces are partial flag varieties, and the quotient blocks are the Lie algebras of the unipotent radicals. Root decomposition computes the Cartier determinant, and removing the center gives the displayed exponent. General root blocks contribute only fixed units by Lemma~\ref{lem:generalposition}, so the constant split formula determines the total determinant.
\end{proof}

\subsection{The trace formula}\label{sub:TF}

\begin{lemma}[Moduli with only locally free sheaves]\label{lem:bundles}
Let $D=\emptyset$ and suppose every semistable sheaf of the type is locally
free.  Then
\[
 \Tr(\LF_s^{p-1})=\Tr_X(s)^{\rho}\cdot u
\]
for some $u\in k^\times$.
\end{lemma}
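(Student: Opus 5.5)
The plan is to compute $\Tr(\LF_s^{p-1})$ at a single stable smooth point $[E]$ and then use constancy. Since $D=Z(s)=\emptyset$, the weight $s$ is a nowhere vanishing section of $\omega_X^{-1}$. Hence $\omega_X\cong\cO_X$, and $s$ is a constant multiple of a fixed trivialisation. Moreover $\Tr_X(s)$ is a constant, which is nonzero exactly when $X$ is ordinary. Because every semistable sheaf of the type is locally free, the Gieseker moduli space coincides with $M$, so $M$ is projective. Lemma~\ref{lem:consttrace-complete} then shows that $\Tr(\LF_s^{p-1})$ is constant on $M$, and it suffices to evaluate it at one point.

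\emph{Degree count.} This is the cleanest step, and I would use it to fix the exponent before computing anything. Since $\theta_s$ is linear in $s$, the Pfaffian power $\theta_s^{[\rho]}$ is homogeneous of degree $\rho$ in $s$. The Euler frame $\Omega$ does not depend on $s$ (Lemma~\ref{lem:descent}). Therefore $\LF_{\lambda s}=\lambda^\rho\LF_s$, so $\Tr(\LF_{\lambda s}^{p-1})=\lambda^{\rho(p-1)}\Tr(\LF_s^{p-1})$. On the other hand, $\Tr_X(\lambda s)=\lambda^{p-1}\Tr_X(s)$. With $D=\emptyset$ the space of admissible $s$ is the line $k\cdot s_0$, where $s_0$ trivialises $\omega_X^{-1}$. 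Consequently $\Tr(\LF_s^{p-1})$ is determined up to a constant by $\rho$, and on this line it equals $c\cdot\Tr_X(s)^\rho$ provided $\Tr_X(s_0)\ne0$. Two points are not settled by this: whether $c\ne0$, and what happens in the non-ordinary case. To handle them I would compute the Cartier determinant through the reduced degeneracy complex.

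\emph{Filtered computation.} Take the single-point filtered representation of Lemma~\ref{lem:singlepoint}, with a locally free ground and ordinary divisors. Since $D=\emptyset$, the radical $H^0(D,\End E|_D)$ is zero, so $\mathcal K_{\mathrm{red}}$ is the full Bottacin complex. By Lemma~\ref{lem:higherdet}, $\det\mathcal K_{\mathrm{red}}^{(p-1)/2}=u\,\LF_s^{p-1}$, and the Cartier determinant of this complex computes $\Tr(\LF_s^{p-1})$. Using Lemma~\ref{lem:schur}, this determinant is multiplicative along the filtration.

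The graded pieces contribute as follows. Weighted norm members contribute $\mathsf t(s)^{\ell}$, by Lemma~\ref{lem:membertop} and Lemma~\ref{lem:hd-unit}, since $T(s|_{\cO_Z})=\Tr_X(s)(x_0)^\ell$. Unweighted Koszul blocks contribute $1$. Split End blocks on the ordinary divisors contribute fixed units, by Lemma~\ref{lem:ordinary} and Lemma~\ref{lem:generalposition}. No split End block on $D$ occurs, because $D$ is empty. The product is therefore $\mathsf t(s)^N u$, and the degree count forces $N=\rho$.

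\emph{Main obstacle.} The hard part is that the objects are all locally free. The members therefore cannot sit on $E$ itself; they must come from a master chain (Lemma~\ref{lem:master}) with healed points, and those healed points contribute through the divisor steps. I would check carefully that the healing contributions are $s$-independent units, so that all $s$-dependence is carried by the $\rho$ units of weight $s$. This is where the ordinarity of the healing divisors and general position are essential.
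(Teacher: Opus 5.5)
\textbf{There is a genuine gap: in this setting your filtered computation cannot produce the factor $\Tr_X(s)^{\rho}$ at all.} Go through your own list of graded pieces.
\begin{itemize}
\item Since $E$ is locally free, Lemma~\ref{lem:zero-dim} rules out kernel point members at $E$. So in Lemma~\ref{lem:master} one has $m=0$ and all $\rho$ points are healed.
\item The alternative source of members in Definition~\ref{def:pointstep}, namely non-locally free objects of the same type, is excluded by the very hypothesis of the lemma.
\item You propose to show that the healed contributions are $s$-independent units.
\item The split End blocks on ordinary divisors are fixed units.
\item There are no End blocks on $D$, because $D=\emptyset$.
\end{itemize}
No weighted norm member is left. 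The Cartier determinant you obtain is therefore a fixed unit, i.e.\ $N=0$, which contradicts your own homogeneity count: the left side has degree $\rho(p-1)>0$ in $s$. Your ``main obstacle'' is thus not a check to be carried out but an internal contradiction.

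The reason is structural. When $D=\emptyset$, the pairing $\beta_s$ is the global Serre-duality pairing \eqref{eq:res0}, not a sum of residues localized at members. The weight enters only as a global scalar, so no local block sees the Hasse invariant of $X$. This is exactly why the proof of Theorem~\ref{thm:TF} sends the case $D=\emptyset$ to this lemma instead of running the filtered argument.

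Your degree count does not rescue this. On the line $k\cdot s_0$ both sides scale identically, so it is vacuous. The whole content of the lemma is that $\Tr(\LF_s^{p-1})\neq0$ when $\Tr_X(s)\neq0$, and $\Tr(\LF_s^{p-1})=0$ when $\Tr_X(s)=0$. Nothing in your argument supplies either statement.

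\textbf{The missing idea is global and cohomological.}
\begin{itemize}
\item Since $\beta_s$ is the symplectic form of $M$, one has $c=0$ and $2\rho=d$. Hence $\LF_s=\theta_s^{[\rho]}$ is a volume section trivializing $\omega_M^{-1}$, and no Euler frame, class coordinates or divisor steps enter.
\item For a trivializing anticanonical section $\sigma$ of a smooth projective $Y$ of dimension $2\rho$, $\Tr(\sigma^{p-1})$ is the scalar by which Frobenius acts on the line $H^{2\rho}(Y,\cO_Y)$; this is dual to the Cartier operator on top forms. Applied to $X$, this makes $\Tr_X(s)$ the Frobenius scalar on $H^2(\cO_X)$.
\item The paper then uses the Chern character of a universal sheaf as an algebraic, hence Frobenius-equivariant, correspondence. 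It identifies $H^{2\rho}(\cO_M)$ with a $\rho$-fold tensor construction on $H^2(\cO_X)$, whose Frobenius scalar is $\Tr_X(s)^{\rho}$.
\end{itemize}

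\textbf{Your constancy step is also unjustified as written.} ``Every semistable sheaf of the type is locally free'' does not give $\bar M=M$, since strictly semistable bundles may exist. So Lemma~\ref{lem:consttrace-complete} does not apply; you would need Lemma~\ref{lem:consttrace-extend} together with Theorem~\ref{thm:g-extend}. The paper's route also tacitly uses that $M$ is smooth and projective in its first identification.
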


\begin{proof}
Since $D=\emptyset$, the section $s$ trivializes $\omega_X^{-1}$, so $X$ is
a $K3$ or abelian surface; the pairing $\beta_s$ is the symplectic pairing
of $M$, so $c=0$, $2\rho=d$, and $\LF_s=\theta_s^{[\rho]}$ is the volume
section of $M$.  Two standard identifications give the trace.  First, if
$Y$ is smooth projective of dimension $2\rho$ and $\sigma$ trivializes
$\omega_Y^{-1}$, then $\Tr(\sigma^{p-1})$ is the scalar of Frobenius on
$H^{2\rho}(\cO_Y)$.  Indeed $H^0(\omega_Y^{1-p})=k\cdot\sigma^{p-1}$, the
trace functional on this line is dual to the Cartier isomorphism on top
forms, and the scalar of Frobenius on $H^{2\rho}(\cO_Y)$ is the socle
coefficient $b_0(\sigma)$ of the local volume expression; for $\rho=1$ this
is the classical Hasse--Witt coefficient of a surface.  Second, the Chern
character of a universal sheaf on $M\times X$ is an algebraic
correspondence, hence Frobenius equivariant, and identifies
$H^{2\rho}(\cO_M)$ with the $\rho$-fold tensor construction of
$H^2(\cO_X)$ as Frobenius modules.  Applying the first identification to
$X$ gives that $\Tr_X(s)$ is the Frobenius scalar on $H^2(\cO_X)$, and the
scalar on the $\rho$-fold tensor construction is its $\rho$-th power.
Hence $\Tr(\LF_s^{p-1})=\Tr_X(s)^{\rho}\cdot u$ with $u\in k^\times$
absorbing the normalisations.
\end{proof}

\begin{theorem}[Trace formula]\label{thm:TF}
Let $X$ be a smooth projective Poisson surface, let
$s\in H^0(X,\omega_X^{-1})$ be nonzero, and let $M$ be $M(r,c_1,c_2)$ or
$M_G$ with $\rho>0$. Then
\[
 \Tr(\LF_s^{p-1})\ =\ \Tr_X(s)^{\rho}\cdot u
\]
for some $u\in k^\times$.  In the framed case the same formula holds with
$\lambda=\Tr_U(s^{p-1})$ in place of $\Tr_X(s)$ when $\lambda$ is a nonzero
constant.
\end{theorem}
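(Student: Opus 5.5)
The plan follows the outline of the introduction: compute the Cartier determinant of the reduced degeneracy complex block by block, then use a degree count in $s$ to fix the exponent. Fix a stable smooth point $[E]\in M$. By Lemma~\ref{lem:singlepoint} (sheaves) or Lemma~\ref{lem:hecke-present} (principal bundles), choose a single-point filtered presentation with ordinary divisors and general reduction data. Lemma~\ref{lem:higherdet} identifies $\LF_s^{p-1}$, up to a fixed unit, with the higher Cartier determinant $\det\mathcal K_{\mathrm{red}}^{(p-1)/2}$. The trace $\Tr(\LF_s^{p-1})$ is then the value of this determinant under the Cartier operator. The first step is therefore to reduce the trace to a Cartier determinant at one point. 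This is legitimate because $\Tr(\LF_s^{p-1})$ is constant: use Lemma~\ref{lem:consttrace-complete} when $M$ is complete, and otherwise Lemma~\ref{lem:consttrace-extend} together with the extension to $\bar M$. Unit rigidity (Lemma~\ref{lem:unitrigid}) then shows that the leftover units are genuine constants.

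The second step uses the filtration from the local-to-global Ext triangles. Applying Lemma~\ref{lem:schur} repeatedly, the determinant of $\mathcal K_{\mathrm{red}}$ becomes the product of the determinants of the graded pieces, since Schur complements preserve determinants up to the invertible block. I would then evaluate each graded piece.
\begin{itemize}
\item A member of length $\ell$ contributes $\Tr_X(s)^\ell$, by Lemma~\ref{lem:membertop} and, for length-one members, Lemma~\ref{lem:hd-unit}.
\item An unweighted Koszul or transverse class block contributes $1$, because the Euler frame has product volume with $T(\eta)=1$.
\item A split End block on $D$ contributes $\mathsf t^{\sum(m_i^2-1)}$ by Lemma~\ref{lem:endcartier}; in the principal case this becomes $\mathsf t^{\dim\mathfrak g-\dim\mathfrak z}$.
\item Blocks on ordinary divisors contribute fixed units by Lemmas~\ref{lem:ordinary} and~\ref{lem:generalposition}.
\end{itemize}
When $D$ is a smooth elliptic curve, $\mathsf t=\Tr_X(s)$. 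In every case the total is $\Tr_X(s)^N\cdot u$ for some integer $N\ge0$ and $u\in k^\times$. If $D=\emptyset$ and all objects are locally free, I would instead invoke Lemma~\ref{lem:bundles} directly.

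The third step pins down $N$. The leafrian $\LF_s=\theta_s^{[\rho]}\wedge\Omega$ is homogeneous of degree $\rho$ in $s$, since Bottacin's tensor is linear in $s$ and $\Omega$ does not depend on $s$. Hence $\Tr(\LF_s^{p-1})$ is homogeneous of degree $\rho(p-1)$ in $s$, and more precisely is $p^{-1}$-semilinear of that weight under the scaling $s\mapsto\lambda s$. The function $\Tr_X(s)$ has degree $p-1$ in $s$. Comparing the behaviour under $s\mapsto\lambda s$ for generic $\lambda$ gives $N(p-1)=\rho(p-1)$, so $N=\rho$. The framed case goes the same way over $U=X\setminus L$, with $\lambda=\Tr_U(s^{p-1})$ in place of $\Tr_X(s)$; here the hypothesis that $\lambda$ is a nonzero constant replaces completeness.

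I expect the main obstacle to be the second step, namely showing that the only $s$-dependence of the Cartier determinant is a power of $\Tr_X(s)$. The cross terms $W^{\mathsf T}A^{-1}W$ in the Schur complements could mix member blocks with End blocks and introduce $s$-dependent units that are not of the form $\Tr_X(s)^N$. I would first control this with Lemma~\ref{lem:zeroblock}, which makes the member blocks orthogonal to the zero-block classes, and with the general-position choice in Lemma~\ref{lem:generalposition}. If that is not enough, I would fall back on showing that the leftover factor is a polynomial in $s$ with no zeros on the open set where $\Tr_X(s)\ne0$, and hence a constant.
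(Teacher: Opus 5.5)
Your proposal is essentially the paper's own proof. The steps match one for one:
\begin{enumerate}
\item constancy of $\Tr(\LF_s^{p-1})$ from the extension to the Gieseker compactification;
\item evaluation at one stable smooth point through the filtered representation of Lemma~\ref{lem:singlepoint} (or Lemma~\ref{lem:hecke-present});
\item the block contributions $\mathsf t^{\ell}$, $1$, $\mathsf t^{m^2-1}$ and fixed units, together with Lemma~\ref{lem:higherdet};
\item the degree count in $s$ forcing $N=\rho$, with Lemma~\ref{lem:bundles} for $D=\emptyset$ and $\lambda$ replacing $\Tr_X(s)$ in the framed case.
\end{enumerate}

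The cross-term issue you flag is handled in the paper exactly by the general-position choice of Lemma~\ref{lem:generalposition}, which supplies the cross Hom hypothesis of Lemma~\ref{lem:endcartier}. So your fallback is unnecessary. It would not work as stated anyway: a function of $s$ with no zeros off $\{\Tr_X(s)=0\}$ need only be a product of powers of the factors of $\Tr_X$, not a constant. Moreover, proving that nonvanishing would already presuppose the conclusion.
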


\begin{proof}
When $D=\emptyset$, apply Lemma~\ref{lem:bundles}. Now suppose $D\neq\emptyset$. The trace is constant on $M$ by Theorem~\ref{thm:g-extend} and Theorem~\ref{thm:g-extend-G}, so we compute it at one stable smooth point.

Use the single-point filtered representation of Lemma~\ref{lem:singlepoint}. By Lemma~\ref{lem:schur}, the reduced degeneracy complex is filtered by the modification blocks. The weighted norm member blocks contribute $\mathsf t^\ell$, unweighted Koszul blocks contribute $1$, split End blocks on $D$ contribute $\mathsf t^{m^2-1}$ by Lemma~\ref{lem:endcartier}, and split End blocks on ordinary divisors contribute only fixed units by Lemma~\ref{lem:ordinary}. Hence the Cartier determinant of the reduced degeneracy complex is
\[
 u\,\mathsf t^N
\]
for some integer $N$ and some fixed unit $u$.

For $p=3$, Lemma~\ref{lem:skewdet} gives
\[
 \det\mathcal K_{\mathrm{red}}=u_0\,\LF_s^2.
\]
For general $p$, Lemma~\ref{lem:higherdet} gives
\[
 \det\mathcal K_{\mathrm{red}}^{[(p-1)/2]}=u_0\,\LF_s^{p-1}.
\]
The determinant has degree $\rho(p-1)$ in $s$, while $\mathsf t$ has degree $p-1$. Therefore $N=\rho$. Normality of the compactification shows that the identification changes only by a fixed unit on the boundary.

For principal bundles, replace $\End E$ by the adjoint Lie algebra and use Lemma~\ref{lem:hecke-present}. General root blocks contribute only fixed units, and the constant split formula determines the total determinant. The same degree argument gives the result. The framed case uses the corresponding framed presentation and $\lambda$ in place of $\mathsf t$.
\end{proof}

\begin{corollary}[Splitting criterion]\label{cor:split}
$\LF_s$ splits $M$ if and only if $\Tr_X(s)\ne0$, equivalently if and only
if $s^{p-1}$ splits $X$.  In the framed case, $\LF_s$ splits $M$ if and
only if $\lambda=\Tr_U(s^{p-1})$ is a nonzero constant.
\end{corollary}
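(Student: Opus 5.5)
The plan is to read the criterion off the trace formula of Theorem~\ref{thm:TF} and combine it with the splitting criterion of Theorem~\ref{thm:bk}. By Theorem~\ref{thm:TF} we have $\Tr(\LF_s^{p-1})=\Tr_X(s)^{\rho}\cdot u$ with $u\in k^\times$. Since $X$ is complete, $\Tr_X(s)$ is a constant (Definition~\ref{def:split}). Hence $\Tr(\LF_s^{p-1})$ is a constant function on $M$, and it is nonzero exactly when $\Tr_X(s)\neq0$, because $\rho>0$ and $u\neq0$.

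\emph{Forward direction.} Assume $\Tr_X(s)\neq0$. Then $\Tr(\LF_s^{p-1})=c$ is a nonzero constant. Rescale $\LF_s\mapsto\lambda\LF_s$ with $\lambda^{p-1}c=1$, which is possible since $k$ is algebraically closed. By Theorem~\ref{thm:bk} on the smooth locus, and by Remark~\ref{rem:nearsplit} to pass from a dense open to all of $M$ (which is reduced), this gives a splitting. Compatibility with $Z(\LF_s)$ then follows from Lemma~\ref{lem:loccompat}.

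\emph{Converse.} If $\Tr_X(s)=0$, the trace formula gives $\Tr(\LF_s^{p-1})=0$. Rescaling by $\lambda$ multiplies this trace by $\lambda^{p-1}$, so every rescaling still has trace $0$ and none of them splits $M$. The equivalence between $\Tr_X(s)\neq0$ and ``$s^{p-1}$ splits $X$'' is Theorem~\ref{thm:bk} applied to $X$ itself: $X$ is projective, so $\Tr_X(s)$ is constant, and $s$ splits $X$ precisely when that constant is nonzero.

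\emph{Framed case.} Here the same argument runs with $\lambda=\Tr_U(s^{p-1})$ in place of $\Tr_X(s)$. When $\lambda$ is a nonzero constant, Theorem~\ref{thm:TF} gives a nonzero constant trace on $M$, and we conclude as above. Conversely, if $\lambda$ is not a nonzero constant, the trace of $\LF_s^{p-1}$ is not a nonzero constant (it is either zero or not constant). By the Brion--Kumar criterion, no constant multiple of $\LF_s$ can then split $M$.

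The main obstacle is not this deduction but its input, namely the constancy of $\Tr(\LF_s^{p-1})$ on the noncomplete $M$. The proof of Theorem~\ref{thm:TF} imports this from the extension results of Section~\ref{sec:gieseker} together with Lemma~\ref{lem:consttrace-extend}. I would verify that this input does not itself depend on the present corollary, so that the argument is not circular. In the framed case I would also check that the hypothesis that $\lambda$ is constant is exactly what is needed there.
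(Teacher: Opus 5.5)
Your proposal is correct and is essentially the paper's own argument: a constant rescaling of $\LF_s$ splits $M$ exactly when $\Tr(\LF_s^{p-1})$ is a nonzero constant, Theorem~\ref{thm:TF} reduces this to $\Tr_X(s)\neq0$, and the framed converse uses the same dichotomy ($\lambda\equiv0$ gives zero trace, while nonconstant $\lambda$ gives nonconstant trace since $\rho>0$). The point you flag at the end is real but is shared with the paper: that framed converse uses the trace formula (in the form $T(g_\nu)=\lambda(x_\nu)$ at the members) outside the range ``$\lambda$ a nonzero constant'' in which Theorem~\ref{thm:TF} is actually stated.
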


\begin{proof}
A constant rescaling of $\LF_s$ splits $M$ if and only if
$\Tr(\LF_s^{p-1})$ is a nonzero constant.  Apply Theorem~\ref{thm:TF}.  In
the framed case a nonconstant $\lambda$ makes $\lambda^\rho$ nonconstant
because $\rho>0$, and $\lambda$ identically zero makes the trace zero.
\end{proof}
\subsection{Canonicity for sheaves}

The next two lemmas identify the members of a sheaf presentation.  They do not apply to principal bundles, which are locally free.

\begin{lemma}[Members are the modification points]\label{lem:hd-members}
Let $E$ be the top object of a general chain from a locally free ground sheaf, with members $x_1,\dots,x_\rho$ chosen away from the divisors $D'_j$.  The members are the point modifications of the chain, and the objects below the top are not locally free at the members.  Each point step of the kernel direction raises the length of $E^{\vee\vee}/E$ at the member by one, so a top object non-locally free at the members satisfies $\rho=\operatorname{length}(E^{\vee\vee}/E)$.  At a stable object one has $\operatorname{length}(E^{\vee\vee}/E)=\sum_\nu\ell_\nu\le\rho$, and the inequality can be strict (Section~\ref{sub:master}).
\end{lemma}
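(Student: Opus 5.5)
The plan is to argue step by step along the chain, using the local model of a kernel point modification.

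First, the chain is general and its members $x_1,\dots,x_\rho$ lie off the divisors $D'_j$. Hence every divisor step (an elementary transformation along $D'_j$) is an isomorphism in a neighbourhood of each $x_\nu$. The only steps that change the stalk at $x_\nu$ are therefore the point steps of Definition~\ref{def:pointstep}, and the points at which the top fails to be locally free are among the modification points. Conversely, Lemma~\ref{lem:zero-dim} shows that a nontrivial kernel point modification of a sheaf that is locally free near $x_\nu$ cannot stay locally free there. Indeed, if $F'=\ker(F\to i_{x_\nu*}k)$ were locally free, the inclusion $F'\subseteq F$ with quotient supported at $x_\nu$ would be an equality, which contradicts the nonzero quotient. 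So the members are exactly the point modification points, and every object from the first point step at $x_\nu$ up to the top is non-locally free at $x_\nu$, since later kernel steps only shrink the stalk further inside the same reflexive hull.

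Second, the length count. A kernel step is $F'\subset F$ with $F/F'\cong k_{x}$, so $F'^{\vee\vee}=F^{\vee\vee}$: the quotient has codimension two and the double dual is unchanged. Hence
\[
\operatorname{length}(F^{\vee\vee}/F')=\operatorname{length}(F^{\vee\vee}/F)+1 .
\]
Divisor steps away from the members leave the double dual quotient at the members unchanged. The divisor steps modify $F^{\vee\vee}$ only along $D'_j$, and this contributes no zero-dimensional torsion of $F^{\vee\vee}/F$ there. Summing over the $\rho$ point steps of a member-saturated top, with $H=\emptyset$, gives $\operatorname{length}(E^{\vee\vee}/E)=\rho$. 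This matches $\operatorname{ch}_2$ as in Lemma~\ref{lem:saturated}.

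Third, the inequality at a stable object. I would use a master chain with healed set $H$ (Definition~\ref{def:master}). By Lemma~\ref{lem:heal} the healed points are removed by elementary transformations, so $E=\ker(G\to\bigoplus_{i\notin H}k_{y_i})$ with $G$ locally free. Then $E^{\vee\vee}=G$ and the length is $\rho-|H|=\sum_\nu\ell_\nu\le\rho$, where the sum runs over the members actually present.

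For strictness, take Lemma~\ref{lem:master} with $m<\rho$. Remark~\ref{rem:ground} supplies, in the Bogomolov range, a stable locally free $G$ with $m<\rho$, and the resulting top is stable with length $m<\rho$.

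The main obstacle is the second step's claim that divisor steps do not interact with the double dual quotient at the members. This needs the genericity $x_\nu\notin D'_j$, plus a check that an elementary transformation of a non-reflexive sheaf commutes with taking double duals away from the divisor. I would check the latter stalkwise, since both operations are local.
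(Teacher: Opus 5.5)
Your outline follows the paper's proof closely. Both rest on three facts: the steps are local, the divisor steps leave the stalks at the members untouched, and a kernel step produces a non-free stalk. Where the paper uses the explicit local model $\mathfrak m_x\oplus\cO_{X,x}^{r-1}$, you use the reflexive-hull argument of Lemma~\ref{lem:zero-dim}, which also covers repeated kernel steps at one point. You also spell out the length count and the master-chain argument for $\sum_\nu\ell_\nu\le\rho$ and its strictness, which the paper only cites to Section~\ref{sub:master}.

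One ingredient is missing. You infer that the points where the top fails to be locally free are among the modification points, and you assert that the divisor steps contribute nothing zero-dimensional to $E^{\vee\vee}/E$. Neither follows from the divisor steps being isomorphisms near the $x_\nu$: you must also rule out that a divisor step creates a non-locally-free point on $D'_j$ itself. This needs the fact stated in the second sentence of the paper's proof. An elementary transformation along a smooth divisor $D'$, with a rank-one subbundle $L\subset F|_{D'}$ as reduction datum, applied to a sheaf $F$ locally free along $D'$, is again locally free along $D'$; locally it is $\cO\oplus\cO(-D')^{\oplus(r-1)}$. Combine this with an induction along the chain: the ground is locally free and every point step lies off all the $D'_j$. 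Then the current sheaf is locally free along $D'_j$ whenever the $j$-th divisor step is performed.

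The check you name as the main obstacle, compatibility with double duals away from $D'$, is automatic. Off $D'$ the step is the identity, and double duals are computed locally. The real check is at points of $D'$, and there the subbundle condition on $L$ is what makes it work. A torsion quotient $Q$ would give a kernel of projective dimension one, not a locally free sheaf.
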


\begin{proof}
A point modification at $x$ of a sheaf which is locally free at $x$ has local model $E'_x\cong\mathfrak m_x\oplus\cO_{X,x}^{r-1}$, which is not free because $\mathfrak m_x$ is not free for $\dim X\ge2$.  An elementary transform along a smooth divisor preserves local freeness: locally $E'=\{f\in E:\ f|_{D'}\in L\}$ is a direct sum of $\cO$ and copies of $\cO(-D')$.  Since the members avoid the divisors $D'_j$, the stalks at the members are changed only by the point steps.
\end{proof}

\begin{lemma}[Transverse volume]\label{lem:hd-transverse}
Two general presentations of one object give radial determinants differing by
a constant.
\end{lemma}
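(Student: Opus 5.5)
The radial determinant of a presentation is $\eta=\det[w_1,\dots,w_c]$, computed from the $Q$-projections of the descended Euler fields $w_j=c_j\partial_{c_j}$ (Lemma~\ref{lem:descent}, Corollary~\ref{cor:frame}). The plan is to compare two general presentations of the same object $E$ through their class coordinates, and to show that the comparison matrix is constant. I will take (P3) as the input: two presentations differ only by the choice of trivialisation data.

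\emph{Step 1: reduce to the invariant rings.} At each divisor step the class coordinates are, by Remark~\ref{rem:classcoords}, generators of $k[A_j]^{G_j}$. A change of trivialisation of $L_j$ multiplies these generators by constants $\lambda_{j,i}\in k^\times$. So for two presentations sharing the same chain shape, one has $c'_j=\lambda_j c_j$. Under this rescaling $c'_j\partial_{c'_j}=c_j\partial_{c_j}$, since Euler fields are invariant under constant rescaling. Hence the $w_j$ agree on the chain space, and by (P1) they agree on $M$. The radial determinants then coincide, with the constant $\prod_j\lambda_j$ entering at most through the identification of the coordinate frame.

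\emph{Step 2: compare different chain shapes.} Two general presentations may use different divisors $D'_j$ or different reduction data. Here I would argue in $Q=T_M/\Lambda$ rather than on the chain spaces. By Lemma~\ref{lem:qrank}, both families of class Euler fields project to generically independent sections of $Q$. Their determinants $\eta$ and $\eta'$ are therefore two sections of $\det Q$ on the open set where the leaf rank is constant, so $\eta'=u\eta$ with $u$ a rational function. Regularity of the descended fields on both sides, together with normality of $M$, shows that $u$ is a regular unit on that open set. This matches the content of Remark~\ref{rem:frame-indep}.

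\emph{Step 3: the unit is constant.} Both presentations are radial at an evaluation point (Remark~\ref{rem:radial}), so the local expression of Proposition~\ref{prop:hd-localexpr} gives $\eta=\prod_j c_j$ and $\eta'=\prod_j c'_j$ up to constants. To conclude that $u$ is constant, I would first use the top-coefficient comparison. The top coefficient of each leafrian is $\prod_\nu T(g_\nu)$ times a constant (Remark~\ref{thm:obstr}), so $u^{p-1}$ is constant wherever these traces are nonzero, and Lemma~\ref{lem:unitrigid} then gives $u\in k^\times$. For a weight-free argument, $M$ being normal and irreducible, I would instead note that the general presentations form a connected family, namely the choices of general divisors and data. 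The ratio $u$ varies algebraically over this family and equals a constant on the locus where the chain shapes agree (Step 1). I would then try to show that $u$ is constant along the whole family.

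\emph{Main obstacle.} The hardest part is Step 3 without any trace hypothesis. A unit on a quasi-projective $M$ need not be constant, and the argument has to rule out a nonconstant invertible function. My first attempt would be to show that $u$ extends with no zeros or poles to $\bar M$, using the reflexive extension of $\omega^{[-1]}$ and codimension-two arguments on the normal space, so that $u$ is constant there. Failing that, I would fall back on Lemma~\ref{lem:unique} under the trace hypothesis.
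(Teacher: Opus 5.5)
Your Step~1 is exactly the paper's proof: by (P3) the two presentations differ only in the trivialisation data, which rescales the class coordinates by constants $c_j\mapsto\lambda_jc_j$. Since $c_j\partial_{c_j}$ is invariant under such a rescaling, $\prod_jc_j$ changes only by the constant $\prod_j\lambda_j$.

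That argument already proves the lemma. (P3) is stated for any two presentations of one object, with no restriction to a common chain shape. The case of different divisors or reduction data that you treat in Steps~2--3 is therefore excluded by the axiom itself, and you should stop after Step~1.

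As written, Steps~2--3 would not close that case anyway:
\begin{itemize}
\item In Step~2, regularity of $\eta$ and $\eta'$ together with normality only makes $u=\eta'/\eta$ a rational function. It is a regular unit only if $Z(\eta)=Z(\eta')$, and that is precisely what is at issue, since the loci $\{c_j=0\}$ and $\{c'_j=0\}$ of unrelated chain shapes need not coincide.
\item In Step~3, the trace route needs a weight $s$ with $\Tr_X(s)\neq0$. That is not a hypothesis of this weight-free lemma and need not exist on every $X$. The ``connected family'' argument is left as a plan rather than a proof.
\end{itemize}
What Steps~2--3 are really aiming at is the weaker form of Theorem~\ref{thm:hd-canonicity} (a regular unit in general, constant when $\Tr_X(s)\neq0$), not the lemma. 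Treat (P3) as the full input rather than as a same-shape special case.
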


\begin{proof}
By {\rm(P3)} the gluing data of two presentations differ by constant
rescalings of the class coordinates. The field $c_j\partial_{c_j}$ is
unchanged by such rescalings, so the radial determinants $\prod_jc_j$ differ
by a constant.
\end{proof}

\begin{theorem}[Canonicity]\label{thm:hd-canonicity}
Let $\LF_s=\theta_s^{[\rho]}\wedge\Omega$ and $\tilde\LF_s=\theta_s^{[\rho]}\wedge\tilde\Omega$ be the leafrians of $s$ with respect to two Euler frames coming from presentations.  Then
\[
 \tilde\LF_s=u\,\LF_s
\]
for a regular unit $u$.  If $\Tr_X(s)\neq0$ then $u\in k^\times$.  Consequently the divisor $Z(\LF_s)$ is intrinsic, and when $s$ splits the base the assignment $[s]\mapsto[\LF_s]$ is well defined up to constants.
\end{theorem}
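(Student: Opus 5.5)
The plan is to reduce the comparison of the two leafrians to a comparison of their transverse volumes, and then use the trace formula to pin down the unit. Both sections are built from the same leaf factor $\theta_s^{[\rho]}$, the Pfaffian power of Bottacin's tensor, which is intrinsic and does not depend on any presentation (Proposition~\ref{prop:localmodel}, Theorem~\ref{thm:hd-descent}). By the identification $\varpi\wedge\Omega=\varpi\otimes\eta$ of Corollary~\ref{cor:frame}, together with Lemma~\ref{lem:leafclass}, each leafrian depends on its Euler frame only through the $Q$-component, that is through the radial determinant $\eta$, respectively $\tilde\eta$, as a section of $\det Q$ on the open set where the leaf rank is $2\rho$. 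So the first step is to write $\LF_s=\theta_s^{[\rho]}\otimes\eta$ and $\tilde\LF_s=\theta_s^{[\rho]}\otimes\tilde\eta$ there.

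The second step is to compare $\eta$ and $\tilde\eta$. By Lemma~\ref{lem:hd-transverse}, which rests on (P3), two general presentations give radial determinants differing by a constant rescaling of the class coordinates. For arbitrary presentations I only claim a ratio $u=\tilde\eta/\eta$ that is a regular invertible function on the open set where both frames are transverse. This uses Lemma~\ref{lem:qrank}: both frames have generically independent $Q$-projections, so each of $\eta$ and $\tilde\eta$ is a nonzero section of the line bundle $\det Q$. To extend $u$ across $M$, I would use normality of $M$ and the fact that $\LF_s$ and $\tilde\LF_s$ are both regular sections of $\omega_M^{[-1]}$ (Lemma~\ref{lem:global}). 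The quotient is then a rational function, and its divisor is the difference $Z(\tilde\LF_s)-Z(\LF_s)$. The main obstacle is to show this difference vanishes, i.e.\ that the zero divisors agree. I would argue this through the local expression of Theorem~\ref{thm:normsec}: in the coordinates of each datum, both zero divisors are cut out by $\prod_\nu N(g_\nu)\cdot\prod_j c_j$, and the class divisors $c_j=0$ correspond under (P3). I expect this identification of the class divisors across different presentations to be the delicate point, and I would fall back on evaluation points where both data are realized, then spread the equality by irreducibility.

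The final step applies when $\Tr_X(s)\neq0$. Theorem~\ref{thm:TF} gives that $\Tr(\LF_s^{p-1})$ and $\Tr(\tilde\LF_s^{p-1})$ are both nonzero constants $\Tr_X(s)^\rho u_1$ and $\Tr_X(s)^\rho u_2$. Since $\tilde\LF_s=u\LF_s$, we have $\Tr(\tilde\LF_s^{p-1})=u^{p-1}\Tr(\LF_s^{p-1})$, so $u^{p-1}$ is a nonzero constant. Lemma~\ref{lem:unique}, via Lemma~\ref{lem:unitrigid} on the connected reduced $M$, then gives $u\in k^\times$. Intrinsicness of $Z(\LF_s)$ follows because multiplying by a regular unit does not change the zero divisor. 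Well-definedness of $[s]\mapsto[\LF_s]$ follows because rescaling $s$ rescales $\theta_s^{[\rho]}$ by a constant, and the frame ambiguity is now a constant as well.
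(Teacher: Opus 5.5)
Your outline is the paper's own proof. It uses the intrinsic leaf factor $\theta_s^{[\rho]}$, then leaf-class invariance (Lemma~\ref{lem:leafclass}, Corollary~\ref{cor:frame}) to reduce everything to the radial determinants, then (P3) for their ratio, and finally Theorem~\ref{thm:TF} with Lemma~\ref{lem:unique} to make the unit constant.

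\textbf{The last step fails.} The identity $\Tr(\tilde\LF_s^{p-1})=u^{p-1}\Tr(\LF_s^{p-1})$ is false when $u$ is a nonconstant function. The Frobenius trace is only $p^{-1}$-linear, $\Tr(a^p\varphi)=a\,\Tr(\varphi)$. Writing $u^{p-1}=u^p\cdot u^{-1}$, one only gets $\Tr(u^{p-1}\varphi)=u\,\Tr(u^{-1}\varphi)$, which says nothing about $u$.

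Here is a concrete counterexample on $Y=\mathbb{G}_m=\Spec k[t,t^{-1}]$. Take $\sigma=t(1+t)\,\partial_t$ and $t^{-1}\sigma=(1+t)\,\partial_t$. Both satisfy $\Tr(\sigma^{p-1})=\Tr\bigl((t^{-1}\sigma)^{p-1}\bigr)=1$: in the global coordinate, only the monomial $t^{p-1}$ of $t^{p-1}(1+t)^{p-1}$, resp.\ of $(1+t)^{p-1}$, contributes. Yet $t^{-1}$ is a nonconstant regular unit. So two nonzero constant traces do not force a regular unit to be constant. This is exactly the argument of Lemma~\ref{lem:unique}, so you inherit the flaw from the paper rather than introducing it, but the step fails all the same.

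\textbf{The repair is already in your second step.} It also disposes of the divisor comparison you flag as delicate. Take (P3) at face value, as the paper does, in the form of Lemma~\ref{lem:hd-transverse}. Near a point where both presentations are realized, the class coordinates satisfy $\tilde c_j=\lambda_j c_j$ with $\lambda_j\in k^\times$. Hence $\tilde\eta=\bigl(\prod_j\lambda_j\bigr)\eta$ on a nonempty open set.

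This is an identity between rational sections on the irreducible $M$, so it holds everywhere. Combined with your first step, it gives $\tilde\LF_s=\lambda\,\LF_s$ with $\lambda\in k^\times$ outright. Equality of the zero divisors and well-definedness of $[s]\mapsto[\LF_s]$ then follow with no appeal to the trace.

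Be precise about what you spread by irreducibility. An identity of rational sections propagates from an open set. Agreement of the two zero divisors checked only on the evaluation locus does not propagate, since the divisors could still differ along components that miss that locus.
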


\begin{proof}
The leaf factor $\theta_s^{[\rho]}$ does not depend on the presentation.  Two Euler frames whose difference lies in the leaf distribution give the same leafrian, and two presentations give frames whose radial determinants differ by a regular unit.  Hence the two leafrians differ by a regular unit.  If $\Tr_X(s)\neq0$ then both traces are nonzero constants by Theorem~\ref{thm:TF}, so $u\in k^\times$ by Lemma~\ref{lem:unique}.
\end{proof}

\noindent On a surface the leaf factor is the intrinsic Pfaffian power $\theta_s^{[\rho]}$, so the assignment $[s]\mapsto[\LF_s]$ is well defined.  The splitting criterion of Corollary~\ref{cor:split} does not use this independence.

\section{The moduli of stable sheaves over Poisson surfaces}\label{sec:moduli}

This section is the model case.  The weight is $s\in H^0(X,\omega_X^{-1})$, the pairing on each block is the trace pairing of Definition~\ref{def:trans}, and $T_1=\Tr_X(s)$.  Here $X$ is a Poisson surface with weight $s$.  The trace formula uses the single-point filtered representation of Section~\ref{sub:degcomplex}.  The Poisson upgrade, the type spectrum, and the degeneracy divisor are Section~\ref{sec:cfp}.

\subsection{The surface setting}\label{sub:moduli-setting}

Let $X$ be a smooth projective Poisson surface and $s\in H^0(X,\omega_X^{-1})\setminus\{0\}$.  Every bivector
field on a surface is Poisson \cite[Prop.~2.1]{Bot}.  Write $D=Z(s)$.  Let $M=M(r,c_1,c_2)$ denote the moduli space of $H$-stable torsion-free sheaves
of rank $r\geq1$ and Chern classes $c_1,c_2$ on $X$.  Write $d=\dim M$.  At a stable point $[E]$ one has $T_{[E]}M\cong\Ext^1(E,E)$, and on the smooth locus
\[
 d=\dim\Ext^1(E,E)=1-\chi(\End E)+h^2(\End E)
   =(1-r)c_1^2+2rc_2-r^2\chi(\cO_X)+1+h^2(\End E),
\]
where the $+1$ is $h^0(\End E)=1$; when $H^2(\End E)=0$ this is $d=(1-r)c_1^2+2rc_2-r^2\chi(\cO_X)+1$.  We write $t:=\dim\Ext^1(E,E)$ for the size of the matrix of the pairing $\beta_s$ of \eqref{eq:res} on $\Ext^1(E,E)$; on the smooth locus $t=d$.  Smoothness of $M$ is not needed in what follows beyond this identification. The moduli of stable sheaves on a smooth projective surface is normal, being a geometric quotient of a smooth Quot scheme by a reductive group; the splitting argument of Remark~\ref{rem:nearsplit} applies.  We allow $D$ to be nonempty reduced, non-reduced, or empty.  The leaf polyvector is $\theta_s$, with transfer model $\hat s$ on the member locus (Section~\ref{sec:hat}); the weight on a block is $g=s|_{\cO_Z}$, and the splitting condition is $\Tr_X(s)\neq0$ when $\rho>0$.

\begin{lemma}[Instantiation of Definition~\ref{def:present}: the sheaf case]\label{lem:eulerframe}
The moduli case admits a single-point filtered presentation. Fix a sufficiently positive curve $C\subset X$ and a ground sheaf $E_0$, allowed to be unstable. The chain data are the gluing data $(L_j,[\varepsilon_j])$ of line bundles $L_j$ on $C$ together with the local module data $(q_\nu,p_\nu)$ of the fat points $Z_\nu$. They define the filtration of the reduced degeneracy complex, and the class coordinates $c_j$ are the scale invariants of the gluing data.
\end{lemma}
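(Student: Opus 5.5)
The plan is to build the chain explicitly and then check (P1)--(P3) of Definition~\ref{def:present}, following the single-point filtered representation of Lemma~\ref{lem:singlepoint}.

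\emph{The chain.} Start from a stable $[E]$ and set $F=E^{\vee\vee}$. By Lemma~\ref{lem:zero-dim} and Lemma~\ref{lem:hd-members}, $E$ is obtained from $F$ by kernel point modifications at the non-locally free points. These give the fat-point members $Z_\nu$, and their block coordinates $(q_\nu,p_\nu)$ parametrize $\Def_{Z_\nu}=A_\nu^{\oplus2}$. Next connect $F$ to a ground $E_0$ by elementary transformations along a sufficiently positive curve $C$, chosen general and therefore ordinary. Concretely, twist $F$ by $\cO(C)$ so that $F(C)|_C$ is globally generated, and take rank-one reduction data $L_j\subset F_j|_C$; the ground $E_0$ need not be stable. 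Lemma~\ref{lem:generalposition} lets us choose the $L_j$ general, so that the cross Hom condition of Lemma~\ref{lem:endcartier} holds. This produces exactly the data $(L_j,[\varepsilon_j])$ and $(q_\nu,p_\nu)$ named in the statement.

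\emph{Class coordinates and (P3).} At each divisor step the extension classes $\varepsilon_j$ form an affine space acted on by $k^\times\ltimes T_j$. By Remark~\ref{rem:classcoords}, the invariant generators are the class coordinates $c_j$. A change of trivialisation of $L_j$ multiplies them by constants, which is (P3).

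\emph{Verifying (P1) and (P2).} For (P1), the block coordinates and the $c_j$ are invariant under equivalences of chains. For the count (P2), apply Proposition~\ref{prop:hd-count}: choose the number of divisor steps so that $\sum_j e_j=c=d-2\rho$. Taking $C$ sufficiently positive makes the $e_j$ large, and then the number of steps can be adjusted to hit $c$ exactly.

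\emph{The filtration.} Each step gives a local-to-global Ext triangle for the Bottacin complex, and Lemma~\ref{lem:schur} passes the pairing to the graded pieces. There are four kinds of graded piece:
\begin{itemize}
\item member blocks, with weight $g=s|_{\cO_Z}$;
\item unweighted Koszul pieces;
\item split End blocks on $C$, which is ordinary;
\item split End blocks on $D$, coming from the radical $H^0(D,\End E|_D)$ removed in Definition~\ref{def:redcomplex}.
\end{itemize}

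\emph{Main obstacle.} The hardest step is (P1): showing that $\mu$ realizes $M$ near $[E]$ as the quotient of the chain space, so that projectable fields descend. My first attempt would be to note that the fibres are the $\phi=\sum_j g_j$ dimensional choices of reduction data, as computed in Proposition~\ref{prop:hd-count}. Then I would argue that the chain space is smooth, with the unobstructedness coming from the argument of Proposition~\ref{prop:realization}. This makes $\mu$ a smooth-orbit quotient on the relevant open set.

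The statement only asks for a \emph{single-point} presentation (dominance is not required), so it suffices to establish this on a neighbourhood of the fibre over $[E]$.
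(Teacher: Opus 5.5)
Your construction of the chain matches the paper's: pass to $E^{\vee\vee}$, use point steps for the members, use elementary transformations along a general ordinary $C$, take class coordinates from Remark~\ref{rem:classcoords}, get (P3) from rescaling trivialisations, and the count from Proposition~\ref{prop:hd-count}. But there is a genuine gap at the step you flag as the main obstacle, and your proposed remedy does not close it.

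What (P1) must deliver is that the class coordinates $c_j$ are constant along the fibres of $\mu$. You only assert that they are ``invariant under equivalences of chains''. Two chains with the same top differ by re-identifications of the intermediate sheaves. Once a top and its reduction datum $L_j$ are fixed, the parent is determined, but the gluing class $\varepsilon_j\in\Hom(Q_j,L_j\otimes N)$ is determined only up to automorphisms of the sheaves involved. If an intermediate sheaf had non-scalar endomorphisms, this would move $\varepsilon_j$ by more than the $k^\times\ltimes T_j$ action of Remark~\ref{rem:classcoords}, and the $c_j$ could vary along the fibre.

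The ingredient you are missing, which the paper invokes exactly here, is Lemma~\ref{lem:hd-simple}. With every reduction datum chosen in the window $\deg L_j<d_{\min}$ (Corollary~\ref{cor:hd-dmin}, Lemma~\ref{lem:hd-window}), the endomorphism algebras do not grow along the chain. Hence the equivalence of two chains producing $[E]$ is just the action of $\Aut(E)=k^\times$ on the chain data, which fixes the $c_j$. The same control is what makes the fibre dimension $\phi=\sum_j g_j$ of Proposition~\ref{prop:hd-count} valid, so your (P2) silently depends on it too. Your substitute, smoothness of the chain space via the unobstructedness argument of Proposition~\ref{prop:realization}, does not address invariance at all. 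Moreover, that proposition concerns deformations of a pair $(Z,F)$ at a fat point, not the divisor steps of the chain.

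There are two smaller problems. First, in (P2) you make the $e_j$ large by taking $C$ very positive and then adjust the number of steps. If every $e_j$ exceeds $c=d-2\rho$, no number $t\ge 1$ of steps gives $\sum_j e_j=c$. The tuning has to go through $\deg L_j$, as in the proof of Proposition~\ref{prop:hd-count}. Lowering $\deg L_j$ lowers the degree of $Q_j^\vee\otimes L_j\otimes N$ and preserves the window $\deg L_j<d_{\min}$, so it is compatible with the simplicity argument above. Second, in your list of graded pieces, the split End blocks on $D$ cannot ``come from'' the radical generated by $H^0(D,\End E|_D)$. Definition~\ref{def:redcomplex} removes that radical before the filtration is formed, so it cannot reappear as a graded piece.
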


\begin{proof}
The clauses of Definition~\ref{def:present} hold by Section~\ref{sec:transfer} with $A_E=\End E$ and $D'=C$. The count is Proposition~\ref{prop:hd-count}, and the endomorphism algebras do not grow along the chains by Lemma~\ref{lem:hd-simple}. Every stable object is the top of a chain of this shape, so the filtration exists at one stable smooth point. The equivalence of two chains producing one $[E]$ is the action of $\Aut(E)$ on the chain data, so every class coordinate is constant along the fibres of $\mu$. Two presentations of one $[E]$ rescale the class coordinates by constants.
\end{proof}

Bottacin's Poisson tensor is the pairing
\begin{align}
 \theta_s(E):\ T^*_{[E]}M\otimes T^*_{[E]}M
 &\xrightarrow{\ \circ\ }\Ext^2(E,E\ot\omega_X^{\otimes 2})
 \xrightarrow{\ \cdot s\ }\Ext^2(E,E\ot\omega_X)\notag\\
 &\xrightarrow{\ \Tr\ }k,\label{eq:bot}
\end{align}
the trace being an isomorphism by Grothendieck--Serre duality and stability \cite[(4.1)]{Bot}.  Serre duality identifies $\Ext^1(E,E\ot\omega_X)$ with $\Ext^1(E,E)^\vee$, so the transpose of \eqref{eq:bot} is a pairing $\beta_s$ on $\Ext^1(E,E)$.
When the classes are supported at a fat point $Z$ with local algebra $A$, the pairing is the Artin residue
\begin{equation}\label{eq:res}
 \beta_s(\xi,\eta)=\Res_A\bigl(\Tr(s\cdot\xi\cdot\eta)\bigr).
\end{equation}
Here $\Res_A$ is the residue of Definition~\ref{def:trans} on the local algebra of $Z$, applied to the local function $\Tr(s\cdot\xi\cdot\eta)$ of the $Z$-supported classes.  On a fat-point block \eqref{eq:res} is the trace pairing of Definition~\ref{def:trans}: the classes of the block are parameterized by $A\oplus A$ and the induced composition pairing on $\Def_Z$ is the Koszul determinant $ab'-a'b$ (read as the determinant pairing on $\Def_Z$ followed by the Artin residue, not as a Yoneda product in $\Ext^2$) \cite[Ch.~I]{Ser}.
When $D=Z(s)$ is empty the section $s$ trivializes $\omega_X^{-1}$ and the pairing is the Serre-duality trace
\begin{equation}\label{eq:res0}
 \beta_s(\xi,\eta)=\Tr_X\bigl(s\cdot\xi\cdot\eta\bigr)\ \in\ H^2(X,\omega_X)\cong k.
\end{equation}
The residue form \eqref{eq:res} is the one used on the blocks below.  Distinct members have disjoint supports, so the leaf matrix is block diagonal.

\begin{lemma}[Trace of the weight]\label{lem:traceblock}
Let $Z$ be a fat point of length $\ell=1$ at a closed point $x\in X$, let $(q_0,p_0)$ be the slice coordinates of a transverse disc centred at $x$, and let $g=s|_{\cO_Z}$.  Then
\[
 T(g)=[q_0^{p-1}p_0^{p-1}]\,N_{\cO_Z/k}(g)^{p-1}=\Tr_X(s)(x).
\]
If $x\in D$ and $s=u\cdot h$ in these coordinates, with $u$ a local equation of a reduced branch through $x$ and $h$ the coordinate coefficient, this is also $[\,\tau^{p-1}\,]\,h(0,\tau)^{p-1}$.
\end{lemma}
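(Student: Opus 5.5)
For $\ell=1$ the plan is to reduce Lemma~\ref{lem:traceblock} to the length-one cases already treated, namely Lemma~\ref{lem:hd-lengthone} and Lemma~\ref{lem:hd-unit}, equivalently the case $\ell=1$ of Lemma~\ref{lem:membertop}.

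First, the algebra $\cO_Z=k$ has basis $\{1\}$, so by Lemma~\ref{lem:hd-lengthone} the norm is the value, $N_{\cO_Z/k}(g)=g$. Regarded as a function of the moving support, this value is the local function $H(q_0,p_0)$ of $s$ in the transverse disc frame centred at $x$. The attached coordinates $(q_0,p_0)$ are two slice coordinates of a disc in $X$, and the frame factor of $\partial_{q_0}\wedge\partial_{p_0}$ is constant by Remark~\ref{rem:unitnorm}. So the extraction $[q_0^{p-1}p_0^{p-1}]H^{p-1}$ is the socle coefficient of $(H\,\partial_u\wedge\partial_v)^{p-1}$ at $x$. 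By Theorem~\ref{thm:bk} and \cite[Lem.~1.3.6]{BKu} this coefficient is $\Tr(s^{p-1})(x)=\Tr_X(s)(x)$. Lemma~\ref{lem:socle} and Lemma~\ref{lem:inv} ensure the value does not depend on the choice of slice coordinates, since we are extracting from a section and not from a bare function. This establishes the first equality.

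For the second assertion, take $x\in D$ and write $s=u\cdot h(u,v)$, where $u$ is a local equation of a reduced branch, $v=\tau$ is a tangential coordinate, and the coordinates $(q_0,p_0)$ are identified with $(u,v)$. Then
\[
 [u^{p-1}v^{p-1}]\,u^{p-1}h(u,v)^{p-1}=[v^{p-1}]\,h(0,v)^{p-1},
\]
because the explicit factor $u^{p-1}$ uses up the entire $u$-exponent. Only the $u$-degree-zero part of $h^{p-1}$ contributes, and that part is $h(0,v)^{p-1}$. This is Theorem~\ref{thm:utc} with $\ell=1$, where $T=T_1$, and equivalently Lemma~\ref{lem:restrict} with $k=1$.

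The main obstacle is the identification of $(q_0,p_0)$ with honest local coordinates $(u,v)$ on $X$ at $x$. The extraction is chart-invariant only up to the frame factor $\det(J)^{p-1}$, and it is invariant exactly only for sections, as the counterexample in Lemma~\ref{lem:socle} shows. I would handle this by taking the disc coordinates to be the coordinates of $X$ themselves, through the support map, which is the identity in length one. The Jacobian is then $1$, and the statement becomes one about the section $s$ rather than about a bare function.
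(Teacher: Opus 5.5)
Your proposal is correct and follows the paper's proof. The paper likewise treats the first equality as the length-one case of Lemma~\ref{lem:hd-unit}, with $N(g)=g$ read as the local function of $s$ in the moving support, identifies the socle extraction with $\Tr_X(s)(x)$ via \cite[Thm.~1.3.8, Lem.~1.3.6]{BKu}, and gets the second assertion from Theorem~\ref{thm:utc} at $\ell=1$; your explicit step that $u^{p-1}$ absorbs the whole $u$-extraction is that case, the same argument the paper uses in Lemma~\ref{lem:inst-T1}.
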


\begin{proof}
The first claim is the length-one case of Lemma~\ref{lem:hd-unit}: $A=k$ and $N(g)=g$ is the local function of $s$ at $x$ in the disc frame, taken as a function of the moving support and not as the value $s(x)$.  The extraction $[q_0^{p-1}p_0^{p-1}]g^{p-1}$ is the top coefficient of $s\,\partial_{[2]}$ at $x$, that is $\operatorname{Tr}_X(s)(x)$, by \cite[Thm.~1.3.8, Lem.~1.3.6]{BKu}.  This identification does not require $x\in D$.  If $x\in D$ and $s=u\cdot h$, Theorem~\ref{thm:utc} writes $T=T_1=[\,\tau^{p-1}\,]h(0,\tau)^{p-1}$.
\end{proof}

\noindent At a smooth point of a reduced divisor $D=Z(s)$, the coordinate coefficient $h(0,\tau)$ is a unit (otherwise $Z(s)$ would have another branch through the point).  The extraction $T_1$ is then the socle of a unit power series, and may or may not vanish.  We do not identify $h(0,\tau)$ with a section of $N_{D/X}$.

The checklist of Proposition~\ref{prop:realization} holds: the local blocks are as above; the modification theory is the elementary modification along a positive curve $C\subset X$ \cite{Mar,Hul,Fri}.  Write $E'=\ker(E\to i_{C*}Q)$ for the parent, where $Q=E|_C/L$ is the quotient of the reduction datum, of rank $r-1$.  Then
\[
 c_1(E)=c_1(E')+(r-1)[C],\qquad
 c_2(E)-c_2(E')=(r-1)\,C\cdot c_1(E)-\deg Q-\tfrac{(r-1)(r-2)}{2}C^2 ,
\]
the second identity being Grothendieck--Riemann--Roch for $0\to E'\to E\to i_{C*}Q\to0$, where $\operatorname{ch}(i_{C*}Q)=(r-1)[C]+\bigl(\deg Q-\tfrac{r-1}{2}C^2\bigr)[\mathrm{pt}]$;
so the lengths of the budget are adjustable within a numerical type; the members cover the leaf directions, realized by $\rho$ length-one members.

\begin{lemma}[$Z(s)$ non-reduced]\label{lem:Dnonred}
Assume $\rho\ge1$.  If $D=Z(s)$ is non-reduced, then $\Tr_X(s)=0$ and $b_0(\LF_s)=0$; in particular $\LF_s$ does not split $M$.
\end{lemma}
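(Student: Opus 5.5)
The plan has two steps. First, show that $\Tr_X(s)=0$ on the base when $D$ is non-reduced. Second, transport this vanishing to $M$ through the trace formula. The first step is local on $X$. Suppose $D=Z(s)$ is non-reduced. Then there is a closed point $x\in D$ at which a local equation of $D$ is divisible by the square of a branch equation. In local coordinates $(u,v)$ at $x$ we can write $s=u^2 h\,\partial_u\wedge\partial_v$ for some power series $h$; it suffices to treat a general point of a non-reduced component, where $u$ can be taken as a coordinate.

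The trace $\Tr_X(s)$ is a constant, since $X$ is projective (Definition~\ref{def:split}). So it is enough to evaluate its top coefficient at this single point $x$, using Theorem~\ref{thm:bk}. We have
\[
 (u^2h)^{p-1}=u^{2p-2}h^{p-1}.
\]
Because $2p-2\ge p$ for $p>2$, every monomial of this product lies in the Frobenius ideal $(u^p,v^p)$. Hence the coefficient of $u^{p-1}v^{p-1}$ is zero, and $\Tr_X(s)(x)=0$, so $\Tr_X(s)=0$. An alternative route is the contrapositive of Theorem~\ref{thm:compat}: if $s^{p-1}$ split $X$ after rescaling, then $Z(s)$ would be reduced, contradicting the hypothesis. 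I would give both arguments, and treat the direct computation as the primary one because it does not need the rescaling.

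The second step passes to $M$. Since $\rho\ge1$, Theorem~\ref{thm:TF} gives
\[
 \Tr(\LF_s^{p-1})=\Tr_X(s)^{\rho}\cdot u=0,
\]
because a positive power of zero is zero. By Theorem~\ref{thm:bk}, the trace at a closed point equals the top coefficient of the local expression. Therefore $b_0(\LF_s)=0$ at every point, and in particular at an evaluation point.

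As a cross-check that avoids the global trace formula, one can work at an evaluation point of Proposition~\ref{prop:hd-localexpr}. There, Remark~\ref{thm:obstr} gives $b_0=\prod_\nu T(g_\nu)\,u^{p-1}$. If one member sits at a point $x\in D$ of the kind above, Lemma~\ref{lem:traceblock} gives $T(g_\nu)=\Tr_X(s)(x)=0$. In fact every $T(g_\nu)$ equals the constant $\Tr_X(s)=0$, so the location of the members does not matter.

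Finally, no constant rescaling of $\LF_s$ can produce trace $1$, because rescaling multiplies the trace by $\lambda^{p-1}$ and zero stays zero. So $\LF_s$ does not split $M$ (Definition~\ref{def:split}). The only point needing care is the local normal form $s=u^2h$ at a general point of a non-reduced component. One must make sure the top-coefficient extraction is done for the section and not for a bare function, which Lemma~\ref{lem:inv} guarantees. Everything else is formal.
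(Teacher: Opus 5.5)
Your argument is correct, but it divides the work differently from the paper.

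\emph{Vanishing of $\Tr_X(s)$.} The paper uses exactly your ``alternative'': if $\Tr_X(s)\neq0$, a rescaling of $s^{p-1}$ splits $X$, so $Z(s)$ is compatibly split and hence reduced (Theorem~\ref{thm:compat}), a contradiction. Your primary argument instead extracts the socle directly from $(u^m h)^{p-1}$ with $m\ge2$ at a general point of a non-reduced component. You then use that $\Tr_X(s)$ is constant on the connected projective $X$. This is more elementary, since it avoids the compatible-splitting theorem, at the cost of choosing a point and coordinates.

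\emph{Vanishing of $b_0(\LF_s)$.} The paper does not read this off from the trace formula. It places the members of a member-saturated datum at generic points of the components of $D_{\mathrm{red}}$ and shows that the block coefficients vanish:
\begin{itemize}
\item on components of multiplicity $m\ge2$, every monomial of $N(g)^{p-1}=q_0^{m(p-1)}N(\bar h)^{p-1}$ has $q_0$-degree $>p-1$; this is the same degree count you perform on $X$, carried out in block coordinates;
\item on reduced components, $T(g)=T_1^{\ell}=\Tr_X(s)^{\ell}=0$.
\end{itemize}
Theorem~\ref{thm:TF} enters only at the end, for the global trace.

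\emph{What each route buys.} Your route gets $b_0=0$ at every smooth point from Theorem~\ref{thm:TF} alone. It is shorter, but the local vanishing then rests on the global formula. In the paper's computation, a single member on a component of multiplicity $\ge2$ already kills $b_0$ at the datum, independently of the first step. Your cross-check also shows that the paper's case split is unnecessary: by Lemma~\ref{lem:traceblock}, $T(g_\nu)=\Tr_X(s)$ for every length-one member, so once $\Tr_X(s)=0$ the position of the members is irrelevant.
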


\begin{proof}
Under the hypotheses of Lemma~\ref{lem:saturated} a member-saturated datum exists, and its members supply the leaf blocks.
If $\Tr_X(s)\neq0$, then after rescaling $s$ the section $s^{p-1}$ splits $X$,
so $Z(s)$ is compatibly split and hence reduced
\cite[Prop.~1.3.11]{BKu}, a contradiction.  Thus $\Tr_X(s)=0$.

For the top coefficient use the member-saturated datum with
members at generic points of the components of $D_{\mathrm{red}}$.
If $s=u^m h$ with $u$
the local equation of the component and $m\ge2$, then $g=z^m\bar h$, and $N(g)=(\pm q_0)^mN(\bar h)$.  Every monomial of $N(g)^{p-1}$ has
$q_0$-degree at least $m(p-1)>p-1$, whence $T(g)=0$.
On a component with $m=1$ one has $T(g)=T_1^{\ell}=\Tr_X(s)^{\ell}=0$.
Hence the top coefficient vanishes at the datum, and $\Tr(\LF_s^{p-1})=0$ by Theorem~\ref{thm:TF}.
\end{proof}

\begin{lemma}[$Z(s)=\emptyset$]\label{lem:Dempty}
Let $D=\emptyset$ and $\rho>0$.  Then at a member-saturated point the top coefficient of $\LF_s$ is $\Tr_X(s)^{\rho}\cdot u^{p-1}$ with $u\in k^\times$, and $\LF_s$ splits $M$ if and only if $X$ is Frobenius split.
\end{lemma}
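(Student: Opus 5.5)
The plan is to reduce everything to the symplectic case already handled by Lemma~\ref{lem:bundles} and the trace formula. When $D=\emptyset$ the section $s$ trivializes $\omega_X^{-1}$, so $X$ is a $K3$ or abelian surface. Bottacin's pairing $\beta_s$ is then the Serre-duality trace \eqref{eq:res0}, which is nondegenerate at every stable point. Hence $\theta_s$ is symplectic on the smooth locus, the radical of the reduced degeneracy complex is trivial, and $c=0$ with $2\rho=d$. In particular no Euler frame is needed: $\LF_s=\theta_s^{[\rho]}$ is the Pfaffian volume form, a nowhere vanishing section of $\omega_M^{-1}$ on the smooth locus.

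For the top coefficient I would work at a member-saturated point. Such a point is supplied by Lemma~\ref{lem:saturated} and Corollary~\ref{lem:Unonempty}, and it has $\rho$ length-one members $x_\nu$ at distinct points. By Theorem~\ref{thm:normsec} and Proposition~\ref{prop:hd-localexpr}, with no transverse factor, the local expression there is
\[
 f=u\prod_{\nu=1}^{\rho}N(g_\nu),\qquad u\in k^\times .
\]
The block coordinates of distinct members are disjoint and exhaust the slice because $c=0$. So the shape theorem (Theorem~\ref{thm:head}) applies with exact factorization, and it gives
\[
 b_0(\LF_s)=u^{p-1}\prod_\nu T(g_\nu).
\]
By Lemma~\ref{lem:traceblock} (equivalently Lemma~\ref{lem:hd-unit}), $T(g_\nu)=\Tr_X(s)(x_\nu)$, and this value is the constant $\Tr_X(s)$ since $X$ is projective. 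This yields $b_0=\Tr_X(s)^\rho\, u^{p-1}$.

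For the splitting equivalence, Lemma~\ref{lem:bundles}, or Theorem~\ref{thm:TF} in the case $D=\emptyset$, gives $\Tr(\LF_s^{p-1})=\Tr_X(s)^\rho\cdot u$ as a constant function. By Theorem~\ref{thm:bk}, $\LF_s$ splits $M$ after rescaling if and only if $\Tr_X(s)\neq0$; here $\rho>0$ is used so that a zero value is detected. Finally, $H^0(X,\omega_X^{1-p})=k\cdot s^{p-1}$ because $\omega_X$ is trivial. So $X$ is Frobenius split if and only if the generator $s^{p-1}$ has nonzero trace, that is, if and only if $\Tr_X(s)\neq0$.

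The step I expect to be delicate is the existence of a member-saturated point when the general object of $M$ is locally free. In that case the members must come from non-locally-free objects of the same type, or from a master chain, and these might lie off the stable smooth locus. To handle this, I would rely on the trace identity of Lemma~\ref{lem:bundles}, which is computed cohomologically via the Frobenius scalar on $H^{2\rho}(\cO_M)$, for the splitting equivalence. The member-saturated computation would then serve only for the stated top-coefficient formula, where the existence of such a point is supplied by Lemma~\ref{lem:saturated}.
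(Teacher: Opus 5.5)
Your proposal is correct and follows essentially the same route as the paper. The block coefficient of each length-one member is $\Tr_X(s)$ by Lemma~\ref{lem:traceblock}, which needs no branch since the member may sit at any closed point, and the splitting equivalence comes from the trace formula (Corollary~\ref{cor:split}, with Theorem~\ref{thm:TF} reducing to Lemma~\ref{lem:bundles} when $D=\emptyset$) together with $H^0(X,\omega_X^{1-p})=k\cdot s^{p-1}$; your extra steps ($c=0$ and the shape theorem assembling $b_0$) only make explicit what the paper's short proof leaves implicit.
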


\begin{proof}
There is no branch, but Lemma~\ref{lem:traceblock} does not require one: a length-one fat point may be placed at an arbitrary closed point $x\in X$.  Since $s$ trivializes $\omega_X^{-1}$, the local function of $s$ at $x$ is a unit in the disc frame, and the extraction is still $\operatorname{Tr}_X(s)$.  Since $s$ trivializes $\omega_X^{-1}$, the space $H^0(\omega_X^{1-p})$ is spanned by $s^{p-1}$, so $\Tr_X(s)\neq0$ holds exactly when $X$ is Frobenius split; the splitting statement is then Corollary~\ref{cor:split}.
\end{proof}

\noindent For $D=\emptyset$ the radical count of Lemma~\ref{lem:radical} is not available, and the rank of $\theta_s$ is read directly.  For the moduli of sheaves on a $K3$ surface $\theta_s$ is generically nondegenerate, being the symplectic form of the moduli space; thus $c=0$ and $2\rho=d$, the members fill the tangent space, and the leafrian has no transverse factor.  The criterion is that the Hasse invariant of $X$ is nonzero, that is $\Tr_X(s)\neq0$.

\begin{theorem}[Splitting of moduli spaces]\label{thm:moduli}
Let $D=Z(s)$ be arbitrary and let $\rho>0$.  Then $\LF_s=\theta_s^{[\rho]}\wedge\Omega$ splits $M(r,c_1,c_2)$ if and only if $s^{p-1}$ Frobenius splits $X$, that is, if and only if $\Tr_X(s)\neq0$.  When the splitting exists it is compatible with $Z(\LF_s)$.
\end{theorem}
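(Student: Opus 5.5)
The proof assembles results already stated: the existence of $\LF_s$, the trace formula, Brion--Kumar, and compatibility of the zero scheme.

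\emph{Step 1 (the section).} Since $\rho>0$, Theorem~\ref{thm:normsec} and Lemma~\ref{lem:global} show that $\LF_s=\theta_s^{[\rho]}\wedge\Omega$ is a nonzero global section of $\omega_M^{[-1]}$. Here the Euler frame comes from the presentation of Lemma~\ref{lem:eulerframe}, and $M$ is normal, as recorded in \S\ref{sub:moduli-setting}. By Theorem~\ref{thm:hd-canonicity} any two frames change $\LF_s$ only by a regular unit. Once the trace is shown to be a nonzero constant, Lemma~\ref{lem:unique} makes this unit a constant, so the splitting statement does not depend on the presentation.

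\emph{Step 2 (the trace).} Theorem~\ref{thm:TF} gives $\Tr(\LF_s^{p-1})=\Tr_X(s)^\rho\cdot u$ with $u\in k^\times$. The case $D=\emptyset$ is covered by Lemma~\ref{lem:bundles} and Lemma~\ref{lem:Dempty}. For $D\neq\emptyset$ the trace is constant because the leafrian extends to the Gieseker compactification (Lemma~\ref{lem:consttrace-extend}), and it is evaluated at one stable smooth point through the reduced degeneracy complex. If $D$ is non-reduced, Lemma~\ref{lem:Dnonred} shows that both sides of the equivalence fail. Since $\rho>0$, $\Tr(\LF_s^{p-1})$ is a nonzero constant if and only if $\Tr_X(s)\neq0$.

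\emph{Step 3 (the equivalence).} Theorem~\ref{thm:bk}, applied on the smooth locus and extended to all of $M$ by Remark~\ref{rem:nearsplit}, shows that a rescaling of $\LF_s$ splits $M$ exactly when this trace is a nonzero constant; this is Corollary~\ref{cor:split}. The same theorem applied to $X$ shows that $s^{p-1}$ splits $X$ after rescaling if and only if $\Tr_X(s)\neq0$, the trace being constant because $X$ is projective. For the converse direction, $\Tr_X(s)=0$ gives $\Tr(\LF_s^{p-1})=0$, so no multiple of $\LF_s^{p-1}$ splits $M$. As a cross-check, at a member-saturated evaluation point Remark~\ref{thm:obstr} gives $b_0=\prod_\nu T(g_\nu)u^{p-1}$, and Lemma~\ref{lem:traceblock} identifies each factor $T(g_\nu)$ with $\Tr_X(s)$; this agrees with the trace formula.

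\emph{Step 4 (compatibility).} When the splitting exists, Lemma~\ref{lem:loccompat} on the smooth locus shows that $Z(\LF_s)$ is compatibly split. Its closure in normal $M$ is then compatibly split as well, since compatibility of an ideal can be checked on a dense open of a reduced scheme.

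\emph{Main obstacle.} The delicate step is Step 2 when $D\neq\emptyset$: the constancy of the trace and the exponent count $N=\rho$ in Theorem~\ref{thm:TF}. I take that theorem as given.
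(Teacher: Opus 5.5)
Your proposal is correct and follows essentially the paper's own proof. Like the paper, you combine Theorem~\ref{thm:TF} with Corollary~\ref{cor:split}, treat the non-reduced and empty cases by Lemmas~\ref{lem:Dnonred} and~\ref{lem:Dempty}, take compatibility from Lemma~\ref{lem:loccompat}, and use Theorem~\ref{thm:hd-canonicity} for independence of the datum.

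The only difference is your Step~4, which goes beyond the paper: the paper asserts compatibility only along the regular locus (Remark~\ref{rem:smooth}). Your conclusion that the closure of $Z(\LF_s)\cap M^{\mathrm{reg}}$ is compatibly split is right, but the reason you give is not. Compatibility on a dense open only controls the saturation of an ideal and says nothing about associated points outside the open.
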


\begin{proof}
The trace formula is Theorem~\ref{thm:TF} and the splitting criterion is Corollary~\ref{cor:split}; the cases $D$ non-reduced and $D=\emptyset$ are recorded in Lemmas~\ref{lem:Dnonred} and~\ref{lem:Dempty}. If $\Tr_X(s)\neq0$ the rescaled section splits $M$, and along the regular locus the splitting is compatible with $Z(\LF_s)$ by Lemma~\ref{lem:loccompat}. If $\Tr_X(s)=0$ and $\rho\ge1$ the trace is zero and no rescaling splits $M$. The section $\LF_s$ is independent of the datum up to a regular unit, by Theorem~\ref{thm:hd-canonicity}.
\end{proof}

\begin{remark}[Constancy of the trace and of units]\label{rem:Mcomplete}
The splitting criterion uses that $\Tr(\LF_s^{p-1})$ is constant, which follows from the extension of $\LF_s$ to a projective compactification by Lemma~\ref{lem:consttrace-extend}.  If $M$ is complete, Lemma~\ref{lem:consttrace-complete} gives the same conclusion directly.  Uniqueness up to scalar holds when $\Tr_X(s)\neq0$, by Lemma~\ref{lem:unique}; in general two data give the same section up to a regular unit.
\end{remark}

\begin{remark}\label{rem:smooth}
The compatibility of the splitting with $Z(\LF_s)$ holds along the regular locus of $M$ by Lemma~\ref{lem:loccompat}.
\end{remark}

\section{The moduli of principal bundles over Poisson surfaces}\label{sec:others}

The local algebra is that of Section~\ref{sec:moduli}.  The differences for principal bundles are an invariant form and reduction directions.

Let $X$ be a smooth projective Poisson surface with weight $s\in H^0(X,\omega_X^{-1})$.  Let $G$ be a smooth reductive group over $k$ whose Lie algebra $\mathfrak g$ carries an invariant nondegenerate bilinear form $\ka$, unrelated to the constant $\kappa_0$ of the trace pairing.  One may take the trace form of a defining representation in good characteristic, the Killing form when $p$ is very good, or $\ka=\Tr(\varrho\,\cdot\,\varrho\,\cdot)$ for a faithful representation $\varrho\colon G\hookrightarrow\GL(V)$ whose trace form is nondegenerate.  We write $\varrho$ for this representation, and $\rho$ retains its intrinsic meaning $\frac12\operatorname{rk}\theta_s$, realized by the Hecke members at member-saturated data (Remark~\ref{rem:rho-open}).  Write $M_G=M_G^{\mathrm{st}}(\tau)$ for the moduli space of $H$-stable principal $G$-bundles of fixed topological type $\tau$ \cite{Ram,Bal}.  We work under $z(\mathfrak g)\otimes H^1(\cO_X)=0$, so $G$ has trivial Lie center or $q(X)=0$; this keeps the central factor of $M_G$ discrete.

Over a surface, at a stable point $[P]$ one has $T_{[P]}M_G\cong H^1(X,\mathfrak g_P)$ and $T^*_{[P]}M_G\cong H^1(X,\mathfrak g_P\otimes\omega_X)$, the second identification by Serre duality together with $\mathfrak g_P^\vee\cong\mathfrak g_P$ through $\ka$.  Write $d_G=\dim M_G$ and $t_G:=\dim H^1(X,\mathfrak g_P)$ for the size of the matrix of $\beta_s$ of \eqref{eq:Gpair}; on the smooth locus $t_G=d_G$.  If $(D\cdot H)>0$ then $H^0(X,\mathfrak g_P\otimes\omega_X)=0$: a nonzero section would give a map $\cO_X(D)\to\mathfrak g_P$, impossible since $\mathfrak g_P$ is semistable of slope $0<\mu(\cO_X(D))$.  Hence $H^2(X,\mathfrak g_P)=0$, $t_G=d_G=\dim z(\mathfrak g)-\chi(X,\mathfrak g_P)$, and $M_G$ is smooth, of dimension
\begin{equation}\label{eq:Gdim}
 d_G=\dim z(\mathfrak g)-\chi(X,\mathfrak g_P)
   =\dim z(\mathfrak g)-\dim G\cdot\chi(\cO_X)+\deg\mathfrak g_P
\end{equation}
in that case; without vanishing of $H^2$ one has $t_G=\dim z(\mathfrak g)-\chi(X,\mathfrak g_P)+h^2(X,\mathfrak g_P)$ and $d_G=t_G$ on the smooth locus.  Here $\deg\mathfrak g_P:=c_2(\operatorname{ad}P)$ (the adjoint bundle has $c_1=0$).  Smoothness of $M_G$ is not needed in the splitting statement. The moduli of stable principal $G$-bundles is normal, being a geometric quotient of a smooth parameter space by a reductive group; Remark~\ref{rem:nearsplit} applies.

\begin{remark}[The center]\label{rem:Gcenter}
The adjoint action kills $\mathfrak z=z(\mathfrak g)$, and $\ka$ is orthogonal between $\mathfrak z$ and $[\mathfrak g,\mathfrak g]$.  Under $z(\mathfrak g)\otimes H^1(\cO_X)=0$ the central tangent directions vanish.  Without that hypothesis the central factor is a torsor under $\operatorname{Pic}^0(X)$, the frame construction has no Euler fields there, and for supersingular $\operatorname{Pic}^0$ the trace vanishes identically.  The setting excludes this case.
\end{remark}

When $D=Z(s)$ is nonempty the pairing is
\begin{equation}\label{eq:Gpair}
 \beta_s(\xi,\eta)\;=\;\sum_{x\in D}\Res_x\bigl(s\cdot\ka(\xi\cup\eta)\bigr).
\end{equation}
When $D=\emptyset$ it is the Serre-duality trace of $s\cdot\ka(\xi\cup\eta)$ in $H^2(X,\omega_X)\cong k$, as in \eqref{eq:res0}.

\begin{lemma}[Identification]\label{lem:Gidentify}
Normalize $\ka=\Tr(\varrho\,\cdot\,\varrho\,\cdot)$ and let $E_\varrho=P\times^G V$.  Then \eqref{eq:Gpair} is the restriction to the image of $H^1(X,\mathfrak g_P)$ in $H^1(X,\End E_\varrho)$ of the weighted residue pairing on $H^1(X,\End E_\varrho)$.
\end{lemma}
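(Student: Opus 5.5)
The plan is to compare the two pairings through the map of adjoint bundles induced by $\varrho$. Let $\varrho_*\colon\mathfrak g_P\to\End E_\varrho$ be the injection of vector bundles coming from $d\varrho\colon\mathfrak g\to\mathfrak{gl}(V)$, applied to the associated bundles. Since $\varrho$ is faithful and $\ka=\Tr(\varrho\,\cdot\,\varrho\,\cdot)$ is nondegenerate, $\varrho_*$ identifies $\mathfrak g_P$ with a subbundle of $\End E_\varrho$ on which the trace form restricts to $\ka$. In other words, for local sections $a,b$ of $\mathfrak g_P$ we have the pointwise identity $\Tr(\varrho_*a\cdot\varrho_*b)=\ka(a,b)$ in $\cO_X$. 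This identity is the whole input, and everything else is functoriality.

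Next I pass to cohomology. The map $\varrho_*$ induces $H^1(X,\mathfrak g_P)\to H^1(X,\End E_\varrho)$. For classes $\xi,\eta\in H^1(X,\mathfrak g_P)$, represented by Čech cocycles on an affine cover, the cup product $\varrho_*\xi\cup\varrho_*\eta$ followed by the trace is the cup product of $\xi$ and $\eta$ followed by the contraction $\ka$. Both are computed cocycle by cocycle from the pointwise identity above, since cup product is natural with respect to bilinear bundle maps. Multiplying by $s$ commutes with all of this because $s$ is a scalar function locally. Hence
\[
 \Tr\bigl(s\cdot\varrho_*\xi\cdot\varrho_*\eta\bigr)=s\cdot\ka(\xi\cup\eta)
\]
as classes in $H^2(X,\omega_X)$, or as local classes near $D$. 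When $D$ is nonempty, I apply the residue at each $x\in D$ and sum, which gives \eqref{eq:Gpair} from \eqref{eq:res}. When $D=\emptyset$, I apply the Serre-duality trace instead, as in \eqref{eq:res0}.

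The main obstacle is making sure that the weighted residue pairing on $H^1(X,\End E_\varrho)$ is really defined by the composite trace of the cup product. One must check that the Bottacin pairing \eqref{eq:bot}, transposed by Serre duality, matches that composite, with no normalising factor appearing between the $\GL(V)$ and $G$ sides. Here I would invoke the Serre-duality compatibility used for \eqref{eq:bot}. Under $\mathfrak g_P^\vee\cong\mathfrak g_P$ via $\ka$, the inclusion $\varrho_*$ is dual to the orthogonal projection $\End E_\varrho\to\mathfrak g_P$, where the orthogonal complement $\mathfrak g_P^\perp$ is a direct summand because $\ka$ is nondegenerate. So restriction along $\varrho_*$ is compatible with the duality identifications of $T^*M_G$ and $T^*$ of the $\GL$-moduli. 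If a discrepancy appears, it would be a constant from the trace-form normalisation. The stated normalisation of $\ka$ is chosen precisely to make that constant $1$.
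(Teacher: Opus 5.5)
Your proposal is correct and is essentially the paper's proof. The paper argues exactly that $\varrho_*\colon\mathfrak g_P\to\End E_\varrho$ carries $\ka$ to the restriction of the trace form and is compatible with cup products, so the two weighted residue pairings agree on the image of $H^1(X,\mathfrak g_P)$. Your third paragraph on compatibility with the Serre-duality identifications, via the orthogonal splitting $\End E_\varrho=\mathfrak g_P\oplus\mathfrak g_P^\perp$, is extra care: the lemma compares pairings defined directly by cup product, trace and residue, so it does not need this.
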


\begin{proof}
The inclusion $\mathfrak g_P\to\End E_\varrho$ is compatible with cup products and carries $\ka$ to the restriction of the trace form.  This is an identity of cup products and traces; no stability of $E_\varrho$ is involved.
\end{proof}

\begin{lemma}[Orthogonality]\label{lem:Gorth}
Let $G\subset\GL(V)$ be the stabilizer of a tuple of tensors and $\mathfrak m:=\mathfrak g^\perp\subset\End V$ the trace-orthogonal complement.  Then $\End V=\mathfrak g\oplus\mathfrak m$ as $G$-modules and $\ka(\mathfrak g,\mathfrak m)=0$.
\end{lemma}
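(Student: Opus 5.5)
The statement is a piece of linear algebra about the trace form $\ka(A,B)=\Tr(AB)$ on $\End V$, restricted to $\mathfrak g=\operatorname{Lie}(G)$. I would define $\mathfrak m:=\{B\in\End V:\Tr(AB)=0\ \text{for all }A\in\mathfrak g\}$ and prove two things: first that $\mathfrak m$ is $G$-stable, and then that $\mathfrak g\cap\mathfrak m=0$. A dimension count then gives the direct sum.

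\emph{$G$-stability.} The form $\Tr(AB)$ on $\End V$ is invariant under conjugation by $\GL(V)$, since $\Tr(gAg^{-1}\,gBg^{-1})=\Tr(AB)$. The subspace $\mathfrak g$ is stable under $\operatorname{Ad}(G)$. So for $g\in G$, $B\in\mathfrak m$ and $A\in\mathfrak g$,
\[
\Tr\bigl(A\cdot gBg^{-1}\bigr)=\Tr\bigl(g^{-1}Ag\cdot B\bigr)=0,
\]
because $g^{-1}Ag\in\mathfrak g$. Hence $\operatorname{Ad}(g)\mathfrak m\subset\mathfrak m$, and $\mathfrak m$ is a $G$-submodule. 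Since $G$ is smooth, this gives the corresponding infinitesimal statement as well.

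\emph{Complement.} Nondegeneracy of the trace form on all of $\End V$ gives $\dim\mathfrak m=\dim\End V-\dim\mathfrak g$. So $\End V=\mathfrak g\oplus\mathfrak m$ holds as soon as $\mathfrak g\cap\mathfrak m=0$. The intersection is exactly the radical of $\ka|_{\mathfrak g}$, and this radical vanishes by the standing choice of $\varrho$, which requires $\ka=\Tr(\varrho\,\cdot\,\varrho\,\cdot)$ to be nondegenerate on $\mathfrak g$. With the intersection zero, the decomposition is one of $G$-modules, and the relation $\ka(\mathfrak g,\mathfrak m)=0$ holds by the definition of $\mathfrak m$.

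\emph{Where the care is needed.} The main obstacle is the complement step. In characteristic $p$ the restriction of the trace form to $\mathfrak g$ can be degenerate; for example, for $\mathfrak{sl}_p$ the identity lies in its own orthogonal. In that situation $\mathfrak g\cap\mathfrak g^\perp\neq0$ and the splitting fails. So the proof has to cite the nondegeneracy hypothesis on $\varrho$ explicitly, rather than claim the result for arbitrary tensor stabilizers. The ``stabilizer of tensors'' description of $G$ is used only to make $\mathfrak g\subset\End V$ an $\operatorname{Ad}(G)$-stable subspace; any closed $G\subset\GL(V)$ would serve equally well.
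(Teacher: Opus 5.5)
Your proof is correct and follows the paper's route. Invariance of the trace form under conjugation makes $\mathfrak m=\mathfrak g^\perp$ an $\operatorname{Ad}(G)$-submodule, and the splitting then comes from a dimension count together with $\mathfrak g\cap\mathfrak m=0$. Where you differ, you are more careful than the paper: its proof deduces $\End V=\mathfrak g\oplus\mathfrak m$ from nondegeneracy of $\Tr$ on $\End V$ alone, which only gives $\dim\mathfrak m=\dim\End V-\dim\mathfrak g$. You correctly identify $\mathfrak g\cap\mathfrak m$ as the radical of $\Tr|_{\mathfrak g}$, which must be killed by the standing nondegeneracy of $\ka=\Tr(\varrho\,\cdot\,\varrho\,\cdot)$. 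Your $\mathfrak{sl}_p$ example, where $\mathfrak{sl}_p^\perp=k\cdot\mathrm{id}\subset\mathfrak{sl}_p$, shows that this hypothesis cannot be dropped.
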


\begin{proof}
The trace form $\Tr(AB)$ on $\End V$ is nondegenerate and invariant under $G$, and $\mathfrak g$ is the annihilator of the tensors in $\End V$, hence a $G$-submodule.  Its orthogonal complement $\mathfrak m$ is again a $G$-submodule, $\End V=\mathfrak g\oplus\mathfrak m$, and $\ka(\mathfrak g,\mathfrak m)=0$ by construction.
\end{proof}

\begin{lemma}[Mixed terms vanish]\label{lem:Gsub}
For the reduction sequence $0\to\mathfrak g_P\to\End E_\varrho\to\mathfrak m_{E_\varrho}\to0$, the weighted residue pairing of a $\mathfrak g_P$-class and an $\mathfrak m_{E_\varrho}$-class vanishes, and on the $\mathfrak g_P$-directions it restricts to \eqref{eq:Gpair}.
\end{lemma}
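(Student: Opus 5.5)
The plan is to use the orthogonal decomposition of Lemma~\ref{lem:Gorth} at the level of adjoint bundles, and then apply the trace-pairing calculation of Lemma~\ref{lem:Gidentify}. Write $\End E_\varrho=\mathfrak g_P\oplus\mathfrak m_{E_\varrho}$. This is the splitting of vector bundles obtained by twisting the $G$-module decomposition $\End V=\mathfrak g\oplus\mathfrak m$ by $P$. It is a genuine direct sum of bundles, not only an extension, because the decomposition of Lemma~\ref{lem:Gorth} is $G$-equivariant. Consequently the reduction sequence $0\to\mathfrak g_P\to\End E_\varrho\to\mathfrak m_{E_\varrho}\to0$ splits, and $H^1(X,\End E_\varrho)=H^1(X,\mathfrak g_P)\oplus H^1(X,\mathfrak m_{E_\varrho})$. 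An $\mathfrak m_{E_\varrho}$-class is therefore represented by a cocycle with values in the summand $\mathfrak m_{E_\varrho}$.

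For the vanishing of the mixed terms, take $\xi$ with values in $\mathfrak g_P$ and $\eta$ with values in $\mathfrak m_{E_\varrho}$. The weighted residue pairing on $H^1(X,\End E_\varrho)$ is $\sum_{x\in D}\Res_x\bigl(s\cdot\Tr(\xi\cup\eta)\bigr)$ when $D\ne\emptyset$. It is the Serre-duality trace as in \eqref{eq:res0} when $D=\emptyset$. In either case the pairing factors through the fibrewise bilinear map $\Tr\colon\End E_\varrho\otimes\End E_\varrho\to\cO_X$ applied to the cup product. On $\mathfrak g_P\otimes\mathfrak m_{E_\varrho}$ this map is identically zero, since fibrewise it is $\ka(\mathfrak g,\mathfrak m)=0$ by Lemma~\ref{lem:Gorth}. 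Hence $\Tr(\xi\cup\eta)=0$ already as a class in $H^2(X,\cO_X)$, or as a local Čech $2$-cochain before taking residues. Multiplying by $s$ and taking the residue or the trace then gives zero.

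For the restriction statement, take two $\mathfrak g_P$-classes. By Lemma~\ref{lem:Gidentify}, with the normalisation $\ka=\Tr(\varrho\,\cdot\,\varrho\,\cdot)$, the map $\mathfrak g_P\to\End E_\varrho$ carries $\ka$ to the trace form and commutes with cup products. The weighted residue pairing of their images therefore equals $\sum_x\Res_x(s\cdot\ka(\xi\cup\eta))$, which is \eqref{eq:Gpair}.

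The only delicate point is the first step. We need the fibrewise orthogonality, not merely orthogonality on $H^1$, so that the cup-product-then-trace composite vanishes at the cochain level. This requires the splitting to hold on the bundles and not just on global sections. That in turn needs the trace form on $\End V$ to stay nondegenerate on $\mathfrak g$, so that $\mathfrak g\cap\mathfrak m=0$ in characteristic $p$; this is exactly the standing assumption that $\ka$ is nondegenerate. I would state this explicitly and use it to justify the direct sum.
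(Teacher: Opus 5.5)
Your argument is the paper's: the mixed cup product takes values in the $\mathfrak g_P\otimes\mathfrak m_{E_\varrho}$ part, where the fibrewise trace form vanishes by Lemma~\ref{lem:Gorth}, and the restriction to the $\mathfrak g_P$-directions is Lemma~\ref{lem:Gidentify}. Your additional justification of the bundle-level splitting, via $G$-equivariance and nondegeneracy of the trace form on $\mathfrak g$ (so that $\mathfrak g\cap\mathfrak m=0$), makes explicit what the paper leaves implicit in Lemma~\ref{lem:Gorth}.
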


\begin{proof}
The mixed pairing lands in the $(\mathfrak g\otimes\mathfrak m)$-part, and $\Tr(\mathfrak g,\mathfrak m)=0$ by Lemma~\ref{lem:Gorth}.
\end{proof}

A reduction datum along a fat point $Z$ is a reduction of $P|_Z$ to a parabolic $\cP\subset G$.  The principal modification $P'$ along $(Z,R)$ has sections the sections of $P$ whose restriction to $Z$ lies in $R$.  From $0\to\operatorname{ad} P'\to\operatorname{ad} P\to Q\to0$ with $Q$ supported on $Z$ of length $\ell\cdot\dim(\mathfrak g/\mathfrak p)$,
\[
 c_2(\operatorname{ad} P')=c_2(\operatorname{ad} P)+\ell\cdot\dim(\mathfrak g/\mathfrak p),
\]
since $\operatorname{ch}_2(\operatorname{ad} P')=\operatorname{ch}_2(\operatorname{ad} P)-\ell\dim(\mathfrak g/\mathfrak p)$ while $c_1$ is unchanged ($Q$ is supported in codimension two), and $c_2=(c_1^2-2\operatorname{ch}_2)/2$.
The deformation module of the pair $(Z,R)$ splits as $\Def_Z\oplus\Def(R)$, of dimensions $2\ell$ and $\ell\cdot\dim(\mathfrak g/\mathfrak p)$.  The $G$-structure is carried by $R$ alone: moving $Z$ contributes only the scalar parameters, so the leaf block is one copy of $A^{\oplus 2}$ and not $A^{\oplus 2}\otimes\mathfrak g$.  Reduction motions land in $\operatorname{im}(H^0(Q)\to H^1(\operatorname{ad} P'))$ and die in $H^1(\operatorname{ad} P)$.

\begin{lemma}[Local block]\label{lem:Gblock}
On the scheme motions the pairing \eqref{eq:Gpair} restricts to the
trace pairing with $g=s|_{\cO_Z}$, and the block Pfaffian is $\pm\kappa_0^\ell N_{\cO_Z/k}(g)$ up to a unit.  All
pairings involving a reduction direction vanish, so the reduction directions
lie in $\operatorname{rad}\beta_s$.
\end{lemma}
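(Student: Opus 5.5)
The plan is to reduce the principal-bundle block to the sheaf block of Section~\ref{sec:moduli} through the faithful representation $\varrho$, using Lemmas~\ref{lem:Gidentify}, \ref{lem:Gorth} and~\ref{lem:Gsub}. First I fix the splitting $\Def(Z,R)=\Def_Z\oplus\Def(R)$ recorded above. A scheme motion of $Z$ is given by $(a,b)\in A\oplus A$. Its image in $H^1(X,\mathfrak g_P)$ is the class obtained by moving the support of the modification. After pushing forward along $\mathfrak g_P\hookrightarrow\End E_\varrho$, this class is a scalar motion: it is $\mathrm{id}$ tensored with the Koszul class of $(a,b)$, since the $G$-structure is carried by $R$ alone. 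By Lemma~\ref{lem:Gidentify}, the pairing \eqref{eq:Gpair} on two such classes equals the weighted residue pairing \eqref{eq:res} on $H^1(\End E_\varrho)$. On $Z$-supported classes that pairing is $\Res_A(\Tr(s\,\xi\eta))$. With $\xi,\eta$ scalar, the trace $\ka(\mathrm{id},\mathrm{id})$ gives a nonzero constant factor $\dim V$, or more generally $\ka$ evaluated on the scalar direction. This leaves $\Res_A(g\,(ab'-a'b))$ with $g=s|_{\cO_Z}$, which is the trace pairing of Definition~\ref{def:trans} up to that unit.

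Next I apply Lemma~\ref{lem:normform} to this block. It gives the Pfaffian $\pm\kappa_0^\ell N_{\cO_Z/k}(g)$, multiplied by the $\ell$-th power of the constant from $\ka$, and that power is a unit. This proves the first two assertions.

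For the reduction directions I use the fact stated before the lemma: reduction motions lie in $\operatorname{im}(H^0(Q)\to H^1(\operatorname{ad}P'))$ and die in $H^1(\operatorname{ad}P)$. Such a class is a coboundary of a $Z$-supported section of $Q$. I would then compute its cup product with any class $\eta$, namely $\delta(q)\cup\eta$. This equals $\delta(q\cdot\eta|_Z)$ in the appropriate long exact sequence, so after multiplying by $s$ and applying $\ka$ it lands in the image of a connecting map into $H^2(\omega_X)$. That image vanishes on the residue side because $\ka$ pairs $\mathfrak g/\mathfrak p$-valued data trivially with $\mathfrak p$-valued data. Concretely, $Q$ is modelled on $\mathfrak g/\mathfrak p$, and the parabolic $\mathfrak p^{\perp}=\mathfrak n$ is isotropic and pairs into $\mathfrak p$, not against $\mathfrak g/\mathfrak p$.

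For mixed pairings between a reduction direction and a scheme motion, I would use Lemma~\ref{lem:Gsub}-style orthogonality. The scalar class paired with a trace-free class of $\mathfrak p$-type has $\ka(\mathrm{id},\cdot)$ equal to the trace, and on the isotropic part this is zero. I would also add Lemma~\ref{lem:zeroblock} for classes with zero block coordinates. Together these give that reduction directions lie in $\operatorname{rad}\beta_s$.

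The main obstacle is the vanishing on reduction directions. The delicate point is that the isotropy argument must hold at the level of local residues on a nonreduced $Z$, not only pointwise. I would first treat $\ell=1$, where $R$ is a single parabolic and the statement reduces to $\ka(\mathfrak n,\mathfrak p)=0$. I would then extend to general $\ell$ by polynomiality in the coefficients of $m$, as in Lemma~\ref{lem:normform}.
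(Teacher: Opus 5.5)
Your first step misreads the sentence ``moving $Z$ contributes only the scalar parameters''. It means the leaf block is one copy of $A\oplus A$ rather than $A^{\oplus 2}\otimes\mathfrak g$. It does not mean that the image of a scheme motion in $H^1(X,\End E_\varrho)$ is $\mathrm{id}$ tensored with the Koszul class, and that description cannot hold.

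For semisimple $G$ one has $\varrho(G)\subset\mathrm{SL}(V)$, so $\mathrm{id}$ is trace-orthogonal to $\mathfrak g$, i.e.\ $\mathrm{id}\in\mathfrak m$. By Lemma~\ref{lem:Gorth} an $\mathrm{id}$-valued class then has zero $\mathfrak g_P$-component, so it is not in the image of $H^1(X,\mathfrak g_P)$ at all. By Lemma~\ref{lem:Gsub} it even pairs trivially with that image.

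When $\mathrm{id}\in\mathfrak g$, your constant $\ka(\mathrm{id},\mathrm{id})=\dim V$ need not be a unit. For $G=\GL_p$ with its standard representation the trace form is nondegenerate, yet $\dim V=0$ in $k$, so your block would vanish identically. For $G=\GL_r$ the factor $r$ also contradicts the sheaf computation of Section~\ref{sec:moduli}, which your reduction is meant to reproduce and which carries no such factor.

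The paper's argument has no Lie-theoretic constant. After $(-)\times^GV$ and Lemma~\ref{lem:Gidentify}, the local computation of Section~\ref{sec:moduli} sees only the two relations of $Z$ and the weight $s|_{\cO_Z}$. This gives exactly $\kappa_0\Res_A\bigl(s|_A(ab'-a'b)\bigr)$, and Lemma~\ref{lem:normform} then gives the Pfaffian.

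For the reduction directions you have misplaced the difficulty, because the vanishing is formal. The form $\ka$ on $\operatorname{ad}P'$ is the restriction of $\ka$ on $\operatorname{ad}P$ along $\iota$. Naturality of cup products therefore gives $\ka(\xi\cup\eta)=\ka(\iota_*\xi\cup\iota_*\eta)=0$ for every $\eta\in H^1(\operatorname{ad}P')$, because $\iota_*\xi=0$. You quote $\iota_*\xi=0$ but never use it. Used this way, it handles reduction--reduction and reduction--scheme pairings at once, with no local residue on $Z$, no case $\ell=1$, and no polynomiality.

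The isotropy principle you substitute is false as stated. Since $\ka|_{\mathfrak p}\neq0$ (it is nondegenerate on a Levi factor), $\mathfrak g/\mathfrak p$ has no well-defined pairing with $\mathfrak p$. Moreover $\mathfrak n=\mathfrak p^\perp$ pairs \emph{perfectly} with $\mathfrak g/\mathfrak p$, so an argument that really depended on this isotropy would fail. Made precise, your connecting-map identity puts $q\cup\eta$ in $H^1$ of a sheaf supported on $Z$, which is zero, so isotropy plays no role. Your separate mixed-pairing step also rests on the incorrect $\mathrm{id}$ description and should be dropped.
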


\begin{proof}
The scheme motions are the deformations of the two relations of $Z$.  Applying $(-)\times^GV$, the modification becomes the elementary modification of $E_\varrho$ along $Z$, and Lemma~\ref{lem:Gidentify} identifies the restriction of \eqref{eq:Gpair} with the trace pairing on that image.  The local computation of Section~\ref{sec:moduli} uses only the two relations of $Z$ and the weight $s|_{\cO_Z}$, so
$\beta_s\bigl((a,b),(a',b')\bigr)=\kappa_0\Res_A\bigl(s|_A(ab'-a'b)\bigr)$.
Lemma~\ref{lem:normform} gives the Pfaffian.  Hence the exponent of the norm factor is $1$, not $\dim\mathfrak g$.

Let $\xi$ be a reduction class and $\eta$ any class of $H^1(\operatorname{ad} P')$.  Naturality of cup products, the fact that $\ka$ is a bundle map, and $\iota_*\xi=0$ in $H^1(\operatorname{ad} P)$ give $\ka(\xi\cup\eta)=0$, hence $\beta_s(\xi,\eta)=0$.
\end{proof}

The Hecke transforms of \cite{Bal} supply a presentation, by Remark~\ref{rem:Geuler}.  The lengths of the budget are adjustable within the type $\tau$: a fat-point modification of length $\ell$ raises $c_2(\operatorname{ad}P)$ by $\ell\cdot\dim(\mathfrak g/\mathfrak p)$ (equivalently lowers $\operatorname{ch}_n$ by that number), and the degrees of the reduction data along $C$ are free integer parameters.

\begin{theorem}[The reductive case]\label{thm:G}
Let $\rho>0$.  Then $\LF_s$ splits $M_G$ if and only if $s^{p-1}$ Frobenius splits $X$, that is, if and only if $\Tr_X(s)\neq0$.  When the splitting exists it is compatible with $Z(\LF_s)$.
\end{theorem}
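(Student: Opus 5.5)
The argument will run in parallel with that of Theorem~\ref{thm:moduli}, with the adjoint bundle $\mathfrak g_P$ in place of $\End E$. The first step is to assemble a presentation of $M_G$ satisfying {\rm(P1)}--{\rm(P3)}. For this I would use the Hecke chains of Lemma~\ref{lem:hecke-present} together with the count of Proposition~\ref{prop:hd-count}, applied with the reduction data $R$ along $Z$ and the parabolic reductions along an ordinary positive curve $C$. The hypothesis $z(\mathfrak g)\otimes H^1(\cO_X)=0$ removes the central $\operatorname{Pic}^0$ directions (Remark~\ref{rem:Gcenter}). As a result, every transverse direction is a class coordinate, and Lemma~\ref{lem:descent} makes the Euler fields global. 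Since $M_G$ is normal, Theorem~\ref{thm:normsec} and Lemma~\ref{lem:global} then give a well-defined regular section $\LF_s=\theta_s^{[\rho]}\wedge\Omega$ of $\omega_{M_G}^{[-1]}$.

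Next I would identify the local model. By Lemma~\ref{lem:Gblock}, the scheme motions of each Hecke member carry the trace pairing with weight $g=s|_{\cO_Z}$, and the block Pfaffian is $\pm\kappa_0^{\ell}N(g)$ to the first power, not to the power $\dim\mathfrak g$. The reduction directions lie in $\operatorname{rad}\beta_s$. Lemmas~\ref{lem:Gidentify}--\ref{lem:Gsub} show that the pairing is the restriction of the weighted residue pairing on $\End E_\varrho$, with no mixed terms. Consequently Proposition~\ref{prop:localmodel} applies in the principal case, and at an evaluation point the local expression is $\prod_\nu s(x_\nu)\cdot\prod_j c_j\cdot u$ by Proposition~\ref{prop:hd-localexpr}. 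Each unit block then has top coefficient $\Tr_X(s)(x_\nu)$ by Lemma~\ref{lem:hd-unit}.

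The core step is the trace formula $\Tr(\LF_s^{p-1})=\Tr_X(s)^{\rho}u$, which is Theorem~\ref{thm:TF} in its principal-bundle form. When $D=\emptyset$ I would argue as in Lemma~\ref{lem:bundles}, using the symplectic structure on $M_G$ and the Frobenius action on $H^{d}(\cO_{M_G})$ through the correspondence given by the universal bundle. When $D\neq\emptyset$, I would first obtain constancy of the trace from the extension to the compactification (Lemma~\ref{lem:consttrace-extend}). I would then evaluate at one stable smooth point through the filtered adjoint degeneracy complex. Lemma~\ref{lem:hecke-present} gives root blocks on $D$ contributing $\mathsf t^{\dim\mathfrak g-\dim\mathfrak z}$, ordinary-divisor blocks contributing fixed units (Lemma~\ref{lem:ordinary}), and Hecke members contributing $\mathsf t^{\ell}$. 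The degree count in $s$ from Lemma~\ref{lem:higherdet} then forces the exponent to equal $\rho$. I expect this to be the main obstacle. The difficulty is verifying that the Schur-complement filtration of Lemma~\ref{lem:schur} respects the $G$-structure, so that the root blocks really decouple and the cross terms contribute only fixed units under the general-position choice of Lemma~\ref{lem:generalposition}. If that fails, I would fall back on a direct Hecke datum at member-saturated points, using the block factorisation of Remark~\ref{thm:obstr}.

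Finally, Corollary~\ref{cor:split} turns the trace formula into the criterion. If $\Tr_X(s)\neq0$, a rescaling of $\LF_s$ splits $M_G$ by Theorem~\ref{thm:bk}, which extends over the singular locus by Remark~\ref{rem:nearsplit}. Compatibility with $Z(\LF_s)$ on the regular locus then follows from Lemma~\ref{lem:loccompat}. If $\Tr_X(s)=0$ and $\rho>0$, the trace vanishes identically, so no rescaling splits $M_G$. This covers non-reduced $D$ exactly as in Lemma~\ref{lem:Dnonred}.
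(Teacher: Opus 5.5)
Your proposal follows the paper's proof essentially step for step. It uses Lemma~\ref{lem:Gblock} for the Hecke scheme block and the length-one block coefficient $\Tr_X(s)$ (the paper cites Lemma~\ref{lem:traceblock}, which reduces to Lemma~\ref{lem:hd-unit}), then the principal-bundle case of Theorem~\ref{thm:TF} via the Hecke presentation of Lemma~\ref{lem:hecke-present} and the root-block formula, Corollary~\ref{cor:split} with Remark~\ref{rem:nearsplit}, and Lemmas~\ref{lem:Dnonred} and~\ref{lem:Dempty} for the special shapes of $D$. The obstacle you flag, the decoupling of the root blocks under the Schur-complement filtration, is a point the paper does not argue beyond its appeal to Lemma~\ref{lem:hecke-present} and Lemma~\ref{lem:generalposition}, so your sketch is no less complete there than the paper's own argument.
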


\begin{proof}
The scheme block of a Hecke chain is the trace pairing with weight $g=s|_{\cO_Z}$ by Lemma~\ref{lem:Gblock}, and the block coefficient of a length-one member is $\Tr_X(s)$ by Lemma~\ref{lem:traceblock}. The principal case of Theorem~\ref{thm:TF} is proved by the Hecke presentation of Lemma~\ref{lem:hecke-present} and the root block formula. The splitting criterion is Corollary~\ref{cor:split}. If $\Tr_X(s)\neq0$ a rescaling splits a dense open, and Remark~\ref{rem:nearsplit} upgrades the splitting to all of $M_G$; if $\Tr_X(s)=0$ the trace is zero and no rescaling splits. The cases $D$ non-reduced and $D=\emptyset$ are as in Lemmas~\ref{lem:Dnonred} and~\ref{lem:Dempty}.
\end{proof}

\begin{theorem}[Canonicity for principal bundles]\label{thm:G-canonicity}
Let $M_G$ be normal.  Then $\LF_s$ is independent of the presentation up to a regular unit, and up to a constant when $\Tr_X(s)\neq0$.  Consequently $Z(\LF_s)$ is intrinsic, and when $s$ splits the base the assignment $[s]\mapsto[\LF_s]$ is a well-defined map
$\mathbb P H^0(X,\omega_X^{-1})\to\mathbb P H^0(M_G,\omega_{M_G}^{[-1]})$.
\end{theorem}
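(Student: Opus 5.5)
The proof mirrors the sheaf case, Theorem~\ref{thm:hd-canonicity}, with the members replaced by the Hecke points of the presentation of Lemma~\ref{lem:hecke-present}. The leaf factor $\theta_s^{[\rho]}$ is the Pfaffian power of Bottacin's tensor for the pairing \eqref{eq:Gpair}. It is defined from cup product, $\ka$ and $s$ alone, so it does not depend on the presentation. Any dependence therefore enters only through the Euler frame $\Omega$.

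\emph{Step 1.} I would take two Hecke presentations of $M_G$, i.e.\ $G$-equivariant Hecke chains with ordinary divisors, verified through Remark~\ref{rem:Geuler}. Each yields global Euler fields $w_j=c_j\partial_{c_j}$ by Lemma~\ref{lem:descent}, using that $M_G$ is normal so that projectable invariant fields descend. Lemma~\ref{lem:Gblock} puts the reduction directions in $\operatorname{rad}\beta_s$ and the scheme motions in the leaf. Hence both frames project to $Q=T_{M_G}/\Lambda$, and Lemma~\ref{lem:leafclass} (via Lemma~\ref{lem:global}) shows that the two leafrians are $\theta_s^{[\rho]}\otimes\eta$ and $\theta_s^{[\rho]}\otimes\eta'$.

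\emph{Step 2.} I would compare the radial determinants $\eta,\eta'$. On the common dense open where both presentations give slice coordinates, the transverse quotient $Q$ is a fixed rank-$c$ bundle. Both $\eta$ and $\eta'$ are nonzero sections of $\det Q$ there, with divisor $\sum\{c_j=0\}$. By (P3) the class coordinates of two presentations of one object differ by constant rescalings, exactly as in Lemma~\ref{lem:hd-transverse}. So $\eta'=u\eta$ with $u$ a rational function, regular and nonvanishing off codimension two. Normality of $M_G$ (Hartogs) then makes $u$ a regular unit on $M_G$.

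\emph{Step 3.} If $\Tr_X(s)\neq0$, Theorem~\ref{thm:TF} in its principal case gives that both $\Tr(\LF_s^{p-1})$ and $\Tr(\tilde\LF_s^{p-1})$ are nonzero constants. Lemma~\ref{lem:unique} together with Lemma~\ref{lem:unitrigid} ($M_G$ reduced and connected, or componentwise) then forces $u\in k^\times$. It follows that $Z(\LF_s)$ is intrinsic as a divisor, and that $[\LF_s]$ is a well-defined point of $\mathbb P H^0(M_G,\omega_{M_G}^{[-1]})$.

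For well-definedness of $[s]\mapsto[\LF_s]$ on projective spaces, I would also check that rescaling $s\mapsto\lambda s$ rescales $\theta_s^{[\rho]}$ by $\lambda^\rho$. This holds because $\theta_s$ is linear in $s$ and the Pfaffian is homogeneous of degree $\rho$. The frame $\Omega$ does not involve $s$ at all.

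The main obstacle is Step 2 for principal bundles. Hecke chains carry reduction data to parabolics, and one must ensure the class coordinates are genuinely $\mu$-invariant and rescale only by constants under change of trivialisation, including the root-block gluing data. I would first try to reduce this to the sheaf case through $E_\varrho=P\times^GV$, using Lemmas~\ref{lem:Gidentify} and~\ref{lem:Gsub}. Failing that, I would argue directly from Remark~\ref{rem:classcoords} applied to the vector-group quotients of each parabolic step.
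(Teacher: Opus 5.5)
Your outline is sound, but it gets the constant from a different place than the paper does.

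\textbf{The paper's route.} The paper's proof never uses the trace. It takes (P3) for Hecke chains from Remark~\ref{rem:Geuler}. It then observes that Lemma~\ref{lem:hd-transverse} uses only the constant rescalings of the class coordinates and nothing about the members, which is why this sheaf-section lemma applies to locally free $G$-bundles. This gives $\eta'=\lambda\eta$ with $\lambda\in k^\times$ outright, and Corollary~\ref{cor:frame} yields $\LF_s'=\lambda\LF_s$. The paper then adds a check you omit. By Lemma~\ref{lem:Gblock}, the leaf Pfaffian computed in any presentation's local coordinates is the restriction of the intrinsic $\theta_s^{[\rho]}$. Hence the local expression of Theorem~\ref{thm:normsec} agrees with the intrinsic section whichever presentation is used.

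\textbf{Your route.} You follow the sheaf template of Theorem~\ref{thm:hd-canonicity}: first a regular unit, then constancy via Theorem~\ref{thm:TF} and Lemma~\ref{lem:unique}. This proves exactly the statement as phrased. However, it makes canonicity depend on the principal-bundle trace formula, and through it on Theorem~\ref{thm:g-extend-G} for constancy of the trace. The paper's route avoids that machinery and gives a constant even when $\Tr_X(s)=0$.

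\textbf{A flaw in Step~2.} Once you invoke (P3), you have $c_j'=\lambda_jc_j$ as regular functions on $M_G$, so $\eta'=(\prod_j\lambda_j)\eta$ globally. The Hartogs detour is therefore unnecessary. As written it is also unjustified: agreement of the divisors on a common dense open says nothing about codimension-one components of its complement, so you cannot conclude that $u$ is a unit off codimension two.

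\textbf{Other remarks.} Your homogeneity check $\LF_{\lambda s}=\lambda^{\rho}\LF_s$ is a useful addition; the paper defers it to Proposition~\ref{prop:assign}. The obstacle you flag, verifying (P3) for Hecke chains, is not proved in the paper's argument either; it is taken as given from Remark~\ref{rem:Geuler}.
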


\begin{proof}
The pairing $\beta_s(\xi,\eta)=\sum_{x\in D}\Res_x\bigl(s\cdot\ka(\xi\cup\eta)\bigr)$ is written on $H^1(X,\mathfrak g_P)$ by cup product, the invariant form, and the residue along $D$.  It makes no reference to a chain, to Hecke points, or to a presentation, by Lemma~\ref{lem:Gidentify}.  Its transpose across Serre duality is the bivector $\theta_s$, a well-defined section of $\wedge^2 T_{M_G}$, and the Pfaffian power $\theta_s^{[\rho]}$ is a well-defined section of $\wedge^{2\rho}T_{M_G}$ on the open of constant rank, where $2\rho$ is the generic rank of $\theta_s$.  A presentation realizing the leaf has $\rho$ members.

A presentation supplies the Euler frame $\Omega$ by descent, by Remark~\ref{rem:Geuler} and Lemma~\ref{lem:descent}.  By {\rm(P3)} and Lemma~\ref{lem:hd-transverse}, the latter using only the constant rescalings of class coordinates and not the members, two presentations give radial determinants $\eta$ and $\lambda\eta$ with $\lambda\in k^\times$.  The identification $\theta_s^{[\rho]}\wedge\Omega=\theta_s^{[\rho]}\otimes\eta$ of Corollary~\ref{cor:frame} therefore yields $\LF_s'=\lambda\,\LF_s$.

On a surface the local expression of Theorem~\ref{thm:normsec} agrees with this intrinsic description: Lemma~\ref{lem:Gblock} identifies the scheme block of any presentation with the restriction of $\beta_s$, so the leaf Pfaffian of the presentation is the restriction of $\theta_s^{[\rho]}$.  Thus the two constructions coincide, and the assignment does not depend on which presentation was used to compute the local expression.
\end{proof}

\noindent The well-definedness uses the intrinsic pairing, which is special to surfaces.

\begin{remark}[Presentation]\label{rem:Geuler}
The principal modification theory of \cite{Bal} supplies a presentation in the sense of Definition~\ref{def:present}, with reduction data in place of elementary ones.  The clauses (P1)--(P3) are as in Lemma~\ref{lem:eulerframe}, with $\mathfrak g$ in place of $\End$.
\end{remark}

\section{The moduli of instantons over Poisson pairs}\label{sec:inst}

We treat the setting of a Poisson pair $(X,L)$.  The local algebra is again that of Section~\ref{sec:moduli}; the differences for instantons are a parabolic trace and a noncomplete open set.

A \emph{Poisson pair} $(X,L)$ is a smooth projective surface $X$ together with a smooth divisor $L\subset X$; the weight of the pair is a nonzero section $s\in H^0(\omega_X^{-1}(-2L))$, and $U=X\setminus L$.  The criterion is that $s$ Frobenius-splits the pair $(X,L)$.  Write $M=M_{r,k}(X,L)$ for the moduli space of framed torsion-free sheaves $(E,\iota)$ on $(X,L)$.  Over a surface the pair is Poisson when $s$ vanishes to order at least two along $L$.  For $(\mathbb P^2,L)$ a line this is the ADHM moduli space of rank-$r$ charge-$k$ instantons on $\mathbb A^2$ \cite{ADHM,Nak,HL,CR}.  On $(\mathbb P^2,L)$ one has $\omega_X^{-1}(-2L)=\cO(1)$.

\noindent\emph{Terminology.}  An \emph{instanton over $(X,L)$} is a framed torsion-free sheaf $(E,\iota)$ on $(X,L)$, and its non-locally free points are the \emph{instanton points}.  This is the moduli-theoretic name, and it carries an analytic meaning only in dimension four.  In the literature an \emph{instanton bundle on $\mathbb P^n$} is a locally free sheaf with $c_1=0$ and cohomological conditions, and an \emph{instanton sheaf on $\mathbb P^3$} is an unframed torsion-free variant; neither is an object of this section.

The framed deformation theory is the parabolic form of Serre duality: deformations of $(E,\iota)$ are $\Ext^1(E,E(-L))$, paired with $\Ext^1(E,E\otimes\omega_X(L))$.  The dualizing sheaf of the pair is $\omega_{par}=\omega_X(L)$, and the weight of the bracket is a section of $\omega_{par}^{-1}(-L)=\omega_X^{-1}(-2L)$.  The framed tensor of \cite[Thm.~4.3]{Bot00} pairs cotangent classes $\xi,\eta\in\Ext^1(E,E\otimes\omega_X(L))$:
\begin{equation}\label{eq:instpair}
 \theta_s(\xi,\eta)=\Tr\bigl(s\cdot(\xi\circ\eta)\bigr)\in H^2(X,\omega_X)\cong k .
\end{equation}

\begin{definition}[Parabolic Frobenius trace]\label{def:pairtrace}
Over $U$ one has $\omega_X^{-1}(-2L)|_U\cong\omega_U^{-1}$.  The \emph{parabolic Frobenius trace} of $s$ is
\[
 \Tr_{(X,L)}(s):=\Tr_U\bigl(s^{p-1}\bigr)\in H^0(U,\cO_U).
\]
\end{definition}

\begin{lemma}[Identification of the pairing]\label{lem:inst-identify}
On a fat-point block at a point of $U$ the pairing \eqref{eq:instpair} is the trace pairing of Definition~\ref{def:trans} with weight $g=s|_{\cO_Z}$.
\end{lemma}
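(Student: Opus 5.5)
The plan is to reduce the framed pairing \eqref{eq:instpair} at a fat point $Z\subset U$ to the unframed residue pairing \eqref{eq:res} of Section~\ref{sec:moduli}, and then apply the identification already made there between \eqref{eq:res} and the trace pairing of Definition~\ref{def:trans}.

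First I localize. Since $Z$ is supported in $U=X\setminus L$, the classes of the block are the deformations of the two relations of $Z$, and they are supported on $Z$. On a neighbourhood of $Z$ disjoint from $L$ the twists by $\cO(-L)$ and $\cO(L)$ are trivial, so both $\Ext^1(E,E(-L))$ and $\Ext^1(E,E\otimes\omega_X(L))$ restrict to the local groups of the unframed problem near $Z$. The weight $s\in H^0(\omega_X^{-1}(-2L))$ restricts to a section of $\omega_U^{-1}$, as in Definition~\ref{def:pairtrace}.

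Next, for $Z$-supported classes I write the global trace $\Tr\bigl(s\cdot(\xi\circ\eta)\bigr)\in H^2(X,\omega_X)$ as a local residue at the support of $Z$. Because the composite is supported on $Z$, the local-to-global spectral sequence degenerates, as in the proof of Proposition~\ref{prop:realization}. The Serre-duality trace then becomes the Grothendieck residue on $\cO_Z$, and this is $\Res_A$ up to the constant $\kappa_0$. Once this is done, \eqref{eq:instpair} coincides on the block with \eqref{eq:res} for the weight $s|_{\cO_Z}$. The Koszul computation of Section~\ref{sec:moduli}, which turns the composition pairing on $\Def_Z=A\oplus A$ into the determinant pairing $ab'-a'b$, then gives $\kappa_0\Res_A\bigl(g(ab'-a'b)\bigr)$ with $g=s|_{\cO_Z}$. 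That is exactly the trace pairing of Definition~\ref{def:trans}.

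I expect the main obstacle to be the compatibility of the parabolic duality with the local residue: the framing twists by $\omega_{par}=\omega_X(L)$, and I need the local residue at $Z$ to carry no extra factor coming from $L$. I would settle this by choosing a trivialization of $\cO(L)$ on the affine neighbourhood of $Z$. The change of trivialization multiplies $s$ by a unit $e^{-2}$ and the two twists by $e$ and $e^{-1}$, and these cancel in the product, so the pairing is unchanged. Any remaining constant is absorbed into $\kappa_0$, which only enters up to a nonzero scalar by Remark~\ref{rem:onechart}.
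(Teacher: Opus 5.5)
Your proposal is correct and takes the same route as the paper: localize at $Z\subset U$, parametrize the block by $A\oplus A$, identify the trace of a $Z$-supported class with the Artin residue, and read off $\kappa_0\Res_A\bigl(g(ab'-a'b)\bigr)$ with $g=s|_{\cO_Z}$. The paper does this directly in three lines rather than routing through \eqref{eq:res}, and it does not discuss trivializations of $\cO(L)$; if you keep that step, note that both arguments of \eqref{eq:instpair} lie in $\Ext^1(E,E\otimes\omega_X(L))$, so each is multiplied by $e$ while $s$ is multiplied by $e^{-2}$, and it is $e\cdot e\cdot e^{-2}=1$ that cancels (with $e$ and $e^{-1}$ as you wrote, a factor $e^{-2}$ would survive).
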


\begin{proof}
On the block the classes are the deformations of the support $Z$, parameterized
by $A\oplus A$; their product is the Koszul class $ab'-a'b$, and the trace of a
$Z$-supported class is the Artin residue.  The weight restricts to $g=s|_{\cO_Z}$,
so the pairing becomes $\kappa_0\Res_A\bigl(g(ab'-a'b)\bigr)$.
\end{proof}

\begin{lemma}[Identification of the unit]\label{lem:inst-T1}
Let $x\in U$, let $(u,\tau)$ be slice coordinates at $x$, and let $Z$ be the length-one member at $x$.  Then
\[
 T(g)=\Tr_{(X,L)}(s)(x).
\]
If $x$ lies on a reduced branch of $D$, this is the extraction $T_1=[\,\tau^{p-1}\,]h(0,\tau)^{p-1}$ of Theorem~\ref{thm:utc}.
\end{lemma}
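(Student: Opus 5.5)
The plan is to reduce to the length-one computation already done for sheaves, Lemma~\ref{lem:traceblock} (equivalently Lemma~\ref{lem:hd-unit}), with $X$ replaced by the open surface $U$. Since $x\in U$, the restriction $\omega_X^{-1}(-2L)|_U\cong\omega_U^{-1}$ of Definition~\ref{def:pairtrace} lets us treat $s|_U$ as an anticanonical section of the smooth surface $U$. All the quantities in the statement are local at $x$: the block coefficient $T(g)$ is a socle extraction in the formal ring $\widehat{\cO}_{X,x}=\widehat{\cO}_{U,x}$, and the Frobenius trace $\Tr_U(s^{p-1})$ is a regular function on $U$ whose value at $x$ is read in that formal ring. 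The framing along $L$ therefore plays no role, and completeness of $X$ is not used.

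\textbf{First step.} By Lemma~\ref{lem:inst-identify}, the framed pairing \eqref{eq:instpair} on the block at $x$ is the trace pairing of Definition~\ref{def:trans} with weight $g=s|_{\cO_Z}$. With $A=k$, Lemma~\ref{lem:hd-lengthone} gives $N(g)=g$. As in the proof of Lemma~\ref{lem:traceblock}, $g$ must be read as the local function of $s$ in the slice coordinates $(u,\tau)$ of the moving support, not merely as its value at $x$. This is because the block coordinates $(q_0,p_0)$ of the length-one member are exactly the coordinates of the support point.

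\textbf{Second step.} With this reading, $T(g)=[u^{p-1}\tau^{p-1}]\,g^{p-1}$ is the top coefficient of $s\,\partial_u\wedge\partial_\tau$ at $x$. By \cite[Thm.~1.3.8, Lem.~1.3.6]{BKu} applied on the smooth variety $U$, this coefficient equals $\Tr_U(s^{p-1})(x)=\Tr_{(X,L)}(s)(x)$. Lemma~\ref{lem:inv} ensures that the result does not depend on the chosen slice coordinates. Alternatively, one can quote the case $\ell=1$ of Lemma~\ref{lem:membertop} with $X$ replaced by $U$.

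\textbf{Second assertion.} If $x$ lies on a reduced branch of $D=Z(s)$, write $s=u\cdot h(u,\tau)$ locally, so that $g=z\cdot h(z,w')$ with $\ell=1$. Theorem~\ref{thm:utc} then gives $T(g)=T_1=[\tau^{p-1}]\,h(0,\tau)^{p-1}$, and this must agree with the value just computed.

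\textbf{Main obstacle.} The delicate point is the identification of the frame. The weight $s$ is a section of $\omega_X^{-1}(-2L)$, not of $\omega_X^{-1}$, so one must check that on $U$ the trivialization by $\partial_u\wedge\partial_\tau$ matches the isomorphism with $\omega_U^{-1}$ up to a unit factor. Any such unit factor would alter the socle extraction. I would handle this by choosing the local generator of $\cO(2L)$, which is invertible near $x\in U$, to be $1$ on a neighbourhood of $x$ disjoint from $L$. Then $s$ literally is an anticanonical section there, and no correction factor enters.
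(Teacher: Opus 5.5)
Your proposal is correct and follows essentially the paper's own argument: apply the length-one computation of Lemma~\ref{lem:hd-unit} on $U$ through the identification $\omega_X^{-1}(-2L)|_U\cong\omega_U^{-1}$. Your frame concern is resolved as you say, because that identification is the one fixed in Definition~\ref{def:pairtrace}. For the branch case the paper writes $f_s=u\cdot h$ in the frame of $\omega_U^{-1}$ and notes that $u^{p-1}$ supplies the whole $u$-extraction, which is exactly the $\ell=1$ instance of Theorem~\ref{thm:utc} that you cite.
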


\begin{proof}
The value of the Frobenius trace at $x$ is the socle coefficient \cite[Lem.~1.3.6]{BKu}.  For a length-one member Lemma~\ref{lem:hd-unit} gives $T(g)=\Tr_U(s^{p-1})(x)$.  When $x$ lies on a reduced branch of $D$ and $f_s=u\cdot h$, where $f_s$ is the local function of the weight in the frame of $\omega_U^{-1}$, the same socle coefficient is $[\,\tau^{p-1}\,]h(0,\tau)^{p-1}$, since $p-1$ is even and $u^{p-1}$ supplies the whole $u$-extraction.
\end{proof}

The verification of Section~\ref{sec:transfer} applies with $A_E=\End E(-L)$: framed transforms preserve the framing along $D'\subset U$, and the members are length-one points of $U$.  Smoothness is not needed; the moduli of framed torsion-free sheaves is normal and $\mathbb Q$-Gorenstein, being an open set of a smooth framed Quot scheme, and Remark~\ref{rem:nearsplit} applies.  The rank count is Lemma~\ref{lem:inst-radical}: with $c$ the corank of the pairing \eqref{eq:instpair} one has $2\rho=d-c$, and the empty leaf is read as in Remark~\ref{rem:emptycount}.

\begin{theorem}[Instantons]\label{thm:inst}
Let $\rho>0$.  Then $\LF_s$ splits $M_{r,k}(X,L)$ if and only if $s^{p-1}$ splits $U=X\setminus L$.  When the splitting exists it is compatible with $Z(\LF_s)$.
\end{theorem}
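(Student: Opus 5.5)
The plan is to follow the proof of Theorem~\ref{thm:moduli} line by line, with $\End E(-L)$ as the endomorphism sheaf and $\lambda=\Tr_{(X,L)}(s)=\Tr_U(s^{p-1})$ in place of $\Tr_X(s)$. First I will set up the leafrian. The framed tensor \eqref{eq:instpair} is intrinsic, so its Pfaffian power $\theta_s^{[\rho]}$ is a global section of $\wedge^{2\rho}T_M$. The framed presentation, with elementary transforms along $D'\subset U$, members at length-one points of $U$, and $A_E=\End E(-L)$, satisfies (P1)--(P3) as in Lemma~\ref{lem:eulerframe}. Lemma~\ref{lem:descent} then supplies global Euler fields, and $\LF_s=\theta_s^{[\rho]}\wedge\Omega$ is a section of $\omega_M^{[-1]}$ by Lemma~\ref{lem:global}; it exists because $\rho>0$. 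Normality of $M$ as an open set of the smooth framed Quot scheme makes the reflexive extension meaningful, and Remark~\ref{rem:nearsplit} will later extend a splitting from a dense open to all of $M$.

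Second, the local computation. Lemma~\ref{lem:inst-identify} identifies the pairing on each member block with the trace pairing of weight $g=s|_{\cO_Z}$. So at an evaluation point, Proposition~\ref{prop:hd-localexpr} gives $f=\prod_\nu N(g_\nu)\prod_j c_j\cdot u$, and Theorem~\ref{thm:head} together with Lemma~\ref{lem:inst-T1} gives the top coefficient
\[
 b_0(\LF_s)=u^{p-1}\prod_{\nu=1}^{\rho}\lambda(x_\nu).
\]
For the global statement I will invoke the framed case of Theorem~\ref{thm:TF}: when $\lambda$ is a nonzero constant, $\Tr(\LF_s^{p-1})=\lambda^\rho u$. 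Corollary~\ref{cor:split} then gives the splitting after rescaling. By Theorem~\ref{thm:bk}, $\lambda$ being a nonzero constant is exactly the statement that $s^{p-1}$ splits $U$ after rescaling. Compatibility with $Z(\LF_s)$ follows from Lemma~\ref{lem:loccompat}, which does not need completeness.

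Third, the converse. Suppose $s^{p-1}$ does not split $U$ after any rescaling. Then $\lambda$ is either identically zero or a nonconstant function on $U$. If $\lambda\equiv0$, every block coefficient vanishes, so $b_0(\LF_s)=0$ at every evaluation point. By Theorem~\ref{thm:bk} the trace $\Tr(\LF_s^{p-1})$ then vanishes at all members; these are dense, so the trace is identically zero. If $\lambda$ is nonconstant, I will move one member $x_\nu$ along $U$ while keeping the others fixed. The displayed formula then shows that $\Tr(\LF_s^{p-1})$ is nonconstant on $M$, so no constant rescaling can normalise it to $1$.

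The main obstacle is that $M$ is not complete. Constancy of the trace therefore cannot come from Lemma~\ref{lem:consttrace-complete}, and I have to show that the pointwise formula $b_0=u^{p-1}\prod\lambda(x_\nu)$ really computes $\Tr(\LF_s^{p-1})$ as a function on $M$, and not only its value at a single point. My first attempt is to use Theorem~\ref{thm:bk} pointwise, which gives equality with the socle coefficient at each evaluation point. The realization conditions are open, so evaluation points are dense, and reducedness of $M$ identifies the two regular functions. The remaining check is that the unit $u$ is a genuine constant and not merely a regular unit. For this I will appeal to Lemma~\ref{lem:constunit} and to (P3).
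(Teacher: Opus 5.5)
Your proposal is correct and follows the paper's own proof. Both split into three cases according to whether $\lambda=\Tr_U(s^{p-1})$ is a nonzero constant, identically zero, or nonconstant, use Lemmas~\ref{lem:inst-identify} and~\ref{lem:inst-T1} to get block coefficients $\lambda(x_\nu)$, use the framed case of Theorem~\ref{thm:TF} with Corollary~\ref{cor:split} for the splitting, and use Lemma~\ref{lem:loccompat} for compatibility. The only difference is that the paper settles the zero and nonconstant cases, and the constancy of the trace, by citing Theorem~\ref{thm:TF} directly; your pointwise evaluation at a dense set of evaluation points is a more explicit version of that step, not an extra ingredient.
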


\begin{proof}
By Lemma~\ref{lem:inst-identify} the datum is a weighted trace pairing datum, and by Lemma~\ref{lem:inst-T1} the block coefficients are $T(g_\nu)=\lambda(x_\nu)$ with $\lambda=\Tr_U(s^{p-1})$.  If $\lambda$ is a nonzero constant, Theorem~\ref{thm:TF} gives $\Tr(\LF_s^{p-1})=\lambda^{\rho}\cdot u$ with $u\in k^\times$, and Corollary~\ref{cor:split} gives the splitting.  If $\lambda$ vanishes identically, Theorem~\ref{thm:TF} gives zero trace and no rescaling splits.  If $\lambda$ is a nonconstant function, then $\lambda^\rho$ is nonconstant because $\rho>0$, so the trace is nonconstant on $M$ and no constant rescaling makes it equal to $1$; moreover $s^{p-1}$ does not split $U$, since $\lambda$ is not a nonzero constant.  In all three cases $\LF_s$ splits $M$ exactly when $s^{p-1}$ splits $U=X\setminus L$, and the compatibility with $Z(\LF_s)$ is Lemma~\ref{lem:loccompat}.
\end{proof}

\begin{example}[A non-splitting pair and a splitting pair]\label{ex:inst-pos}
Let $X=\mathbb P^1\times\mathbb P^1$ with torus coordinates $(z,w)$ and let $L$ be a fibre; then $U\cong\mathbb A^1\times\mathbb P^1$, and $\omega_X^{-1}(-2L)=\cO(0,2)$, whose weight polytope is $P=\{(1,-1),(1,0),(1,1)\}$.  Since $(0,0)\notin P$, Proposition~\ref{prop:inst-toric} shows that no $s$ has $s^{p-1}$ splitting $U$.  Explicitly, for $s=\chi^{(1,1)}$ the invariant-frame unit $x_1x_2$ contributes, so $f_s=z^2w^2$, and
\[
 \Tr_U(s^{p-1})(z,w)=\bigl[\,t_1^{p-1}t_2^{p-1}\,\bigr]
 \bigl((z+t_1)^2(w+t_2)^2\bigr)^{p-1}
 =\binom{2p-2}{p-1}^2 z^{p-1}w^{p-1}=0
\]
in characteristic $p$.  For $L=0$ the canonical weight $s_{-K_X}$ has local function $x_1x_2$ and $\Tr_X(s^{p-1})\equiv1$.
\end{example}

\begin{corollary}[Non-splitting for the ADHM moduli]\label{cor:inst-adhm}
On $(\mathbb P^2,L)$ with $L$ a line one has $\Tr_U(s^{p-1})=0$ for every nonzero weight $s\in H^0(\cO(1))$, so no leafrian splits $M_{r,k}(\mathbb P^2,L)$ when $\rho>0$.
\end{corollary}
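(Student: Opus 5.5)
The plan is to reduce the corollary to a direct computation of the parabolic Frobenius trace on $U=\mathbb P^2\setminus L\cong\mathbb A^2$, and then invoke Theorem~\ref{thm:inst} (equivalently Corollary~\ref{cor:split} in the framed case). Choose affine coordinates $(x,y)$ on $U$, so that $\omega_U^{-1}$ is trivialized by $\partial_x\wedge\partial_y$. Since $\omega_X^{-1}(-2L)=\cO(3)\otimes\cO(-2)=\cO(1)$, a nonzero weight $s\in H^0(\cO(1))$ restricts on $U$ to $f_s\,\partial_x\wedge\partial_y$, where $f_s=a+bx+cy$ is an affine-linear function. The first step is to check this identification of frames: a section of $\cO(1)$ vanishing doubly along $L$ after twisting corresponds to a polynomial of degree at most one in the affine chart, because $\partial_x\wedge\partial_y$ has a pole of order $3$ along $L$.

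Next, compute $\Tr_U(s^{p-1})$ at an arbitrary point $(x_0,y_0)\in U$ as the socle coefficient, using \cite[Lem.~1.3.6]{BKu} exactly as in Example~\ref{ex:inst-pos}. In local coordinates $t_1,t_2$ centred at the point, $f_s=\alpha+bt_1+ct_2$ with $\alpha=f_s(x_0,y_0)$. Then $f_s^{p-1}$ is a polynomial of total degree at most $p-1$ in $(t_1,t_2)$, while the monomial $t_1^{p-1}t_2^{p-1}$ has degree $2p-2>p-1$. So the coefficient vanishes and $\Tr_U(s^{p-1})\equiv0$ on $U$. Equivalently, this is the toric criterion of Proposition~\ref{prop:inst-toric}: the weight polytope of $\cO(1)$ in the invariant frame does not contain the origin in the right position. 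The direct degree count is cleaner and avoids the toric bookkeeping.

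Finally, with $\lambda=\Tr_U(s^{p-1})$ identically zero, Theorem~\ref{thm:TF} in the framed case (or the third case of the proof of Theorem~\ref{thm:inst}) gives $\Tr(\LF_s^{p-1})=0$ on $M_{r,k}(\mathbb P^2,L)$ whenever $\rho>0$. Hence no constant rescaling of $\LF_s$ has trace $1$, and $\LF_s$ does not split $M$.

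The only real subtlety I expect is the frame identification in the first step: checking that the local function of $s$ in the frame of $\omega_U^{-1}$ really has degree at most one, and is not twisted by a unit of higher degree. I would verify it by writing $\partial_x\wedge\partial_y$ in homogeneous coordinates and comparing pole orders along $L$. Once that is settled, the degree argument is immediate.
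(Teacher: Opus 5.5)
Your proposal is correct and follows the paper's proof: the weight has affine-linear local function $f_s$ on $U=\mathbb A^2$, so $f_s^{p-1}$ has total degree $p-1<2(p-1)$ and no socle monomial, hence $\Tr_U(s^{p-1})\equiv0$, and Theorem~\ref{thm:inst} finishes. Two small slips: it is $dx\wedge dy$, not $\partial_x\wedge\partial_y$, that has a pole of order $3$ along $L$ (the bivector has a zero of order $3$), and the identically-zero case is the second case in the proof of Theorem~\ref{thm:inst}, not the third.
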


\begin{proof}
Every nonzero weight has affine-linear local function $f_s$ on $U=\mathbb A^2$.  Then $f_s^{p-1}$ has total degree $p-1<2(p-1)$, so it contains no monomial $X^{p-1}Y^{p-1}$; equivalently, at a point of the branch the residual $h=f_s/u$ is constant and $T_1=0$.  Hence $\Tr_U(s^{p-1})=0$ for every $s$, and the conclusion follows from Theorem~\ref{thm:inst}.
\end{proof}

\noindent Here $d=2rk$, and the radical vanishes at a general point by Lemma~\ref{lem:inst-radical}, so $2\rho=d$ and
\[
 \rho=rk .
\]
For $M_{1,k}=\operatorname{Hilb}^k(\mathbb A^2)$ one has $\rho=k$, and the leaf is empty only for $k=0$.  When the leaf is empty no leafrian is formed, the assembly is the deformation volume, and it splits $M$ whenever $M$ carries a full radial frame in the sense of Proposition~\ref{prop:empty}.  For the defining section of $L$ the scaling obstruction of Lemma~\ref{lem:inst-scale} applies to the canonical symplectic section of \cite[Prop.~5.1]{Bot00}.

\noindent Scope of the excluded case $Z(s)\subset L$.  The conclusion of Corollary~\ref{cor:inst-adhm} uses $U=\mathbb A^2$; for a general pair $(X,L)$ with $Z(s)\subset L$ no weight with $s^{p-1}$ splitting $U$ is claimed here.  Lemma~\ref{lem:inst-scale} below gives the same conclusion whenever a $\mathbb G_m$-action of nonzero weight is available, as for the dilation of the ADHM description.

\begin{lemma}[Scaling obstruction]\label{lem:inst-scale}
Let $\Gamma=\mathbb G_m$ act algebraically on a smooth connected variety $Y$, and let $\LF_s$ be a leafrian with $g_t^*\LF_s=t^{w}\LF_s$ for some $w\neq0$.  Then no rescaling of $\LF_s$ splits $Y$.
\end{lemma}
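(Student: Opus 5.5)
The plan is to show that the Frobenius trace $\Tr(\LF_s^{p-1})$, which must be a nonzero constant for any rescaling of $\LF_s$ to split $Y$, is forced to be of pure nonzero weight under $\Gamma$. A nonzero function of nonzero weight cannot be a nonzero constant, so the trace vanishes identically and no rescaling splits.

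First, record the equivariance of the trace. The Frobenius trace $\Tr\colon F_*\omega_Y^{1-p}\to\cO_Y$ is canonical: it is the Cartier/Grothendieck trace and commutes with pullback along automorphisms of $Y$. So for every $t\in\Gamma(k)=k^\times$ one has
\[
 g_t^*\Tr(\LF_s^{p-1})=\Tr\bigl(g_t^*\LF_s^{p-1}\bigr)=\Tr\bigl(t^{w(p-1)}\LF_s^{p-1}\bigr)=t^{w(p-1)/p}\cdot\ldots
\]
Here the $p$-linearity must be handled carefully. The trace is $p^{-1}$-linear, meaning $\Tr(a^p\varphi)=a\Tr(\varphi)$, so it is cleaner to write $t^{w(p-1)}=(t^{w(p-1)/p})^p$, using that $k$ is perfect. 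This gives
\[
 g_t^*\Tr(\LF_s^{p-1})=t^{w(p-1)/p}\,\Tr(\LF_s^{p-1})\quad\text{for all }t\in k^\times.
\]
Equivalently, after replacing the $\Gamma$-action by its composite with Frobenius on $\Gamma$ (i.e.\ comparing weights of functions via $t\mapsto t^p$), $\Tr(\LF_s^{p-1})$ is an eigenfunction with character $t^{w(p-1)}$ twisted by Frobenius. Its weight is nonzero because $w\neq0$ and $p-1\neq0$.

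Second, apply this to a putative splitting. Suppose $\lambda\LF_s$ splits $Y$ for some $\lambda\in k^\times$. Then $\Tr((\lambda\LF_s)^{p-1})=1$, so $\Tr(\LF_s^{p-1})=c:=\lambda^{-(p-1)/p}\cdot$const is a nonzero constant. Constants are $g_t^*$-invariant. The equivariance identity then gives $c=t^{w(p-1)/p}c$ for all $t\in k^\times$, which forces $t^{w(p-1)}=1$ for every $t\in k^\times$. This is impossible, since $k^\times$ is infinite and $w(p-1)\neq0$. Hence no rescaling splits $Y$; in fact $\Tr(\LF_s^{p-1})$ is not a nonzero constant, which by Theorem~\ref{thm:bk} is exactly the splitting condition.

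The main obstacle is the first step: making the equivariance of $\Tr$ precise, with the correct semilinearity. The Frobenius twist must be tracked correctly, and one must check that $g_t^*$ commutes with the trace map $F_*\omega^{1-p}\to\cO$ on the smooth connected $Y$. I would verify this locally with Theorem~\ref{thm:bk}: $g_t$ is an automorphism, the socle coefficient transforms by the Jacobian factor $\det(J)^{p-1}$ against the frame factor (Lemma~\ref{lem:socle}), and these cancel, so $\Tr$ is natural. Connectedness of $Y$ is used only to say that "constant" means a single scalar.
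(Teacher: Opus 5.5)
Your proof is correct and follows the paper's argument: $\Tr(\LF_s^{p-1})$ is a $\Gamma$-eigenfunction of nonzero weight, so it cannot be a nonzero constant, and a splitting by a rescaling would force it to be one. The only difference is that you track the $p^{-1}$-linearity of the trace and get the factor $t^{w(p-1)/p}$, where the paper writes $t^{w(p-1)}$; the weight is nonzero either way, so the conclusion is the same.
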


\begin{proof}
For $f=\Tr(\LF_s^{p-1})$ one has $f\circ g_t=t^{w(p-1)}f$.  A constant $c$ satisfies $c=t^{w(p-1)}c$ for all $t$, whence $c=0$ since $w(p-1)\neq0$; a nonconstant $f$ cannot be normalised to $1$.  A leafrian that splits $Y$ therefore never satisfies the hypothesis.
\end{proof}

\noindent On $(\mathbb P^2,L)$ the dilation of the ADHM description scales the symplectic form with weight $2$, so no rescaling of the canonical symplectic section of \cite[Prop.~5.1]{Bot00} splits $M$.  This does not touch the leafrian of Theorem~\ref{thm:inst}, whose splitting is governed by $\Tr_U(s^{p-1})$; in the empty-leaf case the assembly $\eta_{\mathrm{can}}$ has nonzero top coefficient and splits whenever its trace is a nonzero constant, by Proposition~\ref{prop:empty}.

\begin{proposition}[Toric classification]\label{prop:inst-toric}
Let $X$ be a smooth complete toric surface with fan $\{v_i\}$, let $L=\sum_i a_iD_i$ ($a_i\geq0$) be an invariant divisor, $U=X\setminus L$, and let
\[
 P=\bigl\{\,m\in M_{\mathbb R}:\ \langle m,v_i\rangle\ge 2a_i-1\ \text{for all } i\,\bigr\}
\]
be the weight polytope of $\omega_X^{-1}(-2L)$, and let $s_D$ be the invariant section attached to $0\in M$.  The following are equivalent: {\rm(i)} some weight $s\in H^0(\omega_X^{-1}(-2L))$ has $s^{p-1}$ splitting $U$; {\rm(ii)} $(0,0)\in P$, equivalently $a_i=0$ for every $i$ (that is, $L=0$); {\rm(iii)} $s=s_D$ has $s^{p-1}$ splitting $U$, with $\Tr_U(s^{p-1})\equiv1$.  The condition does not depend on $p>2$.
\end{proposition}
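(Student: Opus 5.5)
The plan is to reduce everything to a computation on the dense torus $T\subset U$. On $T$ there is an explicit formula for the Frobenius trace of a torus-invariant frame, so the splitting question becomes a question about the constant term of a Laurent polynomial. I would prove (ii)$\Leftrightarrow$ the condition $L=0$ first, then (i)$\Rightarrow$(ii), then (ii)$\Rightarrow$(iii); the implication (iii)$\Rightarrow$(i) is trivial.

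\emph{Numerics of $P$.} Since $\omega_X^{-1}=\cO_X(\sum_iD_i)$, one has $\omega_X^{-1}(-2L)=\cO_X\bigl(\sum_i(1-2a_i)D_i\bigr)$. Its sections are spanned by the characters $\chi^m$ with $m\in P\cap M$, where $P$ is exactly the displayed polytope. The origin lies in $P$ iff $0\ge 2a_i-1$ for all $i$. Because the $a_i$ are nonnegative integers, this holds iff every $a_i=0$, that is iff $L=0$. This settles the equivalence inside (ii), and it also shows the condition does not involve $p$.

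\emph{Trace on the torus.} Take coordinates $x_1,x_2$ on $T$. Then $x_1x_2\,\partial_{x_1}\wedge\partial_{x_2}$ is the invariant frame of $\omega_T^{-1}$, and $s_D$ is this frame. A weight $s=\sum_{m\in P\cap M}c_m\chi^m$ has local function $f_s=x_1x_2\,\Phi$ with $\Phi=\sum_m c_mx^m$. By \cite[Lem.~1.3.6]{BKu}, in the coordinate frame the Frobenius trace of $F\,\partial_{[2]}^{\,p-1}$ sends $x^e$ to $x^{(e-(p-1)\mathbf 1)/p}$ when $e\equiv(p-1)\mathbf 1\pmod p$, and to $0$ otherwise. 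Applied to $f_s^{p-1}=(x_1x_2)^{p-1}\Phi^{p-1}$, this gives
\[
 \Tr_U(s^{p-1})\big|_T=\sum_{n\in M}[x^{pn}]\bigl(\Phi^{p-1}\bigr)\,x^{n}.
\]
The exponents of $\Phi^{p-1}$ lie in $(p-1)P$. Hence the constant term of this Laurent polynomial is $[x^0]\Phi^{p-1}$, which can be nonzero only if $0\in(p-1)P$, i.e. $0\in P$.

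\emph{(i)$\Rightarrow$(ii) and (ii)$\Rightarrow$(iii).} Suppose $s^{p-1}$ splits $U$ after rescaling. Then $\Tr_U(s^{p-1})$ is a nonzero constant on $U$, hence on $T$. By the display, a Laurent polynomial that is a nonzero constant must have nonzero constant term, which forces $0\in P$. Conversely, if $L=0$ take $s=s_D$. Then $\Phi=1$ and the display gives $\Tr\equiv1$ on $T$. Since $U=X$ is reduced and $T$ is dense, $\Tr_U(s_D^{p-1})\equiv1$ on $U$ (Remark~\ref{rem:nearsplit}), so $s_D^{p-1}$ splits $U$.

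\emph{Main obstacle.} The delicate point is justifying the displayed trace formula in the invariant frame. One must check that passing between the coordinate frame $\partial_{x_1}\wedge\partial_{x_2}$ and the invariant frame introduces exactly the factor $(x_1x_2)^{p-1}$, with no extra unit, consistently with Lemma~\ref{lem:socle}. One must also check that restricting the splitting condition from $U$ to $T$ loses nothing, because a regular function on $U$ that is constant on $T$ is constant.
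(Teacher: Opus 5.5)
Your proof is correct and follows the paper's route. You reduce to the torus $T$, write the local function of $s$ as $x_1x_2\,\Phi$ with the exponents of $\Phi$ in $P$, use convexity to see that a nonzero constant term of the trace forces $0\in P$, and check that $s_D$ has trace $1$ when $L=0$.

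The only difference is how the trace is computed. You use the global monomial formula for the Cartier operator on $k[x_1^{\pm1},x_2^{\pm1}]$, whereas the paper extracts the socle pointwise in the additive coordinates $t=x-a$; your version makes the constant-term step more transparent. Strictly, the coefficients in your display should carry a $p$-th root, because the trace is $p^{-1}$-linear, but this affects neither vanishing nor constancy.
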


\begin{proof}
The section $s_D$ lies in $H^0(\omega_X^{-1}(-2L))$ exactly when $(0,0)\in P$.  On $T\subset U$ the invariant frame of $\omega_X^{-1}$ is $e=(x_1\partial_{x_1})\wedge(x_2\partial_{x_2})=x_1x_2\,\partial_{x_1}\wedge\partial_{x_2}$, and the invariant frame $s_D$ of $\omega_X^{-1}(-2L)$ restricts on $U$ to $e$.  A weight $s=\sum_m c_m\chi^m s_D$ with $\operatorname{supp}\subseteq P\cap M$ has local function $G=x_1x_2F$ with $F=\sum_m c_m\chi^m$, and in additive coordinates $t_j=x_j-a_j$ at $a\in T$ the local trace formula gives
$\Tr_U(s^{p-1})(a)=[\,t^{(p-1)\mathbf1}\,]\,G(a+t)^{p-1}$.
Write $G(a+t)=\sum_\beta A_\beta(a)\,t^\beta$.  A product term selecting the monomial $m_k$ in the $k$-th factor of $G^{p-1}$ has $a$-exponent $\sum_k(m_k-\beta^k)$; since $\sum_k\beta^k=(p-1,p-1)$, a nonzero constant term forces $\sum_k m_k=(p-1,p-1)$.  As $\operatorname{supp}(G)$ lies in the convex set $P+(1,1)$, the average $(1,1)$ lies in $P+(1,1)$.  Thus a nonzero constant trace forces $(0,0)\in P$.  Conversely, when $(0,0)\in P$ the weight $s=s_D$ has local function $G=x_1x_2$, and
$\Tr_U(s^{p-1})(a)=[\,t^{(p-1)\mathbf1}\,]\bigl((a_1+t_1)(a_2+t_2)\bigr)^{p-1}\equiv1$.
Finally $(0,0)\in P$ is $\langle 0,v_i\rangle\ge 2a_i-1$, i.e.\ $1-2a_i\ge0$; since the $a_i$ are nonnegative integers, this holds exactly when $L=0$.
\end{proof}

\begin{remark}[Scope of the criterion]\label{rem:inst-toric}
The invariant pairs that admit a weight $s$ with $s^{p-1}$ splitting $U$ are exactly those with $L=0$.  In particular $(\mathbb P^1\times\mathbb P^1$, fibres$)$ admits none, and $(\mathbb P^2,L)$ admits none for $L\neq0$.  For such a pair, the empty-leaf case is governed by the deformation volume.
\end{remark}

\section{Poisson enrichment}\label{sec:cfp}

The splitting criterion of Sections~\ref{sec:moduli}--\ref{sec:inst} did not use the Jacobi identity.  This section records the Poisson enrichment: the leafrian is Poisson, the assignment from weights is injective, and the zero scheme is the rank-drop locus.  Throughout, $X$ is a smooth projective Poisson surface, $s\in H^0(X,\omega_X^{-1})$ is nonzero (or $s\in H^0(\omega_X^{-1}(-2L))$ in the framed case), $D=Z(s)$, and $M$ is one of the three moduli spaces.  The number $\rho$ is half the generic rank of $\theta_s$ (Remark~\ref{rem:rho-open}), and $\hat s$ is the transferred bivector of Section~\ref{sec:hat}.

\subsection{Poisson linear algebra}

Let $Y$ be a smooth variety with Poisson bivector $\pi$ of generic rank $2\rho$, and let $U_\pi\subset Y$ be the dense open where the rank is $2\rho$.  The sharp map $\pi^\sharp:T^*_Y\to T_Y$ has image the tangent bundle of the symplectic leaves.  The flat map $\pi^\flat:T_Y\to T^*_Y$ has kernel $\cV:=\ker\pi^\flat$, the radical.  The normal form of a skew matrix gives $T_Y|_{U_\pi}=\im\pi^\sharp\oplus\cV$.  Sign conventions: $X_f=[\pi,f]=\pi^\sharp(df)$, $\cL_w\pi=[w,\pi]$, and $\{f,g\}=\pi(df,dg)$; graded Jacobi gives $[[\pi,f],w]=[\pi,[f,w]]+[f,[\pi,w]]$, and in particular $[X_f,w]=-X_{w(f)}$ whenever $\cL_w\pi=0$.  Put $Q:=T_Y/\im\pi^\sharp$, of rank $c=d-2\rho$ on $U_\pi$.

\begin{lemma}[Reduction]\label{lem:reduction}
Let $\Omega\in H^0(\Lambda^cT_Y)$ and $\sigma=\pi^{[\rho]}\wedge\Omega$.  For $x\in U_\pi$ one has $\sigma\neq0$ at $x$ if and only if the $\Lambda^c\cV_x$-component of $\Omega_x$ is nonzero; equivalently, if and only if the $Q$-projection of $\Omega_x$ is nonzero.  The condition does not depend on the choice of complement $\cV$.
\end{lemma}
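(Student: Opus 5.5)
The plan is to work pointwise at $x\in U_\pi$ with an adapted basis of $T_xY$. By the normal form of a skew form, I choose $e_1,\dots,e_{2\rho}$ spanning $\im\pi^\sharp_x$, in which $\pi_x$ is in Darboux form, and complete them by a basis $f_1,\dots,f_c$ of $\cV_x$. This is possible because $T_xY=\im\pi^\sharp_x\oplus\cV_x$ on $U_\pi$. In this basis $\pi_x=\sum_{i}e_{2i-1}\wedge e_{2i}$, and so $\pi_x^{[\rho]}=\Pf\cdot e_1\wedge\cdots\wedge e_{2\rho}$. The Pfaffian here is nonzero, since the rank of $\pi_x$ is exactly $2\rho$. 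Thus $\pi_x^{[\rho]}$ is a nonzero multiple of the top wedge of $\im\pi^\sharp_x$.

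Next I will expand $\Omega_x\in\Lambda^cT_x$ along the splitting, $\Lambda^cT_x=\bigoplus_{j}\Lambda^{j}\im\pi^\sharp_x\otimes\Lambda^{c-j}\cV_x$. Every summand with $j\ge1$ contains some $e_i$, so it dies when wedged with $e_1\wedge\cdots\wedge e_{2\rho}$; this is exactly the argument of Lemma~\ref{lem:leafclass}. The only surviving term is the $j=0$ component $\Omega_x^{\cV}\in\Lambda^c\cV_x$, which is a scalar multiple $a\,f_1\wedge\cdots\wedge f_c$. Then $\sigma_x=\Pf\cdot a\cdot e_1\wedge\cdots\wedge e_{2\rho}\wedge f_1\wedge\cdots\wedge f_c$, and since the $e_i,f_j$ form a basis, $\sigma_x\neq0$ if and only if $a\neq0$. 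This proves the first equivalence.

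For the $Q$-formulation, I use that the projection $T_x\to Q_x$ restricts to an isomorphism $\cV_x\cong Q_x$. Hence $\Lambda^c$ of the projection kills all the $j\ge1$ summands and carries $\Omega_x^{\cV}$ isomorphically onto the image of $\Omega_x$ in $\Lambda^cQ_x$. So the $Q$-projection is nonzero exactly when $a\neq0$. Independence of the complement then follows, because the $Q$-condition makes no reference to $\cV$. Alternatively, a different complement $\cV'$ is the graph of a map $\cV\to\im\pi^\sharp$, which changes $\Omega^{\cV}$ only by terms that die against the leaf wedge.

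The only point needing care, and it is mild, is the normal form. I need the rank to be exactly $2\rho$ at $x$, so that $\pi_x^{[\rho]}$ is nonzero and decomposable with span $\im\pi^\sharp_x$, and I need that span to be complementary to $\cV_x$. Both facts are the linear algebra of a skew form of rank $2\rho$ on a $d$-dimensional space, which is why the statement is restricted to $U_\pi$. Nothing else is needed beyond the wedge-vanishing argument.
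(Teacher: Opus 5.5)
Your proposal is correct and follows the paper's proof: you choose a basis of $\im\pi^\sharp_x$ in which $\pi_x^{[\rho]}$ is a nonzero multiple of $e_1\wedge\cdots\wedge e_{2\rho}$, complete it by a basis of $\cV_x$, and note that every monomial of $\Omega_x$ containing a leaf factor dies, so only the $\Lambda^c\cV_x$-component survives. Your explicit argument for the $Q$-projection via $\cV_x\cong Q_x$, and for the independence of the complement, fills in steps the paper leaves implicit.
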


\begin{proof}
Choose a basis $e_1,\dots,e_{2\rho}$ of $\im\pi^\sharp_x$ with $\pi^{[\rho]}|_x=\lambda\,e_1\wedge\cdots\wedge e_{2\rho}$, $\lambda\neq0$, and complete it by a basis $f_1,\dots,f_c$ of $\cV_x$.  Expanding $\Omega_x$, every monomial containing an $e$-factor dies after wedging with $e_1\wedge\cdots\wedge e_{2\rho}$.  Only the $f$-monomials survive.
\end{proof}

\begin{lemma}[Line bundle]\label{lem:linebundle}
On $U_\pi$ one has $\det\cV\cong\omega_Y^{-1}$ and $\det Q\cong\cO(-K_Y)$.
\end{lemma}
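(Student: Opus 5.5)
The plan is to read off both isomorphisms from the splitting $T_Y|_{U_\pi}=\im\pi^\sharp\oplus\cV$ recorded just before Lemma~\ref{lem:reduction}, together with the fact that $\pi^{[\rho]}$ trivializes the determinant of the leaf part on $U_\pi$. First, on $U_\pi$ the rank of $\pi$ is constantly $2\rho$, so $\im\pi^\sharp$ and $\cV=\ker\pi^\flat$ are subbundles of ranks $2\rho$ and $c$. The normal form of a skew matrix gives a pointwise direct sum, and I would check that it holds as a splitting of vector bundles: the rank is locally constant, so the image and the kernel are locally free, and their sum is fibrewise direct.

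Taking determinants then gives
\[
 \omega_Y^{-1}=\det T_Y\cong \det(\im\pi^\sharp)\otimes\det\cV
\]
on $U_\pi$. Next I will show that $\det(\im\pi^\sharp)=\Lambda^{2\rho}\im\pi^\sharp$ is trivial on $U_\pi$. The Pfaffian power $\pi^{[\rho]}$ is a section of $\Lambda^{2\rho}T_Y$. Since $\pi$ is a section of $\Lambda^2(\im\pi^\sharp)$, the normalized power $\pi^{[\rho]}$ lies in $\Lambda^{2\rho}\im\pi^\sharp$. In a local leaf frame it reads $\Pf(A)\,e_1\wedge\cdots\wedge e_{2\rho}$, where $A$ is the matrix of $\pi$ restricted to the leaf. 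This matrix is nondegenerate exactly where the rank is $2\rho$, so $\Pf(A)$ is a unit on $U_\pi$. Because the normalized power avoids the factor $\rho!$, this holds even when $\rho\ge p$. Hence $\pi^{[\rho]}$ is a nowhere-vanishing section of $\det(\im\pi^\sharp)$, and therefore $\det\cV\cong\omega_Y^{-1}$.

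For $Q=T_Y/\im\pi^\sharp$, the composite $\cV\hookrightarrow T_Y\twoheadrightarrow Q$ is an isomorphism of bundles on $U_\pi$ by the direct-sum splitting, so $\det Q\cong\det\cV\cong\cO(-K_Y)$. Alternatively, the exact sequence $0\to\im\pi^\sharp\to T_Y\to Q\to0$ gives $\det Q\cong\det T_Y\otimes(\det\im\pi^\sharp)^{-1}$, and this route uses neither a choice of complement nor the splitting. The isomorphism is realized concretely by $\Omega\mapsto\pi^{[\rho]}\wedge\Omega$, in agreement with Lemma~\ref{lem:reduction} and Corollary~\ref{cor:frame}.

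The only delicate step is that $\pi^{[\rho]}$ is nonvanishing on all of $U_\pi$ rather than just generically. This reduces to the standard identity $\Pf(A)^2=\det A$, which holds in any characteristic, applied to the nondegenerate leaf matrix. I would also check that the isomorphism is independent of the choice of complement $\cV$. This follows as in Lemma~\ref{lem:reduction}: a change of complement alters a $c$-vector only by terms with a leaf component, and those die against $\pi^{[\rho]}$.
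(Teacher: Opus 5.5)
Your argument is correct and is the paper's proof: on $U_\pi$ the nowhere-vanishing $\pi^{[\rho]}$ trivializes $\det(\im\pi^\sharp)$, the splitting $T_Y=\im\pi^\sharp\oplus\cV$ then gives $\det T_Y\cong\det\cV$, and $Q\cong\cV$. Your alternative via $0\to\im\pi^\sharp\to T_Y\to Q\to0$ is a slightly cleaner route to $\det Q\cong\omega_Y^{-1}$ because it needs no choice of complement, and your use of $\Pf(A)^2=\det A$ makes explicit why $\pi^{[\rho]}$ is nonvanishing on all of $U_\pi$ and not just generically.
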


\begin{proof}
On $U_\pi$ the nowhere vanishing $\pi^{[\rho]}$ trivializes $\det(\im\pi^\sharp)$, so $\det T_Y=\det(\im\pi^\sharp)\otimes\det\cV=\det\cV$.  Moreover $Q\cong\cV$ by the same normal form.
\end{proof}

\begin{lemma}[Invariance criterion]\label{lem:invariance}
Let $\pi$ be Poisson of generic rank $2\rho$ on a smooth $Y$, let
$\sigma=\varpi\wedge\Omega\neq0$ be a leafrian, and let $\eta$ be
the transverse volume of $\Omega$.  Then $\cL_{X_f}\sigma=0$ for every function
$f$ if and only if $\cL_{X_f}\eta=0$ for every $f$.  In particular, when the
symplectic leaves are the fibres of a map $\mu$ and $\eta=\mu^*(\eta_0)$ for a
volume $\eta_0$ on the leaf space, the section $\sigma$ is a Poisson leafrian.
\end{lemma}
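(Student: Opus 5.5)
The plan is to compare $\cL_{X_f}\sigma$ with $\cL_{X_f}\eta$ through the factorization $\sigma=\varpi\otimes\eta$ of Corollary~\ref{cor:frame}, with $\varpi=\pi^{[\rho]}$. I would work on the dense open $U_\pi$ and extend afterwards: $\sigma$ is a section of the line bundle $\omega_Y^{-1}$, $Y$ is smooth, and an identity of sections holding on $U_\pi$ holds everywhere.

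The first step is to note that every Hamiltonian field preserves the leaf factor. From $\cL_{X_f}\pi=[X_f,\pi]=-[\pi,[\pi,f]]=0$, which follows from graded Jacobi and $[\pi,\pi]=0$ (Theorem~\ref{lem:schouten}, automatic on a surface-derived bivector), the Pfaffian power satisfies $\cL_{X_f}\pi^{[\rho]}=0$. To get this without dividing by $\rho!$, I would use the derivation property on the unit decomposition $\pi=\sum_a\beta_a$ locally: $\cL_{X_f}$ acts on $\pi^{[\rho]}$ as a polynomial identity in the entries of $\pi$, so the conclusion follows from the characteristic-zero identity $\cL(\pi^{\wedge\rho})=\rho\,\pi^{\wedge\rho-1}\wedge\cL\pi$ by integrality.

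The second step is the splitting of the Lie derivative. Lemma~\ref{lem:leafclass} identifies $\sigma$ with $\varpi$ wedge the $Q$-projection of $\Omega$. On $U_\pi$, $\im\pi^\sharp$ is an involutive distribution preserved by each $X_f$, since $[X_f,X_g]=X_{\{f,g\}}$ and more generally $\cL_{X_f}$ maps leaf fields to leaf fields. Hence $X_f$ acts on $Q=T_Y/\im\pi^\sharp$ and on $\det Q$, where $\eta$ lives. Applying the Leibniz rule to $\sigma=\varpi\wedge\Omega$ gives
\[
\cL_{X_f}\sigma=(\cL_{X_f}\varpi)\wedge\Omega+\varpi\wedge\cL_{X_f}\Omega .
\]
The first term vanishes by the first step. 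In the second term, only the $Q$-component of $\cL_{X_f}\Omega$ survives against $\varpi$, and that component is $\cL_{X_f}\eta$ computed in $\det Q$. Since $\varpi$ is nowhere zero on $U_\pi$, wedging with $\varpi$ is injective from $\det Q$ into $\omega_Y^{-1}$ (Lemma~\ref{lem:linebundle}). This gives $\cL_{X_f}\sigma=0$ if and only if $\cL_{X_f}\eta=0$.

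For the last clause, suppose the leaves are the fibres of $\mu$ and $\eta=\mu^*\eta_0$. Then $X_f$ is tangent to the fibres, so it is $\mu$-vertical. Its induced action on the normal bundle $Q\cong\mu^*T_{\text{leaf space}}$ is the linearized flow along fibres, which is trivial on pulled-back sections. Hence $\cL_{X_f}\mu^*\eta_0=0$.

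The main obstacle is making rigorous that the Bott-type connection action of $X_f$ on $Q$ is exactly what $\varpi\wedge(\cdot)$ detects. This includes checking that the leaf components of $\cL_{X_f}\Omega$ really die, which they do because $\cL_{X_f}$ preserves the ideal generated by $\im\pi^\sharp$. It also requires the characteristic-$p$ care with the Pfaffian normalization when $\rho\ge p$.
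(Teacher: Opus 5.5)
Your proposal is correct and follows the paper's proof: Leibniz on $\sigma=\varpi\wedge\Omega$, $\cL_{X_f}\pi^{[\rho]}=0$ from $\cL_{X_f}\pi=0$, identification of the surviving transverse part of $\cL_{X_f}\Omega$ with $\cL_{X_f}\eta$, and $d\mu(X_f)=0$ for the final clause. You work with the quotient $Q=T_Y/\im\pi^\sharp$, which $\cL_{X_f}$ does preserve, where the paper uses a complement $\cV$, which $\cL_{X_f}$ need not preserve; this, together with your integrality argument for the divided power when $\rho\ge p$, makes the paper's terse steps precise.
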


\begin{proof}
Since $\cL_{X_f}\pi=0$, the derivation $\cL_{X_f}$ preserves $\pi^{[\rho]}$ and
the splitting $T=\operatorname{im}\pi^\sharp\oplus\cV$.
By Leibniz, $\cL_{X_f}\sigma=\pi^{[\rho]}\wedge\cL_{X_f}\Omega$, and the $\Lambda^c\cV$-component of $\cL_{X_f}\Omega$ is $\cL_{X_f}\eta$.
If the symplectic leaves are the fibres of $\mu$ and $\eta=\mu^*(\eta_0)$, then $d\mu(X_f)=0$, whence $\cL_{X_f}\eta=0$.
\end{proof}

\begin{corollary}[Invariance]\label{cor:norminv}
If $\pi$ is Poisson, $\LF_s$ is invariant under the Hamiltonian vector fields:
$\cL_{X_f}\LF_s=0$ for every function $f$.
\end{corollary}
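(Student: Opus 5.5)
The plan is to reduce the statement to Lemma~\ref{lem:invariance}. Since $\pi$ is Poisson on $M$, we have $\cL_{X_f}\pi=0$ for every local function $f$, and therefore also $\cL_{X_f}\pi^{[\rho]}=0$. For $n=2$ the Pfaffian power is a polynomial expression in $\pi$. When $\rho\ge p$ the naive wedge cannot be divided by $\rho!$, so I would argue in a leaf frame instead: $\cL_{X_f}$ is a derivation commuting with the formation of $\Pf$ of the coefficient matrix, because the normalized power is defined integrally and the identity holds over $\mathbb Z$. This gives the first Leibniz term. It then remains to show $\cL_{X_f}\eta=0$, where $\eta$ is the transverse volume (radial determinant) of the Euler frame $\Omega$. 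By Lemma~\ref{lem:leafclass}, only the $Q$-component of $\cL_{X_f}\Omega$ matters.

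For the transverse part, I would use the presentation. The class coordinates $c_j$ are $\mu$-invariant and are constant along the fibres (P1). I would show that they are Casimirs of $\theta_s$. By Lemma~\ref{lem:descent} and Lemma~\ref{lem:qrank}, the differentials $dc_j$ project onto a basis of $Q^\vee=\cV^\vee$ dual to the class directions. The block directions span $\operatorname{im}\theta_s^\sharp$ (Lemma~\ref{lem:zeroblock} and Lemma~\ref{lem:hd-zeroblock}, together with the identification Proposition~\ref{prop:localmodel} and Descent, Theorem~\ref{thm:hd-descent}). So $\theta_s^\sharp(dc_j)$ has no block component and lies in the radical, hence it vanishes. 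Consequently $X_f(c_j)=-\{c_j,f\}=0$, and $X_f$ is tangent to the level sets of the $c_j$, i.e.\ to the leaves. Then $\eta=\prod_j c_j\cdot\Delta$ is a function of the $c_j$ alone, up to the unit $\Delta$. I would check that the $Q$-projection of $\cL_{X_f}(c_j\partial_{c_j})=c_j[X_f,\partial_{c_j}]$ vanishes. This holds because $[X_f,\partial_{c_j}]$ applied to $c_i$ equals $X_f(\delta_{ij})-\partial_{c_j}(X_f c_i)=0$, so the bracket is tangent to the leaves. This is exactly the situation "leaves are fibres of $\mu$ and $\eta=\mu^*\eta_0$" in Lemma~\ref{lem:invariance}, applied on the regular locus $U_\pi$.

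Finally, $\cL_{X_f}\LF_s$ is a regular section of $\omega_M^{[-1]}$ that vanishes on the dense open $U_\pi\cap M^{\mathrm{reg}}$. Since $M$ is reduced and normal and the sheaf is reflexive, it vanishes identically; the statement is local, so $f$ may be taken on any open.

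I expect the main obstacle to be the Casimir step: showing that the class coordinates Poisson-commute with everything. This requires that the block directions exhaust $\operatorname{im}\theta_s^\sharp$ at a general point, and not merely lie in it. The first thing I would try is the rank count: the blocks give rank $2\rho$ by Lemma~\ref{lem:hd-lengthone} with $g\ne0$ at general members, and $2\rho=\operatorname{rk}\theta_s$ by definition of $\rho$, so the containment is an equality. If the identification is only available on the member locus $U$, I would first prove the vanishing there and extend it by density as above.
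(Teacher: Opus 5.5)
Your proposal is correct and follows the same skeleton as the paper's proof. Both reduce to Lemma~\ref{lem:invariance}, work in the $(q,c)$ chart on a dense open where $\theta_s$ has only block components, derive $X_f(c_j)=0$ (your Casimir formulation is the paper's ``$X_f$ is tangent to the leaves''), compute $[X_f,w_j]$ by evaluating on the $c_i$, and extend by density.

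The one real difference is the last step, and there your version is the better one. The paper asserts that $[X_f,w_j]$ lies in the span of the $w_k$, because $\cL_{X_f}$ ``preserves the radical'', and concludes $\cL_{X_f}\Omega=0$. But a complement of $\operatorname{im}\pi^\sharp$ is not preserved by Hamiltonian fields. Already for $\pi=\partial_{q_1}\wedge\partial_{q_2}$, $w=c\,\partial_c$ and $f=c\,q_1$ one has $X_f=\pm c\,\partial_{q_2}$ and $[X_f,w]=\mp c\,\partial_{q_2}\neq0$. The evaluation on the $c_i$ only shows that $[X_f,w_j]$ has zero $Q$-component, which is exactly your claim. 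Lemma~\ref{lem:leafclass} then kills the leafwise remainder against $\pi^{[\rho]}$. So your argument proves the corollary without the false intermediate claim $\cL_{X_f}\Omega=0$.

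A small simplification for the step you flag as the main obstacle: you do not need the rank count. In the chart, the block form $\theta_s=\sum_a g(x_a)\,\partial_{q_{a,1}}\wedge\partial_{q_{a,2}}$ has no $\partial_{c_j}$-components, so $\theta_s^\sharp(dc_j)=0$ directly. Likewise, any vector in the span of the $\partial_q$ dies against $\theta_s^{[\rho]}$, whether or not the $g(x_a)$ vanish.
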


\begin{proof}
By Lemma~\ref{lem:invariance} it suffices that $\cL_{X_f}\eta_{\mathrm{can}}=0$.  We work on the dense open set $V:=U_\pi\cap\{\prod_j c_j\neq0\}$ and extend at the end.  The symbols $c_j$ are local slice coordinates on $V$; the Euler fields $w_j$ are global by Lemma~\ref{lem:descent}, and locally $w_j=c_j\partial_{c_j}$.  On $V$ the leaf matrix is block diagonal with nondegenerate blocks, and the class directions span the transverse directions; hence the radical is spanned by the $w_j$ and the symplectic leaf tangent is the $(q,p)$-span.  A Hamiltonian field $X_f$ is tangent to the leaves, hence $X_f(c_j)=0$ on $V$; and $\cL_{X_f}\pi=0$ preserves the radical, so $[X_f,w_j]=\sum_k a_{kj}w_k$ on $V$.  Evaluating on the local coordinates $c_i$ gives
\[
 [X_f,w_j](c_i)=X_f\bigl(w_j(c_i)\bigr)-w_j\bigl(X_f(c_i)\bigr)
   =X_f(\delta_{ij}c_i)=0,
 \qquad \Bigl(\sum_k a_{kj}w_k\Bigr)(c_i)=a_{ij}c_i,
\]
so $a_{ij}=0$ on $V$.  Thus $\cL_{X_f}(w_1\wedge\cdots\wedge w_c)=0$ on $V$, and on $M$ by regularity.  No completeness enters, and no flow of $X_f$ is used.
\end{proof}

\begin{definition}[Poisson leafrian]\label{def:poissonfr}
Let $\pi$ be a Poisson tensor on $Y$.  A leafrian $\sigma$ is a \emph{Poisson leafrian} when $\cL_{X_f}\sigma=0$ for every function $f$.
\end{definition}

\begin{remark}[A sufficient condition for invariance]\label{rem:upgrade-crit}
Let $\sigma=\varpi\wedge\Omega$ be a leafrian.  If $\cL_{w_i}\pi=0$ for every $i$, then $\cL_{X_f}\sigma=0$ for every function $f$.  Indeed
\[
 [\pi,\Omega]=\sum_i\pm\cL_{w_i}\pi\wedge(\text{omit }w_i),
\]
so $[\pi,\Omega]=0$, and graded Jacobi gives
\[
 [X_f,w_i]=[[\pi,f],w_i]=[\pi,[f,w_i]]=-X_{w_i(f)}\in\operatorname{im}\pi^\sharp .
\]
Hence $\cL_{X_f}\Omega$ is a sum of wedges in which one slot lies in $\operatorname{im}\pi^\sharp$.  Since $\pi^{[\rho]}$ is the top wedge of the leaf directions, $\pi^{[\rho]}\wedge\cL_{X_f}\Omega=0$.
\end{remark}

\begin{theorem}[Upgrade]\label{thm:upgrade}
Let $\pi$ be a Poisson tensor on $Y$ and let $\sigma$ be a Poisson leafrian.  If $\sigma$ splits $Y$, then every compatibly split subvariety of $Y$ is a Poisson subvariety.
\end{theorem}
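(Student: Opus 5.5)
The plan is to show that the ideal sheaf $\cI_Z$ of a compatibly split subvariety $Z\subset Y$ is stable under every Hamiltonian derivation $X_f=\pi^\sharp(df)$. Stability of $\cI_Z$ under all $X_f$ is the same as $\{\cI_Z,\cO_Y\}\subset\cI_Z$, which is the definition of a Poisson subvariety. Write $\varphi=\varphi_\sigma=\sigma^{p-1}$, rescaled so that it is a splitting. Compatibility means $\varphi(F_*\cI_Z)\subset\cI_Z$, and we may take $Z$ reduced by Lemma~\ref{lem:loccompat} and the reducedness argument there.

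\emph{Step 1: Hamiltonian fields commute with $\varphi$.} The hypothesis $\cL_{X_f}\sigma=0$ (Definition~\ref{def:poissonfr}) gives $\cL_{X_f}\sigma^{p-1}=(p-1)\sigma^{p-2}\cL_{X_f}\sigma=0$. A standard computation in characteristic $p$ then applies. Let $\delta$ be a derivation whose Lie derivative kills a section $\psi\in H^0(\omega_Y^{1-p})$, viewed as $\psi\colon F_*\cO_Y\to\cO_Y$ via the Cartier/trace identification. Then
\[
 \delta(\psi(a))=\psi(\delta(a))
\]
for local $a$. The reason is that the Cartier operator on top forms satisfies $C(\cL_\delta\omega)=\cL_\delta(C\omega)$ only up to a $p$-th power correction. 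Concretely, I would verify the identity in local coordinates $t_i$. There $\psi(a)$ extracts the coefficient of $t^{(p-1)\mathbf 1}$ in $a\cdot(\text{local function of }\psi)$, read in the basis of $p$-th powers. Using $\delta(b^p)=0$, both sides differ by the divergence term, and this term vanishes exactly when $\cL_\delta\psi=0$. This local check is the main technical step. I would first try it for $\delta$ a coordinate vector field, using the Leibniz rule $\cL_\delta(g\,\partial_{[d]}^{\otimes(p-1)})$, and then pass to general $\delta$ by $\cO_Y$-linearity modulo $p$-th powers.

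\emph{Step 2: the ideal is preserved.} Let $a\in\cI_Z$. I want $X_f(a)\in\cI_Z$. Since $\varphi$ splits, $a=\varphi(a^p\cdot 1)$, but it is better to use the $\cO$-linearity of $\varphi$ together with Step 1. For $b\in\cO_Y$, apply Step 1 to $a^{p-1}b$, which lies in $\cI_Z$ because $p-1\ge1$. Compatibility gives $\varphi(X_f(a^{p-1}b)) = X_f(\varphi(a^{p-1}b))$, and $\varphi(a^{p-1}b)\in\cI_Z$. The Leibniz rule expands $X_f(a^{p-1}b)=(p-1)a^{p-2}X_f(a)b+a^{p-1}X_f(b)$. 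The second term maps under $\varphi$ into $\cI_Z$. The first term can be rewritten as $-\varphi(a^{p-2}bX_f(a))$. Taking $b=a\,c^{p}$-type test elements and using that $\varphi$ is surjective onto $\cO$ with $\varphi(F(c))=c$ isolates $X_f(a)$.

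A cleaner route is the following, and it is the one I would adopt. Since $Z$ is reduced and compatibly split, it suffices to check $X_f(a)|_Z=0$ at generic points of each component of $Z$. There $Z$ is smooth and, by the standard theory \cite{BKu}, locally a component of an intersection of components of $Z(\sigma)$. The zero divisor of $\sigma$ is $X_f$-stable because $\cL_{X_f}\sigma=0$ preserves the ideal of $Z(\sigma)$: if $\sigma=h\,\partial_{[d]}$, then $\cL_{X_f}\sigma=(X_f(h)-h\,\mathrm{div}X_f)\partial_{[d]}=0$, so $X_f(h)\in(h)$. Intersections, components and unions of $X_f$-stable reduced subschemes stay $X_f$-stable, since a derivation preserving an ideal preserves its radical and associated primes in characteristic $p$ (using Seidenberg-type arguments, valid here for reduced ideals of a derivation stable under $\delta$). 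Every compatibly split subvariety arises this way from $Z(\sigma)$ on $Y$ smooth \cite[Ch.~1]{BKu}.

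\emph{Obstacle.} The delicate point is the last sentence of Step 2: that compatibly split subvarieties are obtained from $Z(\sigma)$ by iterated components and intersections. This holds for splittings of the form $\sigma^{p-1}$ in the known Kumar–Mehta framework, but I would have to check it directly. If that fails, I fall back on the direct argument of Step 2 via the commutation identity of Step 1, which then becomes the genuine main step.
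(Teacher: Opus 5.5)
\textbf{The gap is in Step~1, and Step~2 rests on it.} The identity $\delta(\varphi(a))=\varphi(\delta(a))$ fails for every nonzero derivation $\delta$ once $\varphi$ is a splitting. For $a=b^p$ the left side is $\delta(b\,\varphi(1))=\delta(b)$, while the right side is $\varphi(0)=0$. For instance, on $\mathbb A^1$ with $\sigma=t\,\partial_t$, $\delta=t\,\partial_t$ and $a=t^p$, you get $t\neq0$, although $\cL_\delta\sigma=0$.

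The correct local statement uses exactly your ingredients, but it is not a commutation. Write $\psi=g\,\Omega_0\otimes\Omega_0^{\otimes(-p)}$ for a local volume form $\Omega_0$, so that $\varphi_\psi(a)\,\Omega_0=C(ag\,\Omega_0)$, with $C$ the Cartier operator. On top forms $\cL_\delta\eta=d(\iota_\delta\eta)$ is exact, so $C$ kills it. Also $\cL_\delta$ kills the $p$-th power factor $\Omega_0^{\otimes(-p)}$. Hence
\[
 \varphi_\psi\bigl(\delta(a)\bigr)\,\Omega_0=C\bigl(\cL_\delta(ag\,\Omega_0)\bigr)-C\bigl(a\,\cL_\delta(g\,\Omega_0)\bigr)=-\varphi_{\cL_\delta\psi}(a)\,\Omega_0 .
\]
Since $\cL_{X_f}\sigma^{p-1}=(p-1)\sigma^{p-2}\cL_{X_f}\sigma=0$, this gives $\varphi\circ X_f=0$.

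With this identity the ideal argument takes two lines. For $a\in\cI_Z$ and $b\in\cO_Y$,
\[
 \varphi\bigl(b\,X_f(a)\bigr)=\varphi\bigl(X_f(ab)\bigr)-\varphi\bigl(a\,X_f(b)\bigr)=-\varphi\bigl(a\,X_f(b)\bigr)\in\cI_Z .
\]
The test element $b=X_f(a)^{p-1}$ then gives $X_f(a)=\varphi\bigl(X_f(a)^p\bigr)\in\cI_Z$, so $\{\cI_Z,\cO_Y\}\subset\cI_Z$. This is a self-contained replacement for what the paper obtains by citation. Its proof only notes that Hamiltonian fields preserve $\sigma$ and its splitting, and then invokes \cite[Thm.~D]{PY}.

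\textbf{The fallback route does not close the gap either.} The claim that every compatibly split subvariety is, at its generic point, a component of an intersection of components of $Z(\sigma)$ is not a result of \cite[Ch.~1]{BKu}. Read Zariski-locally, it is false. Take $\sigma=xyz+x^3+y^3$ on $\mathbb P^2$. The coefficient of $(xyz)^{p-1}$ in $\sigma^{p-1}$ is $1$, so $\sigma^{p-1}$ splits $\mathbb P^2$, and $Z(\sigma)$ is an irreducible nodal cubic. The node is compatibly split: in the chart $z=1$, every monomial of $a\,\sigma^{p-1}$ with $a\in(x,y)$ has degree $>2(p-1)$, so $\varphi(a)$ vanishes at the node. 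A correct structural statement has to pass through normalizations, and you would need to prove it.

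Your characteristic-$p$ Seidenberg claim is also false in the generality you use it. The derivation $\partial_x$ preserves $(x^p)$ but not its radical $(x)$. Only the minimal primes of a radical stable ideal are automatically stable.
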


\begin{proof}
The invariance says that every Hamiltonian vector field preserves $\sigma$ and its splitting.  The statement for compatibly split subvarieties is \cite[Thm.~D]{PY}.
\end{proof}

\begin{corollary}[The upgrade on moduli]\label{cor:normupgrade}
For the leafrian of a datum the tensor $\hat s$ is Poisson, $\LF_s$ is a Poisson leafrian, and every compatibly split subvariety of $M$ is a Poisson subvariety.
\end{corollary}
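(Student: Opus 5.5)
The plan is to assemble Corollary~\ref{cor:normupgrade} from three earlier results, in the order: Poisson property of $\hat s$, invariance of $\LF_s$, upgrade to compatibly split subvarieties.

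\emph{Step 1: $\hat s$ is Poisson.} On a surface $n=2$, so the transferred polyvector $\hat s$ of Definition~\ref{def:tpoly} is a bivector. Theorem~\ref{lem:schouten} gives $[\hat s,\hat s]=0$ on the member locus, and Proposition~\ref{prop:hat-well} extends $\hat s$ regularly across the collision strata. Theorem~\ref{thm:hd-descent} identifies $\hat s$ with Bottacin's tensor $\theta_s$ on all of $M$, as regular sections agreeing on a dense open of a reduced space. Since $[\theta_s,\theta_s]$ is a regular trivector vanishing on a dense open, it vanishes identically. So $\pi:=\hat s=\theta_s$ is a Poisson tensor on $M$, with generic rank $2\rho$ by Remark~\ref{rem:rho-open}.

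\emph{Step 2: $\LF_s$ is a Poisson leafrian.} With $\pi$ Poisson, Corollary~\ref{cor:norminv} applies directly to $\LF_s=\theta_s^{[\rho]}\wedge\Omega$. The Euler fields $w_j$ are global by Lemma~\ref{lem:descent}. On the dense open $V=U_\pi\cap\{\prod_jc_j\neq0\}$ the Hamiltonian fields kill the class coordinates, so they commute with the $w_j$. Hence $\cL_{X_f}\LF_s=0$ on $V$, and by regularity on $M$. In the terminology of Definition~\ref{def:poissonfr}, $\LF_s$ is a Poisson leafrian. If a presentation with $\cL_{w_j}\pi=0$ is available, Remark~\ref{rem:upgrade-crit} gives a second route.

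\emph{Step 3: compatibly split subvarieties.} Apply Theorem~\ref{thm:upgrade} with $Y=M$ whenever $\LF_s$ splits $M$, that is, when $\Tr_X(s)\neq0$ by Corollary~\ref{cor:split}. Every compatibly split subvariety is then a Poisson subvariety.

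The main obstacle is that Theorem~\ref{thm:upgrade}, via \cite[Thm.~D]{PY}, is stated for smooth $Y$, while $M$ is only normal. I would first restrict to the smooth locus $M^{\mathrm{reg}}$, whose complement has codimension at least two. On $M^{\mathrm{reg}}$ one obtains: each compatibly split $Z$ meets $M^{\mathrm{reg}}$ in a Poisson subvariety, i.e.\ $\pi^\sharp(I_Z)\subset I_Z$ there.

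To pass to the closure I would argue as follows. The ideal of a compatibly split subvariety is the image of the ideal of its regular part under the splitting, and it is radical. The condition $\{I_Z,\cO_M\}\subset I_Z$ is closed. So it extends over components of $Z$ meeting $M^{\mathrm{reg}}$.

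For components contained in the singular locus, I would instead invoke the algebraic form of the argument of \cite{PY}: Hamiltonian derivations preserving the splitting map $\varphi$ preserve every $\varphi$-compatible ideal. This needs only $\cL_{X_f}\varphi=0$, which follows from $\cL_{X_f}\LF_s=0$ on the normal variety $M$ via reflexivity of $\omega_M^{[1-p]}$.
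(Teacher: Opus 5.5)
Your proposal is correct and is the paper's proof: the paper chains Theorem~\ref{lem:schouten}, Corollary~\ref{cor:norminv} and Theorem~\ref{thm:upgrade}, and applies the last of these to $M$ without addressing that it was set up for smooth $Y$. Your supplementary treatment of singular $M$ is sound and in fact suffices on its own: $\cL_{X_f}\varphi=0$ amounts to $\varphi\circ X_f=0$, so for a compatible ideal $I$ and $a\in I$ one gets $\varphi\bigl(b\,X_f(a)\bigr)\in I$ for every $b$, whence $X_f(a)=\varphi\bigl(X_f(a)^{p}\bigr)\in I$, which makes the detour through $M^{\mathrm{reg}}$ unnecessary.
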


\begin{proof}
By Theorem~\ref{lem:schouten} the tensor $\hat s$ is Poisson, and by Corollary~\ref{cor:norminv} the leafrian is invariant.  Theorem~\ref{thm:upgrade} applies.
\end{proof}

\subsection{The radical and the type spectrum}

This subsection records the radical of the intrinsic pairing $\beta_s$ and the type spectrum of its contribution $c_0$.

\begin{lemma}[Radical of the pairing; rank count]\label{lem:radical}
Let $[E]\in M$ and let $B_s$ be the matrix of $\beta_s$ on $\Ext^1(E,E)$ at $[E]$, of size $t=\dim\Ext^1(E,E)$ as in Section~\ref{sub:moduli-setting} (so $t=d$ on the smooth locus of $M$).  Write
\[
 c_0=h^0(D,\End(E|_D))-1.
\]
Then
\[
 \dim\Ker B_s=c_0,
\qquad
 \rk\theta_s=t-c_0.
\]
At a point of the smooth locus one has $t=d$, so $\rk\theta_s=d-c_0$; when the members realize the leaf this is $2\rho$, and the budget $2\rho+c=d$ identifies $c=c_0$.  If $E$ is not locally free at a point of $D$, then $E|_D$ has torsion and is not simple.
\end{lemma}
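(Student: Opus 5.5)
The plan is to compute the kernel of $B_s$ via Serre duality and the restriction sequence to $D$. Since $s\in H^0(X,\omega_X^{-1})$ has zero scheme $D$, there is the exact sequence
\[
 0\to \End E\otimes\omega_X\xrightarrow{\ \cdot s\ }\End E\to \End E\otimes\cO_D\to 0 ,
\]
or the corresponding sequence of $\Ext$-sheaves when $E$ is not locally free. The first step is to identify $B_s$, through the transpose description of \eqref{eq:bot}, with the composite
\[
 \Ext^1(E,E)\cong\Ext^1(E,E\otimes\omega_X)^\vee\longrightarrow\Ext^1(E,E\otimes\omega_X)\xrightarrow{\ \cdot s\ }\Ext^1(E,E).
\]
More intrinsically, the map $\theta_s^\sharp\colon T^*\to T$ is multiplication by $s$ from $\Ext^1(E,E\otimes\omega_X)$ to $\Ext^1(E,E)$. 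Hence $\dim\Ker B_s=\dim\Ker\bigl(\Ext^1(E,E\otimes\omega_X)\xrightarrow{s}\Ext^1(E,E)\bigr)$.

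The second step uses the long exact sequence of $\Ext^\bullet(E,-)$ applied to the sequence above:
\[
 \Hom(E,E\otimes\omega_X)\to\Hom(E,E)\to\Hom(E,E|_D)\xrightarrow{\ \partial\ }\Ext^1(E,E\otimes\omega_X)\xrightarrow{\ s\ }\Ext^1(E,E).
\]
From this, $\Ker(s)=\operatorname{im}\partial\cong\Hom(E,E|_D)/\operatorname{im}\Hom(E,E)$. Stability gives $\Hom(E,E)=k$. When $(D\cdot H)>0$ we have $\Hom(E,E\otimes\omega_X)=0$, and the case $D=\emptyset$ is trivial, with $c_0=0$ and a symplectic pairing. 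The map $k\to\Hom(E,E|_D)$ is injective because $\mathrm{id}$ restricts nontrivially. For locally free $E$ we have $\Hom(E,E|_D)=H^0(D,\End(E|_D))$, so $\dim\Ker B_s=h^0(D,\End(E|_D))-1=c_0$. For non-locally free $E$, I would read $\End(E|_D)$ as $\Hom_{\cO_D}(E|_D,E|_D)=\Hom_X(E,E|_D)$ by adjunction, so the same formula holds. Then $\rk\theta_s=t-c_0$ is rank–nullity on the $t\times t$ matrix.

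For the remaining assertions: on the smooth locus $t=d$, and when the members realize the leaf the generic rank is $2\rho$ by Remark~\ref{rem:rho-open}. The budget $2\rho+c=d$ of (P2) then forces $c=c_0$. For the last claim, if $E$ is not locally free at $x\in D$, then locally $E\subset E^{\vee\vee}$ with cokernel at $x$. Then $\mathcal Tor_1(E,\cO_D)$ is nonzero, and $E|_D$ acquires a torsion subsheaf $\tau$ supported at $x$, coming from the failure of $E^{\vee\vee}/E$ to be $s$-regular. Composing $E|_D\to E|_D/\tau$ with a map to $\tau$ gives a nilpotent non-scalar endomorphism, so $E|_D$ is not simple.

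The main obstacle is the first step: justifying carefully that the matrix of $\beta_s$ on $\Ext^1(E,E)$ is the Serre-duality transpose of multiplication by $s$. In particular one must check that the kernels of the bilinear form and of the map $\cdot s$ agree with no extra contribution from $\Ext^2$ terms. I would do this via the compatibility of the Yoneda product with the trace, $\Tr(s\,\xi\circ\eta)=\langle s\xi,\eta\rangle_{\mathrm{Serre}}$, which is nondegenerate.
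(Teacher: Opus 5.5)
Your proposal is correct and follows essentially the paper's route. You use the restriction sequence $0\to E\otimes\omega_X\xrightarrow{\cdot s}E\to E|_D\to0$ and the connecting map to identify the radical with $\Hom(E,E|_D)=H^0(D,\End(E|_D))$ modulo the scalars. For non-locally free $E$, your $\Ext^\bullet(E,-)$ plus adjunction version is cleaner than the paper's sheaf-level $\End$ sequence. You then exhibit a nilpotent endomorphism of $E|_D$ with torsion image.

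One slip needs fixing. $\operatorname{Tor}_1^X(E,\cO_D)=0$ because $E$ is torsion-free. The nonzero group is $\operatorname{Tor}_1^X(E^{\vee\vee}/E,\cO_D)$, which injects into $E|_D$ since $E^{\vee\vee}$ is locally free. That is the torsion subsheaf you need, and it generalizes the paper's appeal to Lemma~\ref{lem:hecke}.
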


\begin{proof}
The pairing $\beta_s$ is the transpose of Bottacin's hamiltonian morphism along the exact sequence
\[
 0\to\End(E)\otimes\omega_X\to\End(E)\to\End(E|_D)\to 0,
\]
whose connecting map identifies
\[
 \Ker B(E)=H^0(D,\End(E|_D))\big/ C
\]
with $C$ the image of $H^0(\End E)$ in $H^0(\End E|_D)$ \cite[Prop.~6.2]{Bot}.  For stable $E$ one has $H^0(\End E)=k$, so $\dim C=1$ and $\dim\Ker B_s=c_0$.  The rank $t-c_0$ is even because $B_s$ is antisymmetric.  If $E$ is singular at $x\in D$, then $E|_D$ has a nonzero torsion subsheaf $T$ by Lemma~\ref{lem:hecke}.  From $0\to T\to E|_D\to F\to 0$ with $F$ locally free one gets $\Hom(E|_D,T)\supseteq\Hom(F,T)\neq0$, and composing with $T\subset E|_D$ gives endomorphisms of $E|_D$ with torsion image, which are linearly independent of the scalars; hence $h^0(\End(E|_D))\geq2$.
\end{proof}

\begin{corollary}\label{cor:nonsimple}
If $E|_D$ is not simple at some point of $M$, then $\theta_s$ does not have maximal rank there.
\end{corollary}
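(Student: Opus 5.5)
The plan is to derive the corollary directly from Lemma~\ref{lem:radical}, arguing by contrapositive. Maximal rank of the skew form $\theta_s$ at $[E]$ means that its kernel is as small as it can be for stable $E$. The lemma computes this kernel exactly:
\[
 \dim\Ker B_s=c_0=h^0(D,\End(E|_D))-1 .
\]
The first step is to make ``maximal rank'' precise. Among stable points of $M$, the rank is at most $t-\min c_0$. It equals $t$ exactly when $h^0(D,\End(E|_D))=1$, that is, when $E|_D$ is simple. If the intended maximal rank is $t$, the argument is immediate: $E|_D$ not simple means $h^0(\End(E|_D))\ge 2$, hence $c_0\ge1$, hence $\rk\theta_s=t-c_0<t$.

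The second step handles the case $D=\emptyset$. Then $\End(E|_D)=0$, so the hypothesis forces $D\neq\emptyset$ and no separate treatment is needed.

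The one point needing care is the reading of ``maximal rank'' when the generic rank $2\rho$ is already below $t$, that is, when $c>0$. In that reading, ``not maximal'' should mean $\rk<2\rho$. I would handle this by comparing $c_0$ at $[E]$ with its generic value $c$. Write $\End(E|_D)=\mathcal O_D\cdot\mathrm{id}\oplus\End_0(E|_D)$, using that $p\nmid r$ or else treating the trace separately. At generic data the contribution from $\End_0$ is minimal, and a non-simple restriction at a special point increases $h^0(\End_0)$ by upper semicontinuity. This gives $c_0([E])>c$ when non-simplicity is a special phenomenon.

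I expect this semicontinuity comparison to be the main obstacle. If it does not go through cleanly, I would state the corollary in the sense $\rk\theta_s<t$, which follows from Lemma~\ref{lem:radical} with no further input. The final clause of that lemma, that a non-locally-free point on $D$ gives a non-simple $E|_D$, then also yields the rank drop at such points.
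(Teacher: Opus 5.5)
Your immediate argument is exactly the paper's proof, with ``maximal rank'' meaning full rank $d$: non-simplicity gives $h^0(D,\End(E|_D))\ge2$, hence $c_0\ge1$, and Lemma~\ref{lem:radical} gives $\rk\theta_s=t-c_0<t$. You can drop the semicontinuity detour, because the reading $\rk\theta_s<2\rho$ is not the intended one and would fail whenever the generic restriction is already non-simple, for instance for block type as in Example~\ref{ex:testbed}, where $E|_D$ is non-simple at generic points while the rank there is $2\rho$.
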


\begin{proof}
If $E|_D$ is not simple then $h^0(D,\End E|_D)\ge2$, so $c_0\ge1$ and the
radical has dimension $c_0\ge1$ at that point.  Hence the rank of
$\theta_s$ is at most $d-1$ there.
\end{proof}

\begin{lemma}[Shape of $D$]\label{lem:shape}
Assume $D\neq\emptyset$.  If $s^{p-1}$ Frobenius-splits $X$, then $D$ is reduced of arithmetic genus $p_a(D)=1$; if irreducible, $D$ is an elliptic curve or a rational curve with one node or cusp.
\end{lemma}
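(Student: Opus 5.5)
The plan is to combine reducedness of $D$, which follows from compatible splitting, with adjunction and the fact that $D$ is an anticanonical divisor. \emph{Reducedness.} Since $s^{p-1}$ Frobenius splits $X$ and $X$ is smooth and complete, Theorem~\ref{thm:compat} shows that $Z(s)=D$ is compatibly split, hence reduced. Lemma~\ref{lem:loccompat} gives the same conclusion without completeness.

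\emph{Arithmetic genus.} $D$ is an effective Cartier divisor in the class $-K_X$, so adjunction gives $\omega_D\cong(\omega_X\otimes\cO_X(D))|_D\cong\cO_D$. Hence $\deg\omega_D=2p_a(D)-2=0$ and $p_a(D)=1$. Alternatively, from $0\to\omega_X\to\cO_X\to\cO_D\to0$ we get $\chi(\cO_D)=\chi(\cO_X)-\chi(\omega_X)=0$ by Serre duality, and $p_a(D)=1-\chi(\cO_D)$; this route needs no integrality of $D$.

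\emph{Irreducible case.} If $D$ is irreducible and reduced, it is an integral curve with $p_a(D)=1$, and its geometric genus $g$ satisfies $g\le 1$.
\begin{itemize}
\item If $g=1$, then $D$ is smooth, hence an elliptic curve.
\item If $g=0$, the $\delta$-invariant is $\sum_x\delta_x=p_a-g=1$. So $D$ has a unique singular point, with $\delta=1$.
\end{itemize}
A planar singularity (here $D$ lies on a smooth surface) with $\delta=1$ is either an ordinary node or an ordinary cusp. One way to see this is via Milnor's formula $\mu=2\delta-r+1$: it gives $r=2,\ \mu=1$ (a node) or $r=1,\ \mu=2$ (a cusp, $A_2$). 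In characteristic $p$ I would avoid the Milnor number and classify directly. With $\delta=1$, the normalization adds a one-dimensional quotient $\tilde{\cO}/\cO$, and the conductor is the maximal ideal. So the local ring is either $k[[t]]\times k[[t]]$ glued at one point (a node) or $k[[t^2,t^3]]$ (a cusp).

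\emph{Main obstacle.} The only delicate step is this last classification of $\delta=1$ singularities in positive characteristic; everything else is formal. The standard argument through the conductor and semigroup is characteristic-free and should suffice. The cusp may in fact be excluded by splitting considerations, which would be a bonus. Note, however, that the phrase ``node or cusp'' is exactly the $\delta=1$ list, so no further work is needed for the statement as given.
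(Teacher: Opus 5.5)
Your proof is correct and follows the paper's route: reducedness from compatible splitting via \cite[Prop.~1.3.11]{BKu}, and $p_a(D)=1$ from $\chi(\cO_D)=\chi(\cO_X)-\chi(\omega_X)=0$ (your alternative route, which is exactly the paper's; the adjunction route $\omega_D\cong\cO_D$ works equally well). The paper gives no argument for the irreducible clause, and your characteristic-free $\delta$-invariant and conductor argument ($\delta=1$ forces $\mathfrak m$ to be the conductor, with $\tilde\cO/\mathfrak m\cong k\times k$ or $k[\epsilon]/(\epsilon^2)$) fills that omission.
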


\begin{proof}
Reducedness is \cite[Prop.~1.3.11]{BKu}.  For the arithmetic genus, $D\in|-K_X|$ gives
$0\to\omega_X\to\cO_X\to\cO_D\to0$, and Serre duality in dimension two gives
$\chi(\omega_X)=\chi(\cO_X)$, whence $\chi(\cO_D)=0$ and $p_a(D)=1$.
\end{proof}

\noindent The \emph{type spectrum} of $c_0$ depends only on the generic isomorphism type of $E|_D$: multiplier type gives $c_0=r^2h^0(\cO_D)-1$, block type gives $c_0=(\sum_\alpha m_\alpha^2)h^0(\cO_D)-1$, and simple type gives $c_0=h^0(\cO_D)-1$.  For connected $D$ these are $r^2-1$, $\sum_\alpha m_\alpha^2-1$ and $0$.  In all cases $c_0\le r^2h^0(\cO_D)-1$.  The corank of $\beta_s$ is $c_0$, and when the members realize the leaf this equals the transverse count $c$.

\begin{remark}[When the leaf is empty]\label{rem:emptycount}
By Lemma~\ref{lem:radical} one has $2\rho=d-c$ with $c=c_0$, so
\[
 \rho=0\qquad\Longleftrightarrow\qquad d=c_0.
\]
Since $c_0\le r^2h^0(\cO_D)-1$, the empty leaf forces
\[
 d\ \le\ r^2h^0(\cO_D),
\]
and for connected $D$ this is $d\le r^2$.  For $r=1$ one has $c_0=h^0(\cO_D)-1$ at a generic object, and the empty leaf occurs exactly when
\[
 d=h^0(\cO_D)-1.
\]
For principal bundles the same count reads $c_G\le\dim G\cdot h^0(\cO_D)-1$.  For instantons the count is $c=h^0(\End(E)(-L)|_D)$ by Lemma~\ref{lem:inst-radical}; on $(\mathbb P^2,L)$ this vanishes at a general point, so $d=2rk$ gives $\rho=rk$, and the leaf is empty only for $k=0$.  The supply question of the empty leaf is Proposition~\ref{prop:empty}.
\end{remark}

\begin{lemma}[Radical and rank, principal case]\label{lem:symp-radical}
At a stable $[P]$ with $G$ semisimple the radical of $\beta_s$ is the image of $H^0(D,\mathfrak g_P|_D)$ in $\Ext^1$, of dimension
\[
 i_G=\dim\operatorname{image}.
\]
Hence $\rk\theta_s=t_G-i_G$ and $\rk\theta_s=d_G-i_G$ on the smooth locus, with $t_G=\dim H^1(X,\mathfrak g_P)$ as in Section~\ref{sec:others}.
\end{lemma}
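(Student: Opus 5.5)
The plan is to copy the proof of Lemma~\ref{lem:radical}, with $\End E$ replaced by the adjoint bundle $\mathfrak g_P$ and the trace form replaced by the invariant form $\ka$.

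\emph{Step 1: the exact sequence.} Write $D=Z(s)$. Multiplication by $s$ gives the sequence
\[
 0\to\mathfrak g_P\otimes\omega_X\xrightarrow{\ \cdot s\ }\mathfrak g_P\to\mathfrak g_P|_D\to0 .
\]
This is exact because $\mathfrak g_P$ is locally free, so multiplication by $s$ is injective.

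\emph{Step 2: identify the pairing with the map induced by $s$.} Use Serre duality $H^1(\mathfrak g_P\otimes\omega_X)\cong H^1(\mathfrak g_P)^\vee$, where $\mathfrak g_P^\vee\cong\mathfrak g_P$ via $\ka$. By \eqref{eq:Gpair}, the pairing $\beta_s$ is then the composite of this duality with the map
\[
 H^1(\mathfrak g_P\otimes\omega_X)\to H^1(\mathfrak g_P)
\]
induced by $s$ on cohomology. The residue description is compatible because the Serre-duality trace of $s\cdot\ka(\xi\cup\eta)$ localizes along $D$. This is exactly the step Bottacin carries out for $\End E$. Here it transfers directly, since $\ka$ is a nondegenerate invariant bundle pairing. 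Lemma~\ref{lem:Gidentify} supports the comparison when $\ka$ is a trace form.

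\emph{Step 3: read off the kernel from the long exact sequence.} The relevant portion is
\[
 H^0(\mathfrak g_P)\to H^0(D,\mathfrak g_P|_D)\xrightarrow{\ \delta\ }H^1(\mathfrak g_P\otimes\omega_X)\xrightarrow{\ \cdot s\ }H^1(\mathfrak g_P).
\]
So the kernel of the map $\cdot s$ is $\operatorname{im}\delta$, which is the image of $H^0(D,\mathfrak g_P|_D)$. Transposing through Serre duality, which identifies $H^1(\mathfrak g_P\otimes\omega_X)$ with the dual of $\Ext^1=H^1(\mathfrak g_P)$, gives the radical. Its dimension is
\[
 i_G=h^0(D,\mathfrak g_P|_D)-\dim\operatorname{im}\bigl(H^0(\mathfrak g_P)\to H^0(\mathfrak g_P|_D)\bigr).
\]
For $G$ semisimple and $P$ stable we have $H^0(\mathfrak g_P)=0$, so this correction term vanishes. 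Either way, $i_G$ is simply the dimension of the image, as the statement asserts.

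\emph{Step 4: the rank formula.} Rank and nullity give $\rk\theta_s=t_G-i_G$. On the smooth locus, $t_G=d_G$ by Section~\ref{sec:others}, which yields the second formula.

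The main obstacle is Step 2: making precise that the residue formula \eqref{eq:Gpair} agrees with the cohomological composite. I would first try to reduce to the $\GL(V)$ case through $\varrho$, using Lemma~\ref{lem:Gsub} to see that the $\mathfrak m$-part splits off orthogonally. This would turn the radical computation into the restriction of Bottacin's \cite[Prop.~6.2]{Bot} to the $\mathfrak g$-summand.
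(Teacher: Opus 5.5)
Your proposal is correct and follows the paper's argument. Like the paper, you take the residue sequence $0\to\mathfrak g_P\otimes\omega_X\to\mathfrak g_P\to\mathfrak g_P|_D\to0$, identify $\beta_s$ with the transpose of the map induced by $s$ (as in Lemma~\ref{lem:radical} and \cite[Prop.~6.2]{Bot}), read off the kernel as $H^0(D,\mathfrak g_P|_D)$ modulo the image of $H^0(X,\mathfrak g_P)$, and use $H^0(X,\mathfrak g_P)=0$; your two additions, that the stated formula $i_G=\dim\operatorname{image}$ does not need this vanishing and that the residue-versus-cohomological identification of Step~2 needs justification (the paper simply asserts it), are accurate refinements rather than departures.
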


\begin{proof}
The pairing is the transpose of the hamiltonian morphism of the exact sequence
\[
 0\to\mathfrak g_P\otimes\omega_X\to\mathfrak g_P\to\mathfrak g_P|_D\to 0,
\]
and the same argument as in Lemma~\ref{lem:radical} identifies the kernel with $H^0(D,\mathfrak g_P|_D)/C$, where $C$ is the image of $H^0(X,\mathfrak g_P)$.  For semisimple $G$ and stable $P$ one has $H^0(X,\mathfrak g_P)=0$, so $C=0$ and $c_G=i_G$.  The rank is even because the form is antisymmetric.
\end{proof}

\begin{lemma}[Framed radical]\label{lem:inst-radical}
At a general point of $M_{r,k}(X,L)$ the radical of the framed pairing $\beta_s$ is the image of $H^0(\End(E)(-L)|_D)$ in $\Ext^1(E,E(-L))$ under the connecting map of the framed residue sequence.  Its dimension is the transverse count $c$.
\end{lemma}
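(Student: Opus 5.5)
The plan is to copy the unframed argument of Lemma~\ref{lem:radical}, replacing $\End(E)$ by the framed sheaf $\End(E)(-L)$ throughout. First I write down the framed residue sequence. Multiplication by the weight $s\in H^0(\omega_X^{-1}(-2L))$ gives a twisted version of the sequence used for sheaves:
\[
 0\to\End(E)\otimes\omega_X(L)\xrightarrow{\ \cdot s\ }\End(E)(-L)\to\End(E)(-L)|_D\to 0 .
\]
This is exact because $s$ is a nonzerodivisor with zero scheme $D$, and $E$ is torsion free, so $\End(E)$ has no sections supported on $D$. By \cite[Thm.~4.3]{Bot00}, the framed tensor \eqref{eq:instpair} is the composition of the map $\Ext^1(E,E\otimes\omega_X(L))\to\Ext^1(E,E(-L))$ induced by $s$ with the parabolic Serre duality identification $\Ext^1(E,E\otimes\omega_X(L))\cong\Ext^1(E,E(-L))^\vee$. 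So $\beta_s$ is the transpose of this hamiltonian morphism, and its radical is identified, up to transpose, with the image of the connecting map $\delta\colon H^0(\End(E)(-L)|_D)\to\Ext^1(E,E(-L))$ in the long exact sequence. Here I will need the local-to-global identification $H^1(\End(E)(-L))\subset\Ext^1(E,E(-L))$, which is where the torsion-free, non-locally free case needs care. I would handle it exactly as in \cite[Prop.~6.2]{Bot}, using the sheaf $\mathcal Hom$ version of the sequence.

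Next I compute the dimension. Exactness gives
\[
 \dim\operatorname{im}\delta=h^0(\End(E)(-L)|_D)-\dim C,
\]
where $C$ is the image of $H^0(\End(E)(-L))$. For a framed sheaf, any endomorphism vanishing along $L$ must be zero: it vanishes on the framing line, and stability of framed sheaves (Huybrechts--Lehn) forces it to vanish. So $H^0(\End(E)(-L))=0$ and $C=0$. The radical therefore has dimension exactly $h^0(\End(E)(-L)|_D)$, in agreement with the count quoted in Remark~\ref{rem:emptycount}.

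Finally, I identify this dimension with $c$. At a general point the rank of $\theta_s$ is generic, and it equals $2\rho$ by definition of $\rho$ (Remark~\ref{rem:rho-open}). The budget $2\rho+c=d$ then gives $c=d-\operatorname{rk}\theta_s=\dim\operatorname{rad}\beta_s$.

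The main obstacle is the first step: checking that Bottacin's framed tensor is precisely the transpose of the connecting morphism, including the parabolic duality and the signs. The difficulty is that $D$ and $L$ may meet, and that $s$ vanishes to order two along $L$. I would first try the reduction via $\omega_{par}=\omega_X(L)$ already stated before \eqref{eq:instpair}, which makes the computation formally identical to the unframed one.
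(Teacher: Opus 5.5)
Your proposal is correct. Its first two steps are the paper's argument: the same framed residue sequence, $\beta_s$ read as the transpose of $\cdot s$ across parabolic Serre duality, and the radical identified with $\operatorname{im}\delta$. Three refinements apply to these steps.
\begin{itemize}
\item The connecting map of that sequence lands in $H^1(\End(E)\otimes\omega_X(L))\subseteq\Ext^1(E,E\otimes\omega_X(L))$, the cotangent side, where the radical is the kernel of $\cdot s$. The placement in $\Ext^1(E,E(-L))$, both in the statement and in your write-up, has to be read through the duality. The dimension is unaffected.
\item The care you anticipate for non-locally free $E$ goes away if you apply $\Ext^\bullet(E,-)$ to $0\to E\otimes\omega_X(L)\to E(-L)\to E(-L)|_D\to0$. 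The kernel of $\cdot s$ on $\Ext^1$ is then the image of $\Hom(E,E(-L)|_D)$. At a general point $E$ is locally free along $D$, so this group is $H^0(\End(E)(-L)|_D)$.
\item Your $C$, the image of $H^0(\End(E)(-L))$, is the correct group to divide by. The paper's proof divides by $H^0(\End(E)\otimes\omega_X(L))$, which sits one step earlier in the long exact sequence. Both groups vanish for a stable framed object when $L\neq0$, so the count $h^0(\End(E)(-L)|_D)$ comes out the same. For $L=0$ your vanishing claim fails and the scalars must be removed, as in Lemma~\ref{lem:radical}.
\end{itemize}

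You genuinely diverge from the paper in identifying the dimension with $c$.
\begin{itemize}
\item The paper stays inside a presentation. The classes of $\operatorname{im}\delta$ are those with vanishing member-block components (cf.\ Lemma~\ref{lem:hd-zeroblock}). These are the flag data of the point modifications and the trivialization data of the framing, which are exactly the class coordinates of the budget, $c$ in number.
\item You instead take $2\rho=\rk\theta_s$ at a general point (Remark~\ref{rem:rho-open}) together with the budget $2\rho+c=d$. This gives $\dim\operatorname{rad}\beta_s=d-\rk\theta_s=c$ directly.
\end{itemize}
Your route is presentation-free. It also makes your explicit dimension computation unnecessary for the statement; that computation only supplies the formula quoted in Remark~\ref{rem:emptycount}. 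However, your route is essentially definitional. It presupposes that the member count agrees with half the generic rank, and Section~\ref{sec:inst} cites this very lemma for that rank count. The paper's route buys more: it identifies the radical directions with the class directions, hence with the Euler frame, which is what the leafrian construction actually uses.
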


\begin{proof}
Parabolic Serre duality identifies $\Ext^1(E,E(-L))^{\vee}$ with $\Ext^1(E,E\otimes\omega_X(L))$, and $\beta_s$ is the transpose of the cup product with $s$.  Multiplication by $s$ is a map
\[
 \End(E)(-L)\longrightarrow\End(E)\otimes\omega_X(L)
\]
with zero scheme $D$ and kernel $\End(E)\otimes\omega_X(L)$, so the framed residue sequence is
\[
 0\longrightarrow\End(E)\otimes\omega_X(L)\xrightarrow{\ \cdot s\ }\End(E)(-L)
 \longrightarrow\End(E)(-L)|_D\longrightarrow 0 .
\]
The radical of $\beta_s$ is the kernel of the cup product with $s$, and the long exact sequence gives
\[
 \operatorname{rad}\beta_s\;=\;\operatorname{im}\delta,\qquad
 \delta\colon H^0\bigl(\End(E)(-L)|_D\bigr)\longrightarrow\Ext^1(E,E(-L)),
\]
with $\operatorname{im}\delta$ the quotient of $H^0(\End(E)(-L)|_D)$ by $H^0(\End(E)\otimes\omega_X(L))$.  For a stable framed object the group $H^0(\End(E)\otimes\omega_X(L))$ is zero unless $\omega_X(L)$ is trivial, and then it consists of the scalars.  Hence the radical is $H^0(\End(E)(-L)|_D)$, modulo scalars in the trivial case.

In a presentation the classes of $\operatorname{im}\delta$ are the classes whose member-block components vanish.  The flag data of the point modifications and the trivialization data of the framing pair trivially, and they are exactly the class coordinates of the budget.  Their number is $c$.  For $L=0$ the sequence is the residue sequence of Lemma~\ref{lem:radical}, and the description agrees with $H^0(D,\End(E)|_D)$ modulo scalars.
\end{proof}

\begin{corollary}[Full-rank case]\label{cor:fullrank}
If $c_0=0$, then $E|_D$ is simple at generic $[E]$, the generic rank of $\theta_s$ is $d$, and $d$ is even.  When in addition the members realize the leaf, $\eta_{\mathrm{can}}$ is a unit and $\LF_s=\hat s^{[\rho]}$ up to a unit.
\end{corollary}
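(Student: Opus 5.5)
The statement to prove is Corollary~\ref{cor:fullrank}. The plan is to read everything off Lemma~\ref{lem:radical} and the definitions of the leafrian.

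\emph{Step 1: simplicity and rank.} Assume $c_0=0$ at a generic $[E]$. Then $h^0(D,\End(E|_D))=1$. Since the scalars always lie in $H^0(\End(E|_D))$, this says $E|_D$ is simple. Lemma~\ref{lem:radical} then gives $\dim\Ker B_s=0$ at such a point of the smooth locus, where $t=d$. Hence $\rk\theta_s=d$ generically.

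\emph{Step 2: parity.} $B_s$ is antisymmetric and nondegenerate, so $d$ is even. Here I use that $p>2$: this is what lets an alternating nondegenerate form exist only in even dimension. It is the same parity remark as in the proof of Lemma~\ref{lem:radical}.

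\emph{Step 3: the case where the members realize the leaf.} Now $2\rho=\rk\theta_s=d$, and the budget $2\rho+c=d$ forces $c=0$, in agreement with $c=c_0$ from Lemma~\ref{lem:radical}.
\begin{itemize}
\item The Euler frame $\Omega$ is then the empty wedge, namely $1\in\Lambda^0T_M$.
\item The radial determinant $\eta_{\mathrm{can}}=\prod_j c_j\cdot\Delta$ is an empty product times a unit, so it is a unit.
\item By Definition~\ref{def:leafweight}, $\LF_s=\theta_s^{[\rho]}\wedge\Omega=\theta_s^{[\rho]}$ up to the unit coming from $\eta$ (Corollary~\ref{cor:frame}).
\item On the member locus $U$, Proposition~\ref{prop:localmodel} identifies $\theta_s^{[\rho]}$ with $\hat s^{[\rho]}$. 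Theorem~\ref{thm:hd-descent} gives that $\hat s=\theta_s$ on all of $M$, so the identification holds globally.
\end{itemize}

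The main point needing care is the ``generic'' quantifier in Step 1. $c_0$ is upper semicontinuous, so $c_0=0$ is an open condition, and I read the hypothesis as holding on a dense open subset. The rank statement then concerns the generic rank, and Lemma~\ref{lem:radical} is applied only at points of the smooth locus lying in this open set. No other step seems to present difficulty.
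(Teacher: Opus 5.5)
Your proposal is correct and follows the paper's own argument. From $c_0=0$ you get simplicity of $E|_D$, and Lemma~\ref{lem:radical} gives $\rk\theta_s=d-c_0=d$, which is even by antisymmetry; when the members realize the leaf, $2\rho=d$ forces $c=0$, so the Euler frame is empty and $\LF_s=\theta_s^{[\rho]}$, identified with $\hat s^{[\rho]}$. Your care about the generic quantifier, the use of $p>2$ for parity, and the globalization via Theorem~\ref{thm:hd-descent} only make explicit what the paper leaves implicit.
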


\begin{proof}
The equality $c_0=h^0(D,\End E|_D)-1=0$ says that $E|_D$ is simple, and
$\rk\theta_s=d-c_0=d$ is even.  When the
members realize the leaf one has $2\rho=d$, hence $c=0$ and the leafrian has no
transverse factor.
\end{proof}

\begin{remark}\label{rem:r1}
For $r=1$ one has $\End(E|_D)=\cO_D$ for generic $E$, so $c_0=0$ and the corank of $\beta_s$ is $0$.  For $\chi(\cO_X)=1$ one has $d=2c_2$, so $d$ is even and the rank of $\theta_s$ is $d$.
\end{remark}

\subsection{The assignment and the Torelli theorem}

\begin{proposition}[The assignment]\label{prop:assign}
The leafrian $\LF_s$ exists exactly when $\rho>0$, and then it is independent of the presentation, of the members, of the slice coordinates and of the Euler frame up to a regular unit.  When $s$ splits the base the unit is constant, so there is an assignment
\[
 \Phi_M\colon\ \mathbb P H^0(X,\omega_X^{-1})\longrightarrow
 \mathbb P H^0(M,\omega_M^{[-1]}),\qquad
 [\,s\,]\longmapsto[\,\LF_s\,],
\]
with source $\mathbb P H^0(X,\omega_X^{-1}(-2L))$ in the framed case.
\end{proposition}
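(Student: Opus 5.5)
The proof splits into an existence statement and an independence statement, and I would take them in that order.

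\emph{Existence.} If $\rho>0$, Theorem~\ref{thm:normsec} already gives $\LF_s=\theta_s^{[\rho]}\wedge\Omega$ as a nonzero global section of $\omega_M^{[-1]}$. Its ingredients are as follows. The Pfaffian power $\theta_s^{[\rho]}$ is a global section because $\theta_s$ is Bottacin's tensor. The Euler fields are global by Lemma~\ref{lem:descent}, applied to the presentation of Lemma~\ref{lem:eulerframe}, Remark~\ref{rem:Geuler}, or the framed analogue. Nonvanishing follows from Proposition~\ref{prop:frame} together with Lemma~\ref{lem:reduction}. If $\rho=0$, the budget forces $c=d$. There are then no leaf blocks, and the assembly is the deformation volume of Proposition~\ref{prop:empty}, so no leafrian is formed. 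By Remark~\ref{rem:emptycount}, $\rho=0$ is equivalent to $d=c_0$, so the dichotomy is governed by the rank of $\theta_s$ alone.

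\emph{Independence.} I would treat the ingredients one at a time.
\begin{itemize}
\item The leaf factor $\theta_s^{[\rho]}$ is intrinsic on a surface. It does not depend on the members or the units, because for $n=2$ the normalized wedge equals the Pfaffian power of $\theta_s$ (see the remark after Definition~\ref{def:leaffactor}).
\item Slice coordinates only enter the local expression and the top coefficient. Their nonvanishing is chart-independent by Lemma~\ref{lem:inv}, and the section itself is coordinate-free.
\item For the Euler frame, two frames in the same leaf class give the same section by Lemma~\ref{lem:leafclass} and Lemma~\ref{lem:global}.
\item Two presentations give radial determinants differing by a regular unit. This is (P3) combined with Lemma~\ref{lem:hd-transverse} and Corollary~\ref{cor:frame}, which is exactly Theorem~\ref{thm:hd-canonicity}, and Theorem~\ref{thm:G-canonicity} in the principal case.
\end{itemize}
Together these give independence up to a regular unit $u$.

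\emph{Constancy of the unit.} Assume now that $s$ splits the base, so $\Tr_X(s)\ne0$, or in the framed case $\lambda=\Tr_U(s^{p-1})$ is a nonzero constant. By Theorem~\ref{thm:TF}, both $\Tr(\LF_s^{p-1})$ and $\Tr(\tilde\LF_s^{p-1})$ are nonzero constants. Lemma~\ref{lem:unique} then gives $u^{p-1}\in k^\times$. Lemma~\ref{lem:unitrigid} applies since $M$ is reduced and connected (irreducible, as assumed in Section~\ref{sec:transfer}), and it gives $u\in k^\times$.

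\emph{Well-definedness of $\Phi_M$.} The class $[\LF_s]$ is a well-defined point of $\mathbb P H^0(M,\omega_M^{[-1]})$. It depends only on $[s]$, because $\theta_{\mu s}=\mu\theta_s$ gives $\LF_{\mu s}=\mu^\rho\LF_s$ while the Euler frame is independent of $s$.

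\emph{Main obstacle.} The delicate step is the passage from ``regular unit'' to ``constant''. It depends on Theorem~\ref{thm:TF}, which in turn needs constancy of the trace; that constancy comes from the extension to $\bar M$ (Lemma~\ref{lem:consttrace-extend}), or in the framed case from the hypothesis on $\lambda$. I would state the unit-constancy clause under that hypothesis and simply invoke Theorem~\ref{thm:hd-canonicity} there, rather than reprove it.
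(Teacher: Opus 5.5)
Your proposal is correct and follows the paper's proof. The paper cites Theorem~\ref{thm:hd-canonicity} for sheaves and framed sheaves and Theorem~\ref{thm:G-canonicity} for principal bundles, then uses linearity of the pairing in the weight, $\LF_{\lambda s}=\lambda^{\rho}\LF_s$, so the assignment descends to projective spaces; you unpack those same theorems (leaf-class invariance, (P3), Lemma~\ref{lem:unique} with Theorem~\ref{thm:TF}) and also make explicit the existence clause via Theorem~\ref{thm:normsec} and Proposition~\ref{prop:empty}, which the paper's proof leaves implicit.
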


\begin{proof}
For sheaves and instantons this is Theorem~\ref{thm:hd-canonicity}; for $M_G$ on a surface it is Theorem~\ref{thm:G-canonicity}.  The pairing is linear in the
weight, hence $\LF_{\lambda s}=\lambda^{\rho}\LF_s$, and the assignment descends to projective spaces.  On a curve the members are not intrinsic and the assignment is taken relative to a presentation.
\end{proof}

\begin{theorem}[Torelli]\label{thm:torelli}
Let $M$ be $M(r,c_1,c_2)$, $M_G$, or $M_{r,k}(X,L)$ with $\rho>0$, and let $s$ split the base.  Then the assignment $\Phi_M$ is injective; more precisely, for nonzero weights $s$ and $s'$,
\[
 \LF_{s'}=c\,\LF_s\ \text{ for some } c\in k^\times
 \qquad\Longleftrightarrow\qquad
 s'=\lambda s\ \text{ for some } \lambda\in k^\times ,
\]
and then $c=\lambda^{\rho}$.
\end{theorem}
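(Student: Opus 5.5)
The easy direction comes first. If $s'=\lambda s$, then the pairing $\beta_s$ is linear in the weight, so $\theta_{\lambda s}=\lambda\theta_s$. The Pfaffian normalization then gives $\theta_{\lambda s}^{[\rho]}=\lambda^{\rho}\theta_s^{[\rho]}$. The Euler frame $\Omega$ does not involve the weight: it comes from the class coordinates of a presentation through Lemma~\ref{lem:descent}. Hence $\LF_{\lambda s}=\lambda^\rho\LF_s$, as already noted in the proof of Proposition~\ref{prop:assign}. By Theorem~\ref{thm:hd-canonicity}, or Theorem~\ref{thm:G-canonicity} for $M_G$, a change of presentation alters $\LF_s$ only by a constant when $s$ splits the base. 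So $c=\lambda^\rho$ is well defined.

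For the converse, suppose $\LF_{s'}=c\,\LF_s$. The first reduction is to recover $s$ locally from the leafrian at a member-saturated evaluation point. By Proposition~\ref{prop:hd-localexpr}, in the coordinates of a datum the local expression is $\prod_{\nu}s(x_\nu)\cdot\prod_j c_j\cdot u$, with members of length one at $x_1,\dots,x_\rho$. Here $s(x_\nu)$ is the local function of $s$ in the disc frame, viewed as a function of the moving support $(q_{\nu,1},q_{\nu,2})$.

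I would use the same presentation for $s$ and $s'$. This is allowed because the members and the class coordinates depend only on the chain, not on the weight. The frames agree, so the equality of sections becomes the identity
\[
 \prod_\nu s'(x_\nu)=c''\prod_\nu s(x_\nu)
\]
of functions in the disjoint block variables, with $c''\in k^\times$. For this step I would need the units $u,u'$ to be constants, which is Lemma~\ref{lem:constunit}, and the frame units to cancel.

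Next I separate variables. Fix all members except $x_1$ at general points where $s$ and $s'$ do not vanish; this is possible since both are nonzero sections and the remaining points are general. The identity then says that the local functions satisfy $s'=\lambda s$ near a general point of $X$, where $\lambda$ is a constant built from the values at the frozen points. Both $s$ and $s'$ are global sections of the same line bundle, so the identity extends to $s'=\lambda s$ on all of $X$. In the framed case the same argument runs on $U=X\setminus L$. Comparing with the easy direction gives $c=\lambda^\rho$.

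The main obstacle is justifying the comparison in the same frame. The leafrian is only defined up to a regular unit tied to the presentation. I would fix one presentation for both weights and invoke Lemma~\ref{lem:unique} together with Theorem~\ref{thm:TF}. The hypothesis that $s$ splits the base forces $\Tr_X(s)\neq0$, and $\LF_{s'}=c\LF_s$ then gives $\Tr(\LF_{s'}^{p-1})$ nonzero constant. So both traces are nonzero constants, and the units are constants. A secondary point is that the member-saturated locus must be nonempty, so that the local expression is available. For sheaves this comes from Corollary~\ref{lem:Unonempty}. For $M_G$ I would use the Hecke members of Lemma~\ref{lem:Gblock}, and for instantons the instanton points in $U$.
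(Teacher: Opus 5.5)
Your proof is correct within the paper's framework, but it takes a different route from the paper's.

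\textbf{What the paper does.} The paper never compares local coefficients. After fixing one Euler frame for both weights, it uses only the equality of zero divisors $Z(\LF_{s'})=Z(\LF_s)$. It then recovers $D=Z(s)$ from $Z(\LF_s)$ as follows:
\begin{itemize}
\item it takes the one-point kernel modifications $E_z$ of a slope stable bundle of type $(r,c_1,c_2-1)$;
\item it tests them against the intrinsic description $Z(\LF_s)\setminus Z(\eta_{\mathrm{can}})=\{[E]:E|_D\text{ has torsion}\}$ of Proposition~\ref{prop:intr};
\item the Hecke bridge, Lemma~\ref{lem:hecke}, gives $z\in D$ exactly when $E_z|_D$ has torsion.
\end{itemize}
This yields $Z(s)=Z(s')$, and hence $s'=\lambda s$.

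\textbf{What your route buys.} You instead read $s$ off the coefficient $\prod_\nu s(x_\nu)$ of Proposition~\ref{prop:hd-localexpr} and separate variables by freezing points. The freezing argument is characteristic-free, unlike a logarithmic-derivative argument, which would fail in characteristic $p$. Because you recover $s$ itself rather than its support, $\lambda$ and the multiplicities come for free. By contrast, the paper's passage from the set-theoretic equality $Z(s)=Z(s')$ to $s'=\lambda s$ tacitly needs both divisors to be reduced. That holds because $s'$ also splits the base, since $\Tr(\LF_{s'}^{p-1})=c^{p-1}\Tr(\LF_s^{p-1})$ is a nonzero constant. Your route also bypasses Proposition~\ref{prop:intr} and the test family.

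\textbf{What the paper's route buys.} It needs only single-point test objects. Yours needs an evaluation point in $M$ at which all $\rho$ members move freely.
\begin{itemize}
\item Corollary~\ref{lem:Unonempty} supplies such a point only when a slope stable locally free sheaf of type $(r,c_1,c_2-\rho)$ exists. This is a stronger input than the paper's type $(r,c_1,c_2-1)$.
\item Member-saturated tops can be strictly semistable (Remark~\ref{rem:regimes}; e.g.\ $I_x\oplus I_y$). In that case you would run the same comparison at a generic boundary point of $\bar M$, using the extension of Theorem~\ref{thm:g-extend} and the local expression of Remark~\ref{rem:g-critical}.
\end{itemize}
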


\begin{proof}
If $s'=\lambda s$ then $\LF_{s'}=\lambda^{\rho}\LF_s$ and $c=\lambda^{\rho}$.
Conversely assume $\LF_{s'}=c\,\LF_s$ for $c\in k^\times$.  Both sections use
one and the same Euler frame, by Theorem~\ref{thm:hd-canonicity} for sheaves
and framed sheaves and Theorem~\ref{thm:G-canonicity} for principal bundles,
so their zero divisors agree:
\[
 Z(\LF_{s'})=Z(\LF_s).
\]
The zero scheme of $\LF_s$ satisfies
\[
 Z(\LF_s)\setminus Z(\eta_{\mathrm{can}})
 =\bigl\{[E]:\ E|_D\ \text{has torsion}\bigr\}
 \subseteq\bigl\{[E]:\ h^0(D,\End E|_D)>h^0_{\mathrm{gen}}\bigr\},
\]
where $D=Z(s)$, $h^0_{\mathrm{gen}}$ is the minimal value of
$h^0(D,\End E|_D)$, and the inclusion may be strict; this is
Proposition~\ref{prop:intr}.  The set $Z(\eta_{\mathrm{can}})$ does not
depend on $s$.

The divisor $D$ is recovered from the zero scheme.  For a point $z\in X$ let
$E_z$ be the kernel modification at $z$ of a slope stable locally free $F$
of type $(r,c_1,c_2-1)$; then $E_z$ is stable by
Lemma~\ref{lem:saturated}(i), and $E_z|_D$ has torsion exactly when $z\in
D$, by Lemma~\ref{lem:hecke}.  Hence
\[
 z\in D\qquad\Longleftrightarrow\qquad [E_z]\in Z(\LF_s)\setminus
 Z(\eta_{\mathrm{can}}).
\]
Applying this to $D=Z(s)$ and $D'=Z(s')$ and using
$Z(\LF_{s'})=Z(\LF_s)$ gives $Z(s)=Z(s')$.  Two nonzero sections of
$\omega_X^{-1}$ with the same zero divisor differ by a constant, so
$s'=\lambda s$.  The same argument applies on $U=X\setminus L$ in the
framed case.

If $\rho=0$ the leafrian has no leaf factor, so
the assembly is $\eta_{\mathrm{can}}$ for every $s$ and $\Phi_M$ is constant.  When
$\rho\ge1$ and $D\neq\emptyset$ the leafrian $\LF_s$ is never a scalar multiple of $\eta_{\mathrm{can}}$, because by Lemma~\ref{lem:divdec} its divisor
carries the member part $\sum_\nu x_\nu^*(D)\neq0$, whereas $\eta_{\mathrm{can}}$ has none.
If $D=\emptyset$ then $s$ trivializes $\omega_X^{-1}$, resp.\ $\omega_X^{-1}(-2L)$,
and $\mathbb P H^0$ is a single point; on a $K3$ or
abelian surface this is the case $\rho\ge1$ with $\LF_s=\eta_{\mathrm{can}}$ up to a constant.  Hence for $\rho=0$ the map $\Phi_M$ is injective exactly when $\mathbb P H^0$ is a single point.
\end{proof}

\begin{lemma}[Differential of the assignment]\label{lem:diff}
Let $s$ be a weight with $\rho\geq1$ and let
$\delta\in H^0(X,\omega_X^{-1})$.  On the evaluation locus of a filtered presentation, in the coordinates of the reduced degeneracy complex, the derivative of $\Psi(s)=\LF_s$ at $s$ in the direction $\delta$ has local expression
\[
 \mathrm d\Psi_s(\delta)=u\prod_jc_j\cdot
 \sum_{a}N_a(\delta)\prod_{b\neq a}N_b(s),
\]
where the $N_a$ are the norm factors of the reduced degeneracy complex. Consequently the differential of $\Phi_M$ at $[s]$ is injective.
\end{lemma}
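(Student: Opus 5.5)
The proof follows from the product form of the local expression of $\LF_s$ on the evaluation locus. By Proposition~\ref{prop:hd-localexpr} and Theorem~\ref{thm:normsec}, at an evaluation point of a filtered presentation one has
\[
 \LF_s=u\prod_jc_j\cdot\prod_{a}N_a(s)\;\partial_{[d]},\qquad u\in k^\times .
\]
For a length-one member, $N_a(s)=s(x_a)$ is the local function of $s$ at the moving support point, read in the disc frame, so it is linear in $s$. For longer members, $N_a(s)=N_{A_a/k}(s|_{\cO_{Z_a}})$. Since the Euler frame does not depend on $s$ (Theorem~\ref{thm:hd-canonicity}, Lemma~\ref{lem:hd-transverse}), the factors $u$ and $\prod_jc_j$ are constant in $s$ once the presentation is fixed. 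I would restrict to presentations with length-one members, as the paper does for the identification (Proposition~\ref{prop:localmodel}). In that case $\Psi(s)$ is, locally, a product of $\rho$ linear forms in $s$ times an $s$-independent factor. Differentiating $\Psi(s+\varepsilon\delta)$ at $\varepsilon=0$ with the Leibniz rule then gives the displayed formula directly. The unit $u$ does not vary, because the constant unit of Lemma~\ref{lem:constunit} is $\pm\prod\kappa_0^{\ell}$ and so does not depend on the weight.

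For injectivity of $\mathrm d\Phi_M$ at $[s]$, the tangent space to $\mathbb P H^0(X,\omega_X^{-1})$ at $[s]$ is $H^0(X,\omega_X^{-1})/k s$. The tangent space to the target at $[\LF_s]$ is the quotient by $k\LF_s$. Euler's relation, coming from the homogeneity $\LF_{\lambda s}=\lambda^\rho\LF_s$ of Proposition~\ref{prop:assign}, gives $\mathrm d\Psi_s(s)=\rho\LF_s$, so the projective differential is well defined. It then suffices to show the following: if $\mathrm d\Psi_s(\delta)=\mu\LF_s$, then $\delta\in ks$.

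On the open set where all $N_b(s)\neq0$ and $\prod_jc_j\neq 0$, I would divide to obtain
\[
 \sum_a\frac{\delta(x_a)}{s(x_a)}=\mu .
\]
Here $x_a$ ranges over the $\rho$ member points, which move independently across an open subset of $X$ because the block coordinates are free slice coordinates. Varying one point $x_1$ while fixing the others shows that the rational function $\delta/s$ on $X$ is locally constant, and hence constant since $X$ is irreducible. So $\delta=\lambda s$ with $\rho\lambda=\mu$, which is the claim.

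The main obstacle is the claim that the members genuinely move over an open subset of $X$ independently of one another. This needs the evaluation locus to contain data whose supports sweep out a dense set of $\rho$-tuples. I expect this to follow from the master chain construction (Lemma~\ref{lem:master}), where the points $y_i$ are arbitrary distinct points. I also expect a subtlety in characteristic $p$: the identity $\sum\delta/s=\mu$ is used only as an equality of functions, not through any derivative of it, so the argument should not lose information when $p\mid\rho$. Even if $p\mid\rho$ makes the Euler relation degenerate, the single-variable step still forces $\delta/s$ to be constant.
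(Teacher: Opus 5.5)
Your proposal is correct and follows essentially the same route as the paper. Both arguments apply the Leibniz rule to the product of the norm factors, which are linear in $s$. Both then divide $\mathrm d\Psi_s(\delta)=\mu\LF_s$ by $\LF_s$ to get $\sum_a N_a(\delta)/N_a(s)$ constant, separate variables over the independently moving supports, and conclude $\delta=\lambda s$. Your explicit restriction to length-one members, which is what makes the norm factors linear in $s$, and your remark on the case $p\mid\rho$ spell out what the paper's proof leaves implicit.
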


\begin{proof}
The norm factors of the reduced degeneracy complex are linear in $s$, so differentiating their product gives the display. If the class of $\delta$ dies in the tangent space at $[\LF_s]$, then $\mathrm d\Psi_s(\delta)$ is a multiple of $\LF_s$, so
\[
 \sum_a\frac{N_a(\delta)}{N_a(s)}
\]
is constant. Each summand depends only on the variables of its own support, so each ratio is constant. Since the supports move, $\delta=\lambda s$ for some $\lambda\in k^\times$.
\end{proof}

By Theorem~\ref{thm:torelli} the map $\Phi_M$ is injective when $\rho\ge1$, and by Lemma~\ref{lem:diff} it is an immersion there.  Hence $\Phi_M$ is an isomorphism onto its image when $\rho\ge1$, and it is constant when $\rho=0$.

\begin{proposition}[Hasse hypersurface]\label{prop:hasse}
The Frobenius trace $\Tr_X(s)$ is a homogeneous form of degree $p-1$ on $H^0(X,\omega_X^{-1})$.  Let $\mathcal H_0\subset\mathbb P H^0(X,\omega_X^{-1})$ be its zero hypersurface, unless $\Tr_X$ vanishes identically.  For $\rho>0$ one has
\[
 \LF_s\ \text{splits}\ M
 \qquad\Longleftrightarrow\qquad
 \Tr_X(s)\neq0
 \qquad\Longleftrightarrow\qquad
 [\,s\,]\notin\mathcal H_0 .
\]
\end{proposition}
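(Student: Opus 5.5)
The statement has two parts: the homogeneity of $\Tr_X(s)$, and the chain of equivalences. I will prove them in that order.

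\emph{Homogeneity.} Fix a closed point $x\in X$ and slice coordinates $(t_1,t_2)$. Then $\Tr_X(s)$ is the socle coefficient $[t_1^{p-1}t_2^{p-1}]\,f_s^{p-1}$, where $f_s$ is the local function of $s$ in a fixed frame of $\omega_X^{-1}$ (Theorem~\ref{thm:bk}, Definition~\ref{def:top}). The assignment $s\mapsto f_s$ is $k$-linear on $H^0(X,\omega_X^{-1})$. Hence $f_s^{p-1}$ is a homogeneous polynomial map of degree $p-1$ in the coordinates of $s$. Extracting a fixed coefficient is linear, so $\Tr_X(s)$ is a homogeneous form of degree $p-1$.

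Since $X$ is projective, $\Tr_X(s)\in H^0(X,\cO_X)=k$ is a constant, so the form does not depend on the chosen point $x$. Rescaling gives $\Tr_X(\lambda s)=\lambda^{p-1}\Tr_X(s)$. Therefore the zero locus $\mathcal H_0$ is a well-defined hypersurface in $\mathbb P H^0(X,\omega_X^{-1})$, or all of it when the form vanishes identically.

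\emph{First equivalence.} This is Corollary~\ref{cor:split}, fed by the trace formula of Theorem~\ref{thm:TF}. For $\rho>0$ that formula gives $\Tr(\LF_s^{p-1})=\Tr_X(s)^{\rho}u$ with $u\in k^\times$. By Theorem~\ref{thm:bk}, a constant rescaling of $\LF_s$ splits $M$ exactly when this trace is a nonzero constant, and that happens exactly when $\Tr_X(s)\neq0$. For $M_G$ the same conclusion is Theorem~\ref{thm:G}. The boundary cases are covered by Lemmas~\ref{lem:Dnonred} and~\ref{lem:Dempty}: if $D$ is non-reduced, both sides fail; if $D$ is empty, both sides reduce to the Hasse condition on $X$.

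\emph{Second equivalence.} This is the definition of $\mathcal H_0$, and it is well posed on projective classes by the homogeneity above. One should also check that the left-hand side depends only on $[s]$. This holds because $\LF_{\lambda s}=\lambda^{\rho}\LF_s$ (Proposition~\ref{prop:assign}), and splitting is insensitive to constant rescaling (Remark~\ref{rem:onechart}).

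The only delicate point is the independence of the degree-$(p-1)$ form from the point and frame used to define it. I would handle it by invoking the intrinsic trace together with completeness of $X$, rather than by a coordinate computation. Changing the frame of $\omega_X^{-1}$ multiplies $f_s$ by a unit, which could in principle alter the expression at a single point. However, the resulting global trace is intrinsic, and by Lemma~\ref{lem:socle} the socle extraction is compatible with any change of frame. Everything else follows directly from the earlier results.
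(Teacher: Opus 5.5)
Your proof is correct and follows essentially the paper's route. Homogeneity comes from $s\mapsto s^{p-1}$ being polynomial of degree $p-1$ followed by a linear extraction: your socle coefficient in the coordinate frame, the paper's trace functional. The first equivalence comes from the splitting criteria, namely Theorem~\ref{thm:TF} with Corollary~\ref{cor:split}, and Theorem~\ref{thm:G} for $M_G$; the paper also cites Theorems~\ref{thm:moduli} and~\ref{thm:inst}. The second equivalence is the definition of $\mathcal H_0$.

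One small correction to your last paragraph. The socle extraction computes $\Tr_X(s)$ only in the coordinate frame $\partial_{t_1}\wedge\partial_{t_2}$. It is not invariant under multiplying the frame by a nonconstant unit, and Lemma~\ref{lem:socle} only treats coordinate changes together with their induced frames. You do not need that claim, though: fixing the coordinate frame at a single point already gives a degree-$(p-1)$ form equal to the constant $\Tr_X(s)$.
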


\noindent In the framed case the same equivalence holds with $\operatorname{Tr}_{(X,L)}(s)$ in place of $\operatorname{Tr}_X(s)$, but that trace is a function on $U$ and need not be a scalar form on the weight space; when it is nonconstant or identically zero there is no Hasse hypersurface.

\begin{proof}
The first equivalence is the splitting criterion of Theorem~\ref{thm:moduli}, Theorem~\ref{thm:G} and Theorem~\ref{thm:inst} in the form: $\LF_s$ splits exactly when $\Tr_X(s)\neq0$.  The map $s\mapsto s^{p-1}$ is polynomial of degree $p-1$ and the trace functional is linear, with Theorem~\ref{thm:TF} supplying the block product.
\end{proof}

\subsection{The degeneracy divisor}

Write $D_s=Z(\LF_s)$.  The construction is a moduli incarnation of a classical fact: the Pfaffian of a log-symplectic Poisson tensor is an anticanonical section whose zero locus is the degeneracy divisor \cite{MPS}.

\begin{lemma}[Divisor decomposition]\label{lem:divdec}
In the coordinates of a frame datum one has
\[
 D_s=\sum_\nu \operatorname{div}\bigl(N(g_\nu)\bigr)+\sum_j\operatorname{div}(c_j).
\]
For every member, $\operatorname{div}\bigl(N(g_\nu)\bigr)=x_\nu^*(D)$, where $x_\nu^*$ is the sum of the pullbacks along the $\ell_\nu$ support points of the member, and $D=Z(s)$ is counted with multiplicities.  When the members do not fill the leaf, the remaining contributions come from the split End blocks of the reduced degeneracy complex.
\end{lemma}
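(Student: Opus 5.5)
The plan is to read the divisor off the local expression of Theorem~\ref{thm:normsec} (equivalently Proposition~\ref{prop:hd-localexpr}). On the evaluation locus of a frame datum, the leafrian has local expression
\[
 f=\prod_\nu N_{A_\nu/k}(g_\nu)\cdot\prod_j c_j\cdot u,\qquad u\in k^\times .
\]
Since $u$ is a nonzero constant and the attached coordinates of the blocks together with the $c_j$ form a slice, taking divisors of this product gives the first display, $D_s=\sum_\nu\operatorname{div}N(g_\nu)+\sum_j\operatorname{div}(c_j)$. Here we use that the divisor of a product of regular functions is the sum of the divisors. Because $M$ is normal, the divisor $Z(\LF_s)$ of the reflexive section is determined on a big open set. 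So it suffices to identify it on the evaluation locus together with its translates.

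Next we identify $\operatorname{div}N(g_\nu)$ with $x_\nu^*(D)$. On the open set where the characteristic polynomial $m_\nu$ is separable, the member splits into $\ell_\nu$ support points $x_{\nu,1},\dots,x_{\nu,\ell_\nu}$. There one has $N(g_\nu)=\prod_a g(x_{\nu,a})$, as in Remark~\ref{rem:unitnorm}, where $g(x_{\nu,a})$ is the local function of $s$ pulled back along the $a$-th support map $M\dashrightarrow X$. Hence $\operatorname{div}N(g_\nu)=\sum_a x_{\nu,a}^*\operatorname{div}(s)=x_\nu^*(D)$ there, with $D=Z(s)$ counted with multiplicities. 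If $s=u^m h$ locally, the factor $u^m$ pulls back to $(\pm q_0)^m$ as in the proof of Lemma~\ref{lem:Dnonred}, so multiplicities are respected. To extend across the collision strata, note that both sides are Cartier divisors defined by functions polynomial in the attached coordinates $(q,p)$. One is the resultant $\operatorname{Res}(m_\nu,\hat g_\nu)$. The other is the symmetrized pullback, whose local equation is the same resultant by the polynomiality argument of Lemma~\ref{lem:leaf}. The two therefore agree on the closure, since the separable locus is dense and the coordinates form a smooth chart (Proposition~\ref{prop:hat-well}).

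For the final sentence, when the members do not fill the leaf, we pass to the filtered representation of Lemma~\ref{lem:singlepoint}. By Lemma~\ref{lem:skewdet}, $\det\mathcal K_{\mathrm{red}}=u\,\LF_s^2$. The Schur-complement filtration of Lemma~\ref{lem:schur} writes this determinant as a product over graded pieces. The member pieces contribute $N(g_\nu)^2$, the transverse frame contributes $(\prod_j c_j)^2$, and the healed points and divisor steps contribute the determinants of split End blocks. Taking half the divisor of the determinant assigns the remaining part of $D_s$ to those split End blocks.

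The main obstacle is this last step. The Schur complement $K+W^{\mathsf T}A^{-1}W$ is only rational a priori, so one must check that the quotient pieces have regular determinants, so that no cancellation of divisors between pieces occurs. I would first try to establish this on the open set where every member pairing $A$ is invertible, i.e.\ off $\bigcup_\nu x_\nu^*(D)$. There the split End contributions are regular by Lemma~\ref{lem:endcartier} and Lemma~\ref{lem:ordinary}. I would then argue by normality of $M$ that no extra components hide in the complement.
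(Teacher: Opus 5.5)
For the first display and for $\operatorname{div}N(g_\nu)=x_\nu^*(D)$, your argument is essentially the paper's. You read the divisor off the product $\prod_\nu N(g_\nu)\cdot\prod_jc_j\cdot u$ of Lemma~\ref{lem:constunit}. You then identify each norm divisor with the pullback of $D$ along the support points, with multiplicities coming from the branch factorization $s=u^mh$, $N(g_\nu)=(\pm q_0)^mN(\bar h)$. The paper does exactly this, reading $\operatorname{div}(q_0)$ as the pullback of $\{u=0\}$ and $\operatorname{div}N(\bar h)$ as that of $Z(h)$. Your detour through the separable locus is harmless, provided the extension across collisions rests on the equality of the local equations as polynomials in $(q,p)$, as you say, and not on density alone. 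Density alone would not suffice, since $\{\Delta_\ell=0\}$ is itself a hypersurface in the attached coordinates.

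The paper's proof does not argue the final sentence at all; it is left as a remark. Your attempt at it does not close as proposed:
\begin{itemize}
\item Normality of $M$ only controls loci of codimension at least two. The locus $\bigcup_\nu x_\nu^*(D)$ where the member pairing $A$ degenerates is a divisor, so normality cannot exclude poles of $\det(K+W^{\mathsf T}A^{-1}W)$ or cancellations there.
\item Lemmas~\ref{lem:endcartier} and~\ref{lem:ordinary} compute scalar Cartier determinants (powers of $\mathsf t$ up to fixed units), not local functions on $M$. They therefore say nothing about regularity or divisors on $M$.
\end{itemize}
The rationality worry is better handled by block diagonality than by normality. By Lemma~\ref{lem:zeroblock} and clause (i) of Definition~\ref{def:frame}, classes with zero member-block component pair trivially with the member blocks. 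Hence $W=0$, the induced pairing is $K$ itself, and $\det B=\det A\cdot\det K$ with both factors regular.

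Finally, your opening reduction to the evaluation locus \emph{together with its translates} is unnecessary and unjustified. It is unnecessary because the statement is local to a frame datum. It is unjustified because nothing shows that this locus has complement of codimension at least two.
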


\begin{proof}
By Lemma~\ref{lem:constunit} the local expression is the
product of the norm factors and the class coordinates.  For a member of length $1$ the norm is the value of $s$ at the moving support and its divisor is $x_\nu^*(D)$.
For a member of length $\ell$ on a branch write $s=u^m h$ with $u$ a local equation of a component of $D$; then $g=z^m\bar h$ and
\[
 N(g_\nu)=(\pm q_0)^m N(\bar h).
\]
Here $\operatorname{div}(q_0)$ is the sum of the pullbacks of $\{u=0\}$ along the $\ell$ support points, and $\operatorname{div}(N(\bar h))$ is the sum of the pullbacks of $Z(h)$ along the same points.  Together they give the pullback of $m\{u=0\}+Z(h)=\operatorname{div}(s)=D$.
\end{proof}

\begin{lemma}[Rank locus]\label{lem:ranklocus}
On the evaluation locus,
$D_s\setminus Z(\eta_{\mathrm{can}})=\bigl\{[E']\in M:\ \operatorname{rk}\hat s([E'])<2\rho\bigr\}$.
\end{lemma}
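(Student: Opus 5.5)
The plan is to compare the zero locus of $\LF_s$ with the rank-drop locus of $\hat s$ using the local expression of the leafrian on the evaluation locus. By Proposition~\ref{prop:hd-localexpr} and Lemma~\ref{lem:constunit}, in the coordinates of a frame datum we have
\[
 \LF_s=u\prod_\nu N(g_\nu)\prod_j c_j\;\partial_{[d]},\qquad u\in k^\times .
\]
Lemma~\ref{lem:divdec} then splits $D_s$ into the member part $\sum_\nu\operatorname{div}N(g_\nu)$ and the transverse part $\sum_j\operatorname{div}(c_j)$. The transverse part is exactly $Z(\eta_{\mathrm{can}})$, because the radial determinant of the Euler frame is $\prod_jc_j$ up to a unit (Definition~\ref{def:euler}, Lemma~\ref{lem:hd-transverse}). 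So on the evaluation locus we get
\[
 D_s\setminus Z(\eta_{\mathrm{can}})=\Bigl\{\prod_\nu N(g_\nu)=0\Bigr\}\setminus\Bigl\{\prod_jc_j=0\Bigr\}.
\]

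Next we identify the vanishing of $\prod_\nu N(g_\nu)$ with the rank drop of $\hat s$. In the datum coordinates the matrix of $\hat s$ is block diagonal. There is one $2\ell_\nu\times2\ell_\nu$ block per member, with Pfaffian $\pm\kappa_0^{\ell_\nu}N(g_\nu)$ (Lemma~\ref{lem:normform}, Proposition~\ref{prop:hat-well}). The class directions lie in the radical, which is Lemma~\ref{lem:zeroblock} together with Lemma~\ref{lem:hd-zeroblock}, since their block components vanish. Hence $\operatorname{rk}\hat s=\sum_\nu\operatorname{rk}(\text{block}_\nu)$. This equals $2\rho=\sum_\nu2\ell_\nu$ exactly when every block is nondegenerate, that is, when every Pfaffian $N(g_\nu)$ is nonzero. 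So $\operatorname{rk}\hat s([E'])<2\rho$ if and only if some $N(g_\nu)$ vanishes at $[E']$. This gives the inclusion $\supseteq$ and the inclusion $\subseteq$ at once, pointwise.

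Two points need care. First, the claim is about points off $Z(\eta_{\mathrm{can}})$, and one might worry that the radical could pick up extra rank there from the class directions. It cannot, because of the block-diagonal shape. Off $\prod c_j=0$ the Euler fields $w_j=c_j\partial_{c_j}$ span the transverse quotient $Q$ (Lemma~\ref{lem:qrank}), so the block decomposition of the tangent space really is the frame. Second, the rank must be computed in a coordinate-free way. I would note that the rank of a bivector does not depend on the frame, so computing it in the datum frame suffices.

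The main obstacle is the length-$\ell>1$ members. There $\hat s$ is defined by polynomial extension across collisions (Proposition~\ref{prop:hat-well}), and we need the attached-coordinate block to have determinant equal to the square of the norm, not merely to agree generically. I would handle this with the Hankel identity $\det H=\pm N_{A/k}(g)$ of Lemma~\ref{lem:normform}, which holds as a polynomial identity on the whole family. With that in hand, block degeneracy is equivalent to $N(g_\nu)=0$ everywhere on the evaluation locus. If this step fails, I would restrict to length-one members, which the presentations of Section~\ref{sec:transfer} supply. In that case each block is $\kappa_0 g(x_\nu)J$ and degenerates exactly when $s(x_\nu)=0$, that is, on $x_\nu^*(D)$.
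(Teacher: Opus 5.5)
Your proposal is correct and follows the paper's own argument: the leaf matrix is block diagonal with the class directions in the radical (Lemma~\ref{lem:zeroblock}), and a block degenerates exactly when its Pfaffian $\pm\kappa_0^{\ell_\nu}N(g_\nu)$ vanishes (Lemma~\ref{lem:normform}). You also make explicit, via the local expression and Lemma~\ref{lem:divdec}, the identification $D_s\setminus Z(\eta_{\mathrm{can}})=\{\prod_\nu N(g_\nu)=0\}\setminus\{\prod_j c_j=0\}$, which the paper leaves implicit; the detour through the Euler fields spanning $Q$ is unnecessary, since the rank of $\hat s$ can be read off in the coordinate frame $\partial_q,\partial_c$ at every point of the chart.
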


\begin{proof}
By Lemma~\ref{lem:zeroblock} the leaf matrix is block
diagonal, with the class directions radical.
By Lemma~\ref{lem:normform} a block is degenerate exactly when $N(g)=0$, which happens exactly when the support meets $D$.
\end{proof}

\begin{lemma}[Hecke bridge]\label{lem:hecke}
Let $E'$ be the elementary modification of $E$ at a point $x$ with
one-dimensional quotient, so $0\to E'\to E\to i_*k_x\to0$.  Then:
if $x\notin D$, then $E'|_D\cong E|_D$;
if $x\in D$, then $0\to k\to E'|_D\to E|_D\to k\to0$;
if $C$ is transverse to $D$, an elementary modification
along $C$ changes $E|_D$ only along $C\cap D$.
\end{lemma}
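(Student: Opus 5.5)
The plan is to compute $E'|_D$ by restricting the defining sequence to $D$, i.e.\ tensoring $0\to E'\to E\to i_*k_x\to0$ with $\cO_D$. The result is a local statement, so I would work on the stalk at $x$ and use that $E'$ agrees with $E$ away from $x$. First I need $\mathcal Tor_1^{\cO_X}(E,\cO_D)=0$, which holds because $E$ is locally free near $x$ (the modification is made at a point where $E$ is locally free, as in Lemma~\ref{lem:saturated}) and $\cO_D=\cO_X/(s)$ with $s$ a nonzerodivisor. Then the long exact Tor sequence becomes
\[
 0\to\mathcal Tor_1(i_*k_x,\cO_D)\to E'|_D\to E|_D\to i_*k_x\otimes\cO_D\to0 .
\]

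\emph{Case $x\notin D$.} Both Tor terms of $i_*k_x$ vanish, since $k_x\otimes\cO_D=0$ away from $D$. Hence $E'|_D\to E|_D$ is an isomorphism.

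\emph{Case $x\in D$.} I would compute the Tor terms with the resolution $0\to\cO_X\xrightarrow{s}\cO_X\to\cO_D\to0$. Tensoring it with $k_x$ gives the map $k\xrightarrow{s(x)=0}k$. So both $k_x\otimes\cO_D$ and $\mathcal Tor_1(k_x,\cO_D)$ are $k$, and this gives the four-term sequence $0\to k\to E'|_D\to E|_D\to k\to0$. In particular $E'|_D$ has torsion, which is the fact used in Lemma~\ref{lem:radical} and in Theorem~\ref{thm:torelli}.

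\emph{Transverse curve.} Take $0\to E''\to E\to i_{C*}Q\to0$ with $Q$ locally free on $C$. The same Tor sequence applies, with $\mathcal Tor_i(i_{C*}Q,\cO_D)$ supported on $C\cap D$. Transversality makes $C\cap D$ finite and reduced, so every correction term is a skyscraper on $C\cap D$. Away from $C$, the sheaves $E''$ and $E$ agree, and so do their restrictions.

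The main obstacle is the local freeness hypothesis needed to kill $\mathcal Tor_1(E,\cO_D)$. If $E$ is only torsion-free at $x$, I would argue instead that $\mathcal Tor_1(E,\cO_D)$ is the $s$-torsion of $E$, which vanishes because $E$ is torsion-free and $s\neq0$. So the same sequence holds in general.
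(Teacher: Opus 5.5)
Your proof is correct and follows the paper's argument. You tensor the defining sequence with $\cO_D$ and compute $\mathcal{T}or_1(k_x,\cO_D)$ from the two-term resolution of $\cO_D$; the paper uses $0\to\cO_X(-D)\to\cO_X\to\cO_D\to0$, which is your local resolution globalized. For the modification along $C$ you observe, as the paper does, that the correction terms are supported on $C\cap D$. Your explicit check that $\mathcal{T}or_1(E,\cO_D)=0$ because $E$ is torsion-free is a step the paper uses tacitly, and it is what makes the four-term sequence begin with an injection.
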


\begin{proof}
Tensor $0\to E'\to E\to i_*k_x\to0$ with $\cO_D$ and compute
$\operatorname{Tor}_1^X(k_x,\cO_D)$ from
$0\to\cO_X(-D)\to\cO_X\to\cO_D\to0$.  The map
$\cO_X(-D)|_x\to\cO_X|_x$ is an isomorphism for $x\notin D$ and zero for
$x\in D$, so $\operatorname{Tor}_1$ is $0$ in the first case and $k$ in the
second.  For the third claim, $\operatorname{Tor}_1^X(i_{C*}Q,\cO_D)$ is supported on $C\cap D$.
\end{proof}

\begin{proposition}[Refined radical formula]\label{prop:radref}
At every point of the evaluation locus,
\[
 \dim\ker B_s=c+\sum_{\nu:\,N(g_\nu)=0}\operatorname{corank}(B_\nu),
 \qquad \operatorname{corank}(B_\nu)=2\ell_\nu-\rk B_\nu .
\]
\end{proposition}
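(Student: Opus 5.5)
The plan is to compute $\ker B_s$ in the coordinates of the frame datum at an evaluation point, using that the matrix of $\beta_s$ there is block diagonal. The leaf matrix splits into the member blocks $B_\nu$ and a block on the class directions.

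The first step is to establish this block structure. By Lemma~\ref{lem:zeroblock} (and Lemma~\ref{lem:hd-zeroblock} in the length-one case), a class with zero component in the block of $Z_\nu$ pairs trivially with every class of that block. Distinct members have disjoint supports, so their blocks are mutually orthogonal. Two classes with zero block components have no local contribution at any member. Coverage in Definition~\ref{def:frame} then says the tangent space at the evaluation point is the direct sum of the $2\ell_\nu$-dimensional member blocks and the $c$ class directions. Hence
\[
 B_s=\Bigl(\bigoplus_\nu B_\nu\Bigr)\oplus B_{\mathrm{cl}},
\]
where $B_{\mathrm{cl}}$ is the restriction to the class directions. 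For a block diagonal matrix, $\dim\ker B_s=\dim\ker B_{\mathrm{cl}}+\sum_\nu\dim\ker B_\nu$.

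The second step is to show $B_{\mathrm{cl}}=0$, so that the class directions contribute exactly $c$. For this I would use Lemma~\ref{lem:radical}, together with Lemma~\ref{lem:symp-radical} and Lemma~\ref{lem:inst-radical} in the other cases. At a general point the radical has dimension $c_0=c$ and consists of the classes whose member components vanish, namely the image of $H^0(D,\End E|_D)$ under the connecting map. These are precisely the class directions of the presentation. Because the class coordinates are defined globally via the presentation, and the connecting-map description of the class subspace is independent of $s$ and of the point, I expect this identification to persist at every evaluation point, not just generic ones. This is the step I expect to be the main obstacle. To handle it, I would first argue that the class subspace equals $\operatorname{im}\delta$ on the whole evaluation locus, via the local-to-global filtration of Lemma~\ref{lem:singlepoint}. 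Since $\operatorname{im}\delta$ lies in the radical by construction, $B_{\mathrm{cl}}$ and all its mixed terms vanish.

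The last step handles the member blocks. If $N(g_\nu)\neq0$, Lemma~\ref{lem:normform} gives $\Pf(B_\nu)=\pm\kappa_0^{\ell_\nu}N(g_\nu)\neq0$, so $B_\nu$ is nondegenerate and contributes nothing. If $N(g_\nu)=0$, then $\dim\ker B_\nu=2\ell_\nu-\rk B_\nu=\operatorname{corank}(B_\nu)$ by definition. Summing these contributions gives the stated formula. As a check, when $\ell_\nu=1$ the matrix $\kappa_0 g(x_\nu)J$ is either nondegenerate or zero, so the correction is $2$ exactly when the support meets $D$. This agrees with Lemma~\ref{lem:ranklocus}.
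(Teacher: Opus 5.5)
Your three-step structure matches the paper: block diagonality from Lemma~\ref{lem:zeroblock}, the class directions contributing $c$, and Lemma~\ref{lem:normform} on each member block. The problem is how you propose to settle step~2. You plan to show that the class subspace equals $\operatorname{im}\delta$ on the whole evaluation locus, and that equality is false exactly at the points where the formula has content.

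Lemma~\ref{lem:radical} holds at every stable point, not only at general ones. There $\ker B_s=\operatorname{im}\delta$, of dimension $h^0(D,\End E|_D)-1$. When a member meets $D$, $E|_D$ acquires torsion (Lemma~\ref{lem:hecke}), and this dimension jumps above $c$ (Proposition~\ref{prop:intr}). So at such points $\operatorname{im}\delta$ strictly contains the $c$-dimensional class subspace. Your claimed equality, combined with Lemma~\ref{lem:radical}, would force $\dim\ker B_s=c$ everywhere. That would kill the correction terms you compute in step~3.

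Your supporting expectation is also wrong. The map $\delta$ is the connecting map of $0\to\End E\otimes\omega_X\to\End E\to\End E|_D\to0$. It therefore depends on $D=Z(s)$ and on $[E]$, and its rank jumps.

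Step~2 only needs the inclusion of the class directions in the radical, i.e.\ $B_{\mathrm{cl}}=0$; the mixed terms already vanish by step~1. The paper gets this directly from Lemma~\ref{lem:zeroblock}, with no cohomological input. Classes with zero component in every member block have no local contribution, and the pairing of the datum is the sum of the block pairings (as in Lemma~\ref{lem:hd-zeroblock}). So these classes are radical at every evaluation point.

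If you want a global argument, specialise rather than identify subspaces:
\begin{itemize}
\item The entries of $B_{\mathrm{cl}}$ are regular functions on the chart of the datum.
\item At a general point all $N(g_\nu)\neq0$, so block diagonality gives $\rk B_s=2\rho+\rk B_{\mathrm{cl}}$.
\item Lemma~\ref{lem:radical} gives $\rk B_s=d-c=2\rho$ at such a point.
\item Hence $B_{\mathrm{cl}}$ vanishes on a dense open subset of the reduced, irreducible chart, and therefore identically.
\end{itemize}
With this replacement for step~2, the rest of your argument goes through as written.
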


\begin{proof}
By Lemma~\ref{lem:zeroblock} the pairing is block diagonal, the class
directions contributing $c$ to the radical.  By
Lemma~\ref{lem:normform} a block is nondegenerate when
$N(g_\nu)\neq0$ and contributes $\operatorname{corank}(B_\nu)$ when
$N(g_\nu)=0$.
\end{proof}

\begin{proposition}[Intrinsic description of the degeneracy divisor]%
\label{prop:intr}
Writing $h^0_{\mathrm{gen}}$ for the minimal value of $h^0(D,\End E'|_D)$, one has
\[
\begin{aligned}
 D_s\setminus Z(\eta_{\mathrm{can}})
 &=\bigl\{[E']:\ E'\ \text{not locally free at some point of}\ D\bigr\}\\
 &=\bigl\{[E']:\ E'|_D\ \text{has torsion}\bigr\}\\
 &\subseteq\bigl\{[E']:\ h^0(D,\End E'|_D)>h^0_{\mathrm{gen}}\bigr\}.
\end{aligned}
\]
The last inclusion may be strict (a locally free $E'$ with $E'|_D$ of multiplier type has $h^0$ jumping without torsion); only the inclusion is needed below.
Equivalently, $[E']$ lies in $D_s\setminus Z(\eta_{\mathrm{can}})$ if and
only if every realization of the leaf has a block meeting $D$.
\end{proposition}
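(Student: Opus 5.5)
The plan is to work on the evaluation locus with the local expression of Lemma~\ref{lem:constunit} and the divisor decomposition of Lemma~\ref{lem:divdec}. Away from $Z(\eta_{\mathrm{can}})$ the transverse factor $\prod_j c_j$ is a unit, so
\[
 D_s\setminus Z(\eta_{\mathrm{can}})=\bigcup_\nu \operatorname{div}\bigl(N(g_\nu)\bigr)=\bigcup_\nu x_\nu^*(D).
\]
Hence $[E']$ lies in $D_s\setminus Z(\eta_{\mathrm{can}})$ exactly when, in the chosen frame datum, some member block has support meeting $D$. By Lemma~\ref{lem:ranklocus} this is the same as the rank drop of $\hat s$ below $2\rho$. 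Since $\hat s=\theta_s$ is intrinsic, the condition cannot depend on the frame datum. This already gives the last reformulation: every realization of the leaf has a block meeting $D$.

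\textbf{First equality.} Next I would identify "some block meets $D$" with "$E'$ is not locally free at a point of $D$".

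If $E'$ is singular at $x\in D$, I use Lemma~\ref{lem:hd-members}: the non-locally free points are members. In every realization the member supported at $x$ has $N(g)$ vanishing, because $s(x)=0$.

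Conversely, suppose $E'$ is locally free along $D$. I would choose the members of a master chain (Definition~\ref{def:master}) at points off $D$, which is possible since members can be placed at general points by Proposition~\ref{prop:realization}(a). Any healed points and divisors are placed transverse to $D$. Then no norm factor vanishes, and $[E']$ is not in the locus.

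The main obstacle is this converse direction. One must show that the contribution of the radical and split End blocks does not create extra zeros once $\eta_{\mathrm{can}}$ is removed. I would argue it through Proposition~\ref{prop:radref}: off $Z(\eta_{\mathrm{can}})$ the radical has dimension exactly $c$ unless a member norm vanishes. The rank count of Lemma~\ref{lem:radical} then pins the rank at $2\rho$.

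\textbf{Second equality.} This follows from Lemma~\ref{lem:hecke}. Away from $D$, point modifications leave $E'|_D$ unchanged. A point modification at $x\in D$ produces the subsheaf $k\to E'|_D$, which is torsion. For the reverse direction, a sheaf locally free near $D$ restricts to a locally free sheaf on the Cartier divisor $D$, so $E'|_D$ has no torsion.

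\textbf{Inclusion.} This is the final statement of Lemma~\ref{lem:radical}: torsion in $E'|_D$ forces $h^0(D,\End E'|_D)\ge 2$. More precisely, it forces a jump above the generic value, because the torsion-image endomorphisms are independent of the generic endomorphism algebra. To show this independence I would compare with a general locally free point, where those endomorphisms are absent.

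Strictness of the inclusion needs only the multiplier-type example. No further argument is required there, since only the inclusion is used later.
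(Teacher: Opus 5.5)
Your handling of the final reformulation, the forward half of the first equality, and the second equality follows the paper: you use the rank locus on the evaluation locus, the fact that members are the non-locally free points (Lemma~\ref{lem:hd-members}), Lemma~\ref{lem:hecke}, and the intrinsic rank of $\theta_s$. Your extra remark that a sheaf locally free along $D$ restricts without torsion is a welcome addition.

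\textbf{The gap is in the inclusion.} The last assertion of Lemma~\ref{lem:radical} only gives $h^0(D,\End E'|_D)\ge 2$. That is, the torsion-image endomorphisms are independent of the \emph{scalars}. This says nothing once $h^0_{\mathrm{gen}}=c+1\ge 2$, which is exactly the case $c\ge1$ where $Z(\eta_{\mathrm{can}})$ matters; for instance Example~\ref{ex:testbed} has $c=1$.

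Your proposed repair, comparing with a general locally free point where those endomorphisms are ``absent'', is not an argument:
\begin{itemize}
\item there is no copy of the generic endomorphism algebra inside $\End(E'|_D)$ at the special point;
\item semicontinuity gives only $h^0\ge h^0_{\mathrm{gen}}$;
\item the limit of the generic $h^0_{\mathrm{gen}}$-dimensional spaces is some subspace of $\End(E'|_D)$ that may well contain the torsion-image maps.
\end{itemize}

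The paper closes this step differently. It uses the full identity of Lemma~\ref{lem:radical}, $\dim\Ker B_s=h^0(D,\End E'|_D)-1$ at every stable point, together with Proposition~\ref{prop:radref}. A length-one member at $x_\nu\in D$ has $g(x_\nu)=0$, so its $2\times2$ block vanishes and $\dim\Ker B_s\ge c+2$. At a point of minimal $h^0$ one has $\dim\Ker B_s=c$. Hence $h^0>h^0_{\mathrm{gen}}$, in fact by at least $2$. You already invoke Proposition~\ref{prop:radref} elsewhere; it is the tool needed here.

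\textbf{A secondary point concerns the converse of the first equality.} For a fixed $[E']$ you cannot ``choose'' the members. Proposition~\ref{prop:realization}(a) asserts the existence of local models at general points; it does not say that a given object's members can be moved. On the evaluation locus the members are forced to be the non-locally free points of $E'$.

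That forcing is what makes the converse immediate in the paper. If $E'$ is locally free along $D$, its members are off $D$ and all $N(g_\nu)\neq0$. By Lemma~\ref{lem:constunit} the local expression has no further factors, so the split End contributions you flag as the main obstacle do not arise there.

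Off the evaluation locus, Proposition~\ref{prop:radref} is not available. Take the locally free multiplier-type point of the strictness remark: it has no members at all, yet Lemma~\ref{lem:radical} gives a radical of dimension $h^0-1>c$. So your claim that off $Z(\eta_{\mathrm{can}})$ the radical has dimension exactly $c$ unless a member norm vanishes is not available in that generality. The argument should be confined to the evaluation locus, as the paper's is.
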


\begin{proof}
At a point of the evaluation locus the members are the modification points, and $E'$ is not
locally free exactly at those points.  By Lemma~\ref{lem:ranklocus} the
rank drops exactly when a member meets $D$.  If $E'$ is not locally free at a point of $D$, then $E'|_D$
has torsion by Lemma~\ref{lem:hecke}, and $h^0(D,\End E'|_D)$ is not
minimal: by Proposition~\ref{prop:radref} the radical has dimension $c+\sum_{\nu:N(g_\nu)=0}\operatorname{corank}(B_\nu)>c$, while at a point where $h^0$ is
minimal Lemma~\ref{lem:radical} gives $\dim\ker B_s=c$.
Conversely, if $h^0$
jumps then $E'|_D$ is not simple, so the radical is strictly larger than the generic radical of Lemma~\ref{lem:radical}, and the rank drops below $2\rho$.  When the members are intrinsic to the object the condition does not
depend on the presentation; in general the rank locus is intrinsic by Lemma~\ref{lem:ranklocus} and Theorem~\ref{thm:hd-canonicity}, so the description of $D_s$ is presentation-independent.
\end{proof}

\begin{proposition}[Classes of the components]\label{prop:classes}
The member part
\[
 R=\sum_\nu\operatorname{div}\bigl(N(g_\nu)\bigr)
\]
is supported on the rank-drop locus, and on $U_\pi$ one has
\[
 D_s=Z(\eta_{\mathrm{can}}).
\]
Moreover
\[
 [\,Z(\eta_{\mathrm{can}})\,]=c_1(\det Q),\qquad [\,D_s\,]=-K_M .
\]
\end{proposition}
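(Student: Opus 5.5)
The plan is to read off all three claims from the local expression of $\LF_s$ on the evaluation locus, and then pass to all of $M$ by normality and Lemma~\ref{lem:linebundle}.

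\textbf{Support of $R$.} At a point of a frame datum, Theorem~\ref{thm:normsec} and Lemma~\ref{lem:divdec} give
\[
 D_s=R+\sum_j\operatorname{div}(c_j).
\]
The second summand is $\operatorname{div}$ of the radial determinant $\eta=\prod_jc_j$, so it is $Z(\eta_{\mathrm{can}})$. For $R$, each summand is $\operatorname{div}(N(g_\nu))=x_\nu^*(D)$. Lemma~\ref{lem:normform} says the block $B_\nu$ degenerates exactly on $\{N(g_\nu)=0\}$, and Proposition~\ref{prop:radref} then gives $\dim\ker B_s>c$ there. So $\operatorname{rk}\theta_s<2\rho$ on $\operatorname{supp}R$, which places $R$ inside the rank-drop locus $Y\setminus U_\pi$.

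\textbf{Restriction to $U_\pi$.} On $U_\pi$ the Pfaffian power $\theta_s^{[\rho]}$ is nowhere vanishing, since the rank is exactly $2\rho$. Lemma~\ref{lem:reduction} then says $\LF_s$ vanishes at $x\in U_\pi$ exactly when the $Q$-projection of $\Omega_x$ vanishes. Corollary~\ref{cor:frame} identifies $\LF_s=\theta_s^{[\rho]}\otimes\eta$. Since the first factor is a unit on $U_\pi$, the two divisors coincide there with multiplicity: $D_s|_{U_\pi}=Z(\eta_{\mathrm{can}})|_{U_\pi}$.

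\textbf{Class identities.} The class $[D_s]=-K_M$ is immediate, because $D_s$ is the zero divisor of a nonzero section of $\omega_M^{[-1]}$ (Theorem~\ref{thm:normsec}) and $M$ is normal, so Weil divisor classes of reflexive rank-one sheaves are determined on the regular locus. For $[Z(\eta_{\mathrm{can}})]=c_1(\det Q)$, I note that $\eta_{\mathrm{can}}$ is the determinant of the $Q$-projections of the global Euler fields $w_1,\dots,w_c$ (Lemma~\ref{lem:descent}, Lemma~\ref{lem:qrank}). It is therefore a section of $\det Q$, nonzero by Proposition~\ref{prop:frame}, and its zero divisor represents $c_1(\det Q)$.

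\textbf{Main obstacle.} The hard step is making $\det Q$ meaningful off $U_\pi$, where $Q=T_M/\operatorname{im}\theta_s^\sharp$ jumps rank. I would define $\det Q$ as the reflexive hull of $\det$ on $U_\pi$, using Lemma~\ref{lem:linebundle}. I would then check that $Y\setminus U_\pi$ has no divisorial component along which $\eta$ acquires extra zeros; failing that, I would read the class statement on $U_\pi$ and extend by normality. As a consistency check, combining the three statements gives $[R]=-K_M-c_1(\det Q)$, which must be supported on the rank-drop divisor. This agrees with $\det\theta_s^{[\rho]}$ being a section of $\det(\operatorname{im}\theta_s^\sharp)$ vanishing exactly along the Pfaffian degeneracy divisor, as in \cite{MPS}.
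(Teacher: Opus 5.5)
Your proposal follows the paper's proof step for step: $R$ lies in the rank-drop locus because a member block degenerates exactly where its norm vanishes (the paper cites Lemma~\ref{lem:ranklocus} where you cite Proposition~\ref{prop:radref}, both resting on Lemmas~\ref{lem:zeroblock} and~\ref{lem:normform}); on $U_\pi$ the Pfaffian trivializes the leaf determinant, so $\LF_s$ and $\eta_{\mathrm{can}}$ cut out the same divisor; and the two class identities come from $\eta_{\mathrm{can}}$ being a section of $\det Q$ and $\LF_s$ a section of $\omega_M^{[-1]}$. The proposed fix in your closing paragraph does not work, however: $\operatorname{supp}R$, the pullback of $D$ along the members, is a divisor inside the rank-drop locus whenever $D\neq\emptyset$, so neither a reflexive hull from $U_\pi$ nor extension by normality is available across it; instead take $\det Q=\det T_M\otimes(\det\Lambda^{\mathrm{sat}})^{-1}$, with $\Lambda^{\mathrm{sat}}\subset T_M$ the saturation of the leaf distribution, and your consistency check then reads $[R]=c_1(\det\Lambda^{\mathrm{sat}})$.
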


\begin{proof}
By Lemma~\ref{lem:normform} a factor $N(g_\nu)$ vanishes exactly when the support of the member meets $D$, and by Lemma~\ref{lem:ranklocus} this is exactly the rank-drop condition. Hence $R$ is supported on the rank-drop locus.

On $U_\pi$ the bivector has rank $2\rho$ and $\pi^{[\rho]}$ trivializes $\det(\operatorname{im}\hat s)$, so $\LF_s$ and
$\eta_{\mathrm{can}}$ have the same zero locus there, that is $D_s=Z(\eta_{\mathrm{can}})$.  The deformation volume is a
section of $\det Q$, and on $U_\pi$ one has $\det Q\cong\cO(-K_M)$ by Lemma~\ref{lem:linebundle}.
Since $D_s$ is the zero divisor of a section of $\omega_M^{[-1]}$ its class
is $-K_M$.
\end{proof}

\begin{remark}[Tautological form]\label{rem:degopen}
The whole class content is carried by the deformation volume.  For the moduli of sheaves, Grothendieck--Riemann--Roch gives
\[
 -K_M=\pi_*\Bigl(\tfrac12\,c_1(X)\cdot
 \bigl(2r\,c_2(\cE)-(r-1)\,c_1(\cE)^2\bigr)\Bigr),
\]
where $\cE$ is a universal sheaf on $M\times X$ and $\pi_*$ is the pushforward along the $X$-factor.
For $c=0$ the situation is classical: $\eta_{\mathrm{can}}$ is a unit and $R=D_s$ is the Pfaffian divisor.  The finer geometry of $R$ is described by the reduced degeneracy complex.
\end{remark}

\begin{corollary}[The degenerate instanton locus]\label{cor:inst-locus}
The degenerate instanton locus is contained in $Z(\LF_s)$, which
is that locus together with the zero scheme of
$\eta_{\mathrm{can}}$.  When $\LF_s$ splits $M$ the scheme $Z(\LF_s)$ is compatibly split, and every
compatibly split subscheme contained in $Z(\LF_s)$ is a Poisson subvariety.
\end{corollary}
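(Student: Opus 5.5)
The corollary has three assertions: an inclusion of the degenerate instanton locus in $Z(\LF_s)$, a description of $Z(\LF_s)$ as that locus together with $Z(\eta_{\mathrm{can}})$, and a splitting/Poisson statement. I would prove them in that order, reading the degenerate instanton locus as the set of framed sheaves $(E,\iota)$ with an instanton point (non-locally free point) on $D\cap U$.

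For the set-theoretic description I would follow the sheaf argument of Proposition~\ref{prop:intr}, now in the framed setting. By Lemma~\ref{lem:inst-identify}, the pairing on a member block is the trace pairing with weight $g=s|_{\cO_Z}$. Lemma~\ref{lem:divdec} then writes $D_s$ as the member part $\sum_\nu x_\nu^*(D)$ plus $\sum_j\operatorname{div}(c_j)$. The class part is exactly $Z(\eta_{\mathrm{can}})$, since the radial determinant is $\prod_j c_j$ up to a unit (Proposition~\ref{prop:hd-localexpr}).

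The member part is supported on objects whose members meet $D$, by Lemma~\ref{lem:normform} and Lemma~\ref{lem:ranklocus}. By Lemma~\ref{lem:hd-members} the members are the non-locally free points, which are the instanton points. So on the evaluation locus the member part is the degenerate instanton locus. Combining the two parts gives
\[
 Z(\LF_s)=\{\text{degenerate instantons}\}\cup Z(\eta_{\mathrm{can}}),
\]
and in particular the inclusion. To pass from the evaluation locus to all of $M$ I would take closures. Both sides are closed, since the degenerate locus is the closed condition ``$E|_D$ has torsion'' by Lemma~\ref{lem:hecke}. The evaluation locus is dense and $M$ is normal, so the equality persists.

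For the last assertion, assume $\LF_s$ splits $M$. Lemma~\ref{lem:loccompat} makes $Z(\LF_s)$ compatibly split; this holds on the regular locus and extends through Remark~\ref{rem:nearsplit}, since the framed moduli space is normal. Corollary~\ref{cor:normupgrade}, with Corollary~\ref{cor:norminv} in the framed case, shows that $\LF_s$ is a Poisson leafrian. Theorem~\ref{thm:upgrade} then turns every compatibly split subvariety into a Poisson subvariety, and in particular those contained in $Z(\LF_s)$.

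I expect the main obstacle to be the identification of instanton points on $D$ with blocks meeting $D$ away from the evaluation locus. Members are tied to a presentation, while the degenerate instanton locus is intrinsic. I would first try to invoke the intrinsic rank-locus description of Proposition~\ref{prop:intr}, transported to the framed residue sequence of Lemma~\ref{lem:inst-radical}, so that no presentation-dependence remains.
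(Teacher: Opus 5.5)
Your outline matches the paper's: $Z(\LF_s)$ is the locus where the leaf part degenerates together with $Z(\eta_{\mathrm{can}})$, compatibility comes from Lemma~\ref{lem:loccompat}, and the Poisson statement comes from Corollary~\ref{cor:normupgrade}. The step that fails is the globalization. You prove the decomposition only on the evaluation locus, using the local expression of Lemma~\ref{lem:divdec} and the identification of members with instanton points. You then conclude that, since both sides are closed, the evaluation locus is dense and $M$ is normal, the equality holds on all of $M$. That does not follow. Two closed subsets that agree on a dense open set can still differ by irreducible components lying entirely in its complement. For the divisor $Z(\LF_s)$, nothing you cite rules out a codimension-one component inside the complement of the evaluation locus, and normality does not address this. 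The same objection applies to the inclusion of the degenerate locus in $Z(\LF_s)$, unless you know that locus is the closure of its intersection with the evaluation locus. Lemma~\ref{lem:hecke} also does not give closedness of the torsion condition.

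The paper needs no density, because its argument is pointwise and uses only the global factorization $\LF_s=\theta_s^{[\rho]}\wedge\Omega$. At any point, $\theta_s^{[\rho]}$ vanishes exactly when the rank of $\theta_s$ drops below $2\rho$. That is exactly the set of framed sheaves whose leaf block degenerates, which is how the paper reads the degenerate instanton locus. Where the rank equals $2\rho$, Lemma~\ref{lem:reduction} shows that $\LF_s$ vanishes exactly when the $Q$-projection of $\Omega$ does, i.e.\ on $Z(\eta_{\mathrm{can}})$. This gives both the inclusion and the description at every point, independently of presentations. If you keep your reading of the locus as ``an instanton point on $D\cap U$'', the real content is a framed analogue of Proposition~\ref{prop:intr} identifying that set with the rank-drop locus. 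You flag this but do not carry it out. Finally, Remark~\ref{rem:nearsplit} extends a splitting from a dense open, not the compatibility of a subscheme. You do not need it: the paper treats the framed moduli as an open subset of a smooth framed Quot scheme, so Lemma~\ref{lem:loccompat} applies directly.
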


\begin{proof}
The leaf Pfaffian vanishes exactly on the framed sheaves whose leaf block degenerates.  Compatibility is Lemma~\ref{lem:loccompat}; the Poisson statement is Corollary~\ref{cor:normupgrade}.
\end{proof}

\begin{remark}[The canonical symplectic choice]\label{rem:inst-symp}
On $(\mathbb P^2,L)$ the section $\sigma$ defining $L$ itself gives a symplectic tensor $\theta_\sigma$ \cite[Prop.~5.1]{Bot00}.  Its branch is $L$; the fat-point blocks have no room on $D\setminus L$, and Lemma~\ref{lem:inst-scale} already excludes a splitting.
\end{remark}

\subsection{Examples}

\begin{example}\label{ex:p2}
Let $X=\mathbb{P}^2$ and let $s\in H^0(\cO(3))$ have smooth cubic zero locus $D$.  If $D$ is ordinary, then $s^{p-1}$ splits $\mathbb{P}^2$: $b_0(s)$ is the Hasse invariant of the plane cubic, the coefficient of $(XYZ)^{p-1}$ in $f^{p-1}$ \cite{Man,HLYY}, equivalently the unique entry of the Cartier--Manin matrix, of rank $1$ exactly when $D$ is ordinary.  By Theorem~\ref{thm:moduli}, $\LF_s$ splits $M(r,c_1,c_2)$ exactly when $D$ is ordinary.
\end{example}

\begin{example}[A testbed]\label{ex:testbed}
Let $X=\mathbb{P}^1\times\mathbb{P}^1$ with $s$ the anticanonical section
($D=(2,2)$) and $M=M(2,0,2)$.  Then $d=5$; for block type $(1,1)$ of $E|_D$ one has $c=1$ and leaf rank $2\rho=4$, so $\rho=2$.  The
leafrian splits $M(2,0,2)$ exactly when $s^{p-1}$ splits $\mathbb{P}^1\times\mathbb{P}^1$.
\end{example}

\begin{remark}\label{rem:toric}
Let $X=\mathbb{P}(\cO_C\oplus L)$ over $C=\mathbb{P}^1$, with anticanonical divisor $D$ smooth.  Whenever $s^{p-1}$ Frobenius-splits $X$, Theorem~\ref{thm:moduli} gives the splitting for every type with nonempty moduli.
\end{remark}

\section{Extension to the Gieseker compactification}\label{sec:gieseker}

The leafrian of Section~\ref{sec:moduli} lives on the moduli space of stable
sheaves, which is quasi-projective. This section extends the leafrian to the
Gieseker compactification. Throughout, $X$ is a smooth projective surface over
$k$ and $H$ is an ample divisor. Write
\[
 \bar M=\bar M(r,c_1,c_2)
\]
for the moduli space of $H$-semistable torsion-free sheaves of rank $r$ and
Chern classes $c_1,c_2$, taken up to $S$-equivalence, and write
$M=M(r,c_1,c_2)$ for the open subset of stable sheaves. Then $\bar M$ is
projective and $M$ is the Gieseker-stable locus. As in
Section~\ref{sec:moduli}, let $s\in H^0(X,\omega_X^{-1})$ be nonzero, write
$D=Z(s)$, and assume $\rho>0$.

\subsection{The compactification}

\begin{lemma}[Normality]\label{lem:g-normal}
The variety $\bar M$ is normal. In particular the singular locus of $\bar M$
has codimension at least two.
\end{lemma}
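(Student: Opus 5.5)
The plan is to realize $\bar M$ as a GIT quotient and transfer normality from the parameter space. Fix $m\gg0$ so that every $H$-semistable sheaf $E$ of the given type is $m$-regular. Let $R\subset\operatorname{Quot}(\cO_X(-m)^{\oplus P(m)},P)$ be the open subscheme of quotients $q\colon\cO_X(-m)^{\oplus P(m)}\to E$ with $E$ semistable and $H^0(q(m))$ an isomorphism. Then $\bar M=R/\!\!/\mathrm{SL}_{P(m)}$ is a good quotient. The argument has two steps.

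\textbf{Step 1: $R$ is normal.} The tangent space of $R$ at $[q]$ is $\Hom(K,E)$, where $K=\ker q$. The obstructions lie in $\Ext^1(K,E)\cong\Ext^2(E,E)$. I will not show that $\Ext^2(E,E)$ vanishes, since that fails in general (for instance on K3 surfaces). Instead I will use the dimension estimates of O'Grady and Langer for the moduli of sheaves on a surface. For $c_2\gg0$ these show:
\begin{enumerate}
\item[(a)] $R$ is a local complete intersection of the expected dimension $\dim\Hom(K,E)-\dim\Ext^1(K,E)$;
\item[(b)] the locus where $\Ext^2(E,E)_0\neq0$ has codimension at least two in $R$.
\end{enumerate}
Serre's criterion then gives normality. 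By (a), $R$ is Cohen--Macaulay, hence $S_2$. By (b), $R$ is smooth in codimension one, hence $R_1$. In the Poisson situation of this paper the estimate (b) is easier. When $D\neq\emptyset$ and $(D\cdot H)>0$, stability kills $\Hom(E,E\otimes\omega_X)$ for stable $E$, and for strictly semistable $E$ it is controlled by the Jordan--Hölder factors. Serre duality then gives $\Ext^2(E,E)_0=0$ off a small locus. This is the same argument that gives smoothness of $M$ in Section~\ref{sub:moduli-setting}. When $D=\emptyset$, $X$ is K3 or abelian and $\Ext^2(E,E)_0=0$ at every point. This follows because $\Ext^2(E,E)\cong\Hom(E,E)^\vee$, and for semistable $E$ the trace-free part is controlled by the polystable factors. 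Together with Mukai's unobstructedness, this makes $R$ lci.

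\textbf{Step 2: pass to the quotient.} Normality of $R$ descends to the good quotient. If $f$ lies in the integral closure of $\cO_{\bar M}(V)$, then its pullback to $\pi^{-1}(V)$ is integral over invariant functions, so it lies in $\cO_R(\pi^{-1}V)^{\mathrm{SL}}=\cO_{\bar M}(V)$. Since $\bar M$ is normal, it is $R_1$, so $\operatorname{codim}\operatorname{Sing}\bar M\ge2$. This gives the second sentence of the statement.

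\textbf{Main obstacle.} The hard step is (b) together with the lci statement in Step 1, uniformly over the strictly semistable locus and for the given $c_2$ rather than only for $c_2\gg0$. My first approach is to stratify $R$ by Jordan--Hölder type and bound each stratum's dimension by sums of moduli dimensions of the factors plus extension data. This is where the Poisson hypothesis $-K_X$ effective enters: it makes each factor unobstructed, so the strict boundary is at least two below $\dim\bar M$. This parallels the smoothness argument of Section~\ref{sub:moduli-setting}. If a uniform bound fails, I would fall back on Drezet's factoriality and normality results in the standard cases.
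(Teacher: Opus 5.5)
\textbf{The case $D\neq\emptyset$ works, and is easier than you make it.} The paper does not prove this lemma. It cites normality as a standard property of the Gieseker construction (\cite{HLbook}, \cite{Gie}) and then deduces $R_1$. Your attempt to derive normality from the Quot scheme is therefore a different route. Your Step 2 (normality descends along the good quotient) and your final deduction of $\operatorname{codim}\operatorname{Sing}\bar M\ge2$ are fine.

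For $D\neq\emptyset$, Step 1 can be completed without O'Grady--Langer estimates, without lci, and without any Jordan--H\"older stratification:
\begin{itemize}
\item $D$ is a nonzero effective divisor, so $K_X\cdot H=-D\cdot H<0$ automatically.
\item For every $\mu$-semistable $E$, including strictly semistable ones, $E\otimes\omega_X$ is $\mu$-semistable of strictly smaller slope, so $\Hom(E,E\otimes\omega_X)=0$.
\item Hence $\Ext^1(K,E)\cong\Ext^2(E,E)=0$ at \emph{every} point of $R$, not just off a small locus.
\end{itemize}
So $R$ is smooth and $\bar M$ is normal for every $c_2$, not only $c_2\gg0$. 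The codimension statement your stratification aims at ("the strict boundary is at least two below $\dim\bar M$") is false in general. For $\mathbb P^2$ and type $(2,0,2)$ the strictly semistable locus is a divisor, as Remark~\ref{rem:g-critical} itself notes. Normality of $\bar M$ does not need that bound.

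\textbf{The gap is the case $D=\emptyset$.} Your claim that $\Ext^2(E,E)_0=0$ at every point fails at strictly semistable points. Take $E=E_1\oplus E_2$ with non-isomorphic stable factors of the same reduced Hilbert polynomial. Serre duality gives $\Ext^2(E,E)\cong\Hom(E,E)^\vee\cong k^2$, so the trace-free part is at least one-dimensional. Mukai's unobstructedness applies only to simple sheaves. So $R$ is genuinely obstructed there; in O'Grady's examples $\bar M$ is actually singular along such points. Neither smoothness nor the lci property of $R$ follows from what you wrote.

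Normality of $\bar M$ at these points on K3 or abelian surfaces is a substantial theorem (Kaledin--Lehn--Sorger, via the quadratic moment-map local model, in characteristic zero). It is not a formal consequence of unobstructedness. In characteristic $p$ you would need your own analysis of the Kuranishi map at polystable points. Your fallback route ($c_2\gg0$, or Drezet's results) does not give the statement for an arbitrary fixed type on these surfaces either.

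As it stands, your proof covers $D\neq\emptyset$ for all $c_2$ but not $D=\emptyset$. That is exactly the part the paper's blanket citation also does not really supply, since normality is not a general feature of Gieseker moduli on an arbitrary surface.
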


\begin{proof}
Normality of the coarse moduli space of $H$-semistable sheaves on a smooth
projective surface is a standard property of the Gieseker construction. See
\cite[Ch.~4]{HLbook} and \cite{Gie}. Normality implies Serre's condition
$R1$, which says exactly that the singular locus has codimension at least
two.
\end{proof}

\begin{lemma}[Smooth locus]\label{lem:g-smooth}
The evaluation locus of a presentation is contained in $\bar M^{\mathrm{sm}}$.
\end{lemma}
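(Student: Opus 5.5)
The plan is to reduce the claim to two facts: the evaluation locus consists of Gieseker-stable points, and at such points $\bar M$ is smooth because the deformation theory is unobstructed there.

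\emph{Step 1: stability.} The evaluation locus of a presentation is, by Proposition~\ref{prop:realization}, a set of realization points $[E]$ at which the attached block coordinates and the class coordinates form slice coordinates centred at $[E]$. Condition (b) of that proposition says the modifications produce stable objects. So every evaluation point lies in $M$, the Gieseker-stable locus. I would check that this uses only the kernel-modification stability of Lemma~\ref{lem:saturated}(i), and not the polystable case (ii). The polystable tops of Remark~\ref{rem:regimes} are member-saturated but are not evaluation points, since they admit no slice of this form.

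\emph{Step 2: local structure of $\bar M$ near $[E]$.} Near a stable point, $\bar M$ agrees with $M$. Luna's étale slice for the GIT quotient (Quot scheme modulo $\mathrm{PGL}$) identifies the complete local ring $\widehat{\cO}_{\bar M,[E]}$ with the hull of the deformation functor of $E$. Its tangent space is $\Ext^1(E,E)$, and its obstructions lie in $\Ext^2(E,E)_0$.

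\emph{Step 3: smoothness from the slice.} The key observation is that the evaluation-point hypothesis already gives $d=\dim\bar M$ regular parameters. By (P2) there are $2\rho+c=d$ of them, and they are formal coordinates, not merely tangent directions. They come from unobstructed deformations:
\begin{itemize}
\item the block coordinates are unobstructed by the vanishing $\Ext^1_{\cO_Z}(I_Z/I_Z^2,\cO_Z)=0$ established in the proof of Proposition~\ref{prop:realization};
\item the class coordinates are parameters of the chain space $\cI$, which is smooth, being a tower of affine spaces of gluing data by Remark~\ref{rem:classcoords}, and they descend by (P1).
\end{itemize}
Hence the map $\cI\to\bar M$ is a smooth-type morphism at the point, with image a formal neighbourhood of dimension $d$. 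Moreover $\Ext^1(E,E)$ has dimension $t\geq d$, with equality exactly when the slice coordinates span the tangent space. That equality is the slice clause itself. So $\widehat{\cO}_{\bar M,[E]}$ is a quotient of a formal power series ring in $t=d$ variables, of dimension $d$. Since $\bar M$ is irreducible of dimension $d$ and normal (Lemma~\ref{lem:g-normal}), that quotient is the whole power series ring, and $[E]\in\bar M^{\mathrm{sm}}$.

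The main obstacle is the equality $t=d$, i.e.\ the slice clause controlling the whole Zariski tangent space rather than just a $d$-dimensional subspace. If that fails I would fall back on the following: $H^2(\End E)_0=0$ at a general evaluation point. When $(D\cdot H)>0$, this follows by the same slope argument given for $\mathfrak g_P$ in Section~\ref{sec:others}, since $\Hom(E,E\otimes\omega_X)_0=0$ for stable $E$ and $\omega_X=\cO(-D)$. When $D=\emptyset$, the obstructions land in the trace-free part, and the Mukai/Artamkin trace argument gives the unobstructedness.
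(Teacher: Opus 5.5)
Your proof is correct, and its decisive step is the paper's one-line argument: at an evaluation point the $d=2\rho+c$ block and class coordinates form a regular system of parameters of a $d$-dimensional local ring, so the embedding dimension equals $d$ and the point is regular, and the point lies in the open subset $M\subset\bar M$ because realization points are taken in $M$ in Proposition~\ref{prop:realization} (not because polystable tops lack such a slice). The Luna-slice and unobstructedness detours, the smoothness of $\cI$, and the appeal to normality are unnecessary, since a proper quotient of $k[[x_1,\dots,x_d]]$ already has dimension $<d$; your fallback is not needed, and as phrased it would only give smoothness at a general evaluation point, not at every one.
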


\begin{proof}
At an evaluation point the block and class coordinates form slice
coordinates by Definition~\ref{def:realization}. A regular system of
parameters of length $d$ at a point of a $d$-dimensional variety forces the
local ring to be regular. Hence $M$ is smooth at every evaluation point, and
so is $\bar M$.
\end{proof}

\subsection{Extension of the leafrian}

\begin{theorem}[Extension]\label{thm:g-extend}
Let $s\in H^0(X,\omega_X^{-1})$ be nonzero and assume $\rho>0$. Then $\LF_s$
extends to a unique section
\[
 \bar\LF_s\ \in\ H^0\bigl(\bar M,\ \omega_{\bar M}^{[-1]}\bigr).
\]
If $\bar M$ is Gorenstein, then $\bar\LF_s$ is a section of $\omega_{\bar M}^{-1}$.
\end{theorem}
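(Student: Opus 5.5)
The plan is to use normality of $\bar M$ (Lemma~\ref{lem:g-normal}) together with reflexivity of $\omega_{\bar M}^{[-1]}$. On a normal variety a reflexive sheaf $\mathcal F$ satisfies $\mathcal F=j_*j^*\mathcal F$ for the inclusion $j$ of any open set whose complement has codimension at least two. So it suffices to extend $\LF_s$ as a regular section of $\omega^{-1}$ over an open $V\subset\bar M^{\mathrm{sm}}$ with $\operatorname{codim}(\bar M\setminus V)\ge2$. Uniqueness is automatic: $\bar M$ is integral and $\omega_{\bar M}^{[-1]}$ is torsion free, so two extensions that agree on the dense open $M$ agree everywhere.

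First, on $M^{\mathrm{sm}}$ the section $\LF_s=\theta_s^{[\rho]}\wedge\Omega$ is already regular by Lemma~\ref{lem:global}. Since $M$ is normal, it extends over $M$ as a section of $\omega_M^{[-1]}$. Next I would handle the strictly semistable locus $\bar M\setminus M$. If it has codimension at least two, the extension follows at once by reflexivity. The difficulty is when it contains a divisor. This happens, for example, for rank two with $c_1=0$, where the boundary is the image of $M(1,0,a)\times M(1,0,b)$. In that case I would show that $\LF_s$ has no pole along each such boundary divisor $\Delta$.

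My approach is to pick a general point of $\Delta$. By Lemma~\ref{lem:g-normal} it lies in $\bar M^{\mathrm{sm}}$ after discarding codimension-two loci. I would take it to be a member-saturated polystable top such as $I_x\oplus I_y$, as produced by Lemma~\ref{lem:saturated}(ii) and Remark~\ref{rem:regimes}. The structures involved extend across $\Delta$ as follows.
\begin{itemize}
\item Bottacin's tensor $\theta_s$ is built from Ext groups, cup product, and a trace. Its construction makes sense on the smooth locus of the Gieseker quotient, which is a GIT quotient of a Quot scheme. There $\theta_s$ descends from a $\GL$-invariant bivector on the Quot scheme and hence is regular across $\Delta$.
\item The Euler fields $w_j$ are descended vector fields (Lemma~\ref{lem:descent}), and the same descent argument applies on the Quot scheme.
\end{itemize}
Thus near a general point of $\Delta$ both factors $\theta_s^{[\rho]}$ and $\Omega$ are regular polyvectors, so their wedge is a regular section of $\wedge^dT$, which is $\omega^{-1}$ on the smooth locus. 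Combining this with the reflexive extension across the codimension-$\ge2$ remainder gives $\bar\LF_s$.

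The main obstacle is the descent of $\theta_s$ and $\Omega$ across the strictly semistable divisor. There the stabilizers are positive dimensional and $H^0(\End E)$ jumps, so the identification $T\bar M\cong\Ext^1(E,E)$ fails. I would first try to avoid this by the local-model argument: at a polystable point, the Luna slice gives $\bar M$ locally as $\Ext^1(E,E)/\!\!/\Aut(E)$. The pairing $\beta_s$ is $\Aut(E)$-invariant on $\Ext^1(E,E)$, so it descends to the invariant quotient on its smooth part. As a fallback, I would bound pole orders by the degree count: $\Tr(\LF_s^{p-1})$ is regular on $M$, and poles would contradict the constancy used in Lemma~\ref{lem:consttrace-extend}.

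For the final assertion, if $\bar M$ is Gorenstein then $\omega_{\bar M}$ is invertible. Hence $\omega_{\bar M}^{[-1]}=\omega_{\bar M}^{-1}$, and $\bar\LF_s$ is a section of $\omega_{\bar M}^{-1}$.
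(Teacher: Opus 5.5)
Your reduction matches the paper's: normality of $\bar M$ and reflexivity of $\omega_{\bar M}^{[-1]}$ leave only the question of poles along codimension-one components of the strictly semistable locus, whose generic points lie in the regular locus. The gap is in that divisorial step. You want $\theta_s^{[\rho]}$ and $\Omega$ to be separately regular near a general point of $\Delta$, but neither descent is available.

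You assert, without constructing it, a $\GL$-invariant bivector on the Quot scheme that induces $\theta_s$. Bottacin's tensor is a pairing on $\Ext^1(E,E\otimes\omega_X)$, the cotangent space of the quotient, and such a pairing has no canonical lift to the larger cotangent space of the Quot scheme.

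The Euler fields are not Quot-scheme objects at all. They are the fields $c_j\partial_{c_j}$ for the class coordinates of a presentation. Lemma~\ref{lem:descent} rests on (P1), which says that $\mu$ realizes the \emph{stable} moduli space $M$ as a quotient of the chain space, so it says nothing across $\Delta$.

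The Luna-slice repair does not close this either. It uses only the invariance of $\beta_s$ at the single polystable point, where $T\bar M\neq\Ext^1(E,E)$. That is information about one value, not about the tensor on a neighbourhood, and it gives nothing for $\Omega$.

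The fallback is circular. Lemma~\ref{lem:consttrace-extend} assumes the extension, and in the paper the constancy of $\Tr(\LF_s^{p-1})$ used in Theorem~\ref{thm:TF} is itself deduced from this theorem. In any case, regularity of the trace on $M$ says nothing about the order of $\LF_s$ along $\Delta\subset\bar M\setminus M$.

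The missing idea is that only the single rational section $\LF_s$ of a line bundle has to be controlled. Its pole order can be tested after pulling back along a map whose image contains every codimension-one point. The paper (Lemma~\ref{lem:g-regular}) uses the chain space $\cI$ with semistable tops allowed, and $\mu\colon\cI\to\bar M$ sending a chain to its top.

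On $\cI$ one has $\mu^*\LF_s=f\cdot d\mu(\partial_{[\mathrm{leaf}]}\wedge\partial_{[\mathrm{trans}]})$ with $f=\prod_\nu N(g_\nu)\prod_jc_j\cdot u$. This $f$ is polynomial in the chain coordinates, hence regular. The image of $\mu$ contains every codimension-one point by Lemma~\ref{lem:g-tops}. Over a prime divisor $H$ there is a divisor $H'\subset\cI$ with $\operatorname{ord}_{H'}(\mu^*\LF_s)=e\cdot\operatorname{ord}_H(\LF_s)$ and $e\ge1$ (Lemma~\ref{lem:pullback-pole}), so a pole along $\Delta$ would produce a pole on $\cI$.

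This replaces your separate descent of $\theta_s$ and $\Omega$. The Hartogs step, uniqueness, and the Gorenstein remark then go through as you wrote them.
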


The proof rests on two lemmas. The first says that the strictly semistable
locus is reached by the chains of Section~\ref{sec:transfer}. The
second says that the local expression of the leafrian has no pole there.

\begin{lemma}[Polystable tops]\label{lem:g-tops}
Every polystable sheaf of $\bar M$ is the top of a chain of elementary
transformations and point modifications from a locally free sheaf, in the
shape of Section~\ref{sec:transfer}.
\end{lemma}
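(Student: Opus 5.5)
The plan is to build the chain from the top down. Let $E=\bigoplus_i E_i$ be polystable, each $E_i$ stable with the same reduced Hilbert polynomial. First pass to the double dual, $F_i=E_i^{\vee\vee}$, which is locally free because $X$ is a smooth surface. The quotient $F_i/E_i$ is zero-dimensional, of length $k_i=\operatorname{length}(F_i/E_i)$. Working summand by summand, one builds a chain for each $E_i$ and then takes the direct sum. This is the polystable configuration of Lemma~\ref{lem:saturated}(ii), now run in reverse: there one produced $E$ from $F$, here one decomposes a given $E$.

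The first step is to realize $E_i$ from $F_i$ by point modifications. Filter $F_i/E_i$ by a composition series with successive quotients $k_x$. This gives a chain
\[
F_i=E_i^{(0)}\supset E_i^{(1)}\supset\cdots\supset E_i^{(k_i)}=E_i,
\]
in which each inclusion is the kernel of a surjection $E_i^{(j)}\to i_{x*}k$. Each such inclusion is therefore a kernel point modification (S) in the sense of Definition~\ref{def:pointstep}. When the points coincide, these are iterated modifications at one point, and the resulting fat members are handled by the length-$\ell$ blocks of Section~\ref{sec:trans}.

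The second step is to connect $F_i$ to a locally free ground by elementary transformations along smooth divisors. Choose a sufficiently positive smooth curve $C\subset X$ such that $F_i(C)$ is globally generated. Then $F_i$ is obtained from a split or otherwise convenient locally free sheaf by finitely many elementary transformations along $C$, which is the classical Maruyama--Hulek--Friedman calculus in the weakened form of Appendix~\ref{app:calc}, with grounds supplied by Lemma~\ref{lem:hd-ground}. Since elementary transforms preserve local freeness (Lemma~\ref{lem:hd-members}), every intermediate sheaf stays locally free. The ground is allowed to be unstable, as in Lemma~\ref{lem:eulerframe}.

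The third step takes direct sums of the per-summand chains, padding the shorter chains with trivial steps so that their lengths match. This exhibits $E$ as the top of a chain in the shape of Section~\ref{sec:transfer}. Lemma~\ref{lem:saturated}(ii) confirms that the numerics close up: the reduced Hilbert polynomials agree, so $k_i/\operatorname{rk}F_i$ is constant.

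I expect the main obstacle to be the second step, and in particular the requirement that the divisors can be chosen smooth and ordinary and that the divisor steps can be placed away from the points of the first step. The approach I would try first is a Bertini argument. Members at the support of $F_i/E_i$ impose finitely many closed conditions on $|\mathcal A|$, so a general member of a very ample system avoids them and is smooth. Ordinarity of a general divisor is then handled as in Lemma~\ref{lem:singlepoint}.
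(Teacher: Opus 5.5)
Your skeleton matches the paper's: pass to $F=E^{\vee\vee}$, which is locally free, and realize $E$ from $F$ by $\ell=\operatorname{length}(F/E)$ kernel point steps of type (S) read off a composition series of $F/E$. Your second step, however, is not supported by what you cite. Lemma~\ref{lem:hd-ground} and the Maruyama--Hulek--Friedman calculus only produce \emph{some} locally free sheaf of each numerical type from a split one. They do not show that a \emph{given} $F_i$ is reached from a split sheaf by elementary transformations along $C$. Global generation of $F_i(C)$ does not give this either: $r$ general sections give $\cO^{r}\subset F_i(C)$ with cokernel on a degeneracy curve in $|\det F_i(C)|$, not on $C$. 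Your claim is true only for a trivial reason, and that reason is the paper's one-line trick, which you should use instead. Take $E_0=F(D')$ for a smooth $D'\in|\mathcal A|$. Its elementary transform along $D'$ returns $F$, because a section of $F(D')$ with no pole along $D'$ lies in $F$. Then $E_0\to F\to E$ is a chain with one divisor step and $\ell$ point steps. With this there is no need to split $E$ into summands, pad chains, or run a Bertini argument for the ground.

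Two further points. First, the claim that equal reduced Hilbert polynomials force $k_i/\operatorname{rk}F_i$ to be constant is false. One has $p_{E_i}=p_{F_i}-k_i/\operatorname{rk}F_i$, and the double duals need not share a reduced Hilbert polynomial: take a locally free stable summand next to a non-locally-free stable summand of the same rank and Chern classes. Lemma~\ref{lem:saturated}(ii) only works in the constructive direction. The claim is not needed, so drop it. Second, ``in the shape of Section~\ref{sec:transfer}'' includes the count of Proposition~\ref{prop:hd-count}: $\rho$ units and $m-\phi=d$. Your chain has only $\ell$ point steps, and $\ell$ can be smaller than $\rho$; for instance $\ell=0$ when $E$ is locally free. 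Your last sentence asserts the shape without this count. The paper closes the gap by giving the single divisor step a gluing space of dimension $e_1=d-2\ell$. The surplus $2(\rho-\ell)$ directions are then the block coordinates of the healed points of a master chain (Definition~\ref{def:master}, Lemma~\ref{lem:heal}). The paper also checks that the budget holds at a general point of the family through the generic point of each codimension-one component of the strictly semistable locus, which is what Lemma~\ref{lem:g-regular} actually consumes.
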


\begin{proof}
Let $E=E_1\oplus\cdots\oplus E_k$ be a polystable sheaf, the $E_i$ being the
Jordan--H\"older factors. Each $E_i$ is torsion-free, so its double dual
$E_i^{\vee\vee}$ is locally free. Put
\[
 F=E^{\vee\vee}=\bigoplus_{i=1}^{k}E_i^{\vee\vee},
\]
which is locally free.

Suppose first that $E$ is not locally free. The quotient $F/E$ is a torsion
sheaf supported at the non-locally free points of $E$. Put
\[
 \ell\ =\ \operatorname{length}(F/E).
\]
A point modification at such a point replaces the current sheaf by its kernel
against the skyscraper and raises $c_2$ by one, so $E$ is obtained from $F$ by
$\ell$ kernel point modifications of type S in Definition~\ref{def:pointstep}. To fit the presentation choose a
smooth divisor $D'\in|\mathcal A|$ and take $E_0=F(D')$ with
$L=F|_{D'}\subset F|_{D'}\otimes N=E_0|_{D'}$ the natural inclusion. The
elementary transform of $E_0$ with reduction datum $L$ is
\[
 \{e\in E_0:\ e|_{D'}\in L\}=F,
\]
since an element of $F(D')$ whose restriction to $D'$ lies in $F|_{D'}$ has no
pole along $D'$. The chain
\[
 E_0\ \longrightarrow\ F\ \longrightarrow\ E
\]
consists of one elementary transformation and $\ell$ point modifications of
type S. The elementary transformation supplies the gluing directions, and the
point modifications supply the members. By
Proposition~\ref{prop:hd-count} one arranges the budget $m-\phi=d$: the gluing space has
$e_1=d-2\ell$ dimensions, where $e_1=h^0(D',Q_1^\vee\otimes L_1\otimes N)$. When $\ell=\rho$ these are the $c$ class coordinates; when $\ell<\rho$ the surplus $2(\rho-\ell)$ directions are the block coordinates of the healed points (Definition~\ref{def:master}).

Suppose now that $E$ is locally free, so $F=E$ and $\ell=0$, and the chain $E_0\to E$ of one elementary transformation tops at $E$ itself; when $\rho>0$ the leaf directions of this chain are carried by the healed points of a master chain (Definition~\ref{def:master}), and when $\rho=0$ no leafrian is formed and Proposition~\ref{prop:empty} applies.  For $D=\emptyset$ the members are placed as in Lemma~\ref{lem:Dempty}.

Finally let $z$ be the generic point of a codimension-one component of the
strictly semistable locus. Then $z$ is a smooth point of $\bar M$ because
$\bar M$ is normal, so the singular locus has codimension at least two.
Apply the construction above to a polystable bundle in the $S$-equivalence
class of $z$, or to $z$ itself when $z$ is a bundle. The chain hits $z$ and
the budget holds at a general point of the family through $z$.
\end{proof}

\begin{lemma}[Pullback of poles]\label{lem:pullback-pole}
Let $Y$ be an irreducible variety, let $X$ be a normal variety, and let $\mu\colon Y\to X$ be a morphism whose image contains every codimension-one point of $X$.  Let $L$ be a line bundle on $X$ and let $s$ be a rational section of $L$.  If $\mu^*s$ is a regular section of $\mu^*L$ on $Y$, then $s$ is regular at every codimension-one point of $X$.
\end{lemma}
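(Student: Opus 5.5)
We argue by contradiction and reduce to a discrete valuation ring. Suppose $s$ has a pole at some codimension-one point $\xi$ of $X$. Since $X$ is normal, the local ring $\cO_{X,\xi}$ is a DVR with valuation $v_\xi$. After trivialising $L$ near $\xi$ we may write $s=f\cdot e$, with $e$ a local generator of $L$ and $f$ a rational function satisfying $v_\xi(f)=-n<0$. The hypothesis on the image of $\mu$ gives a point $y\in Y$ with $\mu(y)=\xi$. We will show that $\mu^*f$ is then not regular at $y$, which contradicts the regularity of $\mu^*s$. One technical remark is needed first: for $\mu^*s$ to make sense as a rational section, $\mu(Y)$ must not lie in the polar locus of $s$. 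We read the hypothesis as saying that $\mu^*s$ is a well-defined rational section of $\mu^*L$ that happens to be regular. In particular, the generic point of $Y$ maps outside the polar divisor.

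The plan is as follows. Take $t\in\cO_{X,\xi}$ a uniformiser, so that $f=u\,t^{-n}$ with $u$ a unit at $\xi$. Regularity of $\mu^*f$ at $y$ then gives $\mu^*u=\mu^*(t)^n\cdot\mu^*f$ in $\cO_{Y,y}$. The left-hand side is a unit at $y$, because $u(\xi)\neq0$ and $\mu(y)=\xi$. The right-hand side lies in the maximal ideal of $\cO_{Y,y}$, because $t$ vanishes at $\xi$, so $\mu^*t\in\mathfrak m_y$, and $n\ge1$. A unit cannot lie in the maximal ideal, so we have the contradiction. To make this rigorous we must check that the identity $\mu^*u=\mu^*(t)^n\mu^*f$ holds as rational functions on $Y$. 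This follows from functoriality of pullback on the open set where $f$ is regular; that open set has dense preimage in $Y$ by the remark above, and $Y$ is irreducible. We also need the pullbacks $\mu^*t$ and $\mu^*u$ to be regular at $y$. This holds because $t$ and $u$ are regular on a neighbourhood $V$ of $\xi$, and $\mu^{-1}(V)$ is an open set containing $y$.

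The main obstacle is exactly this well-definedness. If $\mu(Y)$ were contained in the closure of $\xi$, every pullback would have to be taken along that divisor, and the statement can fail; for example, take $Y$ to be the divisor itself and $s$ with a pole along it. So the hypothesis that $\mu^*s$ is a rational section that becomes regular must implicitly require $\mu$ to be dominant, or at least that its image not lie in the polar divisor. In the application to Theorem~\ref{thm:g-extend}, $\mu$ is the chain-space map of Lemma~\ref{lem:g-tops}, which is dominant. We would state this assumption explicitly at the start of the proof, and then the valuation argument above closes the proof. Regularity in codimension one, combined with normality and reflexivity, then gives the global extension needed in Theorem~\ref{thm:g-extend}.
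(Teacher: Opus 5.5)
Your argument is correct, and it takes a different and more elementary route than the paper's.

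\textbf{The two routes.} The paper first builds a prime divisor $H'\subset Y$ that dominates the polar divisor $H$. For this it uses that the generic point of $H$ lies in the image, that $\mu(Y)\not\subset H$, and the fibre dimension theorem. It then compares orders through $\operatorname{ord}_{H'}(\mu^*s)=e\cdot\operatorname{ord}_H(s)$, where $e\ge1$ is the multiplicity of $H'$ in $\mu^*H$.

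You instead take any single point $y$ over the generic point $\xi$ of $H$. You use only that $\cO_{X,\xi}\to\cO_{Y,y}$ is a local homomorphism: the unit $\mu^*u$ cannot equal $(\mu^*t)^n\mu^*f\in\mathfrak m_y$.

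\textbf{What each buys.} Your route needs no dimension count and no divisor $H'$. It also avoids making sense of $\operatorname{ord}_{H'}$ on a possibly non-normal $Y$, where $\cO_{Y,H'}$ need not be a DVR. The paper's route gives the quantitative relation between the two orders, but Lemma~\ref{lem:g-regular} only needs regularity in codimension one.

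\textbf{A small point of wording.} When you conclude that $\mu^*u$ and $(\mu^*t)^n\mu^*f$ agree near $y$ because they agree on a dense open, you should cite that $Y$ is reduced (a variety), not only irreducible.

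\textbf{Your caveat about dominance.} The extra assumption you propose to add is already forced by the hypothesis, so the statement needs no modification. The closure of $\mu(Y)$ is irreducible and contains the generic point of every prime divisor of $X$. A proper irreducible closed subset of $X$ contains at most one prime divisor, while a variety of positive dimension has infinitely many; in dimension zero the statement is vacuous. Hence $\mu$ is dominant, and the generic point of $Y$ maps outside the polar locus of $s$, so $\mu^*s$ is a well-defined rational section. Your example with $Y$ equal to the divisor itself does not satisfy the hypothesis that the image contains every codimension-one point.
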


\begin{proof}
Let $H$ be a prime divisor on $X$.  The image of $\mu$ contains the generic point $\eta_H$ of $H$, so $\mu^{-1}(H)$ has a component $H'$ mapping onto $H$, and $H'\neq Y$ because the image of $\mu$ is not contained in $H$.  The fibre dimension theorem gives $\dim\mu^{-1}(\eta_H)\geq\dim Y-\dim X$, so $\dim H'\geq\dim Y-1$; together with $H'\neq Y$ this shows that $H'$ is a divisor on $Y$.  Let $e\geq 1$ be the multiplicity of $H'$ in $\mu^*H$, that is the ramification index $\operatorname{ord}_{H'}(\mu^*u_H)$ for a local equation $u_H$ of $H$.  At the generic points the local equation of $s$ pulls back with multiplicity $e$, so
\[
 \operatorname{ord}_{H'}(\mu^*s)=e\cdot\operatorname{ord}_H(s).
\]
If $\operatorname{ord}_H(s)$ were negative, then $\mu^*s$ would have a pole along $H'$, against regularity.  Hence $\operatorname{ord}_H(s)\geq 0$ at every codimension-one point.
\end{proof}

\begin{lemma}[Regularity along the strictly semistable locus]%
\label{lem:g-regular}
The local expression of $\LF_s$ has no pole along the strictly semistable
locus of $\bar M$.
\end{lemma}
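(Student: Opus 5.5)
The plan is to reduce to codimension-one points and pull back along a chain space. Since $\bar M$ is normal (Lemma~\ref{lem:g-normal}) and $\omega_{\bar M}^{[-1]}$ is reflexive, a rational section is regular as soon as it has no pole at any codimension-one point of $\bar M$. The strictly semistable locus may have codimension $\ge 2$; in that case there is nothing to check. So let $z$ be the generic point of a codimension-one component of the strictly semistable locus. By Lemma~\ref{lem:g-normal}, $z$ lies in $\bar M^{\mathrm{sm}}$, so $\omega_{\bar M}^{[-1]}$ is a line bundle near $z$. The task is then to show $\operatorname{ord}_z(\LF_s)\ge 0$.

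Step one constructs a chain family through $z$. Lemma~\ref{lem:g-tops} gives a chain of one elementary transformation followed by point modifications (members, or healed points of a master chain) whose top is a polystable representative of the $S$-equivalence class of $z$, with the budget $m-\phi=d$ from Proposition~\ref{prop:hd-count}. Let $\cI$ be the irreducible chain space of this shape and $\mu\colon\cI\to\bar M$ the top map. Its image contains the stable locus: it is dominant because of the count, and it contains $z$ by construction. Hence it contains every codimension-one point of $\bar M$. The points of $M$ itself already carry the regular section $\LF_s$, so there are no poles there to worry about.

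Step two shows that $\mu^*\LF_s$ is regular on $\cI$. By Theorem~\ref{thm:hd-descent}, the unit polyvectors $\beta_a$ have coefficients $g(x_a)=s(x_a)$, which are regular functions of the member coordinates on all of $\cI$. This includes chains whose tops are strictly semistable, since these coefficients never refer to stability. Likewise the Euler fields $c_j\partial_{c_j}$ are built from regular invariants of the gluing data (Remark~\ref{rem:classcoords}). In the chain coordinates, then, the local expression
\[
 \prod_\nu N(g_\nu)\cdot\prod_j c_j\cdot u
\]
of Proposition~\ref{prop:hd-localexpr} is a polynomial, with $u$ a unit. Healed points contribute block coordinates whose coefficients are again values of $s$, hence regular. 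Thus $\mu^*\LF_s$, read through the frame $\partial_{[d]}$ pulled back along the class and block directions, has no poles on $\cI$. Applying Lemma~\ref{lem:pullback-pole} with $L=\omega_{\bar M}^{[-1]}$ near $z$ then gives $\operatorname{ord}_z\LF_s\ge0$.

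The main obstacle is making sense of ``$\mu^*\LF_s$'' as a section of $\mu^*\omega^{-1}$. A polyvector does not pull back; it pushes forward along projectable fields. What has to be checked is that near the preimage of $z$ the block and class directions project isomorphically onto $T_{\bar M}$ at a general point over $z$, i.e.\ that $\mu$ is smooth there modulo the fibre. If that holds, the frame $\partial_{[d]}$ on a transverse slice of $\cI$ maps to a local frame of $\det T_{\bar M}$, and the comparison factor is a regular unit. My first attempt would be to choose a slice $S\subset\cI$ transverse to the fibres through a general point over $z$, using (P1) to make $\mu|_S$ quasi-finite and generically étale onto a neighbourhood. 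Then $\det d(\mu|_S)$ is regular on $S$, and the pulled-back local expression equals the regular chain expression times this Jacobian, which is regular. If the slice is only generically étale with ramification along $\mu^{-1}(z)$, the ramification index $e$ enters exactly as in Lemma~\ref{lem:pullback-pole} and only strengthens the nonnegativity.
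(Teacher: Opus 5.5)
Your argument is essentially the paper's proof: normality puts every codimension-one point in the regular locus, the chain space $\cI$ with semistable tops reaches the strictly semistable points by Lemma~\ref{lem:g-tops}, the chain expression $\prod_\nu N(g_\nu)\prod_j c_j\,u$ is regular on $\cI$, hence $\mu^*\LF_s=f\cdot d\mu(\partial_{[\mathrm{leaf}]}\wedge\partial_{[\mathrm{trans}]})$ is regular, and Lemma~\ref{lem:pullback-pole} concludes. Two small corrections: dominance plus $z\in\mu(\cI)$ does not show that the image contains every codimension-one point, but you only need the argument of Lemma~\ref{lem:pullback-pole} for the single divisor $\overline{\{z\}}$ (the paper instead takes surjectivity of $\mu$ directly from Lemma~\ref{lem:g-tops}); and your ``main obstacle'' needs no smoothness of $\mu$ modulo fibres, since near $z$ the sheaf $\omega_{\bar M}^{[-1]}$ is a line bundle whose sections pull back, and $d\mu$ sends the coordinate $d$-vector, assumed regular on $\cI$ as in the paper, to a regular section of $\mu^*\wedge^dT_{\bar M}$, with ramification only increasing orders, as you note.
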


\begin{proof}
Let $U$ be the regular locus of $\bar M$.  By Lemma~\ref{lem:g-normal} the complement of $U$ has codimension at least two, so every codimension-one point of $\bar M$ lies in $U$ and $\omega_{\bar M}^{[-1]}$ is invertible on $U$.

Consider the chain space $\cI$ of Section~\ref{sec:transfer} with semistable tops allowed, and the morphism $\mu\colon\cI\to\bar M$ sending a chain to its top.  In the deformation coordinates of the members and the class coordinates of the gluing data the local expression of $\LF_s$ is
\[
 f\ =\ \prod_{\nu}N_{A_\nu/k}(g_\nu)\cdot\prod_j c_j\cdot u,
\]
with $u\in k^\times$.  When the members do not fill the leaf, the remaining units contribute further polynomial factors in the chain data; only regularity is at issue here.  Each factor is regular on $\cI$: the norm factors are polynomials in the attached coordinates by Lemma~\ref{lem:normform}, the class coordinates are regular functions of the gluing data, and $u$ is constant by Lemma~\ref{lem:constunit}.  Hence $f$ is a regular function on $\cI$.

By Proposition~\ref{prop:hd-localexpr} the chain computation gives the identity of rational sections of $\mu^*\omega_{\bar M}^{[-1]}$ on $\cI$
\[
 \mu^*\LF_s\ =\ f\cdot d\mu\bigl(\partial_{[\mathrm{leaf}]}\wedge\partial_{[\mathrm{trans}]}\bigr).
\]
The right hand side is regular: the function $f$ is regular, and the differential $d\mu$ carries the coordinate frame of $\cI$ to a regular section of $\mu^*\wedge^{d}T_{\bar M}$.  By Lemma~\ref{lem:g-tops} the morphism $\mu$ is surjective, so Lemma~\ref{lem:pullback-pole} applies to $\mu^{-1}(U)\to U$ and shows that $\LF_s$ is regular at every codimension-one point of $U$.  Hence $\LF_s$ has no pole along the strictly semistable locus.
\end{proof}

\begin{proof}[Proof of Theorem~\ref{thm:g-extend}]
By Theorem~\ref{thm:normsec} the section $\LF_s$ is a regular section of
$\omega_M^{-1}$ on the regular locus of $M$.  The variety $\bar M$ is normal
by Lemma~\ref{lem:g-normal}, so $\omega_{\bar M}^{[-1]}$ is reflexive and
satisfies the Hartogs extension property across subsets of codimension at
least two.  The singular locus of $M$ has codimension at least two in $M$,
hence in $\bar M$.  By Lemma~\ref{lem:g-regular} the rational section $\LF_s$
has no pole along the strictly semistable locus.  The complement of the
regular locus of $M$ in $\bar M$ is the union of the strictly semistable
locus and the singular locus of $M$.  The codimension-one part of the
complement is covered by Lemma~\ref{lem:g-regular}, and the part of higher
codimension is absorbed by Hartogs.  Hence $\LF_s$ extends uniquely to a
section $\bar\LF_s$ of $\omega_{\bar M}^{[-1]}$.  If $\bar M$ is Gorenstein,
then $\omega_{\bar M}^{[-1]}=\omega_{\bar M}^{-1}$ and the claim follows.
\end{proof}

\subsection{Splitting of the compactification}

We use the following consequence of \cite[Ch.~1]{BKu}. If $Y$ is a reduced
scheme and $\varphi\in\Hom_{\cO_Y}(F_*\cO_Y,\cO_Y)$ satisfies
$\varphi\circ F=\mathrm{id}$ on a dense open $U\subset Y$, then $\varphi$ is
a splitting of $Y$. We refer to this as the extension of a splitting from a
dense open.

\begin{theorem}[Splitting]\label{thm:g-split}
Let $s\in H^0(X,\omega_X^{-1})$ be nonzero with $\rho>0$. Then $\bar\LF_s$ splits $\bar M$ if and only if $\Tr_X(s)\neq0$.
\end{theorem}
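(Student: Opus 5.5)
The plan is to reduce the statement to the splitting criterion already proved on the stable locus $M$, together with the extension theorem and the extension of a splitting from a dense open.

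\emph{Trace on $\bar M$.} By Theorem~\ref{thm:g-extend} the section $\bar\LF_s$ is a global section of $\omega_{\bar M}^{[-1]}$, so $\bar\LF_s^{p-1}$ is a section of $\omega_{\bar M}^{[1-p]}$. On the regular locus $\bar M^{\mathrm{reg}}$ the Frobenius trace of this section is a regular function. Since $\bar M$ is normal (Lemma~\ref{lem:g-normal}) and the complement of $\bar M^{\mathrm{reg}}$ has codimension at least two, this function extends to a regular function on $\bar M$. Because $\bar M$ is projective and irreducible, that function is a constant $\kappa$. Restricting to the dense open $M\cap\bar M^{\mathrm{reg}}$ identifies $\kappa$ with $\Tr(\LF_s^{p-1})$, which by Theorem~\ref{thm:TF} equals $\Tr_X(s)^{\rho}\cdot u$ with $u\in k^\times$. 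This is the same constancy used in Lemma~\ref{lem:consttrace-extend}; here we only need to record that the constant on $\bar M$ agrees with the one computed on $M$.

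\emph{Forward direction.} If $\Tr_X(s)\neq0$ then $\kappa\neq0$, since $\rho>0$. Rescale $\bar\LF_s$ by $\lambda$ with $\lambda^{p-1}\kappa=1$. The corresponding map $\varphi\colon F_*\cO_{\bar M}\to\cO_{\bar M}$ is defined through the identification $\mathcal Hom(F_*\cO,\cO)\cong F_*\omega^{[1-p]}$, which holds on the normal variety $\bar M$ since both sides are reflexive and agree on the regular locus. By Theorem~\ref{thm:bk}, $\varphi$ satisfies $\varphi\circ F=\mathrm{id}$ on $\bar M^{\mathrm{reg}}$. As $\bar M$ is reduced, Remark~\ref{rem:nearsplit} upgrades this to a splitting of all of $\bar M$.

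\emph{Converse.} Suppose some rescaling of $\bar\LF_s$ splits $\bar M$. Restricting to the open set $M\cap\bar M^{\mathrm{reg}}$ gives a splitting there, so $\Tr(\LF_s^{p-1})$ must be the nonzero constant $\lambda^{1-p}$. Hence $\Tr_X(s)^{\rho}\neq0$, and so $\Tr_X(s)\neq0$. Equivalently, if $\Tr_X(s)=0$ then $\kappa=0$ and no rescaling has trace $1$.

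\emph{Main obstacle.} The delicate step is the first one: checking that $\Tr(\bar\LF_s^{p-1})$ is a genuine regular function on the singular variety $\bar M$, so that projectivity forces it to be constant, and that the trace formula of Theorem~\ref{thm:TF} applies to the restriction. I would handle both by working on the smooth locus and invoking reflexivity and Hartogs via normality, exactly as in the proof of Theorem~\ref{thm:g-extend}. The trace formula itself is only used on $M$, so the strictly semistable boundary plays no role in the value of the constant.
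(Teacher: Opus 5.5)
Your proposal is correct and follows the paper's proof. You rescale by the nonzero constant from Theorem~\ref{thm:TF}, use the reflexive identification $\Hom(F_*\cO,\cO)\cong\omega^{[1-p]}$ on the normal $\bar M$ to get $\varphi\circ F=\mathrm{id}$ on a dense open, upgrade this to a splitting by Remark~\ref{rem:nearsplit}, and restrict to $M$ for the converse. Your preliminary Hartogs-plus-projectivity step, which makes the trace constant on all of $\bar M^{\mathrm{reg}}$, is harmless but not needed: the paper works directly on the dense open $M^{\mathrm{reg}}$, where Theorem~\ref{thm:TF} already gives the nonzero constant.
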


\begin{proof}
Assume $\Tr_X(s)\neq0$. By Theorem~\ref{thm:g-extend} the section $\LF_s$ extends to
$\bar\LF_s\in H^0(\bar M,\omega_{\bar M}^{[-1]})$. On the regular locus of
$\bar M$ the power $\LF_s^{p-1}$ lies in $H^0(\omega^{1-p})$, which
identifies with $\Hom_{\cO}(F_*\cO,\cO)$ by the trace of Frobenius
\cite{BKu}.  On the
normal variety $\bar M$ this Hom is the reflexive dualizing power
$\omega^{[1-p]}$ \cite{Schw09}.  Call
this map $\varphi$. By normality of $\bar M$ the map $\varphi$ extends to
$\Hom_{\cO_{\bar M}}(F_*\cO_{\bar M},\cO_{\bar M})$.

On $M$ one has, by Theorem~\ref{thm:TF},
\[
 \Tr(\LF_s^{p-1})\ =\ \Tr_X(s)^{\rho}\cdot u^{p-1},
\]
with $u\in k^\times$. This is a nonzero constant. A rescaling of $\bar\LF_s$ by
a constant $\lambda$ with $\lambda^{p-1}$ the inverse of this constant makes
the trace equal to one, so $\varphi\circ F=\mathrm{id}$ on the regular locus of $M$, which is dense in $\bar M$. The variety $\bar M$ is normal by Lemma~\ref{lem:g-normal}, hence
reduced. By the extension of a splitting from a dense open recalled above,
$\varphi$ is a splitting of $\bar M$.

Conversely, if $\bar\LF_s$ splits $\bar M$ then $\Tr(\bar\LF_s^{\,p-1})$ is a nonzero constant. Its restriction to $M$ is $\Tr(\LF_s^{p-1})$, so $\Tr_X(s)^{\rho}\neq0$ by Theorem~\ref{thm:TF}, and $\Tr_X(s)\neq0$.
\end{proof}

\begin{remark}[On the singular criterion]\label{rem:g-singular}
The criterion of Theorem~\ref{thm:bk} is stated for smooth varieties. The
argument above replaces it by two ingredients: the identification of
$\Hom(F_*\cO,\cO)$ with $H^0(\omega^{1-p})$ on the regular locus, extended
to $\bar M$ by normality, and the extension of a splitting from a dense open
recalled above. An alternative route runs through the Quot scheme: the
Gieseker moduli is a geometric quotient of an open set of a Quot scheme, the
Quot scheme sits in a Grassmannian, and the Grassmannian is Frobenius split
because it is homogeneous.
\end{remark}

\subsection{The degeneracy divisor}

Write $D_s=Z(\bar\LF_s)$ for the zero scheme of the extended leafrian.

\begin{proposition}[Components of the zero scheme]\label{prop:g-div}
On the regular locus of $\bar M$ one has
\[
 D_s\ =\ \sum_\nu \operatorname{div}\bigl(N(g_\nu)\bigr)+Z(\eta_{\mathrm{can}}).
\]
The member part is
the pullback of $D=Z(s)$ along the supports of the members. When the members do not fill the leaf, the remaining contributions come from the split End blocks of the reduced degeneracy complex. The strictly
semistable locus is contained in $D_s$ if and only if the class coordinate of
the presentation vanishes there.
\end{proposition}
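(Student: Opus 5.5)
The plan is to compute the divisor of $\bar\LF_s$ on the regular locus of $\bar M$ from its local expression, and to handle the strictly semistable locus separately through the chain space. This splits the statement into three pieces: the divisor formula, the member part, and the criterion for the boundary.

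\emph{Divisor formula on the stable evaluation locus.} Here Lemma~\ref{lem:divdec} and Theorem~\ref{thm:normsec} give
\[
 f=\prod_\nu N(g_\nu)\cdot\prod_jc_j\cdot u .
\]
The transverse factor $\prod_jc_j$ is the radial determinant of the Euler frame. By Corollary~\ref{cor:frame} and Proposition~\ref{prop:classes} its divisor is $Z(\eta_{\mathrm{can}})$, so $D_s=\sum_\nu\operatorname{div}(N(g_\nu))+Z(\eta_{\mathrm{can}})$ there. Both sides are Weil divisors on the normal variety $\bar M$. By Lemma~\ref{lem:g-normal} it then suffices to compare them at codimension-one points of the regular locus.

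\emph{Member part and remaining contributions.} Lemma~\ref{lem:divdec} identifies $\operatorname{div}(N(g_\nu))$ with $x_\nu^*(D)$, so the member part is the pullback of $D$ along the supports of the members. When the members do not fill the leaf, the healed points carry the leftover units. By the filtration of Lemma~\ref{lem:singlepoint} and Lemma~\ref{lem:endcartier}, these contribute only through the split End blocks. I would just record this and not compute it further.

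\emph{Codimension-one points in the strictly semistable locus.} Let $z$ be the generic point of such a component. By Lemma~\ref{lem:g-tops}, $z$ is the image of a chain whose top is a polystable sheaf; its members sit at the non-locally free points, and its class coordinates come from the single elementary transformation. On the chain space, Lemma~\ref{lem:g-regular} gives $\mu^*\bar\LF_s=f\cdot d\mu(\partial_{[d]})$. Lemma~\ref{lem:pullback-pole} then shows that
\[
 \operatorname{ord}_z(\bar\LF_s)=e^{-1}\operatorname{ord}_{H'}(f)
\]
on a divisor $H'$ dominating $z$, where $e$ is the ramification index of $H'$ over $z$. For a general polystable top the members are at general points, hence off $D$, so the norm factors are units at $z$. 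Therefore $\operatorname{ord}_z(\bar\LF_s)>0$ exactly when some $c_j$ vanishes identically along $H'$, which is the stated criterion.

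\emph{Main obstacle.} The hard step is the last one. It has to be shown that $d\mu(\partial_{[d]})$ is a nonvanishing generator of $\mu^*\omega^{[-1]}_{\bar M}$ generically along $H'$; otherwise a vanishing Jacobian of $\mu$ could itself force $z\subset D_s$. My first attempt would be to use the budget (P2) together with Lemma~\ref{lem:g-smooth}. These say that the block and class coordinates form slice coordinates at a general point of the family through $z$, so $\mu$ is étale-like transversally to its fibres there and the Jacobian is a unit.
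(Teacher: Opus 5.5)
Your argument is the paper's: read off $f=\prod_\nu N(g_\nu)\cdot\prod_jc_j\cdot u$, add divisors, and use Lemma~\ref{lem:divdec} for the member part. At the generic point of a strictly semistable component you then use that the members avoid $D$, so the norm factors are units and only class coordinates can vanish. The paper's proof stops there. It treats $f$ directly as the local expression of $\bar\LF_s$ at codimension-one boundary points and never mentions the factor $d\mu(\partial_{[\mathrm{leaf}]}\wedge\partial_{[\mathrm{trans}]})$. So the step you single out as the main obstacle is assumed tacitly in the paper, not proved.

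Your worry is legitimate, but it affects only one direction. Let $J$ be $d\mu(\partial_{[\mathrm{leaf}]}\wedge\partial_{[\mathrm{trans}]})$ measured against a local generator of $\omega_{\bar M}^{[-1]}$ at $z$. Since $J$ is regular,
$e\cdot\operatorname{ord}_z(\bar\LF_s)=\operatorname{ord}_{H'}(f)+\operatorname{ord}_{H'}(J)\ge\operatorname{ord}_{H'}(f)$.
So a class coordinate vanishing along $H'$ already puts $z$ in $D_s$, with no information on $J$ needed.

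The converse needs $\operatorname{ord}_{H'}(J)<e$, for instance $J$ a unit, and your proposed fix does not give this.
\begin{itemize}
\item Condition (P2) is a dimension count. It controls $\mu$ only over a dense open of $\bar M$ and says nothing along the divisor $H'$ lying over $z$. In characteristic $p$ it does not even yield generic smoothness of $\mu$.
\item Lemma~\ref{lem:g-smooth} takes the slice-coordinate property as the definition of an evaluation point rather than producing it.
\end{itemize}
What you actually need is that $z$ is an evaluation point of the presentation, so that the block and class coordinates give a slice of $\bar M$ there and $J$ is a unit. Over strictly semistable points the fibres of $\mu$ are governed by $S$-equivalence and larger automorphism groups. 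This is precisely where transversality is in question, and neither your proposal nor the paper establishes it.
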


\begin{proof}
The local expression of Theorem~\ref{thm:normsec} is the product of the norm
factors and the radial determinant $\prod_j c_j$ of the Euler frame.
The divisor of a product is the sum of the divisors of the factors.
For a length-one member the norm is the value of $s$ at the moving support, so
its divisor is the pullback of $D$. For the strictly semistable locus, the
local expression of Lemma~\ref{lem:g-regular} reads
$f=\prod_\nu N(g_\nu)\cdot\prod_j c_j\cdot u$ at every codimension-one
point. The factor $f$ vanishes at such a point if and only if one of the
factors $N(g_\nu)$ or $c_j$ vanishes. The norm $N(g_\nu)$ vanishes when the
member meets $D$, and the class coordinate $c_j$ vanishes when the gluing
datum of the elementary transformation degenerates. At a generic strictly
semistable point the members avoid $D$ and the norms are units, so $f$
vanishes exactly when the class coordinate vanishes.
\end{proof}

\begin{remark}[The critical case]\label{rem:g-critical}
For $X=\mathbb P^2$, $r=2$, $c_1=0$ and $c_2=2$ the strictly semistable locus
is the divisor $\operatorname{Sym}^2\mathbb P^2$ of sheaves
$I_x\oplus I_y$. Its generic point is the top of a double point modification
of $\cO^{\oplus 2}$, so Lemma~\ref{lem:g-tops} applies. The class coordinate
of the presentation is the gluing scale of the elementary transformation, and
the leafrian formula reads
\[
 f\ =\ s(x)\,s(y)\cdot c\cdot u.
\]
If $x,y\notin D$ and the extension class of the elementary transformation does
not vanish, then $f$ is a unit at the generic strictly semistable point and
the zero scheme $D_s$ does not contain that divisor. If the extension class
vanishes, the factor $c$ cuts out the divisor. Both behaviours are consistent
with Theorem~\ref{thm:g-extend}, since the section is regular in either case.
\end{remark}

\begin{remark}[Comparison with the stable case]\label{rem:g-compare}
On $M$ the intrinsic description of $D_s$ is given by
Proposition~\ref{prop:intr}, where the member part is the locus of sheaves
that are not locally free along $D$. On $\bar M$ the strictly semistable locus
is an additional component of the boundary. Theorem~\ref{thm:g-extend} says
that the leafrian survives this component without acquiring a pole, which is
the exact analogue of the polynomiality of Section~\ref{sec:hat} along the
collision strata of the members.
\end{remark}

\subsection{Principal bundles}\label{sub:g-principal}

The extension of Theorem~\ref{thm:g-extend} to the moduli of principal
bundles uses the same mechanism, with one structural difference. For a
principal $G$-bundle $P$ the associated vector bundle
$E_\varrho=P\times^G V$ under a faithful representation
$\varrho\colon G\hookrightarrow\GL(V)$ is always locally free, so the members
of a presentation are Hecke points and not non-locally free points. Write
$M_G$ for the moduli of $H$-stable principal $G$-bundles of fixed
topological type and $\bar M_G$ for the moduli of $H$-semistable principal
$G$-bundles.

\begin{lemma}[Normality for principal bundles]\label{lem:g-normal-G}
The variety $\bar M_G$ is normal.
\end{lemma}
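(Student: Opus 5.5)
The plan mirrors Lemma~\ref{lem:g-normal}. I realize $\bar M_G$ as a good quotient of a smooth (or at least normal) parameter scheme by a reductive group, and then invoke the general fact that a good quotient of a normal scheme is normal. Concretely, I follow the construction of \cite{Ram,Bal}. Fix the faithful representation $\varrho\colon G\hookrightarrow\GL(V)$. A semistable principal $G$-bundle $P$ is then the same as the associated locally free sheaf $E_\varrho=P\times^G V$ together with a reduction of its frame bundle to $G$. The latter is a section of the affine bundle $\operatorname{Isom}(V\otimes\cO,E_\varrho)/G$, which is affine because $\GL(V)/G$ is affine for reductive $G$. Twisting by a sufficiently positive multiple of $H$ bounds the family. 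This gives a parameter scheme $R_G$: an affine morphism to an open subset $R$ of a Quot scheme parametrizing the sheaves $E_\varrho$ with fixed trivialization of global sections. It also gives an action of $\GL_N$ on $R_G$ such that $\bar M_G=R_G^{ss}/\!\!/\GL_N$ is a good quotient.

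The key steps, in order, are as follows. First, show that $R_G^{ss}$ is normal. For this I compute its local deformation theory. The tangent space at a point is an extension involving $H^1(X,\mathfrak g_P)$ and the Lie algebra of $\GL_N$, and the obstructions lie in $H^2(X,\mathfrak g_P)$. When $(D\cdot H)>0$ the vanishing argument of Section~\ref{sec:others} gives $H^2(X,\mathfrak g_P)=0$ for semistable $P$, since $\mathfrak g_P$ is semistable of slope zero. So $R_G^{ss}$ is smooth, and in particular normal. When $D=\emptyset$, Serre duality gives $H^2(\mathfrak g_P)\cong H^0(\mathfrak g_P)^\vee$. The obstruction map then factors through the trace-type map to $H^2(\mathfrak z\otimes\cO_X)$, and the standing hypothesis $z(\mathfrak g)\otimes H^1(\cO_X)=0$ together with the Mukai-type argument (as for K3 and abelian surfaces) yields smoothness modulo the central factor. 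Failing that, I would fall back on showing that $R_G^{ss}$ is a local complete intersection of the expected dimension that is regular in codimension one, and conclude by Serre's criterion. Second, apply the standard descent result: if $Y$ is normal and $\pi\colon Y\to Y/\!\!/\Gamma$ is a good quotient by a reductive group, then $\cO_{Y/\!\!/\Gamma}=(\pi_*\cO_Y)^\Gamma$ is integrally closed, because invariants of an integrally closed domain are integrally closed in their fraction field. This holds componentwise, and the quotient is irreducible because the topological type is fixed. This gives normality of $\bar M_G$.

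I expect the main obstacle to be the first step outside the case $(D\cdot H)>0$. There $H^2(X,\mathfrak g_P)$ need not vanish at strictly semistable or nonsimple points, so smoothness of $R_G^{ss}$ is not automatic. I would first try to prove that the Kuranishi obstruction map lands in the trace part and vanishes by the Bottacin pairing. If that fails, I would prove the $R1$ and $S2$ conditions directly: $S2$ from the lci structure given by the $\GL_N$-equivalent Quot description, and $R1$ by a dimension count showing that the locus where $H^2(\mathfrak g_P)\neq0$ has codimension at least two.
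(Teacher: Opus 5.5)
Your overall route is to present $\bar M_G$ as a good quotient of a parameter scheme and then use that invariants of a normal ring are normal. This is the route implicit in the paper, whose proof only cites the GIT construction of \cite{Ram,Bal}, and your descent step is correct. The gap is your first step, normality of $R_G^{ss}$, which carries all the content.

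Smoothness via $H^2(X,\mathfrak g_P)=0$ covers only the case $(D\cdot H)>0$, which is the same as $D\neq\emptyset$. Even there, you import from Section~\ref{sec:others} the semistability of $\mathfrak g_P$ for semistable, not necessarily stable, $P$. In characteristic $p$ this is not automatic; it is known under bounds on $p$ in terms of $G$, or for strongly semistable $P$. So the vanishing of $H^0(X,\mathfrak g_P\otimes\omega_X)$ needs its own argument.

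For $X$ a $K3$ or abelian surface, your proposed smoothness is false. At a strictly polystable $P$ one has $H^2(X,\mathfrak g_P)\cong H^0(X,\mathfrak g_P)^\vee=(\operatorname{Lie}\Aut P)^\vee$, which is strictly larger than $\mathfrak z^\vee$. The quadratic part of the Kuranishi map is the moment map of $\Aut P$ acting on the symplectic space $H^1(X,\mathfrak g_P)$. This is in general nonzero, with a singular cone as zero locus, so $R_G^{ss}$ is genuinely singular there. This is why moduli of semistable sheaves on $K3$ surfaces with non-primitive Mukai vector are singular along the strictly semistable locus (compare Remark~\ref{rem:g-open}). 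No trace or Bottacin-pairing argument removes this, since the trace only controls the central summand.

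Your fallback (local complete intersection of the expected dimension, then $R1$ and Serre's criterion) is the right kind of argument. It is how Kaledin--Lehn--Sorger prove normality for sheaves on $K3$ and abelian surfaces in characteristic zero \cite{KLS}. But both halves rest on a dimension estimate for the strata of non-stable objects. The lci property of expected dimension is itself equivalent to the upper bound on $\dim R_G^{ss}$, and $R1$ is the codimension bound. That estimate, with its analogue over Levi reductions of $G$ and in characteristic $p$, is exactly what your proposal leaves out.

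There is a second problem with the parameter scheme itself. Pairs consisting of a locally free $E_\varrho$ and a reduction of its frame bundle to $G$ parametrize only honest $G$-bundles. On a surface these do not form a proper family: already for $G=\GL_r$ the Gieseker compactification needs torsion-free sheaves that are not locally free. Moreover, the locally free locus need not be saturated for the $\GL_N$-action, because a Jordan--H\"older graded object of a locally free bundle can fail to be locally free.

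So neither the existence of the good quotient $R_G^{ss}/\!\!/\GL_N$ nor its identification with the projective $\bar M_G$ used in Theorems~B and~C follows from your description. A compactifying construction has to parametrize principal $G$-sheaves, as in the work of G\'omez and Sols, and in positive characteristic of G\'omez, Langer, Schmitt and Sols under a lower bound on $p$. At the points that are not locally free, the tangent and obstruction spaces are not $H^1$ and $H^2$ of $\mathfrak g_P$, so your first step would also have to be carried out there.
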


\begin{proof}
Normality of the coarse moduli space of $H$-semistable principal
$G$-bundles is a standard property of the GIT construction.  See \cite{Ram}
and \cite{Bal}.
\end{proof}

\begin{lemma}[Polystable tops for principal bundles]\label{lem:g-tops-G}
The image of the chain space of Hecke modifications in $\bar M_G$ contains
every codimension-one point of $\bar M_G$.
\end{lemma}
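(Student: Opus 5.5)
We mirror the proof of Lemma~\ref{lem:g-tops}, replacing sheaves by principal bundles and point modifications by Hecke modifications. By Lemma~\ref{lem:g-normal-G}, $\bar M_G$ is normal, so its singular locus has codimension at least two. Every codimension-one point $z$ of $\bar M_G$ is therefore a regular point. Such a $z$ lies either in the stable locus $M_G$ or is the generic point of a divisorial component of the strictly semistable locus. In the second case we replace $z$ by the polystable representative of its $S$-equivalence class. That representative is a principal bundle $P$ with a reduction to a Levi subgroup $L_{\mathcal P}\subset G$ whose associated $L_{\mathcal P}$-bundle is stable. It therefore suffices to show that every stable or polystable $P$ is the top of a Hecke chain of the shape used in Section~\ref{sec:others}. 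We also need the budget $m-\phi=d_G$ to hold at a general point of the family through $P$.

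\textbf{Constructing the chain.} Fix a faithful representation $\varrho$ and a smooth ordinary divisor $D'\in|\mathcal A|$, which exists by Lemma~\ref{lem:generalposition}. Lemma~\ref{lem:hecke-present} gives a $G$-equivariant Hecke chain ending at $P$ with ordinary divisors. For polystable $P$ we run the construction inside the Levi $L_{\mathcal P}$ and then extend structure group along $L_{\mathcal P}\subset G$. Hecke modifications for $L_{\mathcal P}$ are Hecke modifications for $G$ via the induced parabolic reductions, so the resulting chain is a $G$-Hecke chain ending at $P$. This is the analogue of the polystable grounds in Lemma~\ref{lem:saturated}(ii).

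\textbf{Members and budget.} The members are Hecke points at general $x_\nu\in X$. When $\rho$ exceeds the number of genuine Hecke points, we add healed Hecke points in the manner of Definition~\ref{def:master}; Lemma~\ref{lem:heal} applies verbatim with $\mathfrak g_P$ in place of $\End E$. The count of Proposition~\ref{prop:hd-count}, with $\dim(\mathfrak g/\mathfrak p)$ reduction parameters per step as computed before Lemma~\ref{lem:Gblock}, lets us choose the degrees of the reduction data along $D'$. This gives $m-\phi=d_G$, and then the chain space $\cI$ dominates a neighbourhood of $z$.

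\textbf{Main obstacle.} The delicate step is showing that the image of $\cI\to\bar M_G$ actually contains the generic point $z$ of a strictly semistable divisor, rather than only a proper subvariety of it. In the sheaf case this used double duals, which are unavailable here because all objects are locally free. The plan is to vary the Levi-stable data, the Hecke points and the gluing classes freely. Their total parameter count on the boundary stratum is $d_G-1$. We would then check dominance onto the stratum via the tangent map of $\mu$, using the deformation splitting $\Def_Z\oplus\Def(R)$ and the fact, stated before Lemma~\ref{lem:Gblock}, that reduction motions die in $H^1(\operatorname{ad}P)$. The hypothesis $z(\mathfrak g)\otimes H^1(\cO_X)=0$ removes the central $\operatorname{Pic}^0$ directions (Remark~\ref{rem:Gcenter}), which would otherwise obstruct the count.
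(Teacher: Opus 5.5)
Your reduction to polystable tops, together with Lemma~\ref{lem:hecke-present}, already proves the lemma. The step you single out as the main obstacle is not an obstacle. Points of $\bar M_G$ are $S$-equivalence classes with polystable representatives. Lemma~\ref{lem:hecke-present} makes every polystable $P$ the top of a Hecke chain, and it already covers polystable $P$, so your detour through the Levi $L_{\mathcal P}$ is redundant. Hence $\mu\colon\cI\to\bar M_G$ hits every closed point.

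To pass to generic points, take the chains in one finite-type chain space, fixing $\mathcal A$ uniformly by boundedness of semistable bundles of type $\tau$. Chevalley's theorem then makes the image constructible. A constructible set containing every closed point of a prime divisor $H$ contains a dense open subset of $H$, hence its generic point. This also repairs your first paragraph, where you replace the non-closed point $z$ by ``the polystable representative of its $S$-equivalence class''; that only makes sense for closed points.

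Several ingredients are unnecessary for this statement: the tangent-map dominance, the parameter count $d_G-1$ on the stratum, and the budget $m-\phi=d_G$. Your claim that the dimension count makes $\cI$ dominate a \emph{neighbourhood} of $z$ would not follow anyway, since dominance gives only a dense image. Ordinarity of $D'$ plays no role here either, and Lemma~\ref{lem:generalposition} concerns line bundles on fixed curves, not the existence of ordinary divisors in $|\mathcal A|$.

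With the last paragraph replaced by this observation, your argument is correct but follows a different route from the paper. The paper does not invoke Lemma~\ref{lem:hecke-present}. It passes to the associated bundle $E_\varrho$ and applies the sheaf construction of Lemma~\ref{lem:g-tops}, the chain $E_\varrho(D')\to F\to E_\varrho$. It then notes that this construction is canonical, hence $\varrho(G)$-equivariant, and descends it to a Hecke chain by identifying Hecke data with the closed-orbit point data of the $\varrho(G)$-action on the sheaf chain space.

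The paper's route produces a chain of the explicit shape of Section~\ref{sec:transfer}, one elementary transformation plus point steps, so coordinates and budget are inherited from the sheaf case. Your route is shorter and makes the lemma a corollary of Lemma~\ref{lem:hecke-present}. In exchange, it depends entirely on that lemma's brief parabolic-reduction sketch and gives no control over the shape of the chain.
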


\begin{proof}
Let $\varrho\colon G\hookrightarrow\GL(V)$ be a faithful representation and
let $P$ be a polystable principal $G$-bundle with associated vector bundle
$E_\varrho=P\times^G V$. The bundle $E_\varrho$ is locally free and
polystable.  A Hecke modification of $P$ at a point $x$ is a $\varrho(G)$-equivariant point modification of $E_\varrho$ at $x$.  To spell out the equivariance, the datum of a point modification of $E_\varrho$ at $x$ is a quotient $\pi$ of the fibre $(E_\varrho)_x$, and it descends to a Hecke modification of $P$ along the parabolic subgroup $Q_\pi$ stabilising $\ker\pi$.  The descendable data are exactly the data with closed $\varrho(G)$-orbit.  Hence the Hecke chain space is the closed-orbit locus of the $\varrho(G)$-action on the chain space of Section~\ref{sec:transfer}.

Apply Lemma~\ref{lem:g-tops} to $E_\varrho$.  Its construction is canonical: the elementary transform of $E_\varrho(D')$ along $D'$ uses the reduction datum $E_\varrho|_{D'}$, and since $E_\varrho$ is locally free the point steps from the intermediate sheaf $F$ to $E_\varrho$ are enlargements, inverse to the kernel modifications of Definition~\ref{def:pointstep}, with members the modification points $x_1,\dots,x_\rho$:
\[
 E_0\ \longrightarrow\ F\ \longrightarrow\ E_\varrho.
\]
Every ingredient of this construction carries the $\varrho(G)$-action of $P$, so the chain is $\varrho(G)$-equivariant, and by the criterion above its point data are reductions of $P$ to parabolic subgroups of $G$.  The chain descends to a Hecke chain
\[
 P_0\ \longrightarrow\ P_1\ \longrightarrow\cdots\longrightarrow\ P
\]
whose top is $P$, and the budget condition is adjustable within the type $\tau$ as in
Section~\ref{sec:others}.

Points of $\bar M_G$ are S-equivalence classes with polystable representatives.  As $P$ was an arbitrary polystable bundle, the image
of the Hecke chain space contains every codimension-one point of $\bar M_G$.
\end{proof}

\begin{lemma}[Regularity for principal bundles]\label{lem:g-regular-G}
The local expression of $\LF_s$ has no pole along the strictly semistable
locus of $\bar M_G$.
\end{lemma}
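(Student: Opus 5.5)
The plan is to follow the proof of Lemma~\ref{lem:g-regular} step by step, replacing the sheaf chain space by the Hecke chain space. Let $U_G$ be the regular locus of $\bar M_G$. By Lemma~\ref{lem:g-normal-G} the variety $\bar M_G$ is normal, so its singular locus has codimension at least two. Hence every codimension-one point of $\bar M_G$ lies in $U_G$, and $\omega_{\bar M_G}^{[-1]}$ is invertible there. It therefore suffices to show that $\LF_s$, viewed as a rational section of this line bundle on $U_G$, has nonnegative order along every prime divisor meeting the strictly semistable locus.

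Next take the Hecke chain space $\cI_G$ with semistable tops allowed, together with the top morphism $\mu_G\colon\cI_G\to\bar M_G$. In the scheme-motion coordinates of the Hecke members and the class coordinates of the reduction data, the local expression of $\LF_s$ is $f=\prod_\nu N_{A_\nu/k}(g_\nu)\cdot\prod_j c_j\cdot u$. Lemma~\ref{lem:Gblock} shows that only the scheme motions enter the leaf block, with norm exponent $1$. The reduction directions lie in the radical and contribute no factor. Each norm factor is a polynomial in the attached coordinates by Lemma~\ref{lem:normform}. The class coordinates are regular functions of the reduction data, being invariants as in Remark~\ref{rem:classcoords}. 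The unit $u$ is constant by Lemma~\ref{lem:constunit}. Hence $f$ is regular on $\cI_G$, and
\[
 \mu_G^*\LF_s=f\cdot d\mu_G\bigl(\partial_{[\mathrm{leaf}]}\wedge\partial_{[\mathrm{trans}]}\bigr)
\]
is a regular section of $\mu_G^*\omega_{\bar M_G}^{[-1]}$ over $\mu_G^{-1}(U_G)$. If the members do not fill the leaf, the healed points contribute further polynomial factors in the chain data, as in the sheaf case. Only regularity is needed, so these factors cause no difficulty.

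Finally, Lemma~\ref{lem:g-tops-G} says that the image of $\mu_G$ contains every codimension-one point of $\bar M_G$. Lemma~\ref{lem:pullback-pole}, applied to $\mu_G^{-1}(U_G)\to U_G$ on an irreducible component of $\cI_G$ that dominates, then shows that $\LF_s$ has no pole at any codimension-one point. In particular it has no pole along the strictly semistable locus.

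The main obstacle is the identity of rational sections displayed above on the Hecke chain space. One must check that the descended Euler and block fields are compatible with $d\mu_G$, so that Proposition~\ref{prop:hd-localexpr} transfers to the $G$-equivariant setting. I would prove this by passing through $E_\varrho$. By Lemma~\ref{lem:Gsub}, the $\mathfrak m$-directions are orthogonal to the $\mathfrak g_P$-directions. By Lemma~\ref{lem:Gidentify}, the pairing \eqref{eq:Gpair} is the restriction of the sheaf pairing. The Pfaffian on the $\mathfrak g_P$-part therefore factors off, and the sheaf computation applies to it directly. A second, related issue is irreducibility of the component of $\cI_G$ used in Lemma~\ref{lem:pullback-pole}. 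I would handle it divisor by divisor, choosing for each prime divisor an irreducible component of the closed-orbit locus whose image contains the generic point of that divisor.
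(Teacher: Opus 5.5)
Your proposal is correct and follows the paper's proof essentially step for step. Normality of $\bar M_G$ reduces everything to codimension-one points of the regular locus, the local expression $f$ is regular on the Hecke chain space by Lemmas~\ref{lem:Gblock}, \ref{lem:normform} and~\ref{lem:constunit}, and Lemma~\ref{lem:g-tops-G} together with Lemma~\ref{lem:pullback-pole} excludes poles. Your extra care about irreducibility and about the pullback identity addresses points the paper passes over in silence, but does not change the route.
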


\begin{proof}
Let $U$ be the regular locus of $\bar M_G$.  By Lemma~\ref{lem:g-normal-G} the complement of $U$ has codimension at least two, so every codimension-one point of $\bar M_G$ lies in $U$ and $\omega_{\bar M_G}^{[-1]}$ is invertible on $U$.

Consider the chain space $\cI$ of Hecke chains and the morphism $\mu\colon\cI\to\bar M_G$ sending a chain to its top.  By Lemma~\ref{lem:Gblock} the scheme block of a Hecke chain is the trace
pairing with weight $g=s|_{\cO_Z}$, so the local expression of $\LF_s$ is
\[
 f\ =\ \prod_{\nu}N_{A_\nu/k}(g_\nu)\cdot\prod_j c_j\cdot u,
\]
with $u\in k^\times$.  Each factor is regular on $\cI$: the norm factors are polynomials in the attached coordinates by Lemma~\ref{lem:normform}, the class coordinates are regular functions of the gluing data, and $u$ is constant by Lemma~\ref{lem:constunit}.  Hence $f$ is a regular function on $\cI$.

The chain computation gives the identity of rational sections of $\mu^*\omega_{\bar M_G}^{[-1]}$ on $\cI$
\[
 \mu^*\LF_s\ =\ f\cdot d\mu\bigl(\partial_{[\mathrm{leaf}]}\wedge\partial_{[\mathrm{trans}]}\bigr).
\]
The right hand side is regular.  By Lemma~\ref{lem:g-tops-G} the image of $\mu$ contains every codimension-one point of $\bar M_G$, so Lemma~\ref{lem:pullback-pole} applies to $\mu^{-1}(U)\to U$ and shows that $\LF_s$ is regular at every codimension-one point of $U$.  Hence $\LF_s$ has no pole along the strictly semistable locus.
\end{proof}

\begin{theorem}[Extension for principal bundles]\label{thm:g-extend-G}
Let $\rho>0$.  Then $\LF_s$
extends to a unique section
\[
 \bar\LF_s\ \in\ H^0\bigl(\bar M_G,\ \omega_{\bar M_G}^{[-1]}\bigr).
\]
If $\bar M_G$ is Gorenstein, then $\bar\LF_s$ is a section of
$\omega_{\bar M_G}^{-1}$.
\end{theorem}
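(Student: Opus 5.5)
The proof will follow the sheaf case, Theorem~\ref{thm:g-extend}, and replace each of its inputs by the principal-bundle lemmas just established. We begin on the stable locus. By Theorem~\ref{thm:normsec}, together with Theorem~\ref{thm:G-canonicity} for independence of the presentation, $\LF_s=\theta_s^{[\rho]}\wedge\Omega$ is a regular section of $\omega_{M_G}^{[-1]}$ on $M_G$. On the regular locus of $M_G$ it is a section of $\omega^{-1}$. Since $M_G\subset\bar M_G$ is a dense open, $\LF_s$ defines a rational section of the reflexive sheaf $\omega_{\bar M_G}^{[-1]}$. What remains is to exclude poles along codimension-one points of the complement $\bar M_G\setminus M_G^{\mathrm{reg}}$.

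That complement is the union of the singular locus of $M_G$ and the strictly semistable locus. By Lemma~\ref{lem:g-normal-G}, $\bar M_G$ is normal, so its singular locus, and therefore that of $M_G$, has codimension at least two in $\bar M_G$. Every codimension-one point of $\bar M_G$ thus lies in the regular locus $U$, where $\omega_{\bar M_G}^{[-1]}$ is invertible. Lemma~\ref{lem:g-regular-G} gives regularity at every codimension-one point of $U$ lying in the strictly semistable locus. Its proof pulls back along the Hecke chain map $\mu$, uses Lemma~\ref{lem:g-tops-G} for the codimension-one surjectivity, and applies Lemma~\ref{lem:pullback-pole}. The rational section therefore has no divisorial pole anywhere on $\bar M_G$. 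Reflexive sheaves on a normal variety satisfy the Hartogs property across closed subsets of codimension at least two, so $\LF_s$ extends to a section $\bar\LF_s$ of $\omega_{\bar M_G}^{[-1]}$. The extension is unique because $\omega_{\bar M_G}^{[-1]}$ is torsion-free and $M_G$ is dense. In the Gorenstein case $\omega_{\bar M_G}$ is invertible, so $\omega^{[-1]}=\omega^{-1}$, which gives the final assertion.

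The main obstacle is the regularity step already contained in Lemma~\ref{lem:g-regular-G}. Specifically, one has to know that the local expression $f=\prod_\nu N(g_\nu)\prod_jc_j\,u$ computes $\mu^*\LF_s$ on the whole Hecke chain space, including where the top is strictly semistable. That identity is established on the stable evaluation locus via Lemma~\ref{lem:Gblock}. Our plan is to extend it to all of $\cI$ as an identity of rational sections, since $\cI$ is irreducible and both sides are rational. Regularity of the right-hand side is then enough to rule out poles. A second point to check is that the central directions do not spoil normality or the transverse count at the boundary; this is covered by the standing hypothesis $z(\mathfrak g)\otimes H^1(\cO_X)=0$ of Remark~\ref{rem:Gcenter}.
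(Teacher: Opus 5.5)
Your proposal is correct and follows the paper's proof. Normality of $\bar M_G$ (Lemma~\ref{lem:g-normal-G}) gives reflexivity of $\omega_{\bar M_G}^{[-1]}$ and the Hartogs property, Lemma~\ref{lem:g-regular-G} (via Lemmas~\ref{lem:g-tops-G} and~\ref{lem:pullback-pole}) excludes poles along the codimension-one part of the strictly semistable locus, and the Gorenstein case is immediate. Your extra bookkeeping of the singular locus of $M_G$ and the uniqueness remark are harmless additions, and the irreducibility of $\cI$ you invoke is the same implicit input the paper uses when it applies Lemma~\ref{lem:pullback-pole}.
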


\begin{proof}
By Lemma~\ref{lem:g-normal-G} the variety $\bar M_G$ is normal, so
$\omega_{\bar M_G}^{[-1]}$ is reflexive and satisfies the Hartogs extension
property. By Lemma~\ref{lem:g-regular-G} the section $\LF_s$ has no pole
along the strictly semistable locus. The complement of $M_G$ in $\bar M_G$
is the strictly semistable locus, whose codimension-one part is covered by
Lemma~\ref{lem:g-regular-G} and whose part of higher codimension is absorbed
by Hartogs. Hence $\LF_s$ extends to $\bar\LF_s$. If $\bar M_G$ is Gorenstein, then $\omega_{\bar M_G}^{[-1]}=\omega_{\bar M_G}^{-1}$ and the claim follows.
\end{proof}

\begin{corollary}[Splitting for principal bundles]\label{cor:g-split-G}
The section $\bar\LF_s$ splits $\bar M_G$ if and only if $\Tr_X(s)\neq0$.
\end{corollary}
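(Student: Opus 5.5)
The plan follows the proof of Theorem~\ref{thm:g-split} for sheaves, with $\bar M$ replaced by $\bar M_G$ and the sheaf inputs replaced by their principal-bundle analogues. There are two directions.

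For the forward direction, assume $\Tr_X(s)\neq0$. Theorem~\ref{thm:g-extend-G} already gives the section $\bar\LF_s\in H^0(\bar M_G,\omega_{\bar M_G}^{[-1]})$. On the regular locus, the power $\bar\LF_s^{\,p-1}$ is a section of $\omega^{1-p}$. Via the trace of Frobenius \cite{BKu} this identifies with an element $\varphi$ of $\Hom(F_*\cO,\cO)$. By Lemma~\ref{lem:g-normal-G} the variety $\bar M_G$ is normal, so this Hom sheaf is the reflexive power $\omega^{[1-p]}$ \cite{Schw09}. Hence $\varphi$ extends across the singular locus, which has codimension at least two.

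Next, on $M_G$ the principal case of Theorem~\ref{thm:TF} gives $\Tr(\LF_s^{p-1})=\Tr_X(s)^\rho\cdot u$ with $u\in k^\times$. This proof uses the Hecke presentation of Lemma~\ref{lem:hecke-present}, and it enters Theorem~\ref{thm:G} in the same way. The trace is therefore a nonzero constant. Rescaling $\bar\LF_s$ by a $\lambda$ with $\lambda^{p-1}\Tr_X(s)^\rho u=1$ makes $\varphi\circ F=\mathrm{id}$ on the regular locus of $M_G$, which is dense in $\bar M_G$. Since $\bar M_G$ is normal, hence reduced, Remark~\ref{rem:nearsplit} upgrades $\varphi$ to a splitting of all of $\bar M_G$.

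For the converse, suppose $\bar\LF_s$ splits $\bar M_G$ after rescaling. Then $\Tr(\bar\LF_s^{\,p-1})$ is a nonzero constant on $\bar M_G$. Its restriction to the dense open $M_G$ is $\Tr(\LF_s^{p-1})=\Tr_X(s)^\rho u$. Since $\rho>0$, this forces $\Tr_X(s)\neq0$.

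The main point needing care is the constancy of the trace on $M_G$, because the proof of Theorem~\ref{thm:TF} in turn invokes Theorem~\ref{thm:g-extend-G}. There is no circularity, since the extension theorem is proved independently via Lemmas~\ref{lem:g-tops-G} and~\ref{lem:g-regular-G}. Even so, I would check the constancy directly: $\Tr(\bar\LF_s^{\,p-1})$ is a regular function on the normal projective variety $\bar M_G$, so it is constant by Lemma~\ref{lem:consttrace-extend}. With constancy established, the comparison with $\Tr_X(s)^\rho$ at a single stable smooth point is exactly the content of the principal trace formula, and the argument goes through.
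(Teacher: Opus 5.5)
Your proposal is correct and follows essentially the same route as the paper: for the forward direction, the principal case of Theorem~\ref{thm:TF} makes the trace on $M_G$ a nonzero constant, and the extension of a splitting from a dense open on the normal variety $\bar M_G$ finishes; for the converse, you restrict the splitting to $M_G$ and read off $\Tr_X(s)^{\rho}\neq0$. Your extra details (the reflexive extension of $\varphi$ and the check that Theorem~\ref{thm:g-extend-G} is proved without using Theorem~\ref{thm:TF}, so there is no circularity) simply make explicit what the paper's short proof leaves implicit.
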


\begin{proof}
If $\Tr_X(s)\neq0$ then the trace $\Tr(\LF_s^{p-1})$ is the nonzero constant $\Tr_X(s)^{\rho}u^{p-1}$ by Theorem~\ref{thm:TF}, and the extension of a splitting from a
dense open splits $\bar M_G$. If $\bar\LF_s$ splits $\bar M_G$, then its
restriction to $M_G$ splits $M_G$, so $\Tr(\LF_s^{p-1})\neq0$ and $\Tr_X(s)\neq0$.
\end{proof}

\begin{remark}[Instantons]\label{rem:g-instantons}
For the moduli $M_{r,k}(X,L)$ of framed torsion-free sheaves the extension
is trivial when $\Tr_U(s^{p-1})$ is constant. In the ADHM case $(X,L)=(\mathbb P^2,\text{line})$ the moduli is
already projective. For a general Poisson pair $(X,L)$ the moduli is an open
set of a framed Quot scheme, which is smooth, so the compactification has no
singular boundary and Theorem~\ref{thm:g-extend-G} applies with
$A_E=\End E(-L)$ and no further analysis. If $\Tr_U(s^{p-1})$ is nonconstant, the trace $\Tr(\LF_s^{p-1})$ is nonconstant on $M_{r,k}(X,L)$ by the proof of Theorem~\ref{thm:inst}, and $\LF_s$ admits no extension to a projective compactification.
\end{remark}

\begin{remark}[Scope of the principal case]\label{rem:g-scope-G}
The extension for principal bundles rests on two inputs specific to the
case: the normality of $\bar M_G$ as a geometric quotient, and the
polystable tops of Lemma~\ref{lem:g-tops-G}, which replaces the
non-locally-free-point construction of Lemma~\ref{lem:g-tops} by the
density of the Hecke chain image. The leafrian formula itself is unchanged,
by Lemma~\ref{lem:Gblock}.
\end{remark}

\subsection{Examples}

\begin{example}[Odd $c_2$ on the projective plane]\label{ex:g-odd}
Let $X=\mathbb P^2$, $r=2$, $c_1=0$ and $c_2$ odd. Then every $H$-semistable
sheaf is stable, $\bar M=M$ is projective, and there is nothing to extend.
Theorem~\ref{thm:moduli} applies directly.
\end{example}

\begin{example}[Even $c_2$ on the projective plane]\label{ex:g-even}
Let $X=\mathbb P^2$, $r=2$, $c_1=0$ and $c_2=2\ell$ with $\ell\geq1$. The
polystable sheaves are $I_{Z_1}\oplus I_{Z_2}$ with
$\operatorname{length}(Z_i)=\ell$, so
\[
 \dim\operatorname{SS}\ =\ 4\ell\ =\ 2c_2,
 \qquad
 \dim\bar M\ =\ 4c_2-3.
\]
The codimension of the strictly semistable locus is
\[
 \operatorname{codim}\operatorname{SS}\ =\ 2c_2-3.
\]
For $c_2=2$ this is one, and Remark~\ref{rem:g-critical} applies. For
$c_2\geq4$ the codimension is at least five, so the extension is immediate
from the Hartogs property of Lemma~\ref{lem:g-normal} without a separate
analysis of the boundary.
\end{example}

\begin{remark}[Scope]\label{rem:g-scope}
The extension theorem uses only the normality of $\bar M$, the smoothness of
the stable locus, and the polynomiality of the leafrian formula. It does not
use the Poisson structure, and it does not assume that $X$ is Frobenius
split. Frobenius splitting of $X$ enters only through the hypothesis
$\Tr_X(s)\neq0$ of Theorem~\ref{thm:g-split}. The statement covers every
smooth projective surface $X$ and every numerical type $(r,c_1,c_2)$ with
$\rho>0$.
\end{remark}

\begin{remark}[Gorenstein]\label{rem:g-open}
For $X$ a K3 or abelian surface, the moduli $\bar M$ is locally factorial
\cite{KLS}, hence Gorenstein, with the exception of symmetric products and
the O'Grady class. For $X$ a ruled surface the generic strictly semistable
point is smooth, hence Gorenstein, and the deeper strata are expected to be
Gorenstein as well; a reference for the general case is not known to us. In
all cases the extension of Theorem~\ref{thm:g-extend} lands in
$H^0(\bar M,\omega_{\bar M}^{[-1]})$, and Gorenstein would strengthen this to
$H^0(\bar M,\omega_{\bar M}^{-1})$.
\end{remark}

\begin{remark}[On the smoothness hypothesis]\label{rem:g-smoothhyp}
Smoothness of $M$ is not used in the splitting statement.  The Gieseker compactification $\bar M$ is normal by Lemma~\ref{lem:g-normal}, and the stable locus $M$ is a dense open of $\bar M$.  The leafrian $\LF_s$ is defined on the smooth locus, which contains the member-saturated points of Lemma~\ref{lem:saturated}, by Lemma~\ref{lem:g-smooth} at the stable ones and by Lemma~\ref{lem:g-normal} at the generic boundary ones, and the trace formula of Theorem~\ref{thm:TF} computes the trace there.  The constancy of the trace and the extension of a splitting from a dense open recalled above then yield the splitting of $M$.
\end{remark}

\subsection{Cohomology vanishing}

\begin{lemma}[Hasse identification]\label{lem:hasse}
Let $D=Z(s)$ be smooth.  Then $\Tr_X(s)$ is the Hasse invariant of $D$ in the convention of Example~\ref{ex:p2}; at $x\in D$ with $s=u\cdot h$ in slice coordinates one has
\[
 \Tr_X(s)=[\,\tau^{p-1}\,]\,h(0,\tau)^{p-1},
\]
and $\Tr_X(s)$ vanishes exactly when $D$ is supersingular.
\end{lemma}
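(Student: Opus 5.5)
The statement has three parts: the global trace $\Tr_X(s)$ is the Hasse invariant of $D$; it equals the local extraction $[\tau^{p-1}]h(0,\tau)^{p-1}$ at any $x\in D$; and it vanishes exactly for supersingular $D$. I would prove them in the order: local formula, global identification, vanishing criterion.

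\emph{Local formula.} Since $X$ is projective, $\Tr_X(s)=\Tr(s^{p-1})$ is a constant by Definition~\ref{def:split}. It may therefore be evaluated at any closed point, in particular at $x\in D$. Because $D$ is smooth, we may choose slice coordinates $(u,\tau)$ at $x$ with $D=\{u=0\}$ and $s=u\cdot h\,\partial_{[2]}$. Then $h(0,\tau)$ is a unit, since $D$ is reduced and smooth at $x$. By \cite[Lem.~1.3.6]{BKu} the value of the trace at $x$ is the socle coefficient $[u^{p-1}\tau^{p-1}]\,u^{p-1}h(u,\tau)^{p-1}$. The explicit factor $u^{p-1}$ already supplies the full $u$-exponent, so only the $u^0$-part of $h^{p-1}$ contributes, namely $h(0,\tau)^{p-1}$. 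This is exactly the argument of Lemma~\ref{lem:inst-T1} and Lemma~\ref{lem:traceblock}, and it gives the displayed formula.

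\emph{Global identification.} I would use adjunction. The residue of $s^{-1}$, a meromorphic $2$-form with a simple pole along $D$, is a nowhere-vanishing $1$-form $\omega_D$ on $D$; in coordinates, $\omega_D=d\tau/h(0,\tau)$ up to a unit. The Cartier operator acts on the $1$-form $h(0,\tau)^{-1}d\tau$. Writing $h^{-1}=h^{p-1}\cdot h^{-p}$ and pulling the $p$-th power out of the Cartier operator gives
\[
 C(h^{-1}d\tau)=h^{-1}\,C(h^{p-1}d\tau),
\]
and the leading term of $C(h^{p-1}d\tau)$ is $\bigl([\tau^{p-1}]h^{p-1}\bigr)^{1/p}$. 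Hence the scalar by which $C$ acts on the line $H^0(D,\omega_D)$ is, up to a $p$-th root, the local extraction. This is the Hasse invariant in the convention of Example~\ref{ex:p2}, namely the single Cartier--Manin entry. The same comparison can also be made globally: the restriction sequence $0\to\omega_X\to\cO_X\to\cO_D\to0$ is compatible with the trace maps, so $\Tr(s^{p-1})$ is identified with the residue trace on $\omega_D$.

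\emph{Vanishing criterion.} By Lemma~\ref{lem:shape}, $p_a(D)=1$, so the smooth curve $D$ is elliptic. For an elliptic curve, $D$ is supersingular exactly when the Cartier operator on $H^0(\omega_D)$ is zero, equivalently when Frobenius on $H^1(\cO_D)$ is zero. Combined with the previous two parts, this shows that $\Tr_X(s)=0$ exactly when $D$ is supersingular.

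The main obstacle is the global identification. One has to check that the pointwise socle value, although computed at a single $x$, really equals the global Cartier scalar, and that it does not depend on the choice of $x$ or on the normalisation of $h$. I would settle this by using the constancy of the trace: the local computation gives the Cartier scalar at every point of $D$, and the trace is already known to be constant. A change of unit or coordinate rescales the extraction only by a $(p-1)$-th power, which does not affect whether it vanishes.
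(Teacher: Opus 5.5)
Your proposal is correct and follows essentially the same route as the paper: the local formula via the socle extraction in which the factor $u^{p-1}$ absorbs the whole $u$-exponent (the paper simply cites Lemma~\ref{lem:traceblock}), followed by a local Cartier-operator computation on the residue form $d\tau/h(0,\tau)$. Your use of $C(g^p\omega)=g\,C(\omega)$ to get $C(h^{p-1}d\tau)=\lambda\,d\tau$ is a slightly cleaner form of the paper's congruence $h^{p-1}\equiv h_0^p h^{-1}$ modulo $\tau^p$; one small fix is that Lemma~\ref{lem:shape} is stated under a splitting hypothesis, so you should cite only its adjunction step $\chi(\cO_D)=0$, which does not use that hypothesis.
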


\begin{proof}
The displayed equality is Lemma~\ref{lem:traceblock}.  The trivialization of $\omega_D$ induced by $s$ sends the generator $d\tau$ to $h(0,\tau)$, so the generator of $H^0(\omega_D)$ has local expression $d\tau/h(0,\tau)$.  If $C$ is the Cartier operator and $C(d\tau/h(0,\tau))=\lambda\,d\tau/h(0,\tau)$, then $\lambda$ is the Hasse invariant of $D$.  Write $h_0$ for the constant term of $h(0,\tau)$.  In the Frobenius box one has $h^{p-1}\equiv h_0^{p}\,h^{-1}$ modulo $\tau^{p}$, while the local formula of $C$ gives $[\,\tau^{p-1}\,]\,h(0,\tau)^{-1}=\lambda^{p}h_0^{-p}$; extracting the socle coefficient yields $[\,\tau^{p-1}\,]\,h(0,\tau)^{p-1}=h_0^{p}\cdot\lambda^{p}h_0^{-p}=\lambda^{p}$.  Thus $\Tr_X(s)$ is the Hasse invariant of $D$ in the sense of Example~\ref{ex:p2}, and it vanishes exactly when $D$ is supersingular.
\end{proof}

\begin{proof}[Proof of Theorem~C]
For the moduli of sheaves and of principal bundles, in characteristic $p$ the vanishing holds on $\bar M$ by Theorem~B and \cite{BKu} once $X$ is Frobenius split.  For instantons the compactification is the framed Quot scheme, which is smooth and projective, so the vanishing in characteristic zero is the classical Kodaira theorem.

It remains to treat the sheaf and principal bundle cases in characteristic zero.  Replacing the base field by a finite extension and choosing a model of $X$ over $\mathbb Z[1/N]$, one chooses $s\in H^0(X,\omega_X^{-1})$ and a prime $p$ such that $s_p^{p-1}$ splits $X_p$.  In case \ding{192} the divisor $Z(s)$ is empty, $\Tr_X(s)$ is the Hasse invariant of $X$, and such $p$ exists by Bogomolov and Zarhin \cite{BZ} for $K3$ surfaces and by Ogus \cite[Prop.~2.7, Cor.~2.9]{Ogus} or Sawin \cite{Sawin} for abelian surfaces.  In case \ding{193} the divisor $D=Z(s)$ is smooth by hypothesis, and $p_a(D)=1$ by Riemann--Roch: from $0\to\omega_X\to\cO_X\to\cO_D\to0$ and $\chi(\omega_X)=\chi(\cO_X)$ one gets $\chi(\cO_D)=0$.  By Lemma~\ref{lem:hasse} the trace $\Tr_X(s)$ is the Hasse invariant of $D$; by Serre \cite{Serr} there are infinitely many primes at which $D$ has ordinary reduction, and at any such prime $s_p^{p-1}$ splits $X_p$.

The Gieseker construction spreads out over $\mathbb Z[1/N]$: the moduli of semistable sheaves of the spread-out numerical type form a proper family
\[
 \bar{\mathcal M}\ \longrightarrow\ \operatorname{Spec}\mathbb Z[1/N].
\]
The same spread-out applies to the moduli of semistable principal $G$-bundles.  At the prime $p$ the leafrian of Theorem~B splits the special fibre $\bar M_p$, so the vanishing holds on $\bar M_p$.

Upper semi-continuity of cohomology in a proper family implies that the set of points where $H^i$ vanishes is open.  This open set contains $p$, and $\mathbb Z[1/N]$ is irreducible, so it contains the generic point.  The vanishing therefore holds on the generic fibre, that is in characteristic zero.
\end{proof}

\section{Remarks}\label{sec:remarks}

The moduli cases do not use a torus: the Euler fields come from the presentations of Definition~\ref{def:present}, and Hamiltonian invariance is Corollary~\ref{cor:norminv}.

\begin{remark}[On projectivity]\label{rem:projective}
Projectivity of $X$ is used in three places.  The construction of a presentation needs ample classes: the elementary transforms run along divisors $D'\in|\mathcal A|$ with $\mathcal A$ sufficiently positive, and the degree and boundedness estimates of Section~\ref{sec:transfer} use an ample class.  The scalar character of $\Tr_X(s)$ needs $H^0(X,\cO_X)=k$, hence properness and connectedness.  The constancy of $\Tr(\LF_s^{p-1})$ uses the projectivity of the Gieseker compactification $\bar M$, which rests on projectivity of $X$, by Theorem~\ref{thm:g-extend} and Lemma~\ref{lem:consttrace-extend}.  The block calculus of Section~\ref{sec:trans} and the shape theorem of Section~\ref{sec:principle} are local, and properness plays no role there.
\end{remark}

\begin{remark}[Intrinsic count of members]\label{rem:rho-open}
On a surface $\rho$ is intrinsic for all three families, since $2\rho$ equals the generic rank of the pairing \eqref{eq:res}, \eqref{eq:Gpair} or \eqref{eq:instpair}.  For generic $s$ the identity $2\rho=\operatorname{rk}\theta_s$ holds at generic data; at special $s$ the rank of $\theta_s$ may drop at special points of $M$, for instance when $Z(s)$ is non-reduced, where $\Tr_X(s)=0$ and the top coefficient vanishes (Lemma~\ref{lem:Dnonred}), while $\rho$ itself is unchanged.  When the general object is not locally free and its non-locally free points realize the leaf, one has in addition $\rho=\operatorname{length}(E^{\vee\vee}/E)$, the number of points of the non-locally-free locus when the singularities are simple (Lemma~\ref{lem:hd-members}); this is the member-saturated case of Remark~\ref{rem:regimes}, and at a stable object one has $\operatorname{length}(E^{\vee\vee}/E)=\sum_\nu\ell_\nu\le\rho$ instead.  For a locally free general object the additional contributions come from the split End blocks of the reduced degeneracy complex, and the length formula is not the count of $\rho$.  For principal bundles the units are Hecke points and $2\rho$ equals the generic rank of $\theta_s$.
\end{remark}

\begin{remark}[External sources]\label{rem:transfer}
The splitting machinery uses no characteristic-zero input.  The block calculus of Section~\ref{sec:trans} and the shape theorem of Section~\ref{sec:principle} are local, and the Poisson condition for $\hat s$ is Theorem~\ref{lem:schouten}.  The splitting criterion and the trace formula are \cite[Thm.~1.3.8, Lem.~1.3.6]{BKu}, and the extension of a splitting from a dense open is \cite[Ch.~1]{BKu}; these are written in characteristic $p$.

The sources written in characteristic zero or in the holomorphic category are \cite{Mar,Fri,Hul} for the elementary-transformation calculus, used in the weakened form of Appendix~\ref{app:calc}, \cite{Bal} for the Hecke theory of principal bundles, \cite{Bot} for the smoothness of $M$ on a surface and for the rank analysis of the residue pairing, and \cite{Bot00} for the parabolic tensor.  What is used from them transfers to characteristic $p>2$ because it is formal: the $K$-theory Chern-class computation, the local-to-global degeneration for sheaves supported at a fat point, associativity and graded commutativity of the cup product, and cyclicity of the trace.  No Hodge-theoretic or transcendental step is imported.  Bottacin's rank analysis \cite[\S6]{Bot} is the linear algebra of a residue pairing along $D$ and is quoted as such.  Smoothness of $M$ on a surface under $(D\cdot H)>0$ is the vanishing $H^2(\End E)=0$ from the same slope comparison, which is algebraic.  The remaining inputs are already in characteristic $p$: the subvariety theorem \cite[Thm.~D]{PY}, the deformation theory of fat points \cite{Ser}, the identification $\Hom(F_*\cO,\cO)=H^0(\omega^{1-p})$ on normal varieties \cite{Schw09}, the normality of the Gieseker moduli spaces \cite[Ch.~4]{HLbook,Gie}, and, in the examples, the plane-curve Hasse invariant \cite{HLYY,Man}.
\end{remark}

The case $D=\emptyset$ is covered by Lemma~\ref{lem:Dempty}.  There $s$ trivializes $\omega_X^{-1}$, and the criterion $\Tr_X(s)\neq0$ is the Hasse invariant of $X$.  An abelian surface therefore splits its moduli exactly when it is ordinary, and a $K3$ surface exactly when its Hasse invariant is nonzero.

\noindent\emph{Scope.}  The block calculus and the shape theorem are local.  The moduli applications are carried out for surfaces, where filtered presentations are constructed and member-saturated frame data exist by Lemma~\ref{lem:saturated}.

\appendix

\section{The elementary transformation calculus in characteristic \texorpdfstring{$p$}{p}}\label{app:calc}

\subsection{Scope}

The elementary transformation calculus of \cite{Mar,Hul,Fri} is written in characteristic zero or in the holomorphic category. The present paper works in characteristic $p$. This appendix records the statements of that calculus which are used here, together with algebraic proofs valid in every characteristic. Three groups of results appear: existence of locally free sheaves of prescribed numerical type (Section~\ref{app:exist}), the window and endomorphism estimates for elementary transformations (Section~\ref{app:window}), and the proofs of the chain counts of Proposition~\ref{prop:hd-count} (Section~\ref{app:count}).

\subsection{Existence of locally free sheaves}\label{app:exist}

\begin{lemma}[Locally free grounds]\label{lem:hd-ground}
Let $X$ be a smooth projective surface, let $r\ge2$, and let $D'\in|\mathcal A|$
be smooth with $\mathcal A$ sufficiently positive. For every
$c_1\in\operatorname{Pic}(X)$ and every $c_2\in\mathbb Z$ there is a locally
free sheaf of rank $r$ with Chern classes $c_1$ and $c_2$. For $r=1$ the line
bundles realize the types with $c_2=0$, and point modifications account for
the remaining $c_2$.
\end{lemma}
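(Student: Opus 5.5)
The plan is to first build locally free sheaves with the prescribed $c_1$ and arbitrarily large $c_2$, and then lower $c_2$ by the elementary transformation calculus along the smooth divisor $D'$. Everything is done in characteristic-free terms. Throughout I use the rank-$r$ splitting $\cO_X^{\oplus(r-1)}\oplus\det$ and the Grothendieck--Riemann--Roch identity already displayed in Section~\ref{sec:moduli}.

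For the existence step I would take the Serre construction. Fix a line bundle $N$ with $c_1(N)=c_1$, and a reduced zero-dimensional $Z\subset X$ of length $n$ in general position. For $r\ge2$, consider extensions
\[
 0\to\cO_X^{\oplus(r-1)}\to E\to I_Z\otimes N\to0 .
\]
By the Cayley--Bacharach criterion, these admit locally free members once $\omega_X\otimes N$ has enough sections separating the points of $Z$. This holds after replacing $N$ by $N(k\mathcal A)$ and compensating in the next step. The proof is algebraic: $\Ext^1(I_Z\otimes N,\cO)$ is computed by Serre duality and the local $\mathcal Ext^1$ at each point, so it is valid for $p>2$. The result is a locally free $E$ with $c_1(E)=c_1(N)$ and $c_2(E)=n$, which can be made arbitrarily large.

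To reach every $c_2$ including negative values, I would twist and use elementary transformations along $D'$. Set $E'=\ker(E(D')\to i_{D'*}Q)$, with $Q$ a line-bundle quotient of $E(D')|_{D'}$. By Lemma~\ref{lem:hecke-present}'s local model, or more simply by the proof of Lemma~\ref{lem:hd-members}, $E'$ is locally free. The displayed GRR formula then shows that varying $\deg Q$ shifts $c_2$ by one, while $c_1$ changes by a fixed multiple of $[D']$. Combining $\pm$ twists by $\cO(D')$ with transforms whose quotients have rank $1$ and rank $r-1$ restores $c_1$. Since $\deg Q$ ranges over all sufficiently negative integers, and the available quotients are surjective once $\mathcal A$ is positive, every $c_2$ is hit.

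For $r=1$ the statement is immediate. Line bundles have $c_2=0$, and by Lemma~\ref{lem:saturated} a kernel point modification raises $c_2$ by one, so all types $c_2\ge0$ are accounted for by point modifications.

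The main obstacle is the surjectivity of the range of $c_2$ in the downward direction. One must check that the required quotients $Q$ of $E(D')|_{D'}$ exist for all needed degrees. Positivity of $\mathcal A$ should make $E(D')|_{D'}$ globally generated on the curve, so generic quotients of any degree above a bound exist. Bookkeeping of $c_1$ through the twist then closes the argument.
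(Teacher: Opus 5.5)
There is a genuine gap in the downward step. It is the only place negative $c_2$ could come from, since your Serre ground has $c_2=n\ge0$. For a fixed vector bundle on the curve $D'$, the degrees of its quotient line bundles are bounded \emph{below} and fill all sufficiently \emph{large} integers; your last paragraph in fact says this. They never range over all sufficiently negative integers.

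In the kernel direction $\operatorname{ch}_2(E')=\operatorname{ch}_2(E(D'))-\deg Q+\tfrac12D'^2$, so $c_2(E')$ increases with $\deg Q$, and the available quotients push $c_2$ up. Now carry out your round: $E_1=\ker(E(D')\to i_{D'*}Q_1)$ with $\operatorname{rk}Q_1=1$, then $E_2=\ker(E_1\to i_{D'*}Q_2)$ with $\operatorname{rk}Q_2=r-1$, so that $c_1(E_2)=c_1(E)$. Grothendieck--Riemann--Roch gives $c_2(E_2)-c_2(E)=\deg Q_2-\deg K_1$, where $K_1=\ker(E(D')|_{D'}\to Q_1)$. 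The choice $Q_2=K_1$ returns $E$. For any other locally free $Q_2$, the kernel of $E_1|_{D'}\to Q_2$ is a line subbundle mapping nontrivially to $K_1$. This bounds $\deg Q_2-\deg K_1$ below by a constant depending only on $E|_{D'}$ and $D'^2$. So a round lowers $c_2$ by a bounded amount at best, and nothing in the proposal shows that iterating reaches every negative integer.

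The existence step also states Cayley--Bacharach backwards. A locally free extension of $I_Z\otimes N$ by $\cO^{\oplus(r-1)}$ exists iff, for each $x\in Z$, every section of $\omega_X\otimes N$ vanishing on $Z\setminus\{x\}$ vanishes at $x$. So sections separating the points of $Z$ are exactly the obstruction, and passing to $N(k\mathcal A)$ makes it worse. Fix $N$ and take $n>h^0(\omega_X\otimes N)$ general points instead.

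Both problems go away if the negativity is put into the ground, which a sum of line bundles such as the paper's $\bigoplus_iL_i$ permits. With $H$ ample, $L_2=kH$ and $L_1=c_1-kH$, one has $L_1\cdot L_2=k\,c_1\cdot H-k^2H^2\to-\infty$. For $k\gg0$ one also has $H^0(\omega_X\otimes L_1\otimes L_2^{-1})=0$. Then for every reduced $Z$ of length $n\ge0$, the map $\Ext^1(I_Z\otimes L_1,L_2)\to H^0(\mathcal Ext^1(I_Z\otimes L_1,L_2))$ is surjective, and a general extension $0\to L_2\to E\to I_Z\otimes L_1\to0$ is locally free. The sheaf $E\oplus\cO^{\oplus(r-2)}$ has first Chern class $c_1$ and $c_2=k\,c_1\cdot H-k^2H^2+n$, which covers every integer.
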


\begin{proof}
Choose line bundles $L_1,\dots,L_r$ with $\sum_i L_i=c_1$ and start from
$\bigoplus_i L_i$, which is locally free. An elementary transformation along
$D'$ with reduction datum $L\subset E|_{D'}$ replaces $E$ by its parent
$E'=\ker(E\to i_{D'*}Q)$ with $Q=E|_{D'}/L$, and
Grothendieck--Riemann--Roch gives
\[
 c_1(E')=c_1(E)-(r-1)[D'],\qquad
 c_2(E')=c_2(E)-(r-1)\,D'\!\cdot c_1(E)+\deg Q+\tfrac{(r-1)(r-2)}{2}D'^2 .
\]
The degree of $Q$ takes every integer value above a bound, since twisting a
rank-one subsheaf of $E|_{D'}$ of maximal degree by effective divisors
realises every smaller degree of $L$. A second transformation in the
enlargement direction returns $c_1$ to its original value and changes $c_2$
by the difference of the two degrees of $Q$, so $c_2$ takes every integer
value. All the sheaves are locally free, since an elementary transformation
along a smooth divisor preserves local freeness in both directions. For $r=1$
the locally free sheaves are the line bundles and $c_2=0$; each point
modification raises $c_2$ by one.
\end{proof}

\subsection{Window and endomorphism estimates}\label{app:window}

\begin{lemma}[Endomorphisms of the parent]\label{lem:hd-simple}
Let $E$ be an object of the datum, and let $D'\subset X$ be a smooth divisor. Fix a
local equation $u$ of $D'$ and write a section of $A_E(D')$ locally as
$B+u^{-1}C$ with $C$ regular along $D'$. Let
\[
 V=\operatorname{im}\bigl(H^0(A_E(D'))\to H^0((A_E)_D)\bigr)
\]
be the residue image, let $J_c\subset E_D:=E|_{D'}$ be the saturation of
$\operatorname{im}(c)$ for $0\neq c\in V$, and put
$d_{\min}=\min\{\deg J_c:0\neq c\in V\}$. Then every rank-one subbundle
$L\subset E_D$ with $\deg L<d_{\min}$ has a parent $E'_L$ satisfying
\[
 h^0(\operatorname{End}E'_L)\le h^0(\operatorname{End}E).
\]
If $E$ is simple, then $E'_L$ is simple.
\end{lemma}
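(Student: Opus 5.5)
The plan is to compare $\End E'_L$ with $\End E$ through the two inclusions $E'_L\subset E\subset E'_L(D')$. Here $E'_L=\ker(E\to i_{D'*}(E_D/L))$. An endomorphism $\phi$ of $E'_L$ extends over $X\setminus D'$ to a rational endomorphism of $E$. Its composite with $E\subset E'_L(D')$ and the twist gives a section of $A_E(D')$, because $E(-D')\subset E'_L$. Concretely, $\phi$ determines a section $B+u^{-1}C$ of $A_E(D')$, with $C$ regular along $D'$. The residue $c=C|_{D'}\in H^0((A_E)_D)$ then lies in the residue image $V$. This gives an injective restriction map
\[
 H^0(\End E'_L)\longrightarrow H^0(A_E(D')),
\]
and the whole question is to control the polar part.

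First I show that the polar part of any such $\phi$ vanishes. The requirement that $\phi$ preserve $E'_L$ is a local condition along $D'$. Write a local section of $E'_L$ as $e=a+u b$ with $a|_{D'}\in L$. Expanding $\phi(e)=(B+u^{-1}C)(a+ub)$, regularity on $X$ forces $C|_{D'}$ to kill $L$. Landing in $E'_L$ imposes a further condition: the residue $c$ must have image inside $L$, since the $u^{-1}C\,a$ term contributes $c(a|_{D'})$ to the constant term, and the constant term must lie in $L$. In the cleanest reading, $c$ factors through $E_D/L\to L$. So if $c\ne0$, the saturation $J_c$ of $\operatorname{im}(c)$ is contained in the saturation of $L$, which is $L$ itself. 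This gives $\deg J_c\le\deg L<d_{\min}$, contradicting the definition of $d_{\min}$. Hence $c=0$ for every $\phi$.

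With $c=0$, the section $\phi$ is $B+u^{-1}C$ with $C\equiv0$ along $D'$. So $\phi$ is a regular section of $A_E=\End E$ that preserves $E'_L$. The restriction map therefore lands injectively in the subspace of $H^0(\End E)$ of endomorphisms preserving the flag $L\subset E_D$, which gives $h^0(\End E'_L)\le h^0(\End E)$. For simplicity: if $E$ is simple, the image consists of scalars, and scalars do preserve $E'_L$. So $H^0(\End E'_L)=k$ and $E'_L$ is simple.

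The main obstacle is the local bookkeeping in the middle step. I have to verify that preserving $E'_L$ forces $\operatorname{im}(c)\subseteq L$, and not merely $c(L)=0$. If only the weaker statement $c|_L=0$ comes out, I would use the induced map $E_D/L\to E_D$ together with the rank-one saturation argument instead. The saturation of its image is then a rank-one subsheaf; I would argue that its degree is bounded by $\deg L$ using the filtration by $L$ and $E_D/L$. I expect this degree comparison to be where the hypothesis $\deg L<d_{\min}$ is genuinely used, and for the generic-rank cases ($r=2$) it should reduce to comparing $\deg J_c$ with $\deg L$ directly.
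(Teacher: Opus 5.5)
Your overall route is the paper's. You view $\phi\in H^0(\End E'_L)$ as a section of $A_E(D')$ through $E(-D')\subset E'_L$, and show that its residue $c$ has image in $L$. Saturation of $L$ then gives $J_c=L$, so $\deg J_c=\deg L<d_{\min}$, which is impossible. Hence $c=0$, $\phi$ is regular, and $H^0(\End E'_L)\hookrightarrow H^0(\End E)$.

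The step to repair is your justification of $\operatorname{im}(c)\subseteq L$, which is wrong as written. In $\phi(a+ub)$, the term $u^{-1}Ca$ does not contribute $c(a|_{D'})$ to the restriction to $D'$: you have just shown, from regularity, that $c(a|_{D'})=0$. The residue enters instead through the term $Cb$. So take $a=0$. Because $uE\subset E'_L$, the section $\phi(ub)=uBb+Cb$ lies in $E'_L$, and restricting to $D'$ gives $c(b|_{D'})\in L$ for every local section $b$ of $E$. This is exactly the step in the paper's proof, and it settles your ``main obstacle''.

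You should also discard the fallback. From $c|_L=0$ alone you cannot bound $\deg J_c$ above by $\deg L$. For $r=2$, $c$ would factor through the line bundle $E_D/L$, and that only bounds $\deg J_c$ from below. So the strong inclusion $\operatorname{im}(c)\subseteq L$ is what makes the hypothesis $\deg L<d_{\min}$ bite, and it does hold.
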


\begin{proof}
Let $u$ be a local equation of $D'$; then $uE\subset E'_L\subset E$. An
endomorphism $\varphi$ of $E'_L$ satisfies $\varphi(uE)\subset E$, so
$\varphi\in H^0(A_E(D'))$. Write locally
$\varphi=B+u^{-1}C$. For $e=ue'\in uE$ the
condition $\varphi(e)\in E'_L$ gives $C(e')|_{D'}\in L$ for all $e'$,
whence the residue $c$ of $\varphi$ lies in $V$ and $J_c\subset L$.
If $c=0$ then $C$ is divisible by $u$, so $\varphi$ is regular and already lies in $H^0(\operatorname{End}E)$.
If $c\neq0$ then $L/J_c$ is a torsion sheaf on the smooth projective $D'$, so
$\deg L\ge\deg J_c\ge d_{\min}$, a contradiction.
Hence every endomorphism of $E'_L$ lies in $H^0(\operatorname{End}E)$. Both spaces are viewed inside $\operatorname{End}$ of the generic fibre of $E$, so the inclusion
$H^0(\operatorname{End}E'_L)\subseteq H^0(\operatorname{End}E)$ is injective, and the
inequality follows.  Since multiples of the identity give
$h^0(\operatorname{End}E'_L)\geq1$, the equality
$h^0(\operatorname{End}E)=1$ forces
$h^0(\operatorname{End}E'_L)=1$ and $E'_L$ is simple.
\end{proof}

\begin{lemma}[Bounded kernels]\label{lem:hd-kernels}
Let $Y$ be projective, let $F$ be coherent, and let
$V\subset H^0(Y,\Hom(F,F))$ be a finite-dimensional subspace. Then
$\{\ker c:0\neq c\in V\}$ is a bounded family; in particular the numbers
$c_1(\ker c)\cdot h^{m-1}$ are bounded, where $m=\dim Y$ and $h$ is ample.
\end{lemma}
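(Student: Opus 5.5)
The plan is to reduce boundedness to a Quot-scheme argument. Each kernel $\ker c$ is the kernel of a map $c\colon F\to F$, equivalently determined by the image subsheaf $\operatorname{im}(c)\subset F$ via $0\to\ker c\to F\to\operatorname{im}c\to0$. So it suffices to show that the quotients $F\twoheadrightarrow\operatorname{im}c$, for $0\neq c\in V$, form a bounded family. Once that is in place, the kernels are bounded because they are the kernels of a bounded family of quotients of the fixed sheaf $F$.

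First, parametrize the maps. Put $\mathbb V=\operatorname{Spec}\operatorname{Sym}V^\vee$, the affine space of $V$, and let $\mathbb V^\circ$ be the complement of the origin. On $\mathbb V\times Y$ the tautological endomorphism
\[
 \mathbf c\colon\ \cO_{\mathbb V}\boxtimes F\longrightarrow\cO_{\mathbb V}\boxtimes F
\]
restricts to $c$ on the fibre over $c$. Scaling $c$ by a unit does not change $\ker c$, so the family descends to the projective space $\mathbb P(V)$, which is projective and of finite type. The restriction of $\mathbf c$ to $\mathbb P(V)\times Y$ needs a twist by $\cO(1)$; its cokernel and image are coherent on $\mathbb P(V)\times Y$.

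Second, flatten. By generic flatness applied repeatedly to the image sheaf $\mathcal I=\operatorname{im}\mathbf c$, there is a finite stratification $\mathbb P(V)=\bigsqcup_i S_i$ into locally closed pieces such that $\mathcal I|_{S_i\times Y}$ and $\ker\mathbf c|_{S_i\times Y}$ are flat over $S_i$. On a flat stratum, base change identifies the fibre of the image with $\operatorname{im}c$ and the fibre of the kernel with $\ker c$; this follows from flatness of the cokernel together with the right exactness of restriction. Hence over each $S_i$ the sheaves $\ker c$ form a flat family, so there are only finitely many Hilbert polynomials and the family is bounded. Finitely many strata then give boundedness over all of $\mathbb P(V)$.

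Finally, in a flat family the intersection number $c_1(\ker c)\cdot h^{m-1}$ is locally constant, being read off from the Hilbert polynomial with respect to $h$. It therefore takes finitely many values, which gives the stated bound.

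The main point of care is the base-change step. A kernel does not commute with restriction in general. The remedy is to stratify so that both $\operatorname{coker}\mathbf c$ and $\operatorname{im}\mathbf c$ are flat. Then $0\to\ker\to F\to\operatorname{im}\to0$, with $F$ and $\operatorname{im}$ flat over the stratum, restricts exactly to the fibres, and the identification of the fibres of the kernel with $\ker c$ follows.
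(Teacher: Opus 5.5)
Your proposal is correct and follows the paper's route: the universal homomorphism on $\mathbb P(V)\times Y$, a finite stratification of $\mathbb P(V)$, and constancy of the Hilbert polynomial of the kernel on each stratum, from which $c_1(\ker c)\cdot h^{m-1}$ is read off. Your base-change step supplies what the paper's one-line proof leaves implicit, and the right way to state it is the one in your last paragraph: flatten the cokernel of the universal map restricted to each stratum, so that the image and kernel of the restricted map are flat and their fibres are $\operatorname{im}c$ and $\ker c$; flatness of the restriction of the global $\operatorname{im}\mathbf c$ alone, as your second paragraph first phrases it, would not give this.
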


\begin{proof}
The evaluation map of $V$ on $\mathbb P(V)\times Y$ gives a universal
homomorphism. Its kernel is coherent, and
its Hilbert polynomial is constant on a finite stratification of
$\mathbb P(V)$.
\end{proof}

\begin{corollary}\label{cor:hd-dmin}
Let $E$ be as in Lemma~\ref{lem:hd-simple}. There
is $\kappa_E$ with
$d_{\min}(E;D')\ge c_1(E|_{D'})\cdot h^{m-1}-\kappa_E$,
where $m=n-1$. For a finite family the constants may be
chosen uniformly.
\end{corollary}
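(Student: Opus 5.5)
The plan is to feed Lemma~\ref{lem:hd-kernels} into the definition of $d_{\min}$ from Lemma~\ref{lem:hd-simple}. For $0\neq c\in V$ the residue $c$ is a section of $(A_E)_D=\End(E_D)$ on the curve $D'$, so $m=\dim D'=n-1$. The saturation $J_c$ of $\operatorname{im}(c)$ sits in the exact sequence $0\to\ker c\to E_D\to\operatorname{im}c\to0$. Since $\operatorname{im}c\subseteq J_c$ with torsion cokernel on the smooth curve $D'$, we have
\[
 \deg J_c\ \ge\ \deg\operatorname{im}c\ =\ \deg E_D-\deg\ker c .
\]
So it is enough to bound $\deg\ker c=c_1(\ker c)\cdot h^{m-1}$ from above, uniformly in $c\in V\setminus\{0\}$.

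This upper bound is exactly what Lemma~\ref{lem:hd-kernels} gives. Take $Y=D'$, $F=E_D$, and $V$ the finite-dimensional residue image, viewed inside $H^0(D',\End E_D)$. The family $\{\ker c\}$ is bounded, so the degrees $c_1(\ker c)\cdot h^{m-1}$ are bounded above by some constant $\kappa_E$. Because the statement only involves the projective class of $c$, one may work on $\mathbb P(V)$, and the finite stratification in that lemma's proof gives a single maximum. Combining with the display yields
\[
 d_{\min}(E;D')\ \ge\ c_1(E|_{D'})\cdot h^{m-1}-\kappa_E .
\]
If the intended reading is with $J_c$ rank one as in Lemma~\ref{lem:hd-simple}, the same estimate applies, since rank only enters through the degree bookkeeping.

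For the uniform statement, take a finite family $E_1,\dots,E_N$ with divisors $D'_i$. Apply the above to each member and set $\kappa=\max_i\kappa_{E_i}$. If instead the family is parametrised by a finite-type base, the plan is to run the kernel argument relatively over $\mathbb P(\mathcal V)\times_S D'$ and use generic flatness on a finite stratification.

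The only delicate point is making sure $V$ really sits inside $H^0(\End E_D)$, so that Lemma~\ref{lem:hd-kernels} applies. On $D'$ the residue $c$ of a section $B+u^{-1}C$ of $A_E(D')$ is $C|_{D'}$, twisted by the normal bundle $N$. The plan is to absorb $N$ by replacing $F$ with $E_D\otimes N$ or by noting $\ker c$ is unchanged by a twist. This changes $\kappa_E$ by at most a constant depending on $\deg N$.
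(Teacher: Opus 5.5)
Your proposal is correct and follows the paper's proof: bound $\deg J_c$ from below by $\deg E_D-\deg\ker c$, control $\deg\ker c$ uniformly in $c$ by Lemma~\ref{lem:hd-kernels} on $Y=D'$, and take a maximum over a finite family. Your inequality $\deg J_c\ge\deg\operatorname{im}c$ is the accurate form of the equality the paper writes, since saturation can strictly raise the degree, and it is exactly what the bound needs.
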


\begin{proof}
For $0\neq c\in V$ one has $\deg J_c=c_1(E_D)\cdot h^{m-1}-c_1(\ker c)\cdot h^{m-1}$.
By Lemma~\ref{lem:hd-kernels} the second term is bounded.
\end{proof}

\begin{lemma}[Window]\label{lem:hd-window}
Let $E_1,\dots,E_t$ be sheaves of rank $r\ge2$, locally free along $D'$,
and
let $b_1,\dots,b_t$ be numbers. For $\mathcal A$ sufficiently positive and
$D'\in|\mathcal A|$ smooth, each $E_j$ admits rank-one subbundles
$L\subset (E_j)|_{D'}$ with
\[
 \deg L<d_{\min}(E_j;D'),\qquad
 h^0\bigl(D',Q_L^\vee\otimes L\otimes N\bigr)\ \ge\ b_j ,
\]
provided such subbundles of the indicated degree exist (automatic when $n=2$, $D'$ a curve).
For rank one the transform has $Q_L=0$ and no divisor step is needed.
\end{lemma}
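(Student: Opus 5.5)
The statement to prove is the Window lemma (Lemma~\ref{lem:hd-window}). The plan is to control the two conditions separately on the curve $D'$ and then choose $\mathcal A$ so positive that they are compatible.

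\textbf{Step 1 (lower bound on $d_{\min}$).} By Corollary~\ref{cor:hd-dmin}, applied to the finite family $E_1,\dots,E_t$ with uniform constants, we have
\[
 d_{\min}(E_j;D')\ \ge\ c_1(E_j)\cdot D'-\kappa,
\]
with $\kappa$ independent of $D'\in|\mathcal A|$. The point to check is that $\kappa$ really is uniform as $\mathcal A$ grows. The kernels $\ker c$ come from the residue space $V\subset H^0((A_{E_j})|_{D'})$. The bounded family of Lemma~\ref{lem:hd-kernels} therefore has to be taken over the incidence variety of pairs $(D',c)$, and this is not uniform in $\mathcal A$ as stated. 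I would argue instead directly on $D'$: $\ker c$ is a subsheaf of $E_j|_{D'}$, so its degree is bounded above by the maximal destabilizing degree of $E_j|_{D'}$. For $D'$ general in a very positive $|\mathcal A|$, Mehta--Ramanathan-type restriction (or just the boundedness of the Harder--Narasimhan slopes of $E_j$ pulled back) gives $\mu_{\max}(E_j|_{D'})\le \mu_{\max}(E_j)\cdot D'+O(1)$. This yields $d_{\min}\ge c_1(E_j)\cdot D'/r-O(1)$ with slope-type constants, which is enough for the argument below.

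\textbf{Step 2 (choosing $L$ of low degree).} Fix an integer $\delta$ and look for $L\subset E_j|_{D'}$ of degree $\delta$. Such subbundles exist for every $\delta\le\delta_0(D')$, because one can twist a maximal line subbundle down by effective divisors on $D'$ and saturate; the statement itself assumes existence. Take $\delta<d_{\min}$. Then
\[
 Q_L^\vee\otimes L\otimes N
\]
has rank $r-1$ and degree
\[
 r\delta-(r-1)\deg\bigl(E_j|_{D'}\bigr)+\dots+(r-1)D'^2 .
\]
Riemann--Roch on $D'$ gives
\[
 h^0\ \ge\ \deg+(r-1)(1-g(D')).
\]
By adjunction $2g(D')-2=D'^2+K_X\cdot D'$, so the dominant terms are $(r-1)D'^2-(r-1)\tfrac12 D'^2=\tfrac{r-1}{2}D'^2$ plus terms linear in $D'$. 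The budget $r\delta$ enters negatively, and this is the tension: we want $\delta$ small for the first condition but not too small for the second.

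\textbf{Step 3 (compatibility).} Pick $\delta=\lfloor c_1(E_j)\cdot D'/r\rfloor-C$, with $C$ larger than the Step~1 constant. The $h^0$ lower bound is then
\[
 \tfrac{r-1}{2}D'^2-O(D')-rC,
\]
which exceeds $b_j$ once $\mathcal A$ is sufficiently positive, since $D'^2$ grows quadratically while everything else is linear. The rank-one case is immediate: $Q_L=0$.

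The main obstacle is Step~1, namely making the constant $\kappa_E$ uniform as $D'$ varies in increasingly positive linear systems. If the slope argument fails, I would fall back on fixing $\mathcal A$ once and enlarging $b_j$ by replacing $D'$ with a general member of $|\mathcal A|$. Boundedness over the single family $|\mathcal A|$ then holds by Lemma~\ref{lem:hd-kernels} applied on the universal divisor.
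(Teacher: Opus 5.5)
Your outline matches the paper's. You bound $d_{\min}(E_j;D')$ below by something linear in $D'$, take $\deg L$ just under that bound, and let Riemann--Roch on $D'$ finish: the term $(r-1)\deg N=(r-1)D'^2$ beats $(r-1)(g(D')-1)\approx\tfrac{r-1}{2}D'^2$.

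Your departures are improvements in substance. The paper takes the lower bound from Corollary~\ref{cor:hd-dmin}, whose constant comes from Lemma~\ref{lem:hd-kernels} on one fixed curve. So your doubt about its uniformity as $\mathcal A$ grows is justified. Bounding $\deg\ker c$ by slopes of $E_j|_{D'}$, via $\deg J_c\ge\deg(E_j|_{D'})-\deg\ker c$, avoids the problem. Your adjunction bookkeeping is also more accurate than the paper's appeal to the leading term $c_1(G)$, since the genus term is of the same order as $\deg N$.

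Step~1 needs two corrections. Mehta--Ramanathan gives the slope bound only for general $D'$, and gives the coefficient $1/r$ only for semistable $E_j$. For every smooth $D'$, do the following.
\begin{itemize}
\item Embed $E_j^{\vee\vee}$ as a subbundle of a direct sum of copies of $\cO_X(a_jH)$, dual to a surjection onto $E_j^{\vee}$.
\item Use $E_j|_{D'}=E_j^{\vee\vee}|_{D'}$.
\item This gives $\mu_{\max}(E_j|_{D'})\le a_jH\cdot D'$, hence $d_{\min}\ge\deg(E_j|_{D'})-(r-1)a_jH\cdot D'$.
\end{itemize}
A linear bound is all you use. Separately, $\deg G=r\delta-\deg(E_j|_{D'})+(r-1)D'^2$: the coefficient of $\deg(E_j|_{D'})$ is $1$, not $r-1$. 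This slip is harmless.

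The step that fails as written is the existence of $L$ of degree exactly $\delta=\lfloor c_1(E_j)\cdot D'/r\rfloor-C$.
\begin{itemize}
\item Twisting a maximal line subbundle $L_0$ down to $L_0(-P)$ and saturating gives back $L_0$, so this produces no subbundle of smaller degree.
\item Your $\delta$ may exceed $d_{\max}(E_j|_{D'})$, and then no subbundle of that degree exists at all.
\item The easy existence argument (a general section of $E_j|_{D'}\otimes M^{-1}$ is nowhere zero once that bundle is globally generated) gives subbundles only for $\deg M$ of order $-D'^2$. That is below the range where your Riemann--Roch bound is positive.
\end{itemize}

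The repair is the paper's choice $\deg L=\min\{d_{\max},\delta\}$. If $d_{\max}\le\delta$, a line subsheaf of maximal degree is saturated. Moreover $d_{\max}\ge -a_jH\cdot D'$: enlarging $a_j$ if needed, $E_j$ contains some $\cO_X(-a_jH)$ whose restriction to $D'$ is nonzero. Your estimate then gives $h^0\ge\tfrac{r-1}{2}D'^2-O(D')$. Only when $d_{\max}>\delta$ do you need a subbundle of degree between roughly $-\tfrac{r-1}{2r}D'^2$ and $\delta$. That has to come from the proviso in the statement, and the paper's twisting argument does no better there.

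Finally, your fallback of fixing $\mathcal A$ cannot work. For fixed $\mathcal A$ these $h^0$ are bounded, while the $b_j$ are arbitrary.
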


\begin{proof}
Write $F=E_j|_{D'}$ and $G=Q_L^\vee\otimes L\otimes N$, of rank $r-1$, with
\[
 c_1(G)=r\,c_1(L)+(r-1)c_1(N)-c_1(F).
\]
Put $a=c_1(N)\cdot h^{m-1}$, $b=c_1(E_j)|_{D'}\cdot h^{m-1}$ and
$t_L=\deg L$, so $c_1(G)\cdot h^{m-1}=r\,t_L+(r-1)a-b$.
Here $a$ is of order $\mathcal A^n$ while $b$ is of order $\mathcal A^{n-1}$
at most. By Corollary~\ref{cor:hd-dmin},
$d_{\min}(E_j;D')\ge b-\kappa_{E_j}$,
and $d_{\min}$ tends to infinity with $\mathcal A$. Denote by
$d_{\max}$ the maximal degree of a rank-one subsheaf of $F$; the degree of $F$
tends to infinity with $\mathcal A$, and a vector bundle of large degree has a
rank-one subsheaf of non-negative degree, so $d_{\max}\ge0$ for $\mathcal A$
large. We choose
$t_L=\min\bigl\{d_{\max},\ \lfloor d_{\min}(E_j;D')\rfloor-1\bigr\}$,
which satisfies $t_L<d_{\min}$. A rank-one subsheaf of degree $t_L$ exists:
twisting a rank-one subsheaf of maximal degree by effective divisors on $D'$
realises every smaller degree on a curve, and every degree of the numerical
semigroup in general.
Then $c_1(G)\cdot h^{m-1}\ge(r-1)a-b$ tends to infinity with $\mathcal A$.

The dual of $0\to L\to F\to Q_L\to0$, tensored by $L\otimes N$, gives
\[
 0\to G\to F^\vee\otimes L\otimes N\to N\to0,
\]
so
\[
 h^0(D',G)\ \ge\ h^0(D',F^\vee\otimes L\otimes N)-h^0(D',N).
\]
For $\mathcal A$ large the groups $h^1(D',N)$ and $h^1(D',F^\vee\otimes L\otimes N)$ vanish.  The first is Serre vanishing.  For the second, duality on $D'$ identifies it with the dual of $h^0(D',F\otimes L^\vee\otimes N^\vee\otimes\omega_{D'})$, and
\[
 c_1\bigl(F\otimes L^\vee\otimes N^\vee\otimes\omega_{D'}\bigr)
 \ =\ -c_1(G)+c_1(\omega_X)|_{D'} .
\]
The degree of the right hand side along $h$ tends to $-\infty$, since $c_1(G)\cdot h^{m-1}$ tends to $+\infty$ and $c_1(\omega_X)|_{D'}\cdot h^{m-1}$ is of order $\mathcal A^{n-1}$ at most.  A torsion-free sheaf of negative degree along $h$ has no sections.  Hence
\[
 h^0(D',G)\ \ge\ \chi(D',G)
\]
for $\mathcal A$ large.  The leading term of
$\chi(D',G)$ is the degree-$m$ part of $\operatorname{ch}(G)\operatorname{td}(T_{D'})$, whose
dominant contribution is $c_1(G)^m/m!$; the remaining Chern classes of $G$ are controlled by those of $N$, of order $\mathcal A$.  Since
$c_1(G)\cdot h^{m-1}\ge (r-1)a-b$ tends to infinity with $\mathcal A$,
this term tends to infinity as well.  Choosing $\mathcal A$ so
positive that $h^0(D',G)\ge b_j$ for all $j$, the subbundle $L$ satisfies both
requirements.  Only one such $L$ is chosen, so the argument uses no
uniformity over a family of subbundles beyond the vanishing just quoted.
\end{proof}

\subsection{Proofs of the chain counts}\label{app:count}

\begin{proof}[Proof of Proposition~\ref{prop:hd-count}]
The number of parameters of the first kind is the number of embeddings of
$L_j$ into $F_j$ modulo scalars, which is $g_j$; the extensions form an
affine space of dimension
\[
 \Ext^1_X(i_{D'*}Q_j,E'_j)=\Hom_{D'}(Q_j,L_j\otimes N),
\]
by the standard formula together with $E'_j|_{D'}=L_j$, where $E'_j$ is the elementary transform of $E_j$ with reduction datum $L_j$. For principal bundles $e_j$ is supplied by \cite{Bal}, and for framed sheaves by $\Ext^1(E,E(-L))$. The
members contribute $2$ parameters each.

Over a fixed object $E$ the parent is determined by $L\subset E_D$, and the members
are preserved along the fibres. Hence $\phi=\sum_j g_j$. The scalars are already divided out in $g_j$, and for a stable sheaf the automorphisms are the scalars, so no further factor of $\operatorname{Aut}$ is removed in $\phi$. On a general fibre, the dimension formula
gives $m-\phi=d$. By Lemma~\ref{lem:hd-zeroblock}
the radical of the block pairing consists of the classes whose member-block components vanish, whose number is $d-2\rho$.

For the budget identity, one has $e_j=h^0(D'_j,Q_j^\vee\otimes L_j\otimes N)$; by Riemann--Roch on $D'_j$ this equals $\deg(Q_j^\vee\otimes L_j\otimes N)+1-g$ once the degree is large enough for $h^1=0$, and the degree of $L_j$ can be varied as in Lemma~\ref{lem:hd-window}. Hence the $e_j$ take a range of integer values, and one arranges $\sum_j e_j=c$ by choosing $t$ and the $L_j$. For $c=0$ take no class coordinates. Together with $m-\phi=2\rho+\sum_j e_j$ this gives $m-\phi=2\rho+c=d$.
\end{proof}

\end{document}